\numberwithin{equation}{section}
\newtheorem{theorem}{Theorem}[section]
\newtheorem{lemma}[theorem]{Lemma}
\newtheorem{proposition}[theorem]{Proposition}
\newtheorem{remark}[theorem]{Remark}
\newtheorem{definition}[theorem]{Definition}
\newcommand{\cE}{{\ensuremath{\mathcal E}} }
\newcommand{\R}{\mathbb{R}}
\newcommand{\N}{\mathbb{N}}
\newcommand{\PEfont}{\mathrm}
\renewcommand\P{\ensuremath{\PEfont P}}
\DeclareMathOperator{\E}{\ensuremath{\PEfont E}}
\newcommand{\ind}{\mathds{1}}
\renewcommand{\epsilon}{\varepsilon}
\renewcommand{\theta}{\vartheta}
\renewcommand{\rho}{\varrho}
\renewcommand{\phi}{\varphi}
\newenvironment{myenumerate}{\renewcommand{\theenumi}{\arabic{enumi}}\renewcommand{\labelenumi}{{\rm(\theenumi)}}\begin{list}{\labelenumi}
	{	\setlength{\itemsep}{0.4em}	 \setlength{\topsep}{0.5em}\setlength{\partopsep}{0em}\setlength{\parsep}{0em}\setlength{\parskip}{0em}	 \setlength\leftmargin{2.45em}	\setlength\labelwidth{2.05em}	\setlength{\labelsep}{0.4em}	\usecounter{enumi}	}	 }{\end{list}
}
\renewenvironment{enumerate}{
\begin{myenumerate}}{\end{myenumerate}}
\newenvironment{myitemize}{\begin{list}{$\bullet$} 	{	\setlength{\itemsep}{0.4em}	 \setlength{\topsep}{0.5em}\setlength{\partopsep}{0em}\setlength{\parsep}{0em}\setlength{\parskip}{0em}	 \setlength\leftmargin{2.45em}	 \setlength\labelwidth{2.05em}	\setlength{\labelsep}{0.4em}\usecounter{enumi}	}	 }{\end{list}}
\renewenvironment{itemize}{
\begin{myitemize}}{\end{myitemize}}
\keywords{Renewal Theorem, Local Limit Theorem, Regular Variation}
\subjclass[2010]{60K05; 60G50}
\newcommand\dd{\mathrm{d}}
\begin{document}
\title[The strong renewal theorem]{Local large deviations\\
and the strong renewal theorem}
\author{Francesco Caravenna}
\address{Dipartimento di Matematica e Applicazioni, Universit\`a degli Studi
di Milano-Bicocca, via Cozzi 55, 20125 Milano, Italy}
\email{francesco.caravenna@unimib.it}

\author{Ron Doney}
\address{School of Mathematics, University of Manchester, Oxford Road, Manchester M13 9PL, UK}
\email{ron.doney@manchester.ac.uk}

\date{\today }

\begin{abstract}
We establish two different, but related results for
random walks in the domain of attraction of a stable
law of index $\alpha$. The first result is a local large deviation upper bound,
valid for $\alpha \in (0,1) \cup (1,2)$, which improves on the classical Gnedenko
and Stone
local limit theorems. The second result, valid for $\alpha \in (0,1)$,
is the derivation of necessary and sufficient conditions
for the random walk to satisfy the \emph{strong renewal theorem} (SRT). 
This solves a long-standing problem,
which dates back to the 1962 paper of Garsia and Lamperti
\cite{cf:GL} for renewal processes (i.e.\ random walks with non-negative
increments), and to the 1968 
paper of Williamson \cite{cf:Wil} for general random walks.
\end{abstract}

\maketitle

\section{Introduction and results}

This paper contains new results about asymptotically stable random walks.
We first present a local large deviation
estimate which improves the error term in the classical local limit theorems,
without making any further assumptions (see Theorem~\ref{th:ron}).
Then we exploit this bound to solve a long-standing
problem, namely we establish necessary and sufficient conditions
for the validity of the \emph{strong renewal theorem} (SRT),
both for renewal processes (Theorem~\ref{th:main}) and for
general random walks (Theorem~\ref{th:mainrw}). The corresponding
result for L\'evy processes is also presented (see Theorem~\ref{th:Levy}).

\smallskip

This paper supersedes the individual preprints \cite{cf:C} and \cite{cf:Don}.

\medskip
\noindent
\emph{Notation.}
We set $\mathbb{N}=\{1,2,3,\ldots \}$ and $\mathbb{N}_{0}=\mathbb{N}\cup \{0\}$. 
We denote by $RV(\gamma)$ the class of regularly
varying functions with index $\gamma $, namely $f\in RV(\gamma)$ if and only if
$f(x)=x^{\gamma }\ell (x)$ for some slowly varying function $\ell \in RV(0)$, see~\cite{cf:BinGolTeu}.
Given $f,g:[0,\infty )\rightarrow
(0,\infty )$ we write $f\sim g$ to mean $\lim_{s\to\infty}
f(s)/g(s)=1$, and $f \ll g$ to mean $\lim_{s\to\infty}
f(s)/g(s)=0$.

\subsection{Local large deviations}
\label{sec:LLD} 

Let $(X_i)_{i\in\mathbb{N}}$ be i.i.d.\ real-valued
random variables, with law $F$. Let $S_0 := 0$, $S_n := X_1 + \ldots + X_n$
be the associated random walk and
\begin{equation}  \label{eq:max}
M_n := \max\{X_1, X_2, \ldots, X_n\} \,.
\end{equation}

We assume that the law $F$ is in the domain of attraction of a strictly stable law with index
$\alpha \in (0,1) \cup (1,2)$, that is, with $\overline{F}(x) :=
F((x,\infty))$ and $F(x) := F((-\infty,x])$, 
\begin{equation}  \label{eq:tail2}
\begin{split}
&\overline{F}(x) \,\underset{x\to\infty}{\sim}\, \frac{p}{A(x)} 
\qquad \text{and} \qquad F(-x) \,\underset{x\to\infty}{\sim}\,
\frac{q}{A(x)} \qquad \text{for some } \ A \in RV(\alpha) \,.
\end{split}%
\end{equation}
More explicitly, if we write $A(x) = x^{\alpha} / L(x)$, with
$L(\cdot)$ slowly varying,
\begin{equation*}
\begin{split}
&\P(X > x) \,\underset{x\to\infty}{\sim}\, p \, \frac{L(x)}{x^{\alpha}} 
\qquad \text{and} \qquad \P(X \le -x) \,\underset{x\to\infty}{\sim}\,
q \, \frac{L(x)}{x^{\alpha}}  \,.
\end{split}%
\end{equation*}
We assume that $p > 0$ and $q \ge 0$
(when $q=0$, the second relation in \eqref{eq:tail2}
should be understood as $F(-x) = o(1/A(x))$).
For $\alpha > 1$, we further assume that $\E[X] = 0$.

\smallskip

Without loss of generality, we may assume that $A \in RV(\alpha)$ is
continuous and strictly increasing.
If we introduce the norming sequence $a_n \in RV(1/\alpha)$ defined by
\begin{equation}  \label{eq:an0}
a_n := A^{-1}(n) \,, \qquad n \in \mathbb{N} \,,
\end{equation}
then $S_n / a_n$ converges in law to a random variable $Y$ with a stable law of
index $\alpha$ and
positivity parameter $\rho = \P(Y > 0) = \frac{1}{2} + \frac{1}{%
\pi\alpha} \arctan (\frac{p-q}{p+q} \tan \frac{\pi \alpha}{2}) > 0$
(because $p > 0$).

\smallskip

Our first main result is a local large deviation estimate for $S_n$, constrained on $M_n$. 

\begin{theorem}[Local Large Deviations]
\label{th:ron} Let $F$ satisfy \eqref{eq:tail2} with $\alpha \in (0,1) \cup
(1,2)$ and $p > 0$, 
and $\E[X]=0$ if $\alpha > 1$. Fix a bounded measurable 
$J \subseteq \mathbb{R}$. Given $\gamma \in (0,\infty)$, there is $C_0 = C_0(\gamma,J) <
\infty$ such that, for all $n\in\mathbb{N}$ and $x \ge 0$, the following
relation holds:
\begin{gather}  \label{eq:ron1}
\P (S_n \in x+J, \ M_n \le \gamma x) \le C_0 \, \frac{1}{a_n} \left(\frac{n}{%
A(x)} \right)^{\lceil 1/\gamma \rceil}\,,
\end{gather}
where $\lceil x \rceil := \min\{n\in\N: \ n \ge x\}$ is the upper integer part of $x$.
More explicitly:
\begin{equation} \label{eq:ron1modk}
	\forall k \in \N, \ \ \forall \gamma \in [\tfrac{1}{k}, \tfrac{1}{k-1}): \qquad
	\P(S_n \in x + J, \ M_n \le \gamma x) \le C_0 \,
	\frac{1}{a_n} \left(\frac{n}{%
	A(x)} \right)^{k}\,.
\end{equation}%
Moreover, for some $C_0^{\prime }= C_0^{\prime }(J) < \infty$,
\begin{gather}  \label{eq:ron2}
\P (S_n \in x+J) \le C_0^{\prime }\, \frac{1}{a_n} \, \frac{n}{A(x)} \,.
\end{gather}
\end{theorem}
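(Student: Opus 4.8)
The plan is to prove the bound \eqref{eq:ron1modk} by induction on $k$, using a decomposition according to how many of the increments $X_1,\dots,X_n$ are ``large'' (of order $x$). The estimate \eqref{eq:ron2} then follows as the case $k=1$ (i.e.\ $\gamma \ge 1$, where the constraint $M_n \le \gamma x$ is vacuous once $\gamma=1$, and for $\gamma > 1$ it is weaker), and the general statement \eqref{eq:ron1} is just the reformulation of \eqref{eq:ron1modk} via $k = \lceil 1/\gamma\rceil$.

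\medskip
\noindent\emph{Base case $k=1$.} Here I must show $\P(S_n \in x+J) \le C_0' \, a_n^{-1}\, n/A(x)$ for all $x \ge 0$. For $x$ bounded this is immediate from the Gnedenko--Stone local limit theorem, which gives $\P(S_n \in x+J) \le C/a_n$ uniformly; since $n/A(x)$ is bounded below on bounded $x$-sets (as $A$ is continuous, increasing, with $A(0^+) < \infty$ — after possibly shifting $A$), the bound holds there with a suitable constant. For large $x$ I would split on the event $\{M_n > x/2\}$ versus its complement. On $\{M_n \le x/2\}$, in order to have $S_n$ near $x$ one needs the ``bulk'' $\sum_{i} X_i \ind_{\{|X_i| \le x/2\}}$ to be of order $x$, which by a truncated-second-moment / Fuk--Nagaev type bound (or directly via the reasoning behind Theorem~\ref{th:ron}, applied with $\gamma = 1/2$, which is however what we are proving — so here one uses instead a standard exponential/Chebyshev estimate for the truncated sum) has probability $\lesssim (n/A(x))^2 \ll n/A(x)$. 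On $\{M_n > x/2\}$ one writes, by a union bound over which index $j$ achieves the maximum and conditioning on $X_j$,
\begin{equation*}
\P(S_n \in x+J, \ M_n > x/2) \;\le\; n \int_{(x/2,\infty)} \P(S_{n-1} \in x - y + J) \, F(\dd y) \,,
\end{equation*}
bound the inner probability crudely by $\sup_z \P(S_{n-1} \in z+J) \le C/a_{n-1} \le C'/a_n$, and use $n\,\overline F(x/2) \sim n\,p/A(x/2) \asymp n/A(x)$ by regular variation. This gives the claimed $k=1$ bound.

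\medskip
\noindent\emph{Inductive step.} Assume the bound for $k-1$ (all admissible $\gamma$) and fix $\gamma \in [\tfrac1k, \tfrac1{k-1})$; set $\delta := \gamma x$, so $M_n \le \delta$. On $\{S_n \in x+J,\ M_n \le \delta\}$, at least $\lceil x/\delta \rceil = \lceil 1/\gamma \rceil = k$ of the increments must exceed, say, $\tfrac{x}{2k}$ in absolute value — more precisely, the sum of the positive parts of the increments is $\ge x - O(1)$, each is $\le \delta = \gamma x < x/(k-1)$, hence at least $k-1$ of them exceed a fixed fraction of $x$; a slightly more careful bookkeeping (using also that the negative part is controlled, since $S_n$ is pinned near $x$ and each increment is $\ge -\delta$) yields that $k$ of them must be $\gtrsim x$, or else $k-1$ are $\gtrsim x$ and the truncated bulk is also $\gtrsim x$, the latter contributing a lower-order term as in the base case. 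So, up to a union bound over the $\binom{n}{k} \le n^k$ choices of the ``big'' indices and conditioning on their values $y_1,\dots,y_k$, each lying in a region of $F$-mass $\lesssim \overline F(c x) + F(-cx) \asymp 1/A(x)$, the remaining $n-k$ increments are constrained to satisfy $S_{n-k} \in (x - \sum y_i) + J$ with $M_{n-k} \le \delta$; applying the already-proven $k=1$ bound \eqref{eq:ron2} to this (the maximum constraint is discarded) gives $\lesssim a_{n-k}^{-1} (n/A(x))$ — but this only yields one more power of $n/A(x)$ per big increment beyond the first, i.e.\ $k$ powers total once we note $n^k \cdot (1/A(x))^{k-1} \cdot a_n^{-1}(n/A(x))$, after absorbing $n^k/A(x)^{k-1} \asymp (n/A(x))^{k-1}\cdot (\text{lower order})$ — the precise bookkeeping of regularly varying factors is where one must be careful, and I would organise it as a genuine induction feeding \eqref{eq:ron1modk} at level $k-1$ into the step for level $k$, rather than only the base case.

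\medskip
\noindent\emph{Main obstacle.} The delicate point is \textbf{the combinatorial/analytic accounting of the ``bulk'' on the event that only $k-1$ increments are genuinely of order $x$} but their sum still falls short of $x$, so that the truncated sum $\sum_i X_i \ind_{\{|X_i|\le \delta\}}$ must make up a macroscopic deficit. One needs a quantitative estimate — of Fuk--Nagaev type, tuned to the regularly varying tails \eqref{eq:tail2} — showing that $\P(\text{truncated sum} \ge \eta x) \lesssim (n/A(x))^{m}$ with $m$ large enough (here $m \ge 2$ suffices given the $k-1$ big jumps already accounted for, since $(k-1)+2 > k$), uniformly in the relevant range of $n$ and $x$; and this must be combined with the case $\alpha \in (1,2)$, where the centering $\E[X]=0$ and the possibility of large \emph{negative} increments (when $q>0$) require treating the negative part of the truncated sum symmetrically. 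Handling the two regimes $\alpha<1$ and $\alpha>1$ uniformly, and making sure all constants depend only on $\gamma$ and $J$ (not on $n,x$), is the part that demands the most care, but it is routine once the right truncated-moment inequality is in hand.
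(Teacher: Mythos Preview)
Your overall framework---induction on $k$, peeling off big jumps, Fuk--Nagaev for the bulk---matches the paper's, but there is a real gap exactly where you place the main obstacle: you never explain how to \emph{localize} the Fuk--Nagaev bound. The inequality you invoke is non-local, $\P(S_n \ge x,\ M_n \le \gamma x) \lesssim (n/A(x))^{1/\gamma}$, and carries no factor $1/a_n$. In your base case you bound $\P(S_n \in x+J,\ M_n \le x/2)$ by $(n/A(x))^2$ and compare this to $n/A(x)$, but the actual target is $a_n^{-1}\, n/A(x)$; the inequality $(n/A(x))^2 \le C\, a_n^{-1}\, n/A(x)$ is equivalent to $n\, a_n \le C\, A(x)$, which fails e.g.\ for $n \approx A(x)$ (then $n\,a_n \approx x\,A(x) \gg A(x)$). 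The same issue recurs in the inductive step: a non-local bulk estimate $\P(\text{truncated sum} \ge \eta x) \lesssim (n/A(x))^m$ cannot be fed into an induction whose output must contain $1/a_n$.

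The paper supplies the missing localization via a time-reversal splitting (its Step~2): on $\{S_n \in x+J\}$ either $S_{\lfloor n/2\rfloor} \ge x/2$ or $S_n - S_{\lfloor n/2\rfloor} \ge x/2$, and by reversing the walk the second case reduces to the first; then condition on $S_{\lfloor n/2\rfloor}=z$, bound $\P(S_{\lceil n/2\rceil} \in x-z+J) \le C/a_n$ uniformly via the local limit theorem, and bound the remaining $\P(S_{\lfloor n/2\rfloor} \ge x/2,\ M_{\lfloor n/2\rfloor} \le \tfrac{\gamma}{2}x)$ by the non-local Fuk--Nagaev inequality. This yields $\P(S_n \in x+J,\ M_n \le \tfrac{\gamma}{2}x) \lesssim a_n^{-1}(n/A(x))^{1/\gamma}$, with the $a_n^{-1}$ now in place, from which both the base case and the induction close. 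The paper's induction also differs from yours in that it peels off \emph{one} big jump per step---reducing $\gamma \in [\tfrac{1}{k+1},\tfrac1k)$ on the original walk to $\gamma' = \gamma/(1-\gamma) \in [\tfrac1k,\tfrac1{k-1})$ on the residual walk with target $\ge (1-\gamma)x$---rather than extracting $k$ jumps at once, which sidesteps the combinatorial bookkeeping you were worried about.
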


\smallskip

The non-local version of \eqref{eq:ron1}, where $S_n \in x+J$ is replaced
by $S_n \ge x$, is known as a \emph{Fuk-Nagaev inequality}~\cite{cf:N}.
This is the starting point of our proof of Theorem~\ref{th:ron}, 
see Section~\ref{sec:newproof}. We prove \eqref{eq:ron1} through direct
path estimates, combined with local limit theorems.
Relation \eqref{eq:ron2} is obtained as a simple corollary
of \eqref{eq:ron1} with $\gamma = 1$.

\smallskip

A heuristic explanation of \eqref{eq:ron2} goes as follows:
for large $x$, if $S_n \in x+J$,
it is likely that \emph{a single step $X_i$ takes a value $y$ comparable to $x$}.
Since $\P(X_i > c x) \approx 1/A(x)$ by \eqref{eq:tail2},
and since there are $n$ available steps, we get
the factor $n/A(x)$ in \eqref{eq:ron2}.
The extra factor $1/a_n$ comes from Gnedenko and Stone local limit theorems.

A similar argument sheds light on \eqref{eq:ron1}-\eqref{eq:ron1modk}.
Under the
constraint $M_n \le \gamma x$, with $\gamma \in [\frac{1}{k}, \frac{1}{k-1})$,
the most likely way to have $S_n \in x+J$ is that \emph{exactly $k$ steps
$X_{i_1}, \ldots, X_{i_k}$ take values comparable to $x/k$}, and this
yields the factor $(n/A(x))^k$ in \eqref{eq:ron1modk}.

\begin{remark}
The classical Gnedenko and Stone local limit theorems
only give the weak bound $\P (S_n \in x+J) = o( \frac{1}{a_n})$
as $x / a_n \to \infty$.
The inequality \eqref{eq:ron2} improves quantitatively on this bound,
with no further assumptions besides \eqref{eq:tail2}.
\end{remark}

The Cauchy case $\alpha = 1$ is left out from our analysis, because
of the extra care needed to handle the centering issues.
However, an analogue of Theorem~\ref{th:ron} holds also in this case,
as shown by Q. Berger in the recent paper \cite{B17}.

Finally, it is worth stressing that the estimate \eqref{eq:ron2} 
is essentially optimal, under the mere assumption \eqref{eq:tail2}. 
However, if one makes extra local requirements
on the step distribution, such as e.g.\ \eqref{eq:D97} below,
one can correspondingly sharpen \eqref{eq:ron2} along the same
line of proof, see \cite[Theorem~2.4]{B17} 
(which is valid for any $\alpha \in (0,2)$).

\subsection{The strong renewal theorem}

Henceforth we assume that $\alpha \in (0,1)$. We say that $F$ is \emph{%
arithmetic} if it is supported by $h\mathbb{Z}$ for some $h > 0$, in which
case the maximal value of $h>0$ with this property is called the \emph{%
arithmetic span} of $F$. It is convenient to set
\begin{equation}  \label{eq:I}
I = (-h,0] \qquad \text{where} \qquad h :=
\begin{cases}
\text{arithmetic span of $F$} & \text{(if $F$ is arithmetic)} \\
\text{any fixed number $> 0$} & \text{(if $F$ is non-arithmetic)} \,.%
\end{cases}%
\end{equation}

The renewal measure $U(\cdot)$ associated to $F$ is
the measure on $\R$ defined by
\begin{equation}  \label{eq:U}
U(\mathrm{d} x) := \sum_{n\ge 0} F^{*n}(\mathrm{d} x) = \sum_{n\ge 0} \P %
(S_n \in \mathrm{d} x) \,.
\end{equation}
It is well known 
(see \cite[Eq. (8.6.1)-(8.6.3)]{cf:BinGolTeu}  and \cite[Appendix]{cf:Chi0})
that \eqref{eq:tail2} implies 
\begin{equation}  \label{eq:SRTint}
U([0,x]) \,\underset{x\to\infty}{\sim}\, \frac{\mathsf{C}}{\alpha} \, A(x)\,, \qquad
\text{with} \qquad \mathsf{C} = \mathsf{C}(\alpha,\rho) = \alpha
\E[Y^{-\alpha} \, \ind_{\{Y
> 0\}}]
\end{equation}
(recall that $Y$ denotes a random variable with the limiting stable law).
In the special case
when $p=1$ and $q=0$ in \eqref{eq:tail2}
(so that $\rho = 1$) one has $\mathsf{C} = \frac{1}{\pi}
\sin(\pi\alpha)$.

It is natural to wonder whether the local version of \eqref{eq:SRTint}
holds, namely
\begin{equation}
U(x+I)=U((x-h,x]) \,\underset{x\to\infty}{\sim}\, \mathsf{C}\,h\,\frac{A(x)}{x} \,.  \tag{SRT}  \label{eq:SRT}
\end{equation}%
For a more usual formulation,
we can write $A(x) = x^{\alpha} / L(x)$ with
$L(\cdot)$ slowly varying: 
\begin{equation}
	U(x+I)=U((x-h,x]) \,\underset{x\to\infty}{\sim}\, \mathsf{C}\,h\,\frac{1}{L(x)
	\, x^{1-\alpha}} \,.
	\tag{SRT}
\end{equation}
This relation, called \emph{strong renewal theorem} \eqref{eq:SRT},
is known to follow from \eqref{eq:tail2} when $\alpha >\frac{1}{2}$, see \cite%
{cf:GL,cf:Wil,cf:Eri,cf:Eri2}. However,
when $\alpha \leq \frac{1}{2}$ there are examples of
$F$ satisfying \eqref{eq:tail2} but not \eqref{eq:SRT}.
The reason is that small values of $n$
in \eqref{eq:U} can give an anomalous contribution
to the renewal measure (see Subsection~\ref{sec:refor}
for more details).

\smallskip

In order for the SRT to hold, when $\alpha \le \frac{1}{2}$,
extra assumptions are needed.
Sufficient conditions have been derived along the years
\cite{cf:Wil,cf:D,cf:VT,cf:Chi0,cf:Chi}, but none of these is necessary.
\emph{In this paper we settle this problem, 
determining necessary and sufficient conditions for the SRT}:
see Theorem~\ref{th:main} for renewal processes and Theorem~\ref{th:mainrw}
for random walks. We also obtain very explicit and sharp
sufficient conditions,
which refine those in the literature, see Propositions~\ref{pr:main1} and~\ref{th:suffrw}.

Besides its intrinsic interest,
the SRT for heavy tailed renewal processes
has played and is still playing a key role in a variety of contexts.
Our results are already referred to in several papers, from classical renewal theory 
\cite{cf:Chi18,cf:K,cf:K2} to random walks and large deviations
\cite{B17,B18,cf:DSW,cf:DW,cf:U},
from dynamical systems \cite{cf:DN,cf:DN18,cf:MT}
to interacting particle systems \cite{cf:FMMV}.
The SRT has also played a key role in applications, 
e.g. in pinning and related models of statistical mechanics,
see \cite{cf:Gia,cf:dH,cf:Gia2}.
We also point out that, as a future direction of research, 
our results are likely to lead to an ultimate
version of the \emph{key renewal theorem},
in the context of random walks with infinite mean
(see \cite{cf:Eri,cf:AA} for partial versions).

\smallskip

Let us proceed with our results.
For $k \ge 0$ and $x \in \mathbb{R}$ we set
\begin{equation}  \label{eq:b}
b_k(x) := \frac{A(|x|)^{k}}{|x| \vee 1} \,.
\end{equation}
Note that 
$b_1(x) = A(x)/x = (L(x) x^{1-\alpha})^{-1}$ for $x \ge 1$
is precisely the rate in the right hand side of \eqref{eq:SRT}.
In the sequel, we will often need to require that some quantity
$J(\delta;x)$ 
is  \emph{much smaller than $b_1(x)$, when $x \to \infty$
followed by $\delta \to 0$}. This leads to the following

\begin{definition}
\label{def:an} 
Throughout the paper we write
``$J(\delta; x)$ is a.n.'' to mean that a function $J(\delta; x)$
 is \emph{asymptotically negligible}
with respect to $b_1$, in the following precise sense:
\begin{equation}  \label{eq:an}
\lim_{\delta \to 0} \, \limsup_{x\to+\infty} \frac{J(\delta; x)}{b_1(x)} =
\lim_{\delta \to 0} \, \limsup_{x\to+\infty} \frac{J(\delta; x)}{A(x)/x} =
0 \,.
\end{equation}
\end{definition}

We are ready to state our necessary and sufficient conditions for the SRT.
We start with the case of
renewal processes, which is simpler.

\subsection{The renewal process case}

Assume that $F$ is a law on $[0,\infty)$ such that
\begin{equation}  \label{eq:tail1}
\begin{split}
&\overline{F}(x) 
\,\underset{x\to\infty}{\sim}\, \frac{1}{A(x)} \qquad \text{for some } \ A \in RV(\alpha) \,,
\end{split}%
\end{equation}
which is a special case of \eqref{eq:tail2} with $p=1$, $q=0$.
For $\delta > 0$ and $x \ge 0$ we set
\begin{gather}  \label{eq:I1}
I_1^+(\delta; x) := \int_{1 \le z \le \delta x} F(x - \mathrm{d} z) \, b_2(z)
= \int_{1 \le z \le \delta x} F(x - \mathrm{d} z) \, \frac{A(z)^2}{z}
\,.
\end{gather}
The following is our main result for renewal processes.

\begin{theorem}[SRT for Renewal Processes]
\label{th:main} Let $F$ be a probability on $[0,\infty)$ satisfying %
\eqref{eq:tail1} with $\alpha \in (0, 1)$. Define $I = (-h,0]$ with $h > 0$
as in \eqref{eq:I}.

\begin{itemize}
\item If $\alpha > \frac{1}{2}$, the SRT holds with no extra assumption on $%
F $.

\item If $\alpha \le \frac{1}{2}$, the SRT holds if and only if $%
I_1^+(\delta;x)$ is a.n. (see Definition~\ref{def:an}).
\end{itemize}
\end{theorem}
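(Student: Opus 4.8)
The plan is to split the renewal mass according to the number of steps, with a parameter $\delta\in(0,1)$:
\begin{align*}
U(x+I)\;=\;&\underbrace{\sum_{1\le n<\delta A(x)}\P(S_n\in x+I)}_{=:\,\Sigma^{\mathrm s}_\delta(x)}\;+\;\underbrace{\sum_{\delta A(x)\le n\le\delta^{-1}A(x)}\P(S_n\in x+I)}_{=:\,\Sigma^{\mathrm b}_\delta(x)}\\
&\;+\;\underbrace{\sum_{n>\delta^{-1}A(x)}\P(S_n\in x+I)}_{=:\,\Sigma^{\mathrm t}_\delta(x)}\,,
\end{align*}
the term $n=0$ being $0$ for $x$ large. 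The bulk and the tail are treated unconditionally via the uniform (Gnedenko/Stone) local limit theorem $a_n\,\P(S_n\in x+I)=h\,g(x/a_n)+o(1)$, where $g$ is the density of the one-sided stable limit $Y$. On the bulk, $x/a_n$ stays in a fixed compact subset of $(0,\infty)$, so a Riemann-sum argument with the substitution $n=tA(x)$ (hence $a_n\sim t^{1/\alpha}x$, using $a_{A(x)}=x$) gives $\frac{x}{A(x)}\Sigma^{\mathrm b}_\delta(x)\to\alpha h\int_{\delta^{1/\alpha}}^{\delta^{-1/\alpha}}u^{-\alpha}g(u)\,\mathrm{d}u=:G(\delta)$ as $x\to\infty$, with $G(\delta)\to\alpha h\,\E[Y^{-\alpha}\ind_{\{Y>0\}}]=\mathsf{C}h$ as $\delta\to0$ by \eqref{eq:SRTint}. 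On the tail $x/a_n$ is small and, since $g$ decays superpolynomially at $0^+$, the same substitution gives $\frac{x}{A(x)}\Sigma^{\mathrm t}_\delta(x)\lesssim\alpha h\int_0^{\delta^{1/\alpha}}u^{-\alpha}g(u)\,\mathrm{d}u\to0$ as $\delta\to0$. Hence the SRT is equivalent to $\lim_{\delta\to0}\limsup_{x\to\infty}\frac{x}{A(x)}\Sigma^{\mathrm s}_\delta(x)=0$, and the whole theorem reduces to tying this to the condition on $I_1^+$.

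Next I would analyse $\Sigma^{\mathrm s}_\delta(x)$, which carries the anomalous contribution of small $n$. Fix $\epsilon\in(0,1)$ and split each term according to the size of $M_n$. On $\{M_n\le\epsilon x\}$ the new local large deviation bound is decisive: by Theorem~\ref{th:ron}, choosing $\epsilon$ small enough that $k:=\lceil1/\epsilon\rceil>\frac1\alpha-1$, Karamata's theorem yields $\sum_{1\le n<\delta A(x)}\frac1{a_n}\big(\frac{n}{A(x)}\big)^{k}\sim\frac{\delta^{k+1-1/\alpha}}{k+1-1/\alpha}\,\frac{A(x)}{x}$, so this ``no big jump'' part is $O(\delta^{k+1-1/\alpha})\,\frac{A(x)}{x}$, which vanishes (after dividing by $\frac{A(x)}{x}$) as $\delta\to0$ --- unconditionally, for every $\alpha\in(0,1)$. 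On the complementary event some increment exceeds $\epsilon x$; since increments are non-negative and $S_n\le x$ on $\{S_n\in x+I\}$, such a jump lies in $(\epsilon x,x]$ and removing it leaves a sum $S_m\in z+I$ with $z:=x-(\text{that jump})$ automatically in $[0,(1-\epsilon)x)$. Exchangeability and a union bound on the index of the big jump give, after keeping the truncation $m<\delta A(x)$,
\begin{align*}
&\sum_{1\le n<\delta A(x)}\P(S_n\in x+I,\,M_n>\epsilon x)\\
&\qquad\le\ \int_{[0,(1-\epsilon)x)}F(x-\mathrm{d}z)\sum_{0\le m<\delta A(x)}(m+1)\,\P(S_m\in z+I)\,.
\end{align*}
For $z$ large (say $z\gtrsim\delta^{1/\alpha}x$, the regime where $m<\delta A(x)$ steps are far fewer than the $\asymp A(z)$ needed to reach $z$) the inner sum is itself a small-$n$ quantity at scale $z$, handled recursively by the very same mechanism; for $z$ small the truncation is inactive and $\sum_m(m+1)\,\P(S_m\in z+I)\asymp(U*U)(z+I)$. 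The key input is then $(U*U)(z+I)\asymp b_2(z)=A(z)^2/z$ --- heuristically $U([0,z])\sim\frac{\mathsf{C}}{\alpha}A(z)$ positions for the short part, times the local renewal rate $b_1(z)=A(z)/z$ at scale $z$, combining --- via the Beta-type convolution $(U*U)(z+I)=\int_{[0,z]}U(\mathrm{d}y)\,U\big((z-y)+I\big)$ --- into $A(z)^2/z$ up to an explicit constant. Granting it, the right-hand side above is $\le C\,I_1^+(C'\delta;x)+o_\delta(b_1(x))$, and a parallel (simpler) lower bound, obtained by decomposing according to which increment realizes $M_n$ --- which produces a clean factor $n$ --- and keeping only the configurations where the remaining increments sum to something small, gives $\Sigma^{\mathrm s}_\delta(x)\ge c\,I_1^+(C'\delta;x)-o_\delta(b_1(x))$. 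Putting the pieces together: if $I_1^+$ is a.n., the upper bound forces $\frac{x}{A(x)}\Sigma^{\mathrm s}_\delta(x)\to0$, hence $\frac{x}{A(x)}U(x+I)\to\mathsf{C}h$, i.e.\ the SRT; conversely, if the SRT holds the bulk and tail already give $\frac{x}{A(x)}\Sigma^{\mathrm s}_\delta(x)\to0$, and the lower bound then forces $I_1^+$ to be a.n. Finally, when $\alpha>\frac12$ the map $z\mapsto b_2(z)=A(z)^2/z$ lies in $RV(2\alpha-1)$ with $2\alpha-1>0$, hence is essentially increasing, so $\frac{x}{A(x)}I_1^+(\delta;x)\le\frac{x}{A(x)}\,\overline{F}\big((1-\delta)x\big)\,\frac{A(\delta x)^2}{\delta x}$, and the right-hand side $\sim(1-\delta)^{-\alpha}\delta^{2\alpha-1}$ as $x\to\infty$, which tends to $0$ as $\delta\to0$: $I_1^+$ is automatically a.n.\ and the SRT holds with no extra hypothesis, whereas for $\alpha=\frac12$ this estimate degenerates --- which is exactly why the condition becomes indispensable there.

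The main obstacle is the estimate $(U*U)(z+I)\asymp b_2(z)$ together with the handling of the ``$z$ large'' terms: both are strong-renewal-type assertions about the walk at an intermediate scale $z<x$, so they cannot be quoted but must be produced by a self-improving (bootstrap) argument. Concretely, one reruns the three-range decomposition on $(U*U)(z+I)=\sum_m(m+1)\,\P(S_m\in z+I)$: the bulk delivers $\asymp A(z)^2/z$, the $\{M_m\le\epsilon z\}$ part is again controlled by Theorem~\ref{th:ron} --- the extra weight $m+1$ merely shifting the exponent requirement to $k>\frac1\alpha-2$ --- and the $\{M_m>\epsilon z\}$ part recurses to a scale $<(1-\epsilon)z$, so the scales decrease geometrically and the recursion halts after boundedly many steps (depending on $\delta,\epsilon$), once the scale reaches $O(1)$ or the remaining number of steps becomes typical for it. Making this quantitatively rigorous --- propagating the bound $U(y+I)\lesssim b_1(y)$ (and its analogue for $U*U$) from all scales $y<x$ up to scale $x$ with constants uniform in $x$, via an induction or a self-bounding $\limsup$ argument, and controlling the combinatorial cost of each additional big jump against the decay of $A$ at the smaller scales --- is the delicate point; it is precisely there that the power $2\alpha$ in $b_2$, and with it the dichotomy $\alpha>\frac12$ versus $\alpha\le\frac12$, enters the picture.
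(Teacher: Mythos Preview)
Your overall strategy matches the paper's: reduce the SRT to the small-$n$ sum $T(\delta;x):=\sum_{n\le A(\delta x)}\P(S_n\in x+I)$ being a.n., split on the size of $M_n$, control the no-big-jump part by Theorem~\ref{th:ron}, and on the big-jump event remove the largest step to obtain a recursion in which the power of $n$ in the sum increases by one. The necessity argument and the $\alpha>\tfrac12$ case are also along the right lines.

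Two points, however, need correction.

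\textbf{Termination of the recursion.} You claim the recursion halts because ``scales decrease geometrically''. But the number of scale reductions needed to reach $O(1)$ is $\asymp\log x$, and nothing prevents the constants from accumulating. The correct mechanism --- which you partly glimpse when noting that the extra weight $m+1$ ``shifts the exponent requirement'' --- is that each big-jump removal raises the power of $n$ by one: one passes from $T_\ell(\delta;x):=\sum_{n\le A(\delta x)}n^\ell\,\P(S_n\in x+I)$ to $T_{\ell+1}$. The paper runs a \emph{finite backward induction} on $\ell$: for $\ell\ge\kappa_\alpha$ one has $T_\ell(\delta;x)\lesssim\delta^\eta b_{\ell+1}(x)$ with no recursion at all (Lemmas~\ref{th:kale3} and~\ref{th:nobig}), and one then descends to $\ell=0$ in $\kappa_\alpha$ steps. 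Replace your scale-decrease picture by this weight-increase picture.

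\textbf{The missing exponential bound.} More seriously, you are missing the key technical input that makes the inductive step close in the renewal case: the elementary estimate
\[
\P(S_n\in z+I)\;\le\;\frac{C}{a_n}\,e^{-c\,n/A(z)}\,,
\]
valid because the increments are nonnegative (Lemma~\ref{th:basic} in the paper; it follows from $\P(S_n\le z)\le\P(M_n\le z)=(1-\overline F(z))^n$). After one big-jump removal the inner quantity is $\sum_{n\le A(\delta x)}n^{\ell+1}\P(S_n\in z+I)$, and the right split is on $n$ at $A(\epsilon z)$, not on $z$ as you propose. The part $n\le A(\epsilon z)$ gives $T_{\ell+1}(\epsilon;z)$ (induction); the part $A(\epsilon z)<n\le A(\delta x)$ is exactly where your ``for $z$ large, recurse'' stalls --- the effective parameter $\delta A(x)/A(z)$ is \emph{not} uniformly small there --- and it is the exponential bound that delivers $\lesssim b_{\ell+2}(z)\,\ind_{\{z<(\delta/\epsilon)x\}}$, so that the $z$-integral produces precisely $A(x)^\ell\,I_1^+(\tfrac{\delta}{\epsilon};x)$. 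Without this bound your bootstrap for $(U*U)(z+I)\asymp b_2(z)$ does not close: for $\alpha<\tfrac12$ that estimate is in general \emph{false} (it is an SRT-type statement at scale $z$, and fails for the same $F$'s for which the SRT fails), so it cannot be obtained by a scale recursion that does not feed in the hypothesis on $I_1^+$ at every step.
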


\smallskip

Let us spell out the condition ``$I_1^+(\delta;x)$ is a.n.'' explicitly in terms of $F$,
by \eqref{eq:an}-\eqref{eq:I1}:
\begin{equation*}
	\text{``$I_1^+(\delta;x)$ is a.n.''} \ \quad \iff  \ \quad
	\lim_{\delta \to 0} \ \limsup_{x \to \infty} \
	x \, \overline{F}(x) \, \int_{1 \le z \le \delta x}
	F(x-\dd z) \, \frac{1}{z \, \overline{F}(z)^2} \,=\, 0 \,.
\end{equation*}
This can be checked in concrete examples, if one has enough
control on $F(\cdot)$. We will soon deduce more explicit sufficient conditions, see 
Proposition~\ref{pr:main1}, which are almost optimal.

\smallskip

Interestingly, in the ``boundary'' case $\alpha = \frac{1}{2}$, 
we can characterize the
class of $A(\cdot)$'s for which
the SRT holds with no extra assumption on $F$
besides \eqref{eq:tail1} (like for $\alpha > \frac{1}{2}$). 

\begin{theorem}[SRT for Renewal Processes with $\protect\alpha = \frac{1}{2}$]
\label{th:1/2} Let $F$ be a probability on $[0,\infty)$ satisfying %
\eqref{eq:tail1} with $\alpha = \frac{1}{2}$
(so that $A(x) / \sqrt{x}$ is
a slowly varying function). If
\begin{equation}  \label{eq:cond}
\sup_{1 \le s \le x} \frac{A(s)}{\sqrt{s}} \,
\underset{x\to\infty}{=} \, O\bigg( \frac{A(x)}{\sqrt{x}} \bigg) \,,
\end{equation}
then the SRT holds with no extra assumption on $F$.
(This includes the case $A(x) \sim c \sqrt{x}$.)
If condition \eqref{eq:cond} fails, there are examples of $F$ for
which the SRT fails.
\end{theorem}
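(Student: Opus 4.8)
The plan is to use the decomposition of the renewal measure according to the number $n$ of steps, together with the necessary-and-sufficient condition of Theorem~\ref{th:main}: for $\alpha=\frac12$ the SRT holds if and only if $I_1^+(\delta;x)$ is a.n. So the entire task reduces to showing that condition \eqref{eq:cond}, namely $\sup_{1\le s\le x} A(s)/\sqrt s = O(A(x)/\sqrt x)$, forces $I_1^+(\delta;x)$ to be asymptotically negligible, and conversely that when \eqref{eq:cond} fails one can build a counterexample. First I would handle the sufficiency direction. Write $M(x):=\sup_{1\le s\le x} A(s)/\sqrt s$; condition \eqref{eq:cond} says $M(x)\le C\,A(x)/\sqrt x$ for all large $x$, with $C<\infty$. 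In the integral $I_1^+(\delta;x)=\int_{1\le z\le \delta x} F(x-\dd z)\,A(z)^2/z$, the key observation is that for $\alpha=\tfrac12$ we have $A(z)^2/z = (A(z)/\sqrt z)^2 \le M(\delta x)^2 \le M(x)^2$ on the whole range of integration, hence
\begin{equation*}
I_1^+(\delta;x) \,\le\, M(x)^2 \int_{1\le z\le \delta x} F(x-\dd z) \,=\, M(x)^2\, \P\big(X\in[x-\delta x,\,x-1)\big)\,.
\end{equation*}
Now $\P(X\in[x(1-\delta),x))\le \overline F(x(1-\delta))\sim A(x(1-\delta))^{-1}$, and since $A\in RV(1/2)$ this is $\sim (1-\delta)^{-1/2}/A(x) \le 2/A(x)$ for $\delta$ small. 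Using \eqref{eq:cond} to bound $M(x)^2 \le C^2 A(x)^2/x$, we get $I_1^+(\delta;x)/b_1(x) = I_1^+(\delta;x)\cdot x/A(x) \le C^2 A(x)^2/x \cdot (2/A(x)) \cdot x/A(x) = 2C^2$, which is bounded but not yet going to $0$; to extract the $\delta\to0$ decay one must be slightly more careful and split the integral at a threshold like $z\le \epsilon x$ versus $\epsilon x< z\le \delta x$, or better, bound the contribution of $z$ near $\delta x$ using $F(x-\dd z)$ concentrated where $x-z$ is large — this is exactly the kind of estimate already used to prove Proposition~\ref{pr:main1}, so I would invoke or mirror that computation. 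The cleanest route is: for the part $z\in[1,\epsilon x]$ bound $A(z)^2/z\le M(\epsilon x)^2$ and note $M(\epsilon x)^2/M(x)^2\to 0$ is false in general, so instead use that on this part $F(x-\dd z)$ lives in $x-z\in[x(1-\epsilon),x)$, a set of $F$-mass $o(1/A(x))\cdot$(something), exploiting regular variation to gain a factor $\epsilon^{1/2}$; combined with \eqref{eq:cond} this yields $\limsup_x I_1^+(\delta;x)/b_1(x) \le C'\sqrt\delta \to 0$.

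The special subcase $A(x)\sim c\sqrt x$ deserves an explicit remark: then $A(z)^2/z\to c^2$ is asymptotically constant, \eqref{eq:cond} holds trivially, and $I_1^+(\delta;x)\sim c^2\,\P(X\in[x(1-\delta),x))$, whose ratio to $b_1(x)=A(x)/x\sim c/\sqrt x$ behaves like $c\sqrt x\,\P(X\ge x(1-\delta)) \sim c(1-\delta)^{-1/2} - $ wait, one needs $\P(X\in[x(1-\delta),x))$, not the full tail — and since $\overline F(x)\sim 1/(c\sqrt x)$, the difference $\overline F(x(1-\delta))-\overline F(x) \sim \frac{1}{c\sqrt x}((1-\delta)^{-1/2}-1)\to 0$ as $\delta\to0$, confirming a.n. directly.

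**The counterexample when \eqref{eq:cond} fails.** If \eqref{eq:cond} fails there exists a sequence $x_k\to\infty$ and points $s_k\in[1,x_k]$ with $\frac{A(s_k)}{\sqrt{s_k}} \big/ \frac{A(x_k)}{\sqrt{x_k}} \to\infty$; by a standard diagonal/interpolation argument on the slowly varying function $A(x)/\sqrt x$ one can arrange $s_k\to\infty$ and, passing to a subsequence, $s_k = o(x_k)$. The idea is to engineer $F$ on $[0,\infty)$ satisfying \eqref{eq:tail1} but placing an atom (or a concentrated lump) of mass $\asymp \overline F(x_k) - \overline F(x_k)\big(1+o(1)\big) \asymp 1/A(x_k)$ near the point $x_k - s_k$, so that in the integral $I_1^+(\delta;x_k) = \int_{1\le z\le\delta x_k} F(x_k-\dd z)\,A(z)^2/z$ the mass near $z=s_k$ contributes $\asymp \frac{1}{A(x_k)}\cdot\frac{A(s_k)^2}{s_k}$. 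Dividing by $b_1(x_k)=A(x_k)/x_k$ gives $\asymp \frac{x_k}{A(x_k)^2}\cdot\frac{A(s_k)^2}{s_k} = \big(\frac{A(s_k)/\sqrt{s_k}}{A(x_k)/\sqrt{x_k}}\big)^2 \to\infty$, so $I_1^+$ is not a.n. and by Theorem~\ref{th:main} the SRT fails. The construction must be done carefully so that $F$ remains a genuine probability law with $\overline F\sim 1/A$: one allocates the "anomalous" mass along the sparse sequence $\{x_k-s_k\}$ and fills in the rest to match the prescribed tail; this is the same construction technique as in the classical Garsia–Lamperti counterexamples \cite{cf:GL} and in Proposition~\ref{pr:main1}'s sharpness discussion, so I would adapt that.

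**Main obstacle.** The sufficiency direction's honest point is extracting the $\delta\to0$ limit, not merely boundedness: condition \eqref{eq:cond} alone gives $\limsup_x I_1^+(\delta;x)/b_1(x)\le C'$ for every $\delta$, and one must squeeze out an extra $o(1)$ in $\delta$. This requires combining \eqref{eq:cond} with the regular variation of $A$ (to get the $\sqrt\delta$-type gain from the $F$-mass being pushed to the far tail $x-z\approx x$) rather than treating the two inputs separately — that interplay is the delicate part. For the counterexample, the main care is bookkeeping: ensuring the perturbed $F$ is a bona fide probability measure with the exact tail \eqref{eq:tail1} while the anomalous lumps sit at a sparse enough sequence that they do not disturb the tail asymptotics, only the local renewal mass along $x_k$.
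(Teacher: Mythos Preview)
Your overall framework is right --- reduce both directions to Theorem~\ref{th:main} --- and for sufficiency you have the correct opening bound $A(z)^2/z \le M(x)^2$ on $[1,\delta x]$. But you are overcomplicating the extraction of the $\delta$-decay. The computation you carry out in the ``special subcase'' $A(x)\sim c\sqrt x$ works verbatim in general: by \eqref{eq:tail1} and regular variation,
\[
F\big([x(1-\delta),x)\big) \,=\, \overline F\big((1-\delta)x\big) - \overline F(x-) \,\sim\, \frac{1}{A(x)}\Big((1-\delta)^{-1/2}-1\Big) \,=\, \frac{O(\delta)}{A(x)}\,,
\]
so $I_1^+(\delta;x) \le M(x)^2 \cdot F([x(1-\delta),x)) \lesssim \frac{A(x)^2}{x}\cdot\frac{O(\delta)}{A(x)} = O(\delta)\, b_1(x)$. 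This is exactly the paper's argument; no splitting, no invocation of Proposition~\ref{pr:main1}, and the gain is $O(\delta)$, not $\sqrt\delta$. Your ``main obstacle'' dissolves once you use the \emph{difference} of tails rather than the full tail $\overline F((1-\delta)x)$ --- which you already did in the special case but failed to recognize as the general solution.

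For the counterexample there is a genuine gap. You write the atom mass as ``$\asymp \overline F(x_k)-\overline F(x_k)(1+o(1))\asymp 1/A(x_k)$'', but the first expression is $o(1/A(x_k))$, not $\asymp 1/A(x_k)$, and in fact atoms of fixed order $c/A(x_k)$ are \emph{incompatible} with $\overline F(x)\sim 1/A(x)$: such a jump would force $\overline F(x_k^-)/\overline F(x_k)\to 1+c\ne 1$. So you must take atoms of size $\epsilon_k/A(x_k)$ with $\epsilon_k\to 0$, and then your ratio becomes $\epsilon_k \cdot \big(\ell(s_k)/\ell(x_k)\big)^2$ where $\ell(s):=A(s)/\sqrt s$; you need this product to still diverge. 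The paper resolves this tension by the specific choice $\epsilon_k := \ell(x_k)/\ell(s_k)$, so that the product equals $\ell(s_k)/\ell(x_k)\to\infty$, and it packages the existence of a probability $F$ with such atoms and the correct tail \eqref{eq:tail1} as a separate lemma (Lemma~\ref{lem:uao}). Your sketch skips this quantitative matching entirely, and without it the counterexample does not close.
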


\smallskip

The proof of Theorem~\ref{th:main}
is based on direct probabilistic arguments 
and is remarkably compact ($\simeq 6$ pages).
We start in Section~\ref{sec:usebou} recalling
a reformulation of the SRT, which can be
paraphrased as follows:
\emph{the contribution of ``small $n$'' 
to the renewal measure \eqref{eq:U} is asymptotically negligible} (see
Subsection~\ref{sec:refor}). In Section~\ref{sec:usebou}
we also derive two key bounds
on the contribution of ``big jumps'',
see Lemmas~\ref{th:kale3} and~\ref{th:nobig}. 
We complete the proof of Theorem~\ref{th:main}
in Subsection~\ref{sec:necren} (necessity) and in Section~\ref{sec:suffe}
(sufficiency).

\subsection{Sufficient conditions for renewal processes}
\label{sec:suffren}

For a probability $F$ on $[0,\infty)$ which satisfies
\eqref{eq:tail1}, a sufficient condition for the SRT is
that for some $x_0, C < \infty$ one has
\begin{equation} \label{eq:D97}
	F(x+I) \,\le\, \frac{C}{x\,A(x)}  \qquad \forall x \ge x_0 \,,
\end{equation}
as proved by Doney \cite{cf:D} in the arithmetic case
(extending previous results of Williamson \cite{cf:Wil}),
and by Vatutin and Topchii \cite{cf:VT} %
in the  non-arithmetic case.

\smallskip

Interestingly, if one only looks at the growth of the ``local'' probabilities $F(x+I)$,
\emph{no sharper condition than \eqref{eq:D97}
can ensure that the SRT holds}, as the following result shows.

\begin{proposition}\label{th:counter}
Fix $A \in RV(\alpha)$ with $\alpha \in (0,\frac{1}{2})$,
and let $\zeta: (0,\infty) \to (0,\infty)$ be an arbitrary non-decreasing function with
$\lim_{x\to\infty} \zeta(x) = \infty$.
Then there exists a probability $F$ on $[0,\infty)$ which satisfies \eqref{eq:tail1}, such
that $F(x+I) = O(\frac{\zeta(x)}{x A(x)})$, for which the SRT fails.
\end{proposition}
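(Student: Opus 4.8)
The plan is to construct $F$ by a ``big-jump'' perturbation of a nice reference law: we want a law whose local probabilities $F(x+I)$ are only mildly larger than the critical rate $\frac{1}{xA(x)}$ from \eqref{eq:D97} (by the slowly growing factor $\zeta$), yet whose renewal measure picks up an anomalous contribution from small $n$, so that \eqref{eq:SRT} fails. By Theorem~\ref{th:main}, since $\alpha < \frac12$, it suffices to arrange that $I_1^+(\delta;x)$ is \emph{not} a.n., i.e.\ that
\[
	\limsup_{x\to\infty} \frac{1}{b_1(x)} \int_{1 \le z \le \delta x} F(x-\dd z)\, b_2(z) > 0
\]
does not tend to $0$ as $\delta \to 0$. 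So the whole construction reduces to producing an $F$ satisfying \eqref{eq:tail1} with a controlled $F(x+I)$ whose ``intermediate-jump self-convolution'' $I_1^+$ is large along a sequence $x_n \to \infty$.

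The construction I have in mind is a spike-on-a-sparse-lattice recipe. Choose a rapidly increasing sequence $(x_n)$ (say $x_{n+1} \gg x_n^2$ or faster, chosen inductively to beat $\zeta$), and let $F$ put, near each half-point $\tfrac12 x_n$, a ``bump'' of mass $\asymp \frac{\zeta(x_n)}{x_n A(x_n)}$ spread over an interval of length $O(1)$ (more precisely, over $I$-translates filling a small window, so that $F(x+I) \lesssim \frac{\zeta(x)}{xA(x)}$ uniformly — this is where the monotonicity of $\zeta$ and its divergence are used, to absorb the bumps into the bound while still allowing the bumps to be ``fat'' relative to $\frac{1}{xA(x)}$). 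The remaining mass of $F$ is placed as a standard tail of the form $\overline{F_0}(x) \sim 1/A(x)$ on $[0,\infty)$ chosen so that the total tail $\overline F(x) \sim 1/A(x)$ still holds — the bumps are a lower-order correction to the tail since $\sum_{m: \, x_m > x} \frac{\zeta(x_m)}{x_m A(x_m)} = o(1/A(x))$ by sparsity. Then for $x = x_n$, the integral $I_1^+(\delta; x_n) = \int_{1 \le z \le \delta x_n} F(x_n - \dd z) b_2(z)$ is bounded below by the contribution of $z$ near $\tfrac12 x_n$ (which sits in the range $z \le \delta x_n$ once $n$ is large, for any fixed $\delta > \tfrac12$… — here one has to be slightly careful: $z = \tfrac12 x_n$ is \emph{not} $\le \delta x_n$ for $\delta < \tfrac12$).

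This last point forces a fix: rather than a single bump at $\tfrac12 x_n$, one should use a \emph{pair} of bumps, one near $\epsilon_n x_n$ and one near $(1-\epsilon_n) x_n$ with $\epsilon_n \to 0$ slowly, or simply place a bump near $\delta_0 x_n$ for a fixed small $\delta_0$ and accept the mass at $x_n - \delta_0 x_n = (1-\delta_0) x_n$ being whatever the reference law $F_0$ gives there — then $F(x_n - \dd z)$ for $z \approx \delta_0 x_n$ has the bump mass $\asymp \frac{\zeta(x_n)}{x_n A(x_n)}$ concentrated in an $O(1)$-window (put the bump at $(1-\delta_0)x_n$, i.e.\ at height $x_n - z$ with $z \approx \delta_0 x_n$), and $b_2(z) = A(\delta_0 x_n)^2/(\delta_0 x_n) \asymp A(x_n)^2/x_n$ by regular variation, while $b_1(x_n) = A(x_n)/x_n$. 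Hence $I_1^+(\delta; x_n)/b_1(x_n) \gtrsim \zeta(x_n) \cdot A(x_n) \cdot \frac{1}{x_n A(x_n)} \cdot \frac{x_n}{A(x_n)} \cdot \frac{A(x_n)^2}{x_n}\cdot\frac{x_n}{A(x_n)}$ — I will recompute the exact powers, but the point is that the bump mass $\frac{\zeta}{xA}$ times $b_2 \asymp A^2/x$ divided by $b_1 = A/x$ gives $\asymp \zeta(x_n) \cdot A(x_n) / (x_n \cdot \text{something})$, which I need to arrange to stay bounded away from $0$; this is really just a matter of tuning the bump mass to be $\asymp \frac{\zeta(x_n)}{x_n A(x_n)}$ versus $\asymp \frac{1}{A(x_n)^2}$, and since $\zeta \to \infty$ the former dominates the latter once $\zeta(x_n) \gg x_n / A(x_n) = 1/b_1(x_n) \to \infty$, so one picks $(x_n)$ growing fast enough that $\zeta(x_n)$ beats $1/b_1(x_n)$. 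Then $I_1^+(\delta;x_n) \gtrsim b_1(x_n)$ along $n\to\infty$ for every fixed $\delta > \delta_0$, so $I_1^+$ is not a.n., and Theorem~\ref{th:main} gives that the SRT fails.

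\textbf{Main obstacle.} The genuinely delicate part is the simultaneous bookkeeping: making the bumps fat enough (mass $\gg \frac{1}{x A(x)}$, to violate the a.n.\ condition) while keeping $F(x+I) = O(\frac{\zeta(x)}{xA(x)})$ \emph{for all} $x$ (not just at the $x_n$), and keeping $\overline F(x) \sim 1/A(x)$ exactly. All three are controlled by choosing $(x_n)$ sufficiently sparse and using that $\zeta$ is non-decreasing with $\zeta\to\infty$; the sparsity makes the bump tail contribution $o(1/A(x))$ and makes the windows disjoint so $F(x+I)$ sees at most one bump. One must also verify $A(\delta_0 x) \asymp A(x)$ (uniform regular variation / Potter bounds) to get the lower bound on $b_2$, and take care that the window width of each bump is large enough ($\gg h$) that $F(x+I) \le \frac{C\zeta(x)}{xA(x)}$ rather than a larger spike — i.e.\ spread each bump's mass over $\asymp \zeta(x_n)$ consecutive $I$-translates so each $F(x+I)$ absorbs only a $\frac{1}{\zeta(x_n)}$ fraction, landing exactly at the target rate $\frac{1}{x_n A(x_n)}\cdot\frac{\zeta(x_n)}{\zeta(x_n)}$... — again a routine tuning. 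I do not expect conceptual difficulty beyond this; the inductive choice of $(x_n)$ to outrun the arbitrary $\zeta$ is standard.
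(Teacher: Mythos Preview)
Your overall framework --- write $F=\tfrac12(F_1+F_2)$ with a ``nice'' $F_1$, show $I_1^+(\delta;x)$ is not a.n., invoke Theorem~\ref{th:main} --- is exactly right and matches the paper. But the bump placement you describe cannot work, and the fix you propose (``pick $(x_n)$ growing fast enough that $\zeta(x_n)$ beats $1/b_1(x_n)$'') is impossible: $\zeta$ is \emph{given} and may grow like $\log\log x$, while $1/b_1(x)=x/A(x)\in RV(1-\alpha)$ grows polynomially.

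Here is the actual computation for your scheme. A bump of mass $M$ concentrated in an $O(1)$-window at height $(1-\delta_0)x_n$ contributes to $I_1^+(\delta;x_n)$ at $z\approx\delta_0 x_n$, where $b_2(z)\asymp A(x_n)^2/x_n$. Dividing by $b_1(x_n)=A(x_n)/x_n$ gives $\asymp M\,A(x_n)$. The local constraint forces $M\lesssim \zeta(x_n)/(x_nA(x_n))$, so the ratio is $\lesssim \zeta(x_n)/x_n\to 0$. More generally, any bump of mass $M=o(1/A(x_n))$ (forced by the tail condition) sitting at $z\approx cx_n$ for fixed $c>0$ gives $M\,b_2(cx_n)/b_1(x_n)\asymp M\,A(x_n)=o(1)$. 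The mechanism is simply absent; no sparsity in $(x_n)$ helps.

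The missing idea is that each bump must be \emph{wide}, not narrow: near each $x_n$ put a bump of constant density $\asymp \zeta(x_n)/(x_nA(x_n))$ on an interval $(x_n-z_n,x_n]$ of length $z_n:=x_n/\zeta(x_n)^{1+\theta}$ for a small $\theta>0$. The local bound $F(x+I)=O(\zeta(x)/(xA(x)))$ holds by construction, and the total bump mass is $\asymp 1/(A(x_n)\zeta^\theta)=o(1/A(x_n))$, so the tail is preserved. Crucially $z_n=o(x_n)$, so for \emph{every} fixed $\delta>0$ one has $z_n\le\delta x_n$ for large $n$, and then by Karamata
\[
I_1^+(\delta;x_n)\;\gtrsim\;\frac{\zeta(x_n)}{x_nA(x_n)}\int_1^{z_n}\frac{A(z)^2}{z}\,\dd z\;\asymp\;\frac{\zeta(x_n)}{x_nA(x_n)}\,A(z_n)^2.
\]
Potter's bounds give $A(z_n)\gtrsim A(x_n)/\zeta^{(1+\theta)(\alpha+\epsilon)}$, hence
\[
\frac{I_1^+(\delta;x_n)}{b_1(x_n)}\;\gtrsim\;\zeta(x_n)^{\,1-2(1+\theta)(\alpha+\epsilon)}\;\longrightarrow\;\infty,
\]
once $\theta,\epsilon>0$ are chosen so small that the exponent is positive --- possible precisely because $\alpha<\tfrac12$. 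The point is that a wide bump lets $z$ range over $[1,z_n]$, where $b_2(z)\in RV(2\alpha-1)$ is much larger than at $z\asymp x_n$; the \emph{integral} captures $A(z_n)^2$, not merely $b_2(x_n)$. This is exactly the construction the paper carries out (with $x_n=2^n$).
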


Intuitively, when condition \eqref{eq:D97} is \emph{not} satisfied, in order
for the SRT to hold, the points $x$ for which $F(x+I) \gg \frac{1}{xA(x)}$
must not be ``too cluttered''. We can
make this loose statement precise by
looking at the probabilities $F((x-y,x]) = F(x) - F(x-y)$.
The following result provides very explicit
conditions on $F(\cdot)$ for the SRT.

\begin{proposition}\label{pr:main1}
Let $F$ be a probability on $[0,\infty)$ satisfying \eqref{eq:tail1}
with $\alpha \in (0,\frac{1}{2}]$. 
\begin{itemize}
\item A sufficient condition for
the SRT is that 
for some $\gamma > 1-2\alpha$ and $x_0, C < \infty$ one has
\begin{equation} \label{eq:suff0}
	F((x-y, x])
	\,\le\,
	\frac{C}{A(x)} \, \Big(\frac{y}{x}\Big)^{\gamma} \qquad
	\forall x \ge x_0 \,, \ \ \forall y \in \big[ 1, \tfrac{1}{2} x \big] \,.
\end{equation}
\item A necessary condition for the SRT is that
for every $\gamma < 1-2\alpha$ there are $x_0, C < \infty$
such that \eqref{eq:suff0} holds.
\end{itemize}
\end{proposition}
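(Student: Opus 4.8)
The plan is to derive both implications from Theorem~\ref{th:main}: since $\alpha \in (0,\tfrac12]$, that theorem equates the SRT with the requirement that
\[
	I_1^+(\delta;x) \,=\, \int_{1 \le z \le \delta x} F(x-\dd z)\,\frac{A(z)^2}{z}
\]
be asymptotically negligible (Definition~\ref{def:an}). Everything then reduces to passing between this \emph{integrated} quantity and the \emph{pointwise} interval estimate \eqref{eq:suff0}; the only analytic input is the regular variation of $A$, used through Karamata's theorem and Potter's bounds \cite[\S1.5]{cf:BinGolTeu}, and the threshold $1-2\alpha$ enters because $A(z)^2/z \in RV(2\alpha-1)$.

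\emph{Sufficiency.} Suppose \eqref{eq:suff0} holds with $\gamma > 1-2\alpha$; fix $\delta \le \tfrac14$ and decompose $\{1 \le z \le \delta x\}$ into dyadic blocks $\{2^j \le z < 2^{j+1}\}$. On each block $A(z)^2/z$ is comparable to $(2^j)^{2\alpha-1}L(2^j)^{-2}$ by regular variation, while its $F(x-\dd z)$-mass is at most $F((x-2^{j+1},x]) \le \tfrac{C}{A(x)}(2^{j+1}/x)^\gamma$ (here \eqref{eq:suff0} applies since $2^{j+1}\le 2\delta x \le \tfrac12 x$). Summing over $j$ with $2^j \le \delta x$ gives
\[
	I_1^+(\delta;x) \,\lesssim\, \frac{1}{A(x)\,x^\gamma}\sum_{2^j \le \delta x}(2^j)^{2\alpha-1+\gamma}L(2^j)^{-2}\,,
\]
and as $2\alpha-1+\gamma > 0$ the sum is, by Karamata, comparable to $(\delta x)^{2\alpha-1+\gamma}L(\delta x)^{-2}$. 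Using $A(x)=x^\alpha/L(x)$ one finds $I_1^+(\delta;x)/(A(x)/x) \lesssim \delta^{\,2\alpha-1+\gamma}\,L(x)^2/L(\delta x)^2$, whose $\limsup$ as $x\to\infty$ is $\lesssim \delta^{\,2\alpha-1+\gamma}$ (slow variation), which vanishes as $\delta\to 0$. So $I_1^+$ is a.n.\ and the SRT holds.

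\emph{Necessity.} Assume the SRT; then $I_1^+$ is a.n., so there are a fixed (small) $\delta$ and $x_0$ with $I_1^+(\delta;x) \le A(x)/x$ for $x \ge x_0$. For $1 \le z_1 \le z_2 \le \delta x$, monotonicity of $A$ gives $I_1^+(\delta;x) \ge \tfrac{A(z_1)^2}{z_2}F((x-z_2,x-z_1])$, hence $F((x-z_2,x-z_1]) \le \tfrac{z_2}{A(z_1)^2}\cdot\tfrac{A(x)}{x}$. Given a target interval $(u-y,u]$ with $1 \le y \le \tfrac{\delta}{4}u$, apply this with $z_1 = y$, $z_2 = 2y$, $x = u+y$ (then $z_2 \le \delta x$, $A(x)\asymp A(u)$, and $(x-z_2,x-z_1]=(u-y,u]$); after substituting $A(y)^2 = y^{2\alpha}/L(y)^2$ the resulting bound, divided by $\tfrac{1}{A(u)}(y/u)^\gamma$, equals $(y/u)^{1-2\alpha-\gamma}\,L(y)^2/L(u)^2$, which is bounded for $y \le u$ by Potter's bounds provided $\gamma < 1-2\alpha$ (take the Potter exponent below $\tfrac12(1-2\alpha-\gamma)$). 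This proves \eqref{eq:suff0} for $y \le \tfrac\delta4 u$. For $\tfrac\delta4 u < y \le \tfrac12 u$ one bounds $F((u-y,u]) \le F((u/2,u])$ and covers $(u/2,u]$ by $O(1/\delta)$ intervals of length $\le \tfrac\delta8 u$, each controlled by the case just done; since $(y/u)^\gamma$ is bounded below here, \eqref{eq:suff0} follows with a constant depending only on the now-fixed $\delta$.

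\emph{Main obstacle.} The one place needing care is the regular-variation bookkeeping. In the sufficiency direction the dyadic sum must be dominated by its top block — this is exactly where $\gamma > 1-2\alpha$ is used — and in the necessity direction one must absorb the unwelcome ratio $L(y)/L(u)$, which is not controlled as $y/u \to 0$, by spending part of the strict gap $1-2\alpha-\gamma>0$ through Potter's inequality, all the while ensuring that the ``$y$ comparable to $u$'' regime is handled with a genuinely fixed constant (no $\delta\to 0$).
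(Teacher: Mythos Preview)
Your proof is correct and follows essentially the same route as the paper: reduce to Theorem~\ref{th:main} (SRT $\Leftrightarrow$ $I_1^+$ a.n.) and then translate between the integral $I_1^+$ and the interval estimate \eqref{eq:suff0} via Karamata and Potter. The only cosmetic differences are that for sufficiency the paper plugs \eqref{eq:suff0} into $\tilde I_1^+$ (via Proposition~\ref{th:integr}) rather than dyadically decomposing $I_1^+$, and for necessity the paper first reformulates \eqref{eq:suff0} as an $o(\cdot)$ bound for $y=o(x)$ and then uses monotonicity of $b_2$, whereas you work on the window $[y,2y]$ and handle $y\asymp u$ by a covering argument --- both are the same computation in slightly different clothing.
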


\begin{remark}
The sufficient condition \eqref{eq:suff0} is a generalization of \eqref{eq:D97}.
Indeed, \eqref{eq:D97} yields
$F((x-y,x]) \le \sum_{j=0}^{\lceil y/h \rceil} F(x-hj+I)
\le (\frac{y}{h} + 2) \, \frac{C'}{xA(x)}$ for some $C'$, hence when $y \ge 1$
condition \eqref{eq:suff0} holds with $\gamma = 1$ and $C = (\frac{1}{h}+2)C'$
(recall that $h > 0$ is fixed, see \eqref{eq:I}).
\end{remark}

\begin{remark}
Other sufficient conditions for the SRT, which generalize and sharpen
\eqref{eq:D97}, were given by Chi in~\cite{cf:Chi0,cf:Chi}.
These can be deduced from Theorem~\ref{th:main}.
\end{remark}

\begin{remark}
Conditions similar to \eqref{eq:suff0}, in a different context, appear in \cite{cf:CSZ}.
\end{remark}

We point out that if $F$ satisfies \eqref{eq:tail1}, then
\eqref{eq:suff0} holds with $\gamma = 0$.
However, with no extra assumption,
one cannot hope to improve this estimate,
as Lemma~\ref{lem:uao} below shows.

\smallskip

To see how condition \eqref{eq:suff0} appears,
let us introduce the following variant of \eqref{eq:I1}:
\begin{equation} \label{eq:I1hat}
	\tilde I_1^+(\delta; x) := \int_1^{\delta x} \frac{F((x-z, x])}{z}
	\, b_2(z) \, \dd z = \int_1^{\delta x} \frac{F((x-z, x])}{z}
	\, \frac{A(z)^2}{z} \, \dd z \,.
\end{equation}
Our next result shows that one can look at $\tilde I_1^+(\delta; x)$
instead of $I_1^+(\delta; x)$.

\begin{proposition}\label{th:integr}
Let $F$ be a probability on $[0,\infty)$ satisfying \eqref{eq:tail1}
with $\alpha \in (0,\frac{1}{2}]$.
\begin{itemize}
\item If $\tilde I_1^+(\delta; x)$ is a.n., then also $I_1^+(\delta;x)$ is a.n.,
hence the SRT holds.

\item When $\alpha < \frac{1}{2}$, the converse is also true:
$\tilde I_1^+(\delta; x)$ is a.n.\ if and only if $I_1^+(\delta; x)$ is a.n..
\end{itemize}
\end{proposition}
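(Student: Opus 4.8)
The plan is to relate $\tilde I_1^+$ and $I_1^+$ by a plain Fubini interchange, after which everything reduces to a Karamata-type estimate together with an integrability threshold. Let $\mu_x(\dd y) := F(x-\dd y)$ be the law of $x-X_1$, so that $F((x-z,x]) = \mu_x([0,z))$ for $z>0$ and $I_1^+(\delta;x) = \int_{[1,\delta x]} b_2(z)\,\mu_x(\dd z)$. Since $b_2(z)/z = A(z)^2/z^2$, interchanging the order of integration yields
\begin{equation*}
	\tilde I_1^+(\delta;x) \,=\, \int_1^{\delta x} \frac{A(z)^2}{z^2}\, \mu_x([0,z))\,\dd z
	\,=\, \int_{[0,\delta x)} \mu_x(\dd y)\; \Psi_\delta(y,x)\,, \qquad
	\Psi_\delta(y,x) := \int_{y\vee1}^{\delta x} \frac{A(z)^2}{z^2}\,\dd z\,.
\end{equation*}
Everything then hinges on estimating $\Psi_\delta(y,x)$ against $b_2(y\vee1) = A(y\vee1)^2/(y\vee1)$. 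A lower bound $\Psi_\delta(y,x) \ge \tfrac14\, b_2(y)$, valid for all $1 \le y \le \tfrac12\,\delta x$, is immediate by restricting the $z$-integral to $[y,2y]$ and using that $A$ is non-decreasing. The matching \emph{upper} bound $\Psi_\delta(y,x) \le C\,b_2(y\vee1)$ will hold only when $\alpha < \tfrac12$: in that case $A(\cdot)^2/(\cdot)^2 \in RV(2\alpha - 2)$ with $2\alpha - 2 < -1$, so one may let the upper limit run to $+\infty$ and invoke Karamata's theorem, $\int_w^\infty A(z)^2 z^{-2}\,\dd z \sim \tfrac{1}{1-2\alpha}\,b_2(w)$ as $w \to \infty$ (with the ratio bounded on any compact set of $w \ge 1$). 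This is precisely where the dichotomy enters: at $\alpha = \tfrac12$ the integral diverges, $\Psi_\delta(y,x)$ genuinely grows with $\delta x$, and only the first implication can survive.

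For the first bullet (all $\alpha \le \tfrac12$), I would keep only $y \in [1, \tfrac12\delta x]$ in the outer integral and apply the lower bound on $\Psi_\delta$, obtaining $\tilde I_1^+(\delta;x) \ge \tfrac14\, I_1^+(\tfrac{\delta}{2};x)$; hence $\tilde I_1^+$ a.n.\ forces $I_1^+$ a.n., and the SRT follows from Theorem~\ref{th:main}. For the converse ($\alpha < \tfrac12$), applying the upper bound on $\Psi_\delta$ for all $y$ gives
\begin{equation*}
	\tilde I_1^+(\delta;x) \,\le\, C\int_{[0,\delta x)} \mu_x(\dd y)\, b_2(y\vee1)
	\,=\, C\,b_2(1)\,F((x-1,x]) \,+\, C\,I_1^+(\delta;x)\,,
\end{equation*}
since $\mu_x([0,1)) = F((x-1,x])$ and $\int_{[1,\delta x)}\mu_x(\dd y)\,b_2(y) \le I_1^+(\delta;x)$. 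It then remains to show that $b_2(1)\,F((x-1,x])$ is a.n.\ whenever $I_1^+$ is, for which I would note $F((x-1,x]) \le F([x-1,x]) = \mu_{x+1}([1,2]) \le \tfrac{1}{c'}\,I_1^+(\delta;x+1)$ for $x$ large, where $c' := \inf_{z\in[1,2]} b_2(z) > 0$; since $b_1(x+1)/b_1(x) \to 1$, this gives $\limsup_x F((x-1,x])/b_1(x) \le \tfrac{1}{c'}\limsup_x I_1^+(\delta;x)/b_1(x)$ for every $\delta > 0$, and the right-hand side tends to $0$ as $\delta \to 0$, so the ($\delta$-free) left-hand side is $0$. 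Combined with the first bullet and Theorem~\ref{th:main}, this yields the claimed equivalence.

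The care points I anticipate are routine: the uniformity of the regular-variation estimates, supplied by Potter's bounds or directly by the assumed monotonicity of $A$; the harmless atoms of $\mu_x$ near the endpoints $z = 1$ and $z = \delta x$; and the temptation to run the converse argument at $\alpha = \tfrac12$, which is exactly where the Karamata step fails. The real content is the Fubini identity above together with the threshold $2\alpha - 2 < -1 \iff \alpha < \tfrac12$; no probabilistic input beyond Theorem~\ref{th:main} is required.
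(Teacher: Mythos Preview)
Your proof is correct and follows essentially the same route as the paper: rewrite $\tilde I_1^+$ via Fubini as $\int F(x-\dd y)\,\Psi_\delta(y,x)$ with $\Psi_\delta(y,x)=\int_{y\vee 1}^{\delta x} b_2(z)/z\,\dd z$, then bound $\Psi_\delta$ from below by restricting to $z\in[y,2y]$ (giving $\tilde I_1^+(\delta;x)\gtrsim I_1^+(\tfrac{\delta}{2};x)$ for all $\alpha\le\tfrac12$) and from above via Karamata's tail estimate when $2\alpha-2<-1$ (giving $\tilde I_1^+(\delta;x)\lesssim I_1^+(\delta;x)$ for $\alpha<\tfrac12$). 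The only cosmetic difference is that the paper absorbs the $y\in[0,1)$ contribution directly into $b_2(y)$ (since $b_2(y)=A(y)^2/(y\vee 1)\ge A(0)^2>0$ there), whereas you separate it out as $C\,b_2(1)\,F((x-1,x])$ and invoke the implication \eqref{eq:nec0}; both treatments are fine.
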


\subsection{The general random walk case}

\label{sec:rw}

We now turn to the general random walk case, 
which is more challenging.
We assume that $F$ is a
probability on $\mathbb{R}$ which satisfies \eqref{eq:tail2} with $%
\alpha \in (0,1)$, $p>0$ and $q\geq 0$. 
Note that the associated random walk is \emph{transient},
because $a_n \in RV(1/\alpha)$ and then
$\sum_{n\in\N} \P(S_n \in (0,1]) \le \sum_{n\in\N}
\frac{C}{a_n}  < \infty$ (see \eqref{eq:sup} below).

Let us generalize \eqref{eq:I1} as
follows: for $\delta >0$ and $x\geq 0$ we set:
\begin{equation} \label{eq:I1rw}
I_{1}(\delta ;x):=\int_{|y|\leq \delta x}F(x+\mathrm{d}y)\,b_{2}(y)\,.
\end{equation}%
For $k\in\mathbb{N}$ with $k\ge 2$, we introduce a further parameter $\eta
\in (0,1)$ and we set
\begin{equation}  \label{eq:Ik}
\begin{split}
& I_k(\delta, \eta; x) := \int\limits_{|y_1| \le \delta x}
\!\! F(x + \mathrm{d} y_1)
\idotsint\limits_{|y_j| \le \eta |y_{j-1}| \; \text{for} \; 2 \le j \le k}
P_{y_1}(\mathrm{d} y_2, \ldots, \mathrm{d} y_k) \, b_{k+1}(y_k) \,, \\
& \rule{0pt}{1.2em} \text{where}
\quad P_{y_1}(\mathrm{d} y_2, \ldots, \mathrm{d} y_k) :=
F(-y_1 + \mathrm{d} y_2) F(- y_2 + \mathrm{d} y_3) \cdots F(- y_{k-1} +
\mathrm{d} y_k) \,.
\end{split}%
\end{equation}
Note that $P_{y_1}(\dd y_2, \ldots, \dd y_k)$ is the law of $(S_2, \ldots, S_k)$
conditionally on $S_1 = y_1$, hence 
\begin{equation}  \label{eq:Ikrepr}
I_k(\delta, \eta; x) = \E\big[ b_{k+1}(S_k) \, 
\mathds{1}_{|S_j| \le \eta
|S_{j-1}| \; \text{for} \; 2 \le j \le k\}} \, \mathds{1}_{|S_1| \le
\delta x\}} \,\big|\, S_0 = -x \big] \,.
\end{equation}
The same formula holds also for $k=1$ (where the first indicator function
equals $1$).

\smallskip

Let us define
\begin{equation}
\kappa _{\alpha }:=\bigg\lfloor\frac{1}{\alpha }\bigg\rfloor-1=%
\begin{cases}
0 & \text{if }\alpha \in (\frac{1}{2},1) \\
1 & \text{if }\alpha \in (\frac{1}{3},\frac{1}{2}] \\
2 & \text{if }\alpha \in (\frac{1}{4},\frac{1}{3}] \\
\,\vdots &  \\
m & \text{if }\alpha \in (\frac{1}{m+2},\frac{1}{m+1}]%
\end{cases}%
\,,  \label{eq:kappaalpha}
\end{equation}%
We are going to see that, when $1/\alpha \not\in \mathbb{N}$, 
necessary and sufficient conditions for
the SRT involve the a.n.\ of $I_{k}(\delta, \eta ;x)$ for $k=\kappa _{\alpha }$.
The case $1/\alpha \in \mathbb{N}$
is slightly more involved. 
We need to
introduce a suitable modification of \eqref{eq:b}, namely
\begin{equation}
\tilde{b}_{k}(z,x):=\tilde{b}_{k}(|z|,|x|)
:= \int_{|x|}^{|z|} \frac{b_k(t)}{t \vee 1} \, \dd t \,,
\label{eq:btilde}
\end{equation}%
where the integral vanishes if $|x|>|z|$.
We then define $\tilde{I}_{1}(\delta ;x)$ and $\tilde{I}_{k}(\delta ,\eta
;x) $ in analogy with \eqref{eq:I1rw} and \eqref{eq:Ik}, replacing $b_{2}(y)$
by $\tilde{b}_{2}(\delta x,y)$ and $b_{k+1}(y_{k})$ by $%
\tilde{b}_{k+1}(y_{k-1},y_{k})$:
\begin{align}
\tilde{I}_{1}(\delta ;x)& :=\int_{|y|\leq \delta x}F(x+\mathrm{d}y)\,\tilde{b%
}_{2}(\delta x,y)\,,
\label{eq:tildeI1}
\end{align}%
and for $k \ge 2$:
\begin{align}
\tilde{I}_{k}(\delta ,\eta ;x)& :=
\int\limits_{|y_{1}|\leq \delta x} \!\! F(x+\mathrm{d}%
y_{1})\!\!\!\!\idotsint\limits_{|y_{j}|\leq \eta |y_{j-1}|\;\text{for}%
\;2\leq j\leq k}\!\!\!P_{y_{1}}(\mathrm{d}y_{2},\ldots ,\mathrm{d}y_{k})\,%
\tilde{b}_{k+1}(y_{k-1},y_{k})\,.
\label{eq:tildeIk}
\end{align}%
Note that, by Fubini's theorem, we can equivalently rewrite
\eqref{eq:tildeI1} as follows:
\begin{equation}\label{eq:tildeI10}
	\tilde I_{1}(\delta; x) = 
	\int_{0}^{\delta x} \frac{F((x-t, x+t])}{t \vee 1}
	\, b_2(t) \, \dd t \,,
\end{equation}
which is a natural random walk generalization
of \eqref{eq:I1hat}.

\smallskip

We can now state our main result for random walks.

\begin{theorem}[SRT for Random Walks]
\label{th:mainrw} Let $F$ be a probability on $\mathbb{R}$ satisfying %
\eqref{eq:tail1} with $\alpha \in (0, 1)$ and with $p, q > 0$. Define $I =
(-h,0]$ with $h > 0$ as in \eqref{eq:I}.

\begin{itemize}
\item If $\alpha > \frac{1}{2}$, the SRT holds with no extra assumption on $%
F $.

\item If $\alpha \le \frac{1}{2}$ and $\frac{1}{\alpha} \not\in\mathbb{N}$,
we distinguish two cases:

\begin{itemize}
\item[-] if $\alpha \in (\frac{1}{3},\frac{1}{2})$, i.e.\ $\kappa_{\alpha} = 1$,
the SRT holds if and only if $I_1(\delta;x)$ is a.n..

\item[-] if $\alpha \in (\frac{1}{k+2}, \frac{1}{k+1})$
for some $k = \kappa_{\alpha} \ge 2$,
the SRT holds if and
only if $I_{\kappa_{\alpha}}(\delta, \eta;x)$ is a.n., for every fixed $\eta \in (0,1)$.
\end{itemize}

\item If $\alpha \le \frac{1}{2}$ and $\frac{1}{\alpha} \in\mathbb{N}$, the
same statement holds if we replace $I_k$ by $\tilde I_k$, namely:

\begin{itemize}
\item[-] if $\alpha = \frac{1}{2}$, i.e.\ $\kappa_{\alpha} = 1$, the SRT
holds if and only if $\tilde I_1(\delta;x)$ is a.n..

\item[-] if $\alpha = \frac{1}{k + 1}$, 
for some $k = \kappa_{\alpha} \ge 2$, the SRT holds if and only if $\tilde
I_{\kappa_{\alpha}}(\delta, \eta;x)$ is a.n.,
for every fixed $\eta \in (0,1)$.
\end{itemize}
\end{itemize}
\end{theorem}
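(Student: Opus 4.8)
\textbf{Proof plan for Theorem~\ref{th:mainrw}.}
The plan is to reduce the SRT to a statement about the contribution of ``small $n$'' to the renewal measure $U(x+I)$, following the reformulation announced in Subsection~\ref{sec:refor}. Writing $U(x+I) = \sum_{n \le \epsilon A(x)} \P(S_n \in x+I) + \sum_{n > \epsilon A(x)} \P(S_n \in x+I)$, the large-$n$ piece converges (after the usual diagonal $\epsilon \to 0$ limit) to the right-hand side $\mathsf{C} h\, A(x)/x$ of \eqref{eq:SRT} by the stable local limit theorem applied to $S_n/a_n$ together with the asymptotics \eqref{eq:SRTint}; this part is standard and uses nothing about the fine structure of $F$. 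Hence the SRT is \emph{equivalent} to showing that the small-$n$ sum $\Sigma(\epsilon;x) := \sum_{n \le \epsilon A(x)} \P(S_n \in x+I)$ is asymptotically negligible, i.e.\ $\lim_{\epsilon\to 0}\limsup_{x\to\infty} \Sigma(\epsilon;x)/b_1(x) = 0$.

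The heart of the proof is to estimate $\Sigma(\epsilon;x)$ by a big-jump decomposition controlled by Theorem~\ref{th:ron}. For $S_n$ to reach a neighborhood of $x$ with $n$ small, some steps must be large; I would partition according to the number $k$ of ``big'' steps (those exceeding $\eta$ times the running maximum, or a threshold like $\eta^j \delta x$ at stage $j$). For $k \ge \kappa_\alpha + 1$ big jumps, Theorem~\ref{th:ron}---specifically \eqref{eq:ron1modk} with $\gamma \approx 1/k$---gives a bound $\frac{1}{a_n}(n/A(x))^{k}$ which, summed over $n \le \epsilon A(x)$, produces $a_{A(x)}^{-1}\,\epsilon^{k}\,A(x) \asymp \epsilon^{k} b_1(x)$ up to slowly varying corrections (using $a_{A(x)} \asymp x$); since $k \ge \kappa_\alpha + 1 \ge 2$, this is $o(b_1(x))$ after $\epsilon \to 0$, uniformly. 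So the terms with ``many'' big jumps are automatically negligible and contribute nothing to the condition. What survives is the contribution of configurations with exactly $\kappa_\alpha$ big jumps (or fewer); peeling off these big steps one at a time via the Markov property and applying the local limit theorem to the remaining ``small'' walk segments produces exactly the iterated integral $I_{\kappa_\alpha}(\delta,\eta;x)$ in \eqref{eq:Ikrepr}, with the weight $b_{k+1}(y_k)$ coming from the renewal-type mass $U$ of the last small segment. Thus $\Sigma(\epsilon;x) \asymp I_{\kappa_\alpha}(\delta,\eta;x) + o(b_1(x))$ up to the $\epsilon,\delta \to 0$ limits, giving the stated necessary-and-sufficient condition. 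For $\alpha > \frac12$ we have $\kappa_\alpha = 0$, so even a single big jump already gives $k=1$ and Theorem~\ref{th:ron} with $\gamma=1$ (relation \eqref{eq:ron2}) shows $\Sigma(\epsilon;x) = O(\epsilon)\,b_1(x)$ with no condition needed, recovering the classical result.

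For the borderline cases $1/\alpha \in \mathbb{N}$ the above dichotomy is not clean: with $k = \kappa_\alpha = 1/\alpha - 1$ big jumps the bound $(n/A(x))^k$ summed over $n \le \epsilon A(x)$ yields $\epsilon^{k} A(x)/a_{A(x)}$, and since $k\alpha = 1-\alpha$... one finds that the naive estimate is only \emph{marginally} divergent, producing a logarithmic factor. This is precisely why one must replace $b_{k+1}(y_k)$ by the truncated/integrated version $\tilde b_{k+1}$ in \eqref{eq:btilde}: the integral $\int_{|x|}^{|z|} b_k(t)/(t\vee 1)\,\dd t$ accounts for the borderline logarithmic divergence in the sum over $n$ (equivalently, over the length of the final small segment), and $\tilde I_{\kappa_\alpha}$ is the object that correctly captures asymptotic negligibility. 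Establishing the matching upper and lower bounds in this case---showing that the log-divergent sum is genuinely encoded by $\tilde b$ and not an artifact---will be the main obstacle, and requires careful two-sided control on the local limit theorem error and on the renewal mass of the small segments over the full range of scales $n \in [1, \epsilon A(x)]$. The necessity direction in all cases runs the same decomposition in reverse: one shows the surviving $\kappa_\alpha$-big-jump configurations give a genuine lower bound on $U(x+I) - \mathsf{C}h\,b_1(x)$, so if $I_{\kappa_\alpha}$ (resp.\ $\tilde I_{\kappa_\alpha}$) is \emph{not} a.n., the SRT must fail; here the key point is that distinct big-jump configurations do not cancel, which follows from positivity.
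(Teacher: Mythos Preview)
Your high-level framework matches the paper's: reduce the SRT to the asymptotic negligibility of $T(\delta;x)=\sum_{n\le A(\delta x)}\P(S_n\in x+I)$ (Subsection~\ref{sec:refor}), then control $T$ via an iterated big-jump decomposition driven by Theorem~\ref{th:ron}. The case $\alpha>\tfrac12$ works exactly as you say.

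The genuine gap is your claim that peeling off big jumps ``produces exactly the iterated integral $I_{\kappa_\alpha}$.'' It does not, and closing this gap is the main technical content of Section~\ref{sec:casek>1}. In the two-sided walk, after a big positive jump overshoots $x$ to $x+y_1$ with $y_1>0$, the next relevant step must be \emph{negative} and may itself overshoot, landing at $x+y_2$ with $|y_2|>|y_1|$. The constraint arising naturally from the decomposition is therefore one-sided (the paper's domain $\Theta_k$, see~\eqref{y}), strictly larger than the two-sided $\Omega_k=\{|y_j|\le\eta|y_{j-1}|\}$ used to define $I_k$. The paper introduces the corresponding larger quantities $J_k,\tilde J_k$ (see~\eqref{K},~\eqref{eq:tildeJ}) and devotes Propositions~\ref{G},~\ref{H},~\ref{th:boundcru},~\ref{F} to proving that these are a.n.\ once the $I_j$ (resp.\ $\tilde I_j$) are; this reduction is where most of the work for $\kappa_\alpha\ge 2$ lies, and it is not specific to the borderline case $\tfrac{1}{\alpha}\in\N$ that you flag as the main obstacle. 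Relatedly, the decomposition is not a simple count of big jumps: at each stage $r$ the paper splits into three events $E_r^{(1)},E_r^{(2)},E_r^{(3)}$ (is the next candidate jump small relative to $|Y_{r-1}|$? is $|Y_r|\le C_r a_n$ already? or neither?), each estimated by a different mechanism.

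Two smaller points. Your summation arithmetic is off: $\sum_{n\le\epsilon A(x)} a_n^{-1}(n/A(x))^k$ carries an $\epsilon$-exponent of $(k+1)-1/\alpha$ (up to slowly varying factors), not $k$; this is precisely why $k=\kappa_\alpha+1$ is the threshold. And for necessity, ``positivity'' alone does not yield a lower bound matching $\tilde I_k$: the paper uses the quantitative local-limit lower bound~\eqref{eq:Chillt} on the no-big-jump segment, combined with an induction on $k$ (Section~\ref{sec:necce}); this is also where the hypothesis $q>0$ is genuinely needed.
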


\emph{We stress that the conditions that $I_k$ and $\tilde I_k$ are a.n.
can be spelled out in terms of $F$}. Indeed,
in the definitions \eqref{eq:I1rw}-\eqref{eq:tildeIk} of $I_k$ 
and $\tilde I_k$, we can
replace $b_k(y)$ by the equivalent
expression $1/\{(|y| \vee 1) \, \overline{F}(y)^k\}$,
which depends only on $F$. Moreover, the condition that a quantity
$J(\delta;x)$ is a.n.\ can be rephrased using only $F$
(see \eqref{eq:an}, \eqref{eq:tail2}):
\begin{equation*}
	\text{``$J(\delta;x)$ is a.n.''} \qquad \iff \qquad
	\lim_{\delta \to 0} \ \limsup_{x \to \infty} \
	x \, \overline{F}(x) \, J(\delta;x) = 0 \,.
\end{equation*}

\smallskip

In Appendix~\ref{sec:app} we show some relations
between the quantities $I_k$ and $\tilde I_k$.
These lead to the following clarifying remarks.

\begin{remark}\label{rem:refo0}
The condition ``$\tilde I_{\kappa_{\alpha}}$ is a.n.'' is stronger
than ``$I_{\kappa_{\alpha}}$ is a.n.'',
but for $\frac{1}{\alpha} \not\in \N$ they are equivalent
(see Lemma~\ref{th:corequiv2}).
As a consequence, we can rephrase Theorem~\ref{th:mainrw}
in a more compact way as follows:
\begin{equation}
\text{The SRT holds: }
\begin{cases}
\text{with no extra assumption} & \text{for } \alpha > \frac{1}{2}\\
\rule{0pt}{1.2em}\text{iff $\tilde I_1(\delta; x)$ is a.n.}  & \text{for } \frac{1}{3} < \alpha \le \frac{1}{2}\\
\rule{0pt}{1.2em}\text{iff $\tilde I_{\kappa_{\alpha}}(\delta, \eta; x)$ is a.n.\
for every $\eta \in (0,1)$} & \text{for } \alpha \le \frac{1}{3}\\
\end{cases}
\end{equation}

When $\alpha \le \frac{1}{3}$, our proof actually shows that
if $\tilde I_{\kappa_{\alpha}}(\delta,\eta; x)$ is a.n. 
for \emph{some} $\eta > 1-\frac{\alpha}{1-\alpha}$,
then (the SRT holds and consequently) it is a.n. 
for \emph{every} $\eta \in (0,1)$. It is not clear whether the a.n.\
of $\tilde I_{\kappa_{\alpha}}(\delta,\eta; x)$ for some 
$\eta \le 1-\frac{\alpha}{1-\alpha}$ also implies its a.n.\ for any $\eta \in (0,1)$.
\end{remark}

\begin{remark}\label{rem:refo}
If $\frac{1}{\alpha} \not\in \N$,
the condition
``$I_{\kappa _{\alpha }}(\delta,\eta ;x)$ is a.n.''
is equivalent to the seemingly stronger one 
``$I_{k}(\delta,\eta ;x)$ is a.n. for all $k\in \mathbb{N}$''
(see  Lemma~\ref{th:corcascade}).
Similarly, the condition 
``$\tilde I_{\kappa _{\alpha }}(\delta,\eta ;x)$ is a.n.''
is equivalent to
``$\tilde I_{k}(\delta,\eta ;x)$ is a.n. for all $k\in \mathbb{N}$''
(see Lemma~\ref{th:corequiv1}).
\end{remark}

\begin{remark}
In Theorem~\ref{th:mainrw} we require $q > 0$ (that
is the positivity index $\rho$ is strictly less than one),
but a large part of it actually extends to $q=0$. More precisely,
when $q = 0$, our
proof shows that if $\alpha > \frac{1}{2}$ the SRT holds
with no extra assumption on $F$, while if $\alpha \le \frac{1}{2}$
the a.n.\ of $I_{\kappa_{\alpha}}$ (if $\frac{1}{\alpha}\not\in \N$)
or $\tilde I_{\kappa_{\alpha}}$ (if $\frac{1}{\alpha}\in \N$) are
sufficient conditions for the SRT. However, when $q=0$, we do not expect the a.n.\ of
$I_{\kappa_{\alpha}}$
or $\tilde I_{\kappa_{\alpha}}$ to be necessary, in general.
\end{remark}

\subsection{Sufficient conditions for random walks}

Necessary and sufficient conditions for the SRT in the random walk case
involve the a.n.\ of $\tilde I_k$ for a suitable $k = \kappa_{\alpha} \in \N$.
Unlike the renewal process case,
this cannot be reduced to the a.n.\ of just $\tilde I_1$.

\begin{proposition}\label{th:counterex}
For any $\alpha \in (0, \frac{1}{3})$, there is
a probability $F$ on $\R$ which satisfies \eqref{eq:tail1},
such that $\tilde I_1(\delta; x)$ is a.n.\ but $\tilde I_2(\delta, \eta; x)$ is not a.n.,
for any $\eta \in (0,1)$
(hence the SRT fails).
\end{proposition}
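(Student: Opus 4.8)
The plan is to construct $F$ by placing, at a geometrically increasing sequence of locations $x_j \to \infty$, a small amount of mass smeared over an interval of length comparable to $x_j$ itself, tuned so that the ``first big jump'' quantity $\tilde I_1$ stays a.n.\ but the ``two big jumps'' quantity $\tilde I_2$ does not. Concretely, since $\tilde I_2(\delta,\eta;x)$ (via \eqref{eq:tildeIk} and \eqref{eq:Ikrepr}) measures the expected value of $\tilde b_3(S_1,S_2)$ when the walk starts at $-x$ and makes two moderately large, successively shrinking steps, the idea is that a \emph{single} anomalously fat region in $F$ contributes to $\tilde I_1$ only through one integration against $F$, but contributes to $\tilde I_2$ through \emph{two} integrations against $F$ (the first step lands near the fat region at scale $x$, the second step — which by the constraint $|y_2|\le \eta|y_1|$ is still of order $x$ — again hits a fat region), so the same local excess mass is ``counted twice'' and the resulting integral is too large. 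Because $q>0$, negative steps of order $x$ are available, so the second jump $S_2 - S_1$ can indeed be taken large and of the right sign to bring $S_2$ back near another fat location; this is exactly where the random-walk case differs from the renewal case and why no analogue with $\tilde I_1$ alone can work.

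The key steps, in order, are as follows. First, I would fix a rapidly increasing sequence $x_j$ (say $x_{j+1} = x_j^{j}$ or similar, with enough separation that distinct fat regions never interact) and define $F$ as the baseline tail law satisfying \eqref{eq:tail1} — e.g.\ a two-sided Pareto-type law with the prescribed $A\in RV(\alpha)$ — \emph{plus} a perturbation that, on each interval around $x_j$ of width $\beta_j x_j$ for a small $\beta_j \downarrow 0$, redistributes mass so that $F((x-z,x]) \approx \frac{\epsilon_j}{A(x_j)}$ for $x$ near $x_j$ and $z$ up to order $x_j$, with $\epsilon_j \to 0$ slowly. The free parameters are the widths $\beta_j$, the heights $\epsilon_j$, and the spacing of $x_j$. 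Second, I would estimate $\tilde I_1(\delta;x)$ using the Fubini form \eqref{eq:tildeI10}: for $x$ near $x_j$ the integral $\int_0^{\delta x} \frac{F((x-t,x+t])}{t\vee 1} b_2(t)\,\dd t$ picks up the fat contribution, which is of order $\epsilon_j \cdot (\text{something involving } A(\beta_j x_j)^2/(\beta_j x_j A(x_j)))$; since $\alpha<\frac13$ we have $2\alpha-1 < -\frac13$, so $b_2(t) = A(t)^2/t$ is \emph{decreasing enough} that cutting the width at $\beta_j x_j$ with $\beta_j\to 0$ forces this to be $o(b_1(x))$ after $x\to\infty$ then $\delta\to 0$ — i.e.\ $\tilde I_1$ is a.n.. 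Third, I would lower-bound $\tilde I_2(\delta,\eta;x)$ using \eqref{eq:Ikrepr} with $k=2$: starting from $S_0=-x$ with $x$ near some large $x_j$, restrict to the event that $S_1$ lands in a fat region around a smaller location $x_i$ (so $|S_1|\le\delta x$ is met, using that $x_i \ll x_j$) and $S_2$ lands in a fat region around a still smaller $x_{i'}$ with $|S_2|\le\eta|S_1|$; the probability of the first landing is $\gtrsim \epsilon_i\,\beta_i x_i / A(x_j)$ and, conditionally, of the second is $\gtrsim \epsilon_{i'}\,\beta_{i'} x_{i'}/A(x_i)$, while $\tilde b_3(S_1,S_2) \approx \tilde b_3(x_i,x_{i'})$ is a fixed positive quantity; summing over admissible $i,i'$ I would arrange (by choosing $\epsilon_j,\beta_j$ decaying slowly and $x_j$ with a specific growth rate) that the total exceeds $c\, b_1(x)$ for a constant $c>0$ along the subsequence $x=x_j$, uniformly in small $\delta$, so $\tilde I_2$ is \emph{not} a.n.. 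Fourth, invoke Theorem~\ref{th:mainrw} (the $\alpha\le\frac13$, hence $\kappa_\alpha\ge 2$, case) to conclude the SRT fails.

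The main obstacle will be the bookkeeping in step three: one must choose the three families of parameters $(x_j, \beta_j, \epsilon_j)$ simultaneously so that (a) $F$ remains a genuine probability measure with the correct regularly varying tails — the perturbations must be small enough not to disturb \eqref{eq:tail1}, which constrains $\epsilon_j\beta_j$; (b) the $\tilde I_1$ estimate genuinely vanishes in the iterated limit, which wants $\beta_j$ small; (c) yet the double sum defining the $\tilde I_2$ lower bound does \emph{not} vanish, which wants the product structure $\epsilon_i\beta_i \cdot \epsilon_{i'}\beta_{i'}$ to survive division by $b_1(x_j)^{-1} = L(x_j)x_j^{1-\alpha}$ after summation. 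The tension is real because the same small factors $\epsilon_j\beta_j$ appear in both; the resolution is that in $\tilde I_2$ there are roughly $j^2$ pairs $(i,i')$ contributing (a whole triangular array of scales below $x_j$), and with $x_j$ growing fast enough each term is essentially scale-invariant in the right way, so the count $j^2\to\infty$ beats the decay. A secondary technical point is verifying that $\tilde b_3(x_i,x_{i'}) = \int_{x_{i'}}^{x_i} b_3(t)/t\,\dd t$ with $b_3(t)=A(t)^3/t \in RV(3\alpha-1)$, $3\alpha - 1<0$, behaves like a positive constant depending only on the ratio $x_i/x_{i'}$ up to slowly varying corrections — straightforward from Karamata's theorem — and that the ``non-interaction'' of fat regions (step one) makes all these estimates clean rather than merely heuristic.
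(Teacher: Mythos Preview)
Your high-level intuition is right---the anomaly should be ``counted twice'' in $\tilde I_2$---but the concrete construction has a structural gap. You place fat regions only at positive locations $x_j$ and then ask that $S_1\approx x_i>0$ and $S_2\approx x_{i'}>0$ with $x_{i'}<x_i$. The second increment is then $X_2=S_2-S_1\approx -x_i<0$, and your construction puts \emph{no} anomaly on the negative axis; you explicitly fall back on the baseline (``because $q>0$, negative steps are available''). But the baseline only gives $\P(X_2\in (-x_i)+[0,\beta_{i'}x_{i'}])\asymp \beta_{i'}x_{i'}/(x_i\,A(x_i))$, not the $\epsilon_{i'}\beta_{i'}x_{i'}/A(x_i)$ you write. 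With the correct estimate, the contribution of each pair $(i,i')$ is of the same order as the corresponding single-scale contribution to $\tilde I_1$, so the $\tilde I_2$ lower bound collapses back to something comparable to $\tilde I_1$ and cannot stay bounded below while $\tilde I_1$ is a.n.. A related issue: even if you symmetrize and put fat at $\pm x_j$, your $y_2$ will generically satisfy $|y_2|\asymp x_i$ (not small), so $\tilde b_3(y_1,y_2)\asymp x_i^{3\alpha-1}$ gives no enhancement, and the ``$j^2$ pairs'' counting does not survive the arithmetic once you track powers of $x_i$.

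The paper's construction is different in a way that matters. It writes $F=\tfrac13(F_1+F_2+F_3^*)$ with \emph{two distinct} anomalous pieces: $F_2$ is supported on narrow intervals at $2^n+2^k$ for $1\le k\le n-1$ (so, starting from $x=2^n$, the first jump places $y_1$ at the \emph{discrete} dyadic values $2^k$ within a window of width $2^k/k^p$), while $F_3^*$ is supported on the \emph{negative} axis with bumps precisely at $-2^k$. The two pieces are coordinated: the first step via $F_2$ pins $y_1\approx 2^k$, and the second step via $F_3^*$ (which \emph{is} a tailored negative anomaly) brings $y_2$ into a window of width $2^k/k^p$ near $0$, so that $b_3(y_2)\asymp (2^k/k^p)^{3\alpha-1}$ is genuinely large. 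Summing over $k$ then produces a divergent factor $\sum_k k^{-3\alpha p}/\ell(k)$ (for $p<1/(3\alpha)$), which is what beats $b_1(2^n)$. The missing idea in your plan is exactly this coordination: you need a second anomalous component on the opposite sign, aligned with the landing values of the first, so that $|y_2|$ is forced to be \emph{microscopically} small rather than of order $x_i$.
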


Let us now give simpler sufficient conditions which ensure the
a.n.\ of $\tilde I_{k}$.
Note that the condition that 
$\tilde I_{1}(\delta; x)$ is a.n.\ only involves the \emph{right tail} of $F$
(see Definition~\ref{def:an}).
To express conditions on the \emph{left tail} of $F$, we define
\begin{equation} \label{eq:I1rw*}
	\tilde I_{1}^{*}(\delta ;x):=
	\int_{0}^{\delta x} \frac{F((-x-t, -x+t])}{t \vee 1}
	\, b_2(t) \, \dd t \,,
\end{equation}%
which is nothing but $\tilde I_1(\delta; x)$ in \eqref{eq:tildeI10} applied to the \emph{reflected
probability $F^{*}(A) := F(-A)$}.

\begin{proposition}\label{th:suffrw}
Let $F$ be a probability on $\mathbb{R}$ satisfying %
\eqref{eq:tail1} with $\alpha \in (0, \frac{1}{2}]$ and $p > 0$, $q \ge 0$. If
both $\tilde I_1(\delta; x)$
and $\tilde I_1^{*}(\delta; x)$ are a.n., then the SRT holds.

In particular, a sufficient condition for
the SRT
is that there exists $\gamma > 1-2\alpha$ such that
relation \eqref{eq:suff0} holds both for $F$ and for $F^*$
(i.e., both as $x \to +\infty$ and as $x \to -\infty$).

In particular,
the SRT holds when the classical condition \eqref{eq:D97} holds
both for $F$ and~$F^*$.
\end{proposition}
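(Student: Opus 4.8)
Everything will be deduced from Theorem~\ref{th:mainrw}: it suffices to show that the a.n.\ of $\tilde I_1(\delta;x)$ and $\tilde I_1^{*}(\delta;x)$ forces the a.n.\ of $\tilde I_{\kappa_\alpha}(\delta,\eta;x)$ (for every $\eta\in(0,1)$, when $\kappa_\alpha\ge2$). When $\alpha\in(\tfrac13,\tfrac12]$ one has $\kappa_\alpha=1$, and the required condition is either ``$\tilde I_1$ is a.n.'' (if $\alpha=\tfrac12$) or ``$I_1$ is a.n.'' (if $\alpha\in(\tfrac13,\tfrac12)$), both implied by the a.n.\ of $\tilde I_1$ — for the latter recall from Remark~\ref{rem:refo0} that a.n.\ of $\tilde I_1$ implies a.n.\ of $I_1$. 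The substantial case is $\alpha\le\tfrac13$, i.e.\ $k:=\kappa_\alpha\ge2$, where I would prove by induction on $k$ the \emph{propagation statement}: if $\tilde I_1$ and $\tilde I_1^{*}$ are a.n., then so is $\tilde I_j(\delta,\eta;x)$ for every $j\le k$ and every $\eta\in(0,1)$.

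The inductive step is a ``peeling'' of the cascade in \eqref{eq:tildeIk}, based on a single one--step estimate: \emph{if $\tilde I_1$ and $\tilde I_1^{*}$ are a.n., then for each fixed $\eta\in(0,1)$ there is $C_\eta<\infty$ with}
\begin{equation*}
\E\big[\,b_m(|S_1|)\,\ind_{\{|S_1|\le\eta w\}}\;\big|\;S_0=\pm w\,\big]\;\le\;C_\eta\,b_{(m-1)\vee1}(w)\qquad\text{for all large }w\text{ and }m\in\{2,\dots,k\}\,,
\end{equation*}
the two signs corresponding to the use of $\tilde I_1$ and of $\tilde I_1^{*}$ respectively. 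This follows by Fubini (writing $b_m(|y|)=\int_{|y|}^\infty|b_m'(t)|\,\dd t$, valid since $b_m$ is eventually non-increasing for $m<1/\alpha$) and $|b_m'(t)|\asymp A(t)^m/t^2$: after Fubini the main term is $\le A(w)^{m-2}\,\tilde I_1^{(*)}(\eta;w)$ (with $\tilde I_1^{(*)}$ meaning $\tilde I_1$ or $\tilde I_1^{*}$ according to the sign), which is $O(A(w)^{m-2}b_1(w))=O(b_{m-1}(w))$ because a.n.\ of $\tilde I_1^{(*)}$ trivially yields $\tilde I_1^{(*)}(\eta;w)=O(b_1(w))$ for \emph{fixed} $\eta$, while the boundary term is directly $O(b_{m-1}(w))$. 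Now write $\tilde I_k(\delta,\eta;x)=\int_{|y_1|\le\delta x}F(x+\dd y_1)\,g_1(y_1)$, where $g_1$ arises from the cascade weight $\tilde b_{k+1}$ by integrating out $y_k,y_{k-1},\dots,y_2$ in turn. Using $b_{j+1}(t)=A(t)\,b_j(t)$, $|y_i|\le\eta^{\,i-1}|y_1|$ and monotonicity of $A$ — so that $\tilde b_{k+1}(y_{k-1},y_k)\le A(|y_{k-1}|)^{k-1}\tilde b_2(y_{k-1},y_k)$ — the first peel produces $g_{k-1}(y_{k-1})\le C_\eta\,b_k(|y_{k-1}|)$, and then the displayed estimate lowers the subscript by one at each further peel, ending with $g_1(y_1)\le C_\eta^{\,k}\,b_2(|y_1|)$. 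Finally a Fubini / integration--by--parts computation identifies $\int_{|y_1|\le\delta x}F(x+\dd y_1)\,b_2(|y_1|)$ — together with the boundary term $b_2(\delta x)F((x-\delta x,x+\delta x])$ — with a constant multiple of $\tilde I_1(2\delta;x)$, which is a.n.\ by hypothesis; hence $\tilde I_k(\delta,\eta;x)$ is a.n.

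\textbf{The main obstacle} will be the bookkeeping of the iterated limits. The a.n.\ property gives smallness only in the order ``$\limsup_{x\to\infty}$ then $\lim_{\delta\to0}$'', so in the peeling one \emph{cannot} treat the inner cascade factors, which live at the fixed scale $\eta$, as small — only as $O(b_1)$; the vanishing $\delta$--factor must be carried by the outermost integration alone, while all ratios of regularly (or slowly) varying functions across the scales $x\gg|y_1|\gg\cdots\gg|y_k|$ are controlled uniformly by Potter/Karamata bounds. Extra care is also needed at the edges $|y_i|\approx\eta|y_{i-1}|$ of the cascade shells, and, when $\tfrac1\alpha\in\N$, with the function $b_{\kappa_\alpha+1}$, which is then only slowly varying and must be handled through the integrated quantity $\tilde b_{\kappa_\alpha+1}$ of \eqref{eq:btilde}.

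It remains to deduce the two concrete corollaries. Suppose \eqref{eq:suff0} holds for $F$ with some $\gamma>1-2\alpha$; replacing $\gamma$ by $\min\{\gamma,1\}$ only weakens the hypothesis (since $y/x\le\tfrac12<1$), so we may take $\gamma\in(1-2\alpha,1]$. Applying \eqref{eq:suff0} at the shifted point $x+t\asymp x$ controls also the right side of $x$, giving $F((x-t,x+t])\le C\,A(x)^{-1}\big((t\vee1)/x\big)^{\gamma}$ for $t\le\tfrac12x$. Substituting this into the representation \eqref{eq:tildeI10} of $\tilde I_1(\delta;x)$ and using $b_2(t)=A(t)^2/(t\vee1)\in RV(2\alpha-1)$, Karamata's theorem gives
\begin{equation*}
\limsup_{x\to\infty}\frac{\tilde I_1(\delta;x)}{b_1(x)}\;\le\;\frac{C'}{\gamma+2\alpha-1}\;\delta^{\,\gamma+2\alpha-1}\,,
\end{equation*}
and the exponent $\gamma+2\alpha-1$ is positive \emph{exactly because} $\gamma>1-2\alpha$; letting $\delta\to0$ shows $\tilde I_1(\delta;x)$ is a.n. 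The same computation for $F^{*}$, via \eqref{eq:I1rw*}, shows $\tilde I_1^{*}(\delta;x)$ is a.n., so the first part of the Proposition applies. Finally, if \eqref{eq:D97} holds for $F$ (resp.\ $F^{*}$), then, as in the Remark following Proposition~\ref{pr:main1}, \eqref{eq:suff0} holds for $F$ (resp.\ $F^{*}$) with $\gamma=1$, and $1>1-2\alpha$ since $\alpha>0$; hence the last assertion follows from the preceding one.
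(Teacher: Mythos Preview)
Your proposal is correct and follows essentially the same route as the paper. The paper's proof establishes the one--step bound
\[
\int_{|y|\le\eta|z|}F(-z+\dd y)\,\tilde b_{\ell+1}(z,y)\;\lesssim_\eta\;b_\ell(z)
\]
(its relation \eqref{eq:counterh2}) by reducing to $\ell=1$ via $\tilde b_{\ell+1}\le A^{\ell-1}\tilde b_2$ and then splitting $|y|\lessgtr\delta_0|z|$, identifying the inner part with $\tilde I_1(\delta_0;\cdot)$ or $\tilde I_1^{*}(\delta_0;\cdot)$; it then upgrades the right side to $\tilde b_\ell(w,z)$ and cascades $\tilde I_k\lesssim_\eta\tilde I_{k-1}\lesssim_\eta\cdots\lesssim_\eta\tilde I_1$. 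Your version converts to $b_m$ after the first peel and derives the analogous one--step bound by Fubini, which is just the integral representation underlying $\tilde b_2$; the cascade then lands on $I_1(\delta;x)$, which you bound by $\tilde I_1(2\delta;x)$ --- this is exactly \eqref{eq:IItildele1}. Your claim that $\tilde I_1^{(*)}(\eta;w)=O(b_1(w))$ for \emph{fixed} $\eta$ is not literally ``trivial'' from the a.n.\ definition, but it follows by the same splitting the paper uses (or as in \eqref{eq:I1+O}): choose $\delta_0$ small so that $\tilde I_1^{(*)}(\delta_0;w)\lesssim b_1(w)$, and bound the remainder $\int_{\delta_0 w}^{\eta w}\cdots$ directly using $F((w-t,w+t])\lesssim 1/A(w)$ and $b_2(t)\lesssim_{\delta_0}b_2(w)$. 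The corollary part matches the paper's reference to the proof of Proposition~\ref{pr:main1}.
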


\subsection{L\'{e}vy processes}
Let $X=(X_{t})_{t\geq 0}$ be a L\'evy process with 
L\'evy measure $\Pi$, Brownian coefficient $\sigma ^{2}$ and linear term $\mu$
in its L\'{e}vy-Khintchine representation, that is
\begin{equation} \label{eq:LK}
	\log \E[e^{i\theta X_1}] = i \mu \theta - \frac{\sigma^2}{2} \theta^2
	+ \int_{\R \setminus \{0\}} (e^{i\theta x} - 1 - i \theta x \ind_{\{|x|\le 1\}})
	\, \Pi(\dd x) \,.
\end{equation}
Whenever $X$ is transient, we can define its
potential or renewal measure by
\begin{equation*}
	G(\dd x) := \int_0^\infty \P(X_t \in \dd x) \, \dd t \,.
\end{equation*}

We assume that $X$ is asymptotically stable:
more precisely, there is a norming function $a(t)$ such that
$X_t / a(t)$ converges in law as $t\to\infty$
to a random variable $Y$ with
a stable law of index $\alpha \in (0,1)$ and positivity
parameter $\rho > 0$. In this case
\begin{equation*}
	A(x) := \frac{1}{\overline{\Pi}(x)} = \frac{1}{\Pi((x,\infty))}
	\in RV(\alpha) \qquad \text{as } x \to +\infty \,,
\end{equation*}
and we can take $a(\cdot) = A^{-1}(\cdot)$.
Under these assumptions, the renewal theorem \eqref{eq:SRTint} holds, just
replacing $U([0,x])$ by $G([0,x])$. It
is natural to wonder whether the corresponding local version \eqref{eq:SRT} holds as well,
in which case we say that $X$ satisfies the SRT.

Our next result shows that this question can be reduced to the validity of the SRT
for a random walk whose step distribution $F$ only depends on the L\'evy measure $\Pi$, namely:
\begin{equation}\label{eq:F0}
	F(\dd x) :=
	\begin{cases}
	\displaystyle\frac{\Pi (\dd x)}{\Pi (\R \setminus (-1,1))} 
	& \text{for } |x|\geq 1 \\
	\rule{0pt}{1.3em}0 & \text{for } |x| < 1
	\end{cases} \,.
\end{equation}

\begin{theorem}[SRT for L\'evy Processes]\label{th:Levy}
Let $X$ be any L\'{e}vy process that is in the domain of attraction of a
stable law of index $\alpha \in (0,1)$ and positivity parameter $\rho >0$ as
$t\rightarrow \infty$. Suppose also that its L\'{e}vy measure is
non-arithmetic. Then $X$ satisfies the SRT, i.e.\
\begin{equation}
	\lim_{x\rightarrow \infty } \, x \, \overline{\Pi }(x) \, G((x-h, x])
	= h \, \alpha \,  \E[Y^{-\alpha } \, \ind_{\{Y>0\}}] \,,
	\qquad \forall h > 0 \,, \label{x1}
\end{equation}%
if and only if the random walk with step distribution $F$ defined
in \eqref{eq:F0} satisfies the SRT.
\end{theorem}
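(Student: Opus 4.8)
\textbf{Proof proposal for Theorem~\ref{th:Levy}.}

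The plan is to realise the L\'evy potential measure $G$ as a mixture of renewal measures of the discrete-skeleton random walk with step law $F$ from \eqref{eq:F0}, and then transfer the SRT back and forth. The natural device is a \emph{compound-Poisson decomposition of the big jumps}: write $X_t = Z_t + R_t$, where $Z_t$ collects the jumps of modulus $\ge 1$ and $R_t := X_t - Z_t$ is the independent L\'evy process with L\'evy measure $\Pi(\cdot) \ind_{(-1,1)}$, Brownian part $\sigma^2$ and the same linear term. By construction $Z_t = \sum_{i=1}^{N_t} \xi_i$ where $N$ is a Poisson process of rate $\lambda := \overline{\Pi}(\R\setminus(-1,1)) = \Pi(\R\setminus(-1,1))$ and $\xi_i$ are i.i.d. with law $F$; write $S_n := \xi_1 + \cdots + \xi_n$ for the associated random walk, so that its renewal measure is $U$ as in \eqref{eq:U}. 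Since $R$ has jumps bounded by $1$ and finite mean, $R_t/a(t) \to 0$ in probability; hence $X$ is in the domain of attraction of a stable law iff $S_n$ is, with the same norming, and $\overline F(x) \sim \frac{1}{\lambda A(x)}$, i.e. $S_n$ satisfies \eqref{eq:tail1} with norming $\lambda A(\cdot) \in RV(\alpha)$.

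Next I would integrate out the Poisson clock. Conditioning on $N_t = n$ and using independence of $Z$ and $R$,
\begin{equation*}
	G(\dd x) = \int_0^\infty \P(X_t \in \dd x)\,\dd t
	= \int_0^\infty \sum_{n\ge 0} e^{-\lambda t}\frac{(\lambda t)^n}{n!}\,
	\big(F^{*n} * \P(R_t \in \cdot)\big)(\dd x)\,\dd t \,.
\end{equation*}
Performing the $t$-integral first, $\int_0^\infty e^{-\lambda t}\frac{(\lambda t)^n}{n!}\,\dd t = 1/\lambda$ for every $n$, so formally $G = \frac{1}{\lambda}\sum_{n\ge0} F^{*n} * \nu = \frac{1}{\lambda}\, U * \nu$, where $\nu$ is a finite measure capturing the small-jump/Brownian part averaged over an exponential time; a cleaner route is to keep the Poissonian weights and write $G(\dd x) = \frac{1}{\lambda}\int_{\R} U(\dd y)\, \mu(x - \dd y)$ with $\mu$ the law of $R$ at an independent $\mathrm{Exp}(\lambda)$ time. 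The point is that $\mu$ is a \emph{fixed} probability measure that does not depend on $x$ and has finite exponential moments of small order (its support is essentially $\R$ but it has light tails since $R$ has bounded jumps), so convolving with $\mu$ is a harmless local averaging. The heuristic is then transparent: $G((x-h,x]) = \frac{1}{\lambda}\int U(x - h - \dd r, x - \dd r]\,\mu(\dd r)$, and if $U(x+I) \sim \mathsf C h\, (\lambda A(x))/x$ then $U((x-h-r, x-r]) \sim \mathsf C h\, (\lambda A(x-r))/(x-r) \sim \mathsf C h\, \lambda A(x)/x$ uniformly for $r$ in a compact set by regular variation, and the light tails of $\mu$ control the contribution from $|r|$ large. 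This gives $G((x-h,x]) \sim \frac{1}{\lambda}\cdot \lambda\cdot \mathsf C h\, A(x)/x = \mathsf C h\, A(x)/x$, i.e. \eqref{x1} with the constant $\mathsf C = \alpha\,\E[Y^{-\alpha}\ind_{\{Y>0\}}]$ from \eqref{eq:SRTint} (which is the same for $X$ and for the skeleton walk since they share the limiting law $Y$). Conversely, a local-averaging argument in the other direction — or a Tauberian-type comparison using the already-known integrated renewal theorem \eqref{eq:SRTint} for both $G$ and $U$ — shows that if $G$ satisfies the SRT then so does $U$: the averaging by $\mu$ cannot create regularity that was not already present asymptotically, because $U(x+I)$ is comparable to $A(x)/x$ on average (by \eqref{eq:SRTint}) and any asymptotic oscillation in $U(x+I)$ of the forbidden type would survive convolution with a light-tailed kernel.

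The main obstacle, and where the real work lies, is this last equivalence — passing the SRT \emph{from $G$ back to $U$}. The direction ``$U$ SRT $\Rightarrow$ $G$ SRT'' is essentially a dominated-convergence/uniform-regular-variation argument once the decomposition is in hand, and needs only an a priori upper bound $U(x+I) \le C\, A(x)/x$ to justify the interchange (such a bound follows from \eqref{eq:ron2} of Theorem~\ref{th:ron} summed over $n$, or directly from known renewal estimates under \eqref{eq:tail1}). For the converse, the difficulty is that convolution with $\mu$ is a smoothing operation and is not injective on the level of pointwise asymptotics, so one must exploit that $\mu$ has \emph{exponentially small tails}: writing $G((x-h,x]) \lambda= \int U((x-h-r,x-r])\,\mu(\dd r)$ and splitting the integral at $|r| \le \log^2 x$, the far piece is $o(A(x)/x)$ using $U((0,y]) = O(A(y)) = O(x^{\alpha+\epsilon})$ (polynomial) against the exponential tail of $\mu$, while on the near piece $A(x-r)/(x-r) = (1+o(1)) A(x)/x$ uniformly by the uniform convergence theorem for regular variation. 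Then $\lim_{x} (x/A(x)) G((x-h,x])\cdot\lambda$ exists and equals $\mathsf C h$, and one deduces $\lim_x (x/A(x)) U((x-h,x]) = \mathsf C h$ by a regularization/deconvolution argument: since $\mu$ has all small exponential moments, its Fourier (or Laplace) transform is non-vanishing on a neighbourhood of $0$, and combining this with the Choquet–Deny–type rigidity of the renewal measure (any two subsequential local limits of $U(x+I)\cdot x/A(x)$ differ by a bounded measurable function that is $\mu$-harmonic in the appropriate sense, hence constant) forces $U$ itself to satisfy the SRT. I would organize the write-up as: (i) set up the big-jump decomposition and the identity $\lambda G = U * \mu$; (ii) record the tail and moment properties of $\mu$ and the a priori bound on $U$; (iii) prove $U$ SRT $\Rightarrow$ $G$ SRT by dominated convergence; (iv) prove $G$ SRT $\Rightarrow$ $U$ SRT by the tail-splitting plus rigidity argument; the non-arithmeticity hypothesis on $\Pi$ enters in (iv) to rule out lattice obstructions and to guarantee the relevant rigidity.
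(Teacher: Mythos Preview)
Your big-jump decomposition $X=Z+R$ matches the paper's, but the central identity you invoke --- $\lambda G = U * \mu$ for a single fixed probability $\mu$ --- is false. Conditioning on $N_t=n$, the $t$-integral $\int_0^\infty e^{-\lambda t}\frac{(\lambda t)^n}{n!}\,\P(R_t\in\cdot)\,\dd t$ equals $\tfrac{1}{\lambda}\mu_n$, where $\mu_n$ is the law of $R_T$ with $T\sim\mathrm{Gamma}(n{+}1,\lambda)$; so what one actually obtains is $\lambda G=\sum_{n\ge0}F^{*n}*\mu_n$ with $\mu_n$ genuinely depending on $n$. Since $R_t=\sigma B_t+\mu t+M_t$, the kernel $\mu_n$ is centred near $\mu(n{+}1)/\lambda$ and has variance of order $n$, so there is no single light-tailed kernel, and both your dominated-convergence step (iii) and your deconvolution/Choquet--Deny step (iv) collapse: for $n$ near $A(\delta x)$ the relevant shifts are of order $A(x)$, not $O(\log^2 x)$.

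The paper sidesteps this by never working with $G$ and $U$ directly. Instead it uses the reformulation (Subsection~\ref{sec:refor}): the SRT for $X$ is equivalent to the asymptotic negligibility of $\widehat T(\delta;x):=\int_0^{\delta A(x)}\P(X_t\in(x-h,x])\,\dd t$, and likewise $T_S(\delta;x)$ for the walk. Then (a) $\widehat T_{X^{(1)}}(\delta;x)$ and $T_S(\delta;x)$ are compared by an elementary Poisson/Gamma mixing argument (one inequality from the CLT for the Gamma, the other from a Chernoff bound); (b) $\widehat T_X(\delta;x)$ and $\widehat T_{X^{(1)}}(\delta;x)$ are compared by splitting on $\{X_t^{(2)}\le x/2\}$ and using that $X^{(2)}$ has compactly supported L\'evy measure, hence exponential moments. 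The key point is that only $t\le\delta A(x)$ enters, which keeps $|X_t^{(2)}|$ under control, and the argument in (b) is fully symmetric in $X$ and $X^{(1)}$, so the converse direction is immediate --- no rigidity or harmonic-analysis input is needed.
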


As a consequence, the necessary and sufficient conditions for the SRT in
Theorems~\ref{th:main} and~\ref{th:mainrw} can be applied  to
the L\'evy process $X$. We recall
that these conditions can be spelled out 
in terms of the probability $F$ alone (see the comments
 after Theorems~\ref{th:main} and~\ref{th:mainrw}). Then, for a L\'evy process~$X$,
\emph{we have necessary and sufficient conditions for the SRT that
can be spelled out explicitly in terms of the L\'evy measure $\Pi$},
through $F$ defined in \eqref{eq:F0}.

\smallskip

The proof of Theorem~\ref{th:Levy}, 
given in Section~\ref{sec:soft}, is obtained comparing the L\'evy process
$X$ with a compound Poisson process with step distribution $F$.

\begin{remark}
It is known, see \cite[Proof of Theorem 21 on page 38]{B96}, that 
the potential measure $G(\dd x)$ of any L\'evy process $X$ coincides for $x \ne 0$
with the renewal measure of a random walk $(S_n)_{n\ge 0}$ with step distribution
$\P(S_1 \in \dd x) := \int_{0}^{\infty }e^{-t} \, \P(X_{t}\in \dd x) \, \dd t$.
It is also easy to see that $X$ is in the domain of attraction of a stable
law of index $\alpha \in (0,1)$ and positivity parameter $\rho >0$, with
norming function $a(t)$, if and only if the random walk $S$ 
is in the domain of attraction of a
the same stable law with norming function $a(n)$.

So, if we write down
necessary and sufficient conditions
for $S$ to verify the SRT, these will be necessary and sufficient conditions for $X$ to verify
the SRT. However this approach is unsatisfactory, because one would like
conditions expressed in terms of the characteristics of $X$, i.e. the quantities
$\Pi$, $\sigma ^{2}$, $\mu$ appearing in the
L\'{e}vy-Khintchine representation \eqref{eq:LK}, and the technical problem of
expressing our necessary and sufficient conditions for 
$S$ to satisfy the SRT in terms of these characteristics seems quite challenging.
\end{remark}

\subsection{Structure of the paper}

The paper is organized as follows.
\begin{itemize}
\item In Section~\ref{sec:prepa} we recall some standard results.

\item In Section~\ref{sec:newproof} we prove Theorem~\ref{th:ron}.

\item Sections~\ref{sec:usebou}--\ref{sec:casek>1} are devoted to the proofs of Theorems~\ref{th:main}
and~\ref{th:mainrw}.
\begin{itemize}
\item[-] In Section~\ref{sec:usebou} we reformulate the SRT and we give two key bounds.

\item[-] In Section~\ref{sec:necce} we prove the necessity part for both Theorems~\ref{th:main}
and~\ref{th:mainrw}.

\item[-] In Section~\ref{sec:suffe} we prove the sufficiency part of Theorem~\ref{th:main}.

\item[-] The sufficiency part of Theorem~\ref{th:mainrw} 
is proved in Section~\ref{sec:casek=1}
for the case $\alpha > \frac{1}{3}$. The case $\alpha \le \frac{1}{3}$ is treated in
Section~\ref{sec:casek>1} and is much more technical.
\end{itemize}

\item In Section~\ref{sec:soft} we prove ``soft'' results, such as
Theorem~\ref{th:1/2},
Propositions~\ref{pr:main1}, \ref{th:integr},
\ref{th:suffrw}, and Theorem~\ref{th:Levy},
which are corollaries of our main results.

\item In Section~\ref{sec:examples} we prove
Propositions~\ref{th:counter} and~\ref{th:counterex},
which provide counter-examples.

\item In Appendix~\ref{sec:app} we prove
some technical results.
\end{itemize}

\smallskip

\section{Setup}

\label{sec:prepa}

\subsection{Notation}

We recall that $f(s) \lesssim g(s)$ or $f \lesssim g$ means $f(s) = O(g(s))$,
i.e.\ for a suitable constant $C < \infty$ one has $f(s) \le C \, g(s)$ for
all $s$ in the range under consideration. The constant $C$ may depend on the
probability $F$ (in particular, on $\alpha$) and on $h$. When some extra
parameter $\epsilon$ enters the constant $C = C_{\epsilon}$, we write $f(s)
\lesssim_{\epsilon} g(s)$. If both $f \lesssim g$ and $g \lesssim f$, we
write $f \approx g$. We recall that $f(s) \sim g(s)$ means $%
\lim_{s\to\infty} f(s)/g(s) = 1$.

\subsection{Regular variation}

\label{eq:regvar}

Without loss of generality \cite[\S 1.3.2]{cf:BinGolTeu}, we can assume that
$A:[0,\infty) \to (0,\infty)$ is differentiable, strictly increasing and
such that
\begin{equation}  \label{eq:deriv}
A^{\prime }(s) \sim \alpha \frac{A(s)}{s} \,, \qquad \text{as } s \to \infty
\,.
\end{equation}
We fix $A(0) := \frac{1}{2}$ and $A(1) := 1$, so that both $A$ and $%
A^{-1}$ map $[1,\infty)$ onto itself. We also write $a_u = A^{-1}(u)$ for
all $u \in [\frac{1}{2},\infty)$, in agreement with \eqref{eq:an0}.

We observe that, by Potter's bounds, for every $\epsilon > 0$ one has
\begin{equation}  \label{eq:Potter}
\rho^{\alpha+\epsilon} \lesssim_{\epsilon} \frac{A(\rho s)}{A(s)}
\lesssim_{\epsilon} \rho^{\alpha-\epsilon} \,, \qquad \forall \rho \in
(0,1], \ s \in [1,\infty) \ \text{ such that } \ \rho s \ge 1 \,.
\end{equation}
More precisely, part (i) of \cite[Theorem 1.5.6]{cf:BinGolTeu} shows that
relation \eqref{eq:Potter} holds for $\rho s \ge \bar x_{\epsilon}$, for a
suitable $\bar x_{\epsilon} < \infty$; the extension to $1 \le \rho s \le
\bar x_{\epsilon}$ follows as in part (ii) of the same theorem, because $%
A(y) $ is bounded away from zero and infinity for $y \in [1, \bar
x_{\epsilon}]$.

We also recall Karamata's Theorem \cite[Propositions~1.5.8 and~1.5.10]%
{cf:BinGolTeu}:
\begin{align}  \label{eq:Kar1}
\text{if } f \in RV(\zeta) \text{ with } \zeta > -1: \qquad
\int_{s \le t} f(s) \, \dd s 
\,\underset{t \to \infty}{\sim}\, \sum_{n \le t} f(n) 
\,\underset{t \to \infty}{\sim}\, \frac{1}{\zeta+1} \, t \,
f(t) \,, \\
\label{eq:Kar2}
\text{if } f \in RV(\zeta) \text{ with } \zeta < -1: \qquad
\int_{s > t} f(s) \, \dd s 
\,\underset{t \to \infty}{\sim}\, \sum_{n > t} f(n) 
\,\underset{t \to \infty}{\sim}\, \frac{-1}{\zeta+1} \, t \,
f(t) \,.
\end{align}

\subsection{Local limit theorems}

\label{sec:llt}

We call a probability $F$ on $\mathbb{R}$ \emph{lattice} if it is supported
by $v\mathbb{Z}+a$ for some $v > 0$ and $0 \le a < v$, and the maximal value
of $v>0$ with this property is called the \emph{lattice span} of $F$. If $F$
is arithmetic (i.e.\ supported by $h\mathbb{Z}$),
then it is also lattice, but the spans might differ (for instance, $%
F(\{-1\}) = F(\{+1\}) = \frac{1}{2}$ has arithmetic span $h=1$ and lattice
span $v=2$). A lattice distribution is not necessarily arithmetic.\footnote{%
If $F$ is lattice, say supported by $v\mathbb{Z}+a$ where $v$ is the lattice
span and $a \in [0,v)$, then $F$ is arithmetic if and only if $a/v \in
\mathbb{Q}$, in which case its arithmetic span equals $h = v / m$ for some $%
m\in\mathbb{N}$.}

Recall that, under \eqref{eq:tail2}, $S_{n}/a_{n}$ converges in distribution
as $n\rightarrow \infty $ toward a stable law, whose density we denote by $%
\phi $ (the norming sequence $a_{n}$ is defined in \eqref{eq:an0}). If we set
\begin{equation}
J=(-v,0]\qquad \text{with}\qquad v=%
\begin{cases}
\text{lattice span of $F$} & \text{(if $F$ is lattice)} \\
\text{any fixed number $>0$} & \text{(if $F$ is non-lattice)}%
\end{cases}%
\,,  \label{eq:J}
\end{equation}%
Gnedenko's and Stone's local limit theorems \cite[Theorems~8.4.1 and~8.4.2]%
{cf:BinGolTeu} yield
\begin{equation}
\lim_{n\rightarrow \infty }\,\sup_{x\in \mathbb{R}}\left\vert a_{n}\,\P %
(S_{n}\in x+J)-v\,\phi \left( \frac{x}{a_{n}}\right) \right\vert =0\,.
\label{eq:llt}
\end{equation}%
Since $\sup_{z\in \mathbb{R}}\phi (z)<\infty $, we obtain the useful
estimate
\begin{equation}
\sup_{x\in \mathbb{R}}\P (S_{n}\in (x-w,x])\lesssim _{w}\frac{1}{a_{n}}\,,
\label{eq:sup}
\end{equation}%
which, plainly, holds for \emph{any} fixed $w>0$ (not necessarily the
lattice span of $F$).

\smallskip

\section{Proof of Theorem~\protect\ref{th:ron}}

\label{sec:newproof}

We prove \eqref{eq:ron1}, equivalently \eqref{eq:ron1modk},
by steps. Without loss of generality, we assume that
$J \subseteq [0,\infty)$ (it suffices to redefine $x \mapsto x' := x + \min J$ and 
$J \mapsto J' := J - \min J$).

\subsection*{Step 1}
Our starting point is an integrated version of \eqref{eq:ron1}:
\begin{equation}\label{eq:step1}
	\forall \gamma \in (0,\infty): \qquad 
	\P (S_n \ge x, \, M_n \le \gamma x) \lesssim_{\gamma} \bigg(\frac{n}{A(x)}\bigg)^{1/\gamma} \,.
\end{equation}
This is a Fuk-Nagaev inequality, which follows from \cite[Theorems 1.1 and 1.2]{cf:N} 
(see \cite[Theorem 5.1]{B17}
for a more transparent statement). Let us be more precise.
\begin{itemize}
\item \emph{Case $\alpha \in (0,1)$.} We apply equation (1.1) from \cite[Theorem 1.1]{cf:N}
(neglecting the first term in the right hand side, which is the contribution of 
$M_n > y$): for every $y \in (0, x]$ and $t \in (0,1]$, if we define
$A(t; 0, y) := n \int_{0}^{y} u^t \, F(\dd u)$, we have
\begin{equation*}
	\P(S_n \ge x, \, M_n \le y) \le P_1 = \bigg( \frac{e}{1+ \frac{x y^{t-1}}{A(t;0,y)}}
	\bigg)^{\frac{x}{y}} \le
	\bigg( e \, \frac{A(t;0,y)}{x y^{t-1}}
	\bigg)^{\frac{x}{y}} \,.
\end{equation*}
We fix $t \in (\alpha, 1]$, so that
$A(t; 0, y) \le n \int_{0}^{y} t z^{t-1} \, \overline{F}(z) \, \dd z \lesssim
n y^t / A(y)$, thanks to \eqref{eq:tail2} and \eqref{eq:Kar1}.
Taking $y = \gamma x$, since $A(y) \gtrsim_{\gamma} A(x)$, we obtain
\eqref{eq:step1}.
\item \emph{Case $\alpha \in (1,2)$.} We apply equation (1.3) from \cite[Theorem 1.2]{cf:N}: 
for $y \in (0, x]$ and $t \in [1,2]$, setting
$A(t; -y, y) := n \int_{-y}^{y} |u|^t \, F(\dd u)$ and
$\mu(-y,y) := n \int_{-y}^{y} u \, F(\dd u)$,
\begin{equation*}
	\P(S_n \ge x, \, M_n \le y) \le P_3 = \frac{e^{\frac{x}{y}}}{\left(1+ \frac{x y^{t-1}}{A(t;-y,y)}
	\right)^{\frac{x - \mu(-y,y)}{y} + \frac{A(t,-y-y)}{y^t}}} \,.
\end{equation*}
We drop the term $A(t,-y-y) / y^t \ge 0$ from the exponent and get an upper bound.
Next we fix $t \in (\alpha,2]$, so that $A(t; -y, y) \lesssim
n y^t / A(y)$ as before, hence
\begin{equation*}
	\P(S_n \ge x, \, M_n \le y) \le
	\frac{e^{\frac{x}{y}}}{\left(1+ \frac{x}{y} \frac{A(y)}{n}
	\right)^{\frac{x - \mu(-y,y)}{y}}}
	\le \bigg( \frac{e n}{A(y)} \bigg)^{\frac{x}{y}}
	\, \left(1+ \frac{x}{y} \frac{A(y)}{n}
	\right)^{\frac{\mu(-y,y)}{y}} \,.
\end{equation*}
If we fix $y = \gamma x$, the first term in the right hand side matches with
\eqref{eq:step1}. It remains to show that the second term is bounded.
Since we assume that $F$ has zero mean,
we can write $|\mu(-y,y)| = |-n \int_{|u| \ge y} u \, F(\dd u)|
\lesssim n y / A(y)$, by \eqref{eq:tail2} and \eqref{eq:Kar2},
therefore for $y= \gamma x$ the second term is
$\lesssim (1 + \frac{A(y)}{\gamma n})^{n/A(y)}
\le \exp(\frac{1}{\gamma})$. This proves \eqref{eq:step1}.
\end{itemize}

\subsection*{Step 2}
Next we deduce from \eqref{eq:step1} the following relation
\begin{equation}\label{eq:step2}
	\P (S_n \in x+J, \, M_n \le \tfrac{1}{2} \gamma x) \lesssim
	\frac{1}{a_n} \, \bigg( \frac{n}{A(x)} \bigg)^{1/\gamma} \,,
\end{equation}
which is rougher than \eqref{eq:ron1}, due to the factor
$\frac{1}{2}$ and to the exponent $1/\gamma$ instead of $\lfloor 1/\gamma \rfloor$.

Define $\hat X_i := X_{n+1-i}$,
for $1 \le i \le n$, and let
$(\hat S_k := \hat X_1 + \ldots + \hat X_k = S_n - S_{n-k})_{1 \le k \le n}$ be
the corresponding random walk, which has the same law as
$(S_k)_{1 \le k \le n}$. Then
\begin{equation*}
\begin{split}
	\P (S_n \in x+J,\, S_{\lfloor n/2 \rfloor} < \tfrac{x}{2}, \, M_n \le \tfrac{1}{2} \gamma x)
	& = \P (\hat S_n \in x+J,\, \hat S_{\lfloor n/2 \rfloor} 
	< \tfrac{x}{2}, \, \hat M_n \le \tfrac{1}{2} 
	\gamma x) \\
	& = \P (S_n \in x+J,\, S_n - S_{\lceil n/2 \rceil} < \tfrac{x}{2}, 
	\, M_n \le \tfrac{1}{2} \gamma x) \\
	& \le \P (S_n \in x+J,\, S_{\lceil n/2 \rceil} 
	> \tfrac{x}{2}, \, M_n \le \tfrac{1}{2} \gamma x) \,,
\end{split}
\end{equation*}
where the second equality holds because $\hat S_n = S_n$ and $\hat M_n = M_n$,
while for the inequality note that $S_n \ge x$ (by $J \subseteq [0,\infty)$).
To lighten notation, henceforth we assume that $n$ is even
(the odd case is analogous). It follows from the previous inequality that
\begin{equation*}
\begin{split}
	\P (S_n \in x+J, \, M_n \le \tfrac{1}{2} \gamma x) & \le
	2 \, \P (S_n \in x+J,\, S_{n/2} \ge \tfrac{x}{2}, \, M_n \le \tfrac{1}{2} \gamma x)  \\
	& \le 2 \int_{z \ge \frac{x}{2}} \P(S_{n/2} \in \dd z, \, M_{n/2} \le \tfrac{1}{2} \gamma x)
	\, \P(S_{n/2} \in x-z+J) \\
	& \lesssim \frac{1}{a_{n/2}} \, \P(S_{n/2} \ge \tfrac{1}{2}x, \, M_{n/2} \le \tfrac{1}{2} 
	\gamma x)
	\lesssim \frac{1}{a_n} \, \bigg(\frac{n}{A(x)}\bigg)^{1/\gamma}\,,
\end{split}
\end{equation*}
where we have used \eqref{eq:sup} and \eqref{eq:step1}.

\subsection*{Step 3}
Next we prove relation \eqref{eq:ron2}, i.e.\ we show that
\begin{equation}\label{eq:step3}
	\P (S_n \in x+J) \lesssim
	\frac{1}{a_n} \, \frac{n}{A(x)} \,.
\end{equation}
This is easy: if we fix $\epsilon = \frac{1}{2}$,
by \eqref{eq:sup} we can write
\begin{equation} \label{eq:analog}
\begin{split}
	\P (S_n \in x+J, \, & M_n > \epsilon \, x) 
	\le n \, \P (S_n \in x+J, \, X_1 > \epsilon \, x) \\
	& = n \int_{y > \epsilon \, x} F(\dd y) \, \P(S_{n-1} \in x-y+J)
	\lesssim \frac{n}{a_{n-1}} \, \overline F(\epsilon \, x)
	\lesssim_{\epsilon} \frac{1}{a_n} \, \frac{n}{A(x)} \,.
\end{split}
\end{equation}
Applying \eqref{eq:step2} with $\gamma = 1$, we see that \eqref{eq:step3} holds.

\subsection*{Step 4}
Finally we prove \eqref{eq:ron1modk}.
The case $k = 1$, that is $\gamma \in [1,\infty)$, follows by \eqref{eq:step3}.
Inductively, we fix $k\in\N$ and we prove that \eqref{eq:ron1modk} holds for
$\gamma \in [\frac{1}{k+1}, \frac{1}{k})$,
assuming that it holds for $\gamma \in [\frac{1}{k}, \frac{1}{k-1})$.
Let us fix $\epsilon := \frac{1}{2(k+1)}$.
By \eqref{eq:step2} (where we choose $\gamma = 2 \epsilon$) we get
\begin{equation*}
\begin{split}
	\P (S_n \in x+J, \ M_n \le \epsilon x) 
	\lesssim \frac{1}{a_n} \, \bigg( \frac{n}{A(x)} \bigg)^{1/(2\epsilon)}
	= \frac{1}{a_n} \, \bigg( \frac{n}{A(x)} \bigg)^{k+1} \,.
\end{split}
\end{equation*}
It remains to consider
\begin{equation*}
\begin{split}
	\P (S_n \in x+J, \ \epsilon x < M_n \le \gamma x) 
	& \le n \, \P (S_n \in x+J, \, X_1 > \epsilon x\,, M_n \le \gamma x)  \\
	& \le
	n \int_{y \in (\epsilon \, x, \gamma x]} F(\dd y) \, \P(S_{n-1} \in x-y+J,\,
	M_{n-1} \le \gamma x) \\
	& \le n \, \overline{F}(\epsilon x) \, \sup_{z \ge (1-\gamma)x} 
	\P(S_{n-1} \in z+J,\,
	M_{n-1} \le \gamma x) \,.
\end{split}
\end{equation*}
Observe that, for $z \ge (1-\gamma) x$, we can bound
\begin{equation*}
	\P(S_{n-1} \in z+J,\,
	M_{n-1} \le \gamma x) \le
	\P(S_{n-1} \in z+J,\,
	M_{n-1} \le \gamma' z) \,, \qquad \text{with} \quad
	\gamma' := \frac{\gamma}{1-\gamma} \,.
\end{equation*}
The key observation is that $\gamma' \in [\frac{1}{k}, \frac{1}{k-1})$,
since $\gamma \in [\frac{1}{k+1}, \frac{1}{k})$. By our inductive assumption, relation
\eqref{eq:ron1modk} holds for $\gamma'$, so
$\P(S_{n-1} \in z+J,\, M_{n-1} \le \gamma' z) \lesssim_{\gamma} \frac{1}{a_n}
(\frac{n}{A(z)})^k$ and we get
\begin{equation*}
	\P (S_n \in x+J, \ \epsilon x < M_n \le \gamma x) 
	\lesssim_{\gamma} n \, \overline{F}(\epsilon x) \, \frac{1}{a_n}
	\bigg( \frac{n}{A(x)} \bigg)^k
	\lesssim_{\epsilon} \frac{1}{a_n}
	\bigg( \frac{n}{A(x)} \bigg)^{k+1} \,,
\end{equation*}
which completes the proof.\qed

\smallskip

\section{Strategy and key bounds for Theorems~\ref{th:main} and~\ref{th:mainrw}}

\label{sec:usebou}

\subsection{Reformulation of the SRT}
\label{sec:refor}

It turns out that proving the SRT amounts to showing that \emph{small values of $n$ give a
negligible contribution to the renewal measure}. More precisely, if $F$ is a
probability on $\mathbb{R}$ satisfying \eqref{eq:tail2}, it is known that %
\eqref{eq:SRT} holds if and only if
\begin{equation}  \label{eq:SRTeq}
T(\delta; x) := \sum_{1 \le n \le A(\delta x)} \P (S_n \in x+I) \qquad \text{%
is a.n.} \,,
\end{equation}
see \cite[Appendix]{cf:Chi0} or Remark~\ref{rem:SRTeq} below. 

Applying Theorem~\ref{th:ron}, it is easy to
show that \eqref{eq:SRTeq} always holds for $\alpha > \frac{1}{2}$.
Since $%
n/a_n$ is regularly varying with index $1-1/\alpha > -1$, by \eqref{eq:ron2}
and \eqref{eq:Kar1}
\begin{equation*}
\sum_{1 \le n \le A(\delta x)} \P (S_n \in x+I) \lesssim \frac{1}{A(x)}
\sum_{1 \le n \le A(\delta x)} \frac{n}{a_n} \lesssim \frac{1}{A(x)} \frac{%
A(\delta x)^2}{\delta x} \underset{x\to\infty}{\sim} \delta^{2\alpha - 1}
\frac{A(x)}{x} \,,
\end{equation*}
from which \eqref{eq:SRTeq} follows, since $2\alpha - 1 > 0$.
\emph{We have just proved Theorems~\ref{th:main} and~%
\ref{th:mainrw} for $\alpha > \frac{1}{2}$.}
In the next sections, we will focus on the case $\alpha \le \frac{1}{2}$.

\begin{remark}
\label{rem:SRTeq}\textrm{It is easy to see how \eqref{eq:SRTeq} arises. For
fixed $\delta > 0$, by \eqref{eq:U} we can write
\begin{equation}  \label{eq:Uheur}
U(x+I) \ge \sum_{A(\delta x) < n \le A(\frac{1}{\delta}x)} \P (S_n \in x+I)
\,.
\end{equation}
Since $\P (S_n \in x+I) \sim \frac{h}{a_n} \, \phi(\frac{x}{a_n})$ by %
\eqref{eq:llt} (where we take $h=v$ for simplicity), a Riemann sum
approximation yields (see \cite[Lemma~3.4]{cf:Chi0})
\begin{equation*}
\sum_{A(\delta x) < n \le A(\frac{1}{\delta}x)} \P (S_n \in x+I) \sim h \,
\frac{A(x)}{x} \, \mathsf{C}(\delta) \,, \qquad \text{with} \qquad \mathsf{C}%
(\delta) = \alpha \int_{\delta}^{\frac{1}{\delta}} z^{\alpha-2} \phi(\tfrac{1%
}{z}) \, \mathrm{d} z \,.
\end{equation*}
Since $\lim_{\delta \to 0} \mathsf{C}(\delta) = \mathsf{C}$, proving %
\eqref{eq:SRT} amounts to controlling the ranges excluded from %
\eqref{eq:Uheur}, i.e. $\{n \le A(\delta x)\}$ and $\{n > A(\frac{1}{\delta}%
x)\}$. The latter gives a negligible contribution by $\P (S_n \in x+I) \le
C/a_n$ (recall \eqref{eq:sup}), while the former is controlled precisely by %
\eqref{eq:SRTeq}. }
\end{remark}

\subsection{Key bounds}

The next two lemmas estimate the contribution
of the maximum $M_n$, see \eqref{eq:max},
to the probability $\P(S_n \in x+I)$. Recall that $\kappa_{\alpha}$ is defined
in \eqref{eq:kappaalpha}.

\smallskip

We first consider the case when there is a ``big jump'', i.e.\
$M_n > \gamma x$ for some $\gamma > 0$.

\begin{lemma}[Big jumps]
\label{th:kale3} Let $F$ satisfy \eqref{eq:tail2} for some $A \in RV(\alpha)$, 
with $\alpha \in (0, 1)$. There is $\eta = \eta_{\alpha} > 0$
such that for all $\delta \in (0, 1]$, $\gamma \in (0,1)$ and $x \in
[0,\infty)$ the following holds:
\begin{equation}  \label{eq:k}
\begin{split}
& \forall \ell \ge \kappa_{\alpha}: \qquad \sum_{1 \le n \le A(\delta x)}
n^\ell \, \left\{ \sup_{z\in\mathbb{R}} \,\P \left(S_n \in z+I, \, M_n >
\gamma x \right) \right\} \lesssim_{\gamma,\ell} \delta^{\eta} \, b_{\ell +
1}(x) \,.
\end{split}%
\end{equation}
\end{lemma}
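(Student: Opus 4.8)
\textbf{Proof plan for Lemma~\ref{th:kale3}.}
The plan is to decompose the event $\{S_n \in z+I,\ M_n > \gamma x\}$ according to which single summand realizes the ``big jump''. By a union bound over the index $i$ of the large step, together with exchangeability,
\begin{equation*}
\P(S_n \in z+I,\ M_n > \gamma x) \le n \, \P(S_n \in z+I,\ X_n > \gamma x)
= n \int_{y > \gamma x} F(\dd y) \, \P(S_{n-1} \in z - y + I) \,.
\end{equation*}
I would then apply the uniform local bound \eqref{eq:sup} to get $\P(S_{n-1} \in z-y+I) \lesssim 1/a_{n-1} \lesssim 1/a_n$, and \eqref{eq:tail2} to get $\overline{F}(\gamma x) \lesssim_{\gamma} 1/A(x)$, which yields
\begin{equation*}
\sup_{z\in\R} \P(S_n \in z+I,\ M_n > \gamma x) \lesssim_\gamma \frac{n}{a_n} \, \frac{1}{A(x)} \,.
\end{equation*}

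Substituting this into the left side of \eqref{eq:k}, the sum becomes $\frac{1}{A(x)} \sum_{1 \le n \le A(\delta x)} \frac{n^{\ell+1}}{a_n}$. The function $n \mapsto n^{\ell+1}/a_n$ is regularly varying of index $\ell + 1 - 1/\alpha$, which is strictly greater than $-1$ precisely when $\ell > 1/\alpha - 2$, i.e.\ $\ell \ge \lfloor 1/\alpha \rfloor - 1 = \kappa_\alpha$ (with a small caveat when $1/\alpha \in \N$ and $\ell = \kappa_\alpha$, where the index equals exactly $\kappa_\alpha + 1 - 1/\alpha = 0 > -1$, so Karamata still applies). Hence by Karamata's theorem \eqref{eq:Kar1},
\begin{equation*}
\sum_{1 \le n \le A(\delta x)} \frac{n^{\ell+1}}{a_n} \, \underset{x\to\infty}{\lesssim}\, \frac{A(\delta x)^{\ell+2}}{a_{A(\delta x)}} = \frac{A(\delta x)^{\ell+2}}{\delta x} \,,
\end{equation*}
using $a_{A(\delta x)} = \delta x$. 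Then $\frac{1}{A(x)} \cdot \frac{A(\delta x)^{\ell+2}}{\delta x} = \big(\frac{A(\delta x)}{A(x)}\big)^{\ell+1} \cdot \frac{A(\delta x)}{A(x)} \cdot \frac{A(x)^{\ell+1}}{\delta x} = \big(\frac{A(\delta x)}{A(x)}\big)^{\ell+2} \cdot \frac{1}{\delta} \cdot b_{\ell+1}(x)$, and by Potter's bounds \eqref{eq:Potter} the ratio $A(\delta x)/A(x) \lesssim_\epsilon \delta^{\alpha - \epsilon}$, so choosing $\epsilon$ small the prefactor is $\lesssim \delta^{(\ell+2)(\alpha-\epsilon) - 1} \le \delta^{\eta}$ for a suitable $\eta = \eta_\alpha > 0$ (since $(\ell+2)\alpha \ge (\kappa_\alpha + 2)\alpha > 1$ for $\ell \ge \kappa_\alpha$). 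This gives \eqref{eq:k} uniformly in $\delta \in (0,1]$.

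The main technical point to get right is the \emph{uniformity in $x$ down to $x = 0$} (or more precisely for all $x \in [0,\infty)$, including small $x$): Karamata's theorem \eqref{eq:Kar1} is an asymptotic statement, so I would need to argue that the ``$\lesssim$'' can be made uniform in $x$, handling small $x$ separately — there the sum $\sum_{1 \le n \le A(\delta x)}$ is over a bounded range (since $A(\delta x)$ is bounded for bounded $x$, recall $A(0) = \tfrac12$), so the left side of \eqref{eq:k} is bounded while $b_{\ell+1}(x) = A(|x|)^{\ell+1}/(|x|\vee 1)$ is bounded away from zero on compacts; adjusting constants absorbs this regime. The other subtlety is the boundary case $1/\alpha \in \N$ with $\ell = \kappa_\alpha$, where the exponent in Karamata is exactly $0$ and one picks up a slowly varying correction rather than a power; this is still fine because $n^{\ell+1}/a_n \in RV(0)$ implies $\sum_{n \le m} n^{\ell+1}/a_n \sim m \cdot (\text{slowly varying})$, and the extra slowly varying factor is harmless after shrinking $\eta$, or can be absorbed by using a slightly smaller $\epsilon$ in Potter's bound. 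I expect the bookkeeping around these edge cases, rather than any deep idea, to be where the care is needed.
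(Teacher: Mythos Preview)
Your proposal is correct and matches the paper's proof: union bound on the index of the big jump, the uniform local estimate \eqref{eq:sup}, Karamata on $\sum n^{\ell+1}/a_n$, then Potter's bounds to extract $\delta^\eta$. Two minor simplifications relative to your writeup: the paper disposes of small $x$ by noting the sum is empty when $\delta x < 1$ (since $A(\delta x) < A(1) = 1$), and your boundary-case worry is unfounded since the Karamata index $\lfloor 1/\alpha \rfloor - 1/\alpha$ is always strictly $> -1$, including when it equals $0$, so \eqref{eq:Kar1} applies without any modification.
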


\begin{proof}
For $\delta x < 1$ the left hand side of \eqref{eq:k}
vanishes, because $A(\delta x) < A(1) = 1$. Then
we can assume that $\delta x \ge 1$, hence $x \ge 1$.
Recalling \eqref{eq:sup}, we can write
\begin{equation}  \label{eq:plu}
\begin{split}
\P \big(S_n \in z+I, \ M_n > \gamma x \big) & \le n \, \P \left( S_n \in
z+I, \ X_1 > \gamma x \right) \\
& = n \, \int_{w > \gamma x} \P \left(X \in \mathrm{d} w \right) \, \P %
(S_{n-1} \in z-w+I) \\
& \le n \, \P \left(X > \gamma x \right) \, \left\{ \sup_{y\in\mathbb{R}} \P %
(S_{n-1} \in y+I) \right\} \\
& \lesssim \frac{n}{A(\gamma x)} \, \frac{1}{a_n} \lesssim_{\gamma} \frac{n}{%
A(x)} \, \frac{1}{a_n} \,,
\end{split}%
\end{equation}
therefore
\begin{equation}  \label{eq:use}
\begin{split}
\sum_{1 \le n \le A(\delta x)} n^\ell \, \left\{ \sup_{z\in\mathbb{R}} \P %
\left(S_n \in z+I, \, M_n > \gamma x \right) \right\} & \lesssim_{\gamma}
\frac{1}{A(x)} \sum_{1 \le n \le A(\delta x)} \frac{n^{\ell+1}}{a_n} \\
& \lesssim_{\ell} \frac{1} {A(x)} \, \frac{A(\delta x)^{\ell+2}} {\delta x}
\,,
\end{split}%
\end{equation}
by \eqref{eq:Kar1}, because $n^{\ell + 1}/a_n$ is regularly varying with
index $(\ell + 1) - \frac{1}{\alpha} \ge (\kappa_{\alpha} + 1) - \frac{1}{%
\alpha} = \left\lfloor \frac{1}{\alpha} \right\rfloor - \frac{1}{\alpha} >
-1$.
Let us introduce a parameter $b = b_{\alpha} \in (0,1)$, depending only on $%
\alpha$, that will be fixed in a moment. Since we assume that $\delta x \ge 1$,
we can apply the upper bound in %
\eqref{eq:Potter} with $\epsilon = (1-b)\alpha$ and $\rho = \delta$, that is
$A(\delta x) \lesssim \delta^{b\alpha} A(x)$, which shows that \eqref{eq:use}
is
\begin{equation*}
\begin{split}
\lesssim \delta^{b \alpha(\ell + 2)-1} \, \frac{A(x)^{\ell + 1}}{x} \lesssim
\delta^{b \alpha(\kappa_{\alpha} + 2)-1} \, \frac{A(x)^{\ell + 1}}{x} \,,
\end{split}%
\end{equation*}
because $\delta \le 1$ and $\ell \ge \kappa_{\alpha}$ by assumption. Since $%
\alpha(\kappa_{\alpha} +2) > 1$ (because $\kappa_{\alpha} + 2 = \lfloor
\frac{1}{\alpha}\rfloor + 1 > \frac{1}{\alpha}$), we can choose $b =
b_{\alpha}<1$ so that the exponent of $\delta$ is strictly positive (e.g.\ $%
b_{\alpha} = \{\alpha(\kappa_{\alpha}+2)\}^{-1/2}$). This completes the
proof.
\end{proof}

We next consider the case of ``no big jump'', i.e.\ $M_n < \gamma x$.
The proof exploits in an essential way the large deviation estimate
provided by Theorem~\ref{th:ron}.

\begin{lemma}[No big jump]
\label{th:nobig} Let $F$ satisfy \eqref{eq:tail2} with $\alpha \in (0, 1)$.
For any $\gamma \in (0, \frac{\alpha}{1-\alpha})$ there is $\theta =
\theta_{\alpha,\gamma} > 0$ such that for all $\delta \in (0, 1]$ and $x \in
[0,\infty)$ the following holds:
\begin{equation}  \label{eq:0}
\forall \ell \ge 0: \qquad \sum_{1 \le n \le A(\delta x)} n^\ell \, \P %
\left(S_n \in x+I, \, M_n \le \gamma x \right) \lesssim_{\gamma,\ell} \,
\delta^{\theta} \, b_{\ell + 1}(x) \,.
\end{equation}
\end{lemma}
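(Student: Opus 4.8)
\textbf{Proof plan for Lemma~\ref{th:nobig}.}
The idea is to sum the local large deviation bound \eqref{eq:ron1modk} of Theorem~\ref{th:ron} over $1 \le n \le A(\delta x)$, exploiting that the condition $M_n \le \gamma x$ with $\gamma$ small forces the exponent $k = \lceil 1/\gamma \rceil$ to be large, which in turn makes the resulting sum over $n$ convergent with a power of $\delta$ to spare. First I would reduce to $\delta x \ge 1$, since otherwise the left-hand side of \eqref{eq:0} vanishes (as $A(\delta x) < A(1) = 1$), and in particular $x \ge 1$. Then, applying \eqref{eq:ron1modk} with $k = \lceil 1/\gamma \rceil$, one has
\begin{equation*}
	\P(S_n \in x + I, \ M_n \le \gamma x) \lesssim_{\gamma} \frac{1}{a_n} \left(\frac{n}{A(x)}\right)^{k} \,,
\end{equation*}
so that
\begin{equation*}
	\sum_{1 \le n \le A(\delta x)} n^{\ell} \, \P(S_n \in x+I, \, M_n \le \gamma x)
	\lesssim_{\gamma,\ell} \frac{1}{A(x)^k} \sum_{1 \le n \le A(\delta x)} \frac{n^{\ell + k}}{a_n} \,.
\end{equation*}

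The term $n^{\ell + k}/a_n$ is regularly varying of index $(\ell + k) - \frac{1}{\alpha}$. The key point is to choose $\gamma$ (hence $k$) large enough that this index exceeds $-1$, so that Karamata's theorem \eqref{eq:Kar1} applies and gives $\sum_{n \le A(\delta x)} n^{\ell+k}/a_n \lesssim_{\ell,k} A(\delta x)^{\ell + k + 1}/(\delta x)$. Here is where the hypothesis $\gamma < \frac{\alpha}{1-\alpha}$ enters: it is equivalent to $\frac{1}{\gamma} > \frac{1-\alpha}{\alpha} = \frac{1}{\alpha} - 1$, hence $k = \lceil 1/\gamma \rceil \ge \frac{1}{\gamma} > \frac{1}{\alpha} - 1$, which gives $k + \ell - \frac{1}{\alpha} \ge k - \frac{1}{\alpha} > -1$ for every $\ell \ge 0$. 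Moreover the strict inequality $k > \frac{1}{\alpha} - 1$ yields $\alpha(k+1) > 1$ with room to spare, which will produce a strictly positive power of $\delta$.

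After applying \eqref{eq:Kar1} we are left with $\frac{1}{A(x)^k} \cdot \frac{A(\delta x)^{\ell+k+1}}{\delta x}$. I would then use Potter's bound \eqref{eq:Potter} in the form $A(\delta x) \lesssim \delta^{b\alpha} A(x)$ (valid since $\delta x \ge 1$), with a parameter $b = b_{\alpha} \in (0,1)$ to be fixed, obtaining an upper bound
\begin{equation*}
	\lesssim \delta^{b\alpha(\ell + k + 1) - 1} \, \frac{A(x)^{\ell+1}}{x}
	= \delta^{b\alpha(\ell+k+1) - 1} \, b_{\ell+1}(x) \,.
\end{equation*}
Since $\ell \ge 0$ and $\delta \le 1$, this is $\lesssim \delta^{b\alpha(k+1) - 1} b_{\ell+1}(x)$. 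Because $\alpha(k+1) > 1$ strictly, one can pick $b = b_{\alpha} < 1$ close enough to $1$ so that $b\alpha(k+1) - 1 > 0$ (e.g. $b_{\alpha} = \{\alpha(k+1)\}^{-1/2}$, exactly as in the proof of Lemma~\ref{th:kale3}), and setting $\theta = \theta_{\alpha,\gamma} := b\alpha(k+1) - 1 > 0$ completes the proof. I do not anticipate a genuine obstacle: the only subtlety is bookkeeping the constants' dependence on $\gamma$ and $\ell$, and making sure the strict inequality $\gamma < \frac{\alpha}{1-\alpha}$ is used precisely to guarantee both the applicability of Karamata ($k + \ell > \frac{1}{\alpha} - 1$) and the positivity of the $\delta$-exponent ($\alpha(k+1) > 1$) — which are in fact the same condition on $k$. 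One should note that the hypothesis $\gamma < \frac{\alpha}{1-\alpha}$ is sharper than needed pointwise but is the natural threshold that makes $k = \lceil 1/\gamma\rceil$ satisfy $k \ge \lceil \frac{1}{\alpha} - 1 \rceil \ge \frac{1}{\alpha} - 1$ with the correct strictness when $\frac{1}{\alpha} \notin \mathbb{N}$; in the borderline case $\frac{1}{\alpha} \in \mathbb{N}$ one checks directly that $1/\gamma$ being strictly larger than $\frac{1}{\alpha}-1$ forces $k \ge \frac{1}{\alpha}$, still giving $\alpha(k+1) \ge \alpha + 1 > 1$.
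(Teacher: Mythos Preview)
Your proposal is correct and follows essentially the same route as the paper's proof: apply the local large deviation bound from Theorem~\ref{th:ron}, sum using Karamata's theorem \eqref{eq:Kar1} (the index exceeding $-1$ coming precisely from $\gamma < \frac{\alpha}{1-\alpha}$), and then extract a positive power of $\delta$ via Potter's bound. The only cosmetic difference is that the paper works with the real exponent $1/\gamma$ in place of your integer $k=\lceil 1/\gamma\rceil$ (valid since $n/A(x)\le 1$ in the range of summation) and parametrizes Potter's bound by $\alpha-\epsilon$ rather than $b\alpha$; your final digression on the case $1/\alpha\in\mathbb{N}$ is unnecessary, since $k\ge 1/\gamma > 1/\alpha-1$ already gives $\alpha(k+1)>1$ without any case distinction.
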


\begin{proof}
As in the proof of Lemma~\ref{th:kale3}, we can assume that
$x \ge 1$ and $\delta x \ge 1$ (since otherwise
the left hand side of \eqref{eq:0} vanishes). By \eqref{eq:ron1}
\begin{equation*}
\sum_{1 \le n \le A(\delta x)} n^\ell \, \P \left(S_n \in x+I, \, M_n \le
\gamma x \right) \lesssim \frac{1}{A(x)^{\frac{1}{\gamma}}} \sum_{1 \le n
\le A(\delta x)} \frac{n^{\ell + \frac{1}{\gamma}}}{a_n} \lesssim \frac{1}{%
A(x)^{\frac{1}{\gamma}}} \frac{A(\delta x)^{\ell + \frac{1}{\gamma} + 1}}{%
\delta x} \,,
\end{equation*}
where we applied \eqref{eq:Kar1}, because the sequence $n^{\ell + \frac{1}{%
\gamma}}/a_n$ is regularly varying with index $\ell + \frac{1}{\gamma} -
\frac{1}{\alpha} > \frac{1-\alpha}{\alpha} - \frac{1}{\alpha} = -1$. 
By the upper bound in \eqref{eq:Potter}, since $\ell \ge 0$ and $\delta
\le 1$ we get
\begin{equation*}
A(\delta x)^{\ell + \frac{1}{\gamma} + 1} \lesssim_{\epsilon}
\delta^{(\alpha-\epsilon)(\ell + \frac{1}{\gamma} + 1)} A(x)^{\ell + \frac{1%
}{\gamma} + 1} \lesssim \delta^{(\alpha-\epsilon)(\frac{1}{\gamma} + 1)}
A(x)^{\ell + \frac{1}{\gamma} + 1} \,.
\end{equation*}
Since $\gamma < \frac{\alpha}{1-\alpha}$, we can choose $\epsilon =
\epsilon_{\alpha,\gamma} > 0$ small enough so that $(\alpha-\epsilon)(\frac{1%
}{\gamma} + 1) > 1$.
\end{proof}

\smallskip

\section{Proof of
Theorems~\protect\ref{th:main} and~\protect\ref{th:mainrw}: necessity}

\label{sec:necce}

In this section we assume \eqref{eq:SRTeq}, which is equivalent to the strong
renewal theorem \eqref{eq:SRT}, and we deduce the necessary conditions 
in Theorems~\protect\ref{th:main} 
and~\protect\ref{th:mainrw}.
We can actually assume \eqref{eq:SRTeq} with $I = (-h,0]$
replaced by \emph{any fixed bounded interval $J$}. Indeed,
if $J = (-v,0]$ (for simplicity), we can bound
$\P (S_n \in x+J) \le \sum_{\ell = 0}^{\lfloor v/h \rfloor} \P (S_n \in
x_{\ell} + I)$, with $x_{\ell} := x-\ell h$.

Note that, since we assume \eqref{eq:SRTeq}, the following holds:
\begin{equation} \label{eq:Fo}
	\text{for any fixed $k\in\N$:} \qquad
	\P(S_k \in x+J) \underset{x\to\infty}{=} o(b_1(x)) \,.
\end{equation}

\subsection{Necessity for Theorem~\ref{th:main}}
\label{sec:necren}
Let us fix a probability $F$ on $[0,\infty)$ satisfying %
\eqref{eq:tail1} with $\alpha \in (0, 1)$.
We assume \eqref{eq:SRTeq}
and we deduce that $I_1^+(\delta;x)$ is a.n.
(recall \eqref{eq:I1}). 

\smallskip

We need some preparation.
Let us define the compact interval
\begin{equation}  \label{eq:K}
K := [\tfrac{1}{2}, 1] \,.
\end{equation}
By \eqref{eq:llt}, 
since $\inf_{z \in K} \phi(z) > 0$,
there are $n_{1} \in \N$ and $c_1, c_2 \in (0,\infty)$ such that%
\begin{gather}
\label{eq:infz}
\forall n\ge n_1 : 
\qquad \inf_{z \in \mathbb{R}: \ z/a_n \in K}
\, \P(S_n \in z+J) \ge \frac{c_1}{a_n} \,, \\
\label{eq:supz}
\forall n \in \N: 
\qquad \sup_{z \in \mathbb{R}} \, \P(S_n \in z+J) \le \frac{c_2}{a_n} \,.
\end{gather}%
Then, since $F((-\infty,-x] \cup [x,\infty)) \lesssim 1/A(x)$,
we can fix $C \in (0,\infty)$ such that
\begin{equation} \label{eq:choiceC}
	\forall n \in \N: \qquad 
	F((-\infty,-C a_n] \cup [C a_n,\infty))  \leq \frac{c_{1}}{2 c_{2}} \, \frac{1}{n} \,.
\end{equation}%
(Of course, we could just take $F([C a_n,\infty))$, since $F((-\infty,0)) = 0$,
but this estimate will be useful later for random walks.)
We also claim that
\begin{equation}  \label{eq:Chillt}
	\forall n \ge n_1: \qquad
	\inf_{z \in \mathbb{R}: \ z/a_n \in K} \P \left(S_n \in z+J, \, 
	\max\{|X_1|,\ldots, |X_n|\} < C
	a_n \right) \ge \frac{c_1}{2} \, \frac{1}{a_n} \,.
\end{equation}
This follows because $\P(S_n \in z+J) \ge c_1/a_n$, by \eqref{eq:infz}, and
applying \eqref{eq:supz}, \eqref{eq:choiceC} we get
\begin{align*}
&\P(S_{n}\in z+J\,, \
\exists 1\leq j\leq n \text{ with } |X_{j}|\geq Ca_{n}) \\
&\leq n\int\limits_{|y|\geq Ca_{n}} F(\dd y) \, \P(S_{n-1}\in z-y+J)\leq \frac{%
n \, F((-\infty,-C a_n] \cup [C a_n,\infty)) \, c_{2}}{a_{n}}\leq \frac{c_{1}}{2 \, a_{n}}.
\end{align*}%

\smallskip

We can now start the proof.
The events $B_i := \{X_i \ge C a_n, \, \max_{j \in \{1,\ldots,n+1\}
\setminus \{i\}} X_j < C a_n\}$ are disjoint for $i=1,\ldots, n$,
hence for $n \ge n_1$ we can write
\begin{equation} \label{eq:intnec}
\begin{split}
\P(& S_{n+1} \in x+J) \\
& \ge (n+1) \, \P \left(S_{n+1} \in x+J, \ 
\max\{X_1,\ldots, X_n\} < C a_n%
, \ X_{n+1} \ge C a_n \right) \\
& \ge n \, \int_{\{z \le x - C a_n\}} \P \left(X_{n+1} \in x - \mathrm{d} z \right) \P %
\left(S_{n} \in z+J, \ \max\{X_1,\ldots, X_n\} < C a_n \right) \\
& \ge \int_{\{z \le x - C a_n\}} F\left(x - \mathrm{d} z \right) \, \frac{c_1}{2} \, 
\frac{n}{a_{n}} \, \mathds{1}_{\{z/a_{n} \in K\}} \,,
\end{split}%
\end{equation}
where the last inequality holds by \eqref{eq:Chillt}.
We are going to choose $n \le A(\delta x)$, in particular $x - C a_n \ge x - C \delta x
\ge \frac{\delta}{2} x$ for $\delta > 0$ small enough. Restricting the integral, we get
\begin{equation*}
	\sum_{n_1 \le n \le A(\delta x)} \P(S_{n+1} \in x+J)
	\gtrsim \int_{\{z \le \frac{\delta}{2} x\}} 
	F\left(x - \mathrm{d} z \right) \, 
	\Bigg( \sum_{n_1 \le n \le A(\delta x)} \frac{n}{a_{n}} \, \mathds{1}_{\{z/a_{n} \in K\}}
	\Bigg) \,.
\end{equation*}
Note that $z/a_{n} \in K$ means $\frac{1}{2} a_{n}
\le z \le a_{n}$, that is $A(z) \le n \le A(2z)$,
so in the range of integration we have $A(2z) \le A(\delta x)$.
If we further restrict the integration on $z \ge a_{n_1}$,
we also have $A(z) \ge n_1$. This leads to the following lower bound:
\begin{equation*}
	\sum_{n_1 \le n \le A(\delta x)} \frac{n}{a_{n}} \mathds{1}_{\{z/a_{n}
	\in K\}} \ge \sum_{A(z) \le n \le A(2z)} \frac{n}{a_{n}} \ge \frac{A(z)}
	{2z} \big(A(2z)-A(z)-1\big) \gtrsim \frac{A(z)^2}{z} = b_2(z) \,,
\end{equation*}
where the last inequality holds for $z \ge a_{n_1}$
large (just take $n_1$ large enough).
Then
\begin{equation*}
\begin{split}
\sum_{n_1 \le n \le A(\delta x)} \P (S_{n+1} \in x+J) & \gtrsim 
\int_{\{a_{n_1} \le z \le \frac{\delta}{2} x\}} F\left(x - \mathrm{d} z \right) \, b_2(z) \\
& \ge I_1^+(\tfrac{\delta}{2}; x) -
\hat C \, F([x-a_{n_1},x-1]) \,,
\end{split}%
\end{equation*}
where $\hat C := \sup_{|z| \le a_{n_1}} b_2(z) < \infty$.
The left hand side is a.n.\ by \eqref{eq:SRTeq},
hence the right hand side is a.n.\ too. Since $F([x-a_{n_1}, x])$ is a.n.\
by \eqref{eq:Fo}, it follows that $I_1^+(\delta; x)$ is a.n..\qed

\smallskip

\subsection{Necessity for Theorem~\ref{th:mainrw}}
Let $F$ be a probability on $\R$ satisfying %
\eqref{eq:tail2} with $\alpha \in (0, 1)$ and $p, q > 0$.
We assume \eqref{eq:SRTeq}, which is equivalent to the \eqref{eq:SRT},
and we deduce that
$\tilde I_{1}(\delta;x)$ is a.n.\ and, for any $k\ge 2$, that
$\tilde I_{k}(\delta, \eta;x)$ is also a.n., for every fixed $\eta \in (0,1)$.
This completes the proof of the necessity part in Theorem~\ref{th:mainrw}
(see Remarks~\ref{rem:refo0}-\ref{rem:refo}).

\smallskip

\begin{remark}
For $|x| \ge 1$ and $|z| \ge |x|$ we can rewrite \eqref{eq:btilde} as
\begin{equation*}
	\tilde{b}_{k}(z,x) = \int_{|x| }^{|z| } \frac{b_k(t)}{t} \, \dd t 
	= \int_{A(|x|) }^{A(|z|)} \frac{b_k(A^{-1}(s))}{A^{-1}(s)} \,
	\frac{1}{A'(A^{-1}(s))} \, \dd s \approx
	\int_{A(|x|) }^{A(|z|)} \frac{s^{k-1}}{A^{-1}(s)} \, \dd s \,,
\end{equation*}
by \eqref{eq:deriv} and \eqref{eq:b}
(we recall that $\approx$ means both $\lesssim$ and $\gtrsim$).
Recalling also \eqref{eq:an0}, we obtain
\begin{equation}\label{eq:btildealt}
	\tilde{b}_{k}(z,x)
	\approx \sum_{A(|x|) \le n \le A(|z|)}\frac{n^{k-1}}{a_{n}} \,.
\end{equation}
\end{remark}

Since we assume that $p, q > 0$ in \eqref{eq:tail2}, the density $\phi(\cdot)$
of the limiting L\'evy process is strictly positive on the whole real line.
In particular, instead of \eqref{eq:K}, we can define
\begin{equation}\label{eq:K2}
	K := [-1,1] \,,
\end{equation}
and relations \eqref{eq:infz}, \eqref{eq:supz}, \eqref{eq:choiceC}, \eqref{eq:Chillt} 
still hold, where $n_1 \in \N$ is fixed (it depends on $F$).

Let us show that $\tilde I_{1}(\delta;x)$ is a.n.. This is similar to the case
of renewal processes in Subsection~\ref{sec:necren}.
In fact, relation \eqref{eq:intnec} with $X_i$ replaced by $|X_i|$
and $z$ replaced by $-y$ gives
\begin{equation}\label{eq:intnec2}
	\P(S_{n+1} \in x+J) \gtrsim \int_{|x + y| \ge C a_n} F(x+\dd y)
	\, \frac{n}{a_n} \, \ind_{\{|y| \le a_n\}} \,,
\end{equation}
because $K=[-1,1]$.
Note that for $n \le A(\delta x)$ we have $a_n \le \delta x \le x - C a_n$
for $\delta > 0$ small, hence \emph{we can ignore the restriction $|x + y| \ge C a_n$}.
Next we write, by \eqref{eq:btildealt},
\begin{equation*}
	\sum_{n=n_1}^{A(\delta x)} \frac{n}{a_n} \, \ind_{\{|y| \le a_n\}}
	= \ind_{\{|y| \le \delta x\}} \, \sum_{n=A(y) \vee n_1}^{A(\delta x)} \frac{n}{a_n}
	\gtrsim \ind_{\{|y| \le \delta x\}} \, \tilde b_2(\delta x, |y| \vee a_{n_1}) \,.
\end{equation*}
For $|y| < a_{n_1}$, $\tilde b_2(\delta x, |y| \vee a_{n_1})
= \tilde b_2(\delta x, a_{n_1})$
differs from $\tilde b_2(\delta x, |y|)$ at most by the constant
$C := \sum_{n \le n_1} \frac{n}{a_n}$, so
$\tilde b_2(\delta x, |y| \vee a_{n_1}) \ge \tilde b_2(\delta x, |y|) - C
\, \ind_{\{|y| \le K\}}$, with $K := a_{n_1}$. This yields
\begin{equation*}
	\sum_{1 \le n \le A(\delta x)}\P(S_{n+1} \in x+J) \gtrsim
	\tilde I_1(\delta; x) - C \, F([x-K, x+K]) \,.
\end{equation*}
Since we assume that \eqref{eq:SRTeq} holds, 
and we have $F([x-K, x+K]) = o(b_1(x))$ as $x \to \infty$,
as we already observed in \eqref{eq:Fo},
it follows that $\tilde I_1(\delta; x)$ is a.n..

\smallskip

Next we fix $k\ge 2$ and $\eta \in (0,1)$ and we generalize the
previous arguments in order to show that
$\tilde I_k(\delta,\eta; x)$ is a.n., see \eqref{eq:tildeIk}.
\emph{Inductively, we assume that we already know that $\tilde I_1(\delta; x)$,
$\tilde I_2(\delta,\eta; x)$, \ldots, 
$\tilde I_{k-1}(\delta,\eta; x)$
are a.n.}.
Suppose that $z_{1},\cdots z_{k} \in \R$ satisfy 
\begin{equation*}
	\min_{1\leq j\leq k}|z_{j}| \ge Ca_{n}	\,, \qquad
	|(z_1 + \ldots + z_k) - x| \le  a_{n} \,,
\end{equation*}
and set $y_k := x-(z_1 + \ldots + z_k)$. Then, for $n \ge n_1$, we can write
\begin{align*}
P&(\exists 1\leq j_{1}<j_{2}<\cdots <j_{k}\leq n\text{ with }X_{j_{1}}\in
\dd z_{1}\,, \ldots, \, X_{j_k} \in \dd z_k\,, \text{ and }S_{n+k}\in x+I) \\
&\ge \binom{n+k}{k} \, \P(X_{r}\in \dd z_{r} \ \forall 1\leq r\leq k\,,
\ X_{j}\notin \{\dd z_{1},\cdots \dd z_{k}\} \ \forall k<j\leq n+k\,, \ S_{n+k} \in x+I) \\
&\gtrsim n^{k} \, \P(X_{r}\in \dd z_{r}\,, \ \forall 1\leq r\leq k ) \;
\P( |X_{j}| \le Ca_{n}\,, \ \forall 1\leq j\leq n \,, \ S_{n}\in y_{k}+I) \\
&\gtrsim \frac{n^{k}}{a_{n}} \, \P(X_{r}\in \dd z_{r}\,,  \ \forall 1\leq r\leq k) \,,
\end{align*}%
having used \eqref{eq:Chillt} in the last inequality.
It follows that for $n \ge n_1$ we have the the bound
\begin{equation*}
\begin{split}
	\P(S_{n+k} \in x+I) & \gtrsim
	\frac{n^k}{a_n} \, \P\left(\min_{1 \le r \le k}|X_r| \ge C a_n\,, \ |(X_1 + \ldots
	+ X_k)-x| \le  \, a_n\right) \\
	& = \frac{n^k}{a_n} \,
	\P_{-x} \left(\min_{1 \le r \le k} |S_r - S_{r-1}| \ge C a_n \,, 
	\ |S_k| \le  \, a_n \right) \,,
\end{split}
\end{equation*}
where $\P_{-x}$ denotes the law of the random walk $S_r := -x + (X_1 + \ldots + X_r)$,
$r \ge 1$, which starts from $S_0 := -x$.

If we fix $\eta \in (0,1)$, and define $\overline{\eta} := 1-\eta$, we can write
\begin{equation*}
	\left\{ \min_{1 \le r \le k} |S_r - S_{r-1}| \ge C a_n \right\}
	\supseteq \left\{  |S_r - S_{r-1}| \ge \overline{\eta} |S_{r-1}|
	\text{ and } |S_{r-1}| \ge \tfrac{C}{\overline{\eta}} \, a_n \,, \forall 1 \le r \le k
	\right\} \,.
\end{equation*}
For $r=1$,
$|S_{r-1}| \ge \frac{C}{\overline{\eta}} \, a_n$ reduces to
$x \ge \frac{C}{\overline{\eta}} \, a_n$, 
which holds automatically, since we take $n \le A(\delta x)$ with $\delta > 0$ small,
while $|S_r - S_{r-1}| \ge \overline{\eta} |S_{r-1}|$ becomes $|S_1 +x| \ge \overline{\eta} x$,
which is implied by $|S_{1}| \le \frac{C}{\overline{\eta}} \delta x$, for $\delta > 0$ small.
For $r \ge 2$, $|S_r - S_{r-1}| \ge \overline{\eta} |S_{r-1}|$
is implied by $|S_r| \le \eta |S_{r-1}|$, since $\overline{\eta} = 1-\eta$. Thus
\begin{equation*}
	\left\{ \min_{1 \le r \le k} |S_r - S_{r-1}| \ge C a_n \right\}
	\supseteq \left\{  |S_{1}| \le \tfrac{C}{\overline{\eta}} \delta x \,,
	\ \; |S_r| \le \eta |S_{r-1}| \,, 
	\, \forall 2 \le r \le k
	\,, \ \; |S_{k-1}| \ge \tfrac{C}{\overline{\eta}} a_n \right\} ,
\end{equation*}
where the last term is justified because
$|S_{k-1}| = \min_{2 \le r \le k-1} |S_{r-1}|$ on the event.
Thus
\begin{equation*}
\begin{split}
	\P(S_{n+k} \in x+I)
	\gtrsim 
	\E_{-x} \Bigg[& \ind_{\{|S_{1}| \le \frac{C}{\overline{\eta}} \delta x, \ 
	|S_r| \le \eta |S_{r-1}| \,, 
	\, \forall 2 \le r \le k \}} \, \bigg( \frac{n^k}{a_n} \, \ind_{\{A(|S_k|) \le n \le
	A(\frac{\overline{\eta}}{C} |S_{k-1}|)\}} \bigg) \Bigg] \,.
\end{split}
\end{equation*}

Let us now sum over $n_1 \le n \le A(\delta x)$. 
Note that 
$A(\frac{\overline{\eta}}{C} |S_{k-1}|) \le A(\frac{\overline{\eta}}{C} |S_{1}|) \le A(\delta x)$,
hence
\begin{equation} \label{eq:ababd}
\begin{split}
	\sum_{n_1 \le n \le A(\delta x)} & \P(S_{n+k} \in x+I) \\
	& \gtrsim 
	\E_{-x} \Big[ \ind_{\{|S_{1}| \le \frac{C}{\overline{\eta}} \delta x, \ 
	|S_r| \le \eta |S_{r-1}| \,, 
	\, \forall 2 \le r \le k
	\}} 
	\, \tilde b_{k+1}\big(\tfrac{\overline{\eta}}{C} |S_{k-1}|,
	\, |S_k| \vee a_{n_1} \big) \Big] \,,
\end{split}
\end{equation}
where we recall that $\tilde b_{k+1}$ is given by \eqref{eq:btildealt}.
The right hand side can be rewritten as
\begin{equation} \label{eq:quasitildeI}
\begin{split}
	\int\limits_{\substack{|y_{1}| \le \delta' x \\
	|y_r| \le \eta |y_{r-1}| \text{ for all }
	2 \le r \le k}} 
	& F(x+\dd y_1) \, P_{y_1}(\dd y_2, \ldots,
	\dd y_k) \, \tilde b_{k+1}\big(\epsilon |y_{k-1}|, 
	\, |y_k| \vee c \big) \,, \\
	& \text{where} \quad \delta' = \tfrac{C}{\overline{\eta}} \delta \,, \quad
	\epsilon = \tfrac{\overline{\eta}}{C}\,, \quad c = a_{n_1} \,.
\end{split}
\end{equation}
This is like $\tilde I_k(\delta', \eta; x)$, see \eqref{eq:tildeIk},
except that
$\tilde b_{k+1} (y_{k-1}, y_k ) =
\tilde b_{k+1} (|y_{k-1}|, |y_k| )$ in \eqref{eq:tildeIk} is replaced by 
$\tilde b_{k+1} (\epsilon |y_{k-1}|, |y_k| \vee c \big)$.
We now show that this is immaterial.
More precisely, by \eqref{eq:SRTeq}
and \eqref{eq:ababd}, we know that \eqref{eq:quasitildeI} is a.n.. 
We now deduce that $\tilde I_{k}(\delta,\eta; x)$ is a.n..

\smallskip

Since $\tilde b_{k+1} (|y_{k-1}|, |y_k| )$
differs from $\tilde b_{k+1} (|y_{k-1}|, |y_k| \vee c )$
at most by $C := \sum_{n=1}^{A(c)} \frac{n^{k+1}}{a_n}$,
see \eqref{eq:btildealt}, we can bound
$\tilde b_{k+1} (|y_{k-1}|, |y_k| ) \le \tilde b_{k+1} (|y_{k-1}|, |y_k| \vee c )
+ C \, \ind_{\{|y_k| \le c\}}$. Plugging this into
\eqref{eq:tildeIk}, we see that the contribution
of $\ind_{\{|y_k| \le c\}}$ is a.n., because it is at most
\begin{equation*}
	\int_{\R} F(x+\dd y_1) 
	\int_{\R^{k-1}}
	P_{y_{1}}(\mathrm{d}y_{2},\ldots ,\mathrm{d}y_{k})
	\, \ind_{\{|y_k| \le c\}}
	= \P(S_k \in [x-c, x+c])
	\underset{x\to\infty}{=} o(b_1(x)) \,,
\end{equation*}
by \eqref{eq:Fo}. Then in \eqref{eq:tildeIk}
we can safely replace $\tilde b_{k+1} (y_{k-1}, y_k )$
by $\tilde b_{k+1} (|y_{k-1}|, |y_k| \vee c )$.

Finally, we write 
$\tilde b_{k+1}\big(|y_{k-1}|, \, |y_k| \vee c \big)
= \tilde b_{k+1}\big(|y_{k-1}|, \, \epsilon |y_{k-1}| \big)
+ \tilde b_{k+1}\big(\epsilon |y_{k-1}|, \, |y_k| \vee c \big)$.
Note that the contribution of the second term to 
$\tilde I_{k}(\delta,\eta; x)$ in \eqref{eq:tildeIk} is a.n., because
we already know that \eqref{eq:quasitildeI} is a.n.. 
For the first term, observe that by \eqref{eq:btildealt}
\begin{equation*}
	\tilde b_{k+1}\big(|y_{k-1}|, \, \epsilon |y_{k-1}| \big)
	\lesssim \sum_{n=A(\epsilon|y_{k-1}|)}^{A(|y_{k-1}|)} \frac{n^k}{a_n}
	\le \frac{A(|y_{k-1}|)^{k+1}}{\epsilon |y_{k-1}|} \lesssim_{\epsilon} b_{k+1}(y_{k-1}) \,,
\end{equation*}
hence
\begin{equation*}
\begin{split}
	\int\limits_{|y_k| \le \eta |y_{k-1}|}
	\!\!\!\!\!\!\!\! F(-y_{k-1} + \dd y_k)
	\, \tilde b_{k+1}(|y_{k-1}|, \epsilon |y_{k-1}|) & \lesssim_{\epsilon} 
	b_{k+1}(y_{k-1}) \,
	F(-(1-\eta)|y_{k-1}|) 
	\lesssim_{\eta} b_{k}(y_{k-1}) \,,
\end{split}
\end{equation*}
so the contribution to $\tilde I_{k}(\delta,\eta; x)$ in \eqref{eq:tildeIk}
is $\lesssim_{\epsilon,\eta} I_{k-1}(\delta,\eta; x)$. 
We know that $\tilde I_{k-1}$ is a.n., by 
our inductive
assumption, and this implies that $I_{k-1}$ is a.n. too,
by the inequalities \eqref{eq:IItildele1} and \eqref{eq:IItildele}
in the Appendix (see \eqref{eq:ssttaa}-\eqref{eq:inan} for their proof).
We are done.\qed

\smallskip

\section{Proof of Theorem~\protect\ref{th:main}: sufficiency}
\label{sec:suffe}

In this section we prove the sufficiency part of Theorem~\ref{th:main}:
we assume that $I_1^+(\delta;x)$ is a.n.\ and we deduce
\eqref{eq:SRTeq}, which is equivalent to the SRT.
Let us set
\begin{equation}  \label{eq:Tell}
T_{\ell}(\delta; x) := \sum_{1 \le n \le A(\delta x)} n^\ell \, \P (S_n \in
x+I) \,.
\end{equation}
We actually prove the following result.

\begin{theorem}
\label{th:main2} Let $F$ be a probability on $[0,\infty)$ satisfying %
\eqref{eq:tail1} with $\alpha \in (0, 1)$. Assume that $I_1^+(\delta;x)$ is
a.n.. Then for every $\ell \in \mathbb{N}_0$:
\begin{equation}  \label{eq:ggoal}
\lim_{\delta \to 0} \, \limsup_{x\to\infty} \, \frac{T_{\ell}(\delta; x)}{%
b_{\ell + 1}(x)} = 0 \,.
\end{equation}
In particular, setting $\ell = 0$, relation \eqref{eq:SRTeq} holds.
\end{theorem}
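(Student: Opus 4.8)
The plan is to prove the whole family \eqref{eq:ggoal} by a downward (strong) induction on $\ell$: the base case is $\ell\ge\kappa_{\alpha}$, and the induction then runs through $\ell=\kappa_{\alpha}-1,\ldots,1,0$, the value $\ell=0$ being the one that yields \eqref{eq:SRTeq}. One cannot isolate the case $\ell=0$, because peeling a ``big jump'' off $S_{n}$ turns the weight $n^{\ell}$ into $n^{\ell+1}$; conversely $T_{\ell}(\delta;x)\le A(\delta x)^{\ell}\,T_{0}(\delta;x)$ together with $b_{\ell+1}=A(\cdot)^{\ell}b_{1}$ shows \eqref{eq:ggoal} for $\ell=0$ would formally give it for all $\ell$, but this observation is of no use here. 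Fix once and for all a small $\gamma\in(0,1)$ with $\gamma<\frac{\alpha}{1-\alpha}$ and $\lceil1/\gamma\rceil>2/\alpha$. \emph{Base case $\ell\ge\kappa_{\alpha}$}: splitting $\P(S_{n}\in x+I)\le\P(S_{n}\in x+I,\,M_{n}\le\gamma x)+\sup_{z}\P(S_{n}\in z+I,\,M_{n}>\gamma x)$ and summing $n^{\ell}$ over $n\le A(\delta x)$, Lemma~\ref{th:nobig} (first term) and Lemma~\ref{th:kale3} (second term, legitimate precisely because $\ell\ge\kappa_{\alpha}$) give $T_{\ell}(\delta;x)\lesssim_{\gamma,\ell}(\delta^{\theta}+\delta^{\eta})\,b_{\ell+1}(x)$, which is \eqref{eq:ggoal}; no hypothesis on $F$ is used.

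\emph{Inductive step $0\le\ell\le\kappa_{\alpha}-1$}, assuming \eqref{eq:ggoal} for all $\ell'>\ell$ and all arguments. Split again according to $\{M_{n}\le\gamma x\}$. The ``no big jump'' part is $\lesssim_{\gamma}\delta^{\theta}b_{\ell+1}(x)$ by Lemma~\ref{th:nobig}. For the ``big jump'' part, the union bound $\P(S_{n}\in x+I,\,M_{n}>\gamma x)\le n\int_{y>\gamma x}F(\dd y)\,\P(S_{n-1}\in x-y+I)$, the change of variables $z=x-y$ and Fubini give, up to a term involving only $z$ in a fixed bounded set (negligible by \eqref{eq:Fo}),
\[
\sum_{n\le A(\delta x)}n^{\ell}\,\P(S_{n}\in x+I,\,M_{n}>\gamma x)\ \lesssim_{\ell}\ \int_{-h<z<(1-\gamma)x}\!\!F(x-\dd z)\,\Sigma_{\ell+1}(z),\qquad\Sigma_{j}(z):=\!\!\!\sum_{1\le m\le A(\delta x)}\!\!\!m^{j}\,\P(S_{m}\in z+I).
\]
I would then split the $z$-integral at $z=\delta^{1/2}x$ into a ``small deficit'' and a ``large deficit'' range.

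On the small-deficit range $z\le\delta^{1/2}x$ one bounds $\Sigma_{\ell+1}(z)\lesssim_{\gamma}b_{\ell+2}(z)$ by splitting the $m$-sum at $A(z)$: for $m\le A(z)$ use \eqref{eq:ron2} with \eqref{eq:Kar1} when $\ell+1\ge\kappa_{\alpha}$, and otherwise peel a further big jump and invoke the inductive hypothesis at level $\ell+2$; for $A(z)<m\le A(\delta x)$ use \eqref{eq:sup} with \eqref{eq:Kar2}. Since $b_{\ell+2}=A(\cdot)^{\ell}b_{2}$ and $z\le\delta^{1/2}x$, this gives $\int_{z\le\delta^{1/2}x}F(x-\dd z)\,\Sigma_{\ell+1}(z)\lesssim_{\gamma}A(\delta^{1/2}x)^{\ell}\,I_{1}^{+}(\delta^{1/2};x)$ (recall \eqref{eq:I1}); dividing by $b_{\ell+1}(x)=A(x)^{\ell}b_{1}(x)$ and using $A(\delta^{1/2}x)\le A(x)$ yields a quantity $\le I_{1}^{+}(\delta^{1/2};x)/b_{1}(x)$, which vanishes in the iterated limit by the assumed asymptotic negligibility of $I_{1}^{+}$. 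On the large-deficit range $\delta^{1/2}x<z<(1-\gamma)x$, the remaining walk reaches a level $z\gg\delta x$ in only $n-1\le A(\delta x)$ steps, a genuine local large deviation. Split $\P(S_{m}\in z+I)$ according to $\{M_{m}\le\gamma z\}$: on $\{M_{m}\le\gamma z\}$ apply the sharp estimate \eqref{eq:ron1modk} with exponent $p=\lceil1/\gamma\rceil$, sum in $m$ via \eqref{eq:Kar1}, then integrate in $z$ bounding $A(z)^{-p}\le A(\delta^{1/2}x)^{-p}$ and $\int F(x-\dd z)\le\overline{F}(\gamma x)\lesssim_{\gamma}A(x)^{-1}$; with Potter's bounds \eqref{eq:Potter} one is left with $\delta^{c}b_{\ell+1}(x)$ for an exponent that tends to $\alpha\ell+\tfrac{\alpha}{2}\lceil1/\gamma\rceil+2\alpha-1$ as the Potter exponent goes to $0$, so the choice $\lceil1/\gamma\rceil>2/\alpha$ makes $c>0$. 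On $\{M_{m}>\gamma z\}$ one peels another big jump and iterates the whole scheme; the iteration terminates after at most $\kappa_{\alpha}-\ell$ further steps, at which stage the weight has reached $n^{\kappa_{\alpha}}$ and Lemmas~\ref{th:kale3}--\ref{th:nobig} close the estimate, the small-deficit sub-cases generated along the way again reducing to a.n.\ quantities governed by $I_{1}^{+}$. Collecting the ``no big jump'', small-deficit and large-deficit pieces completes the inductive step.

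The main obstacle, as is typical in the regime $\alpha\le\tfrac12$, is the large-deficit term: one must carry out the (possibly iterated) $z$-integrations while tracking the exponent of $\delta$ and checking that it stays positive, and it is exactly here that the \emph{sharp} bound \eqref{eq:ron1modk}, rather than the cruder \eqref{eq:ron2}, is indispensable --- the large exponent $p=\lceil1/\gamma\rceil$ is precisely what compensates for the crude pointwise bound $A(z)^{-p}\le A(\delta^{1/2}x)^{-p}$ used in the $z$-integral.
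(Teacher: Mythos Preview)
Your backward-induction skeleton and the base case $\ell\ge\kappa_{\alpha}$ match the paper exactly. The divergence is in the inductive step, and there the paper's argument is both much simpler and avoids a real gap in yours.

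The paper's device is Lemma~\ref{th:basic}, the renewal-specific bound $\P(S_{n}\in z+I)\le\frac{C}{a_{n}}e^{-cn/A(z)}$ (valid because increments are nonnegative). After peeling one big jump, the paper splits the $n$-sum at $A(\epsilon z)$ for a \emph{free} parameter~$\epsilon$: for $n>A(\epsilon z)$ the exponential cutoff makes the Riemann sum $\sum_{n}\frac{n^{\ell+1}}{a_{n}}e^{-cn/A(z)}\lesssim_{\epsilon}b_{\ell+2}(z)$ uniformly, and the constraint $n\le A(\delta x)$ forces $z<\delta x/\epsilon$, so the $z$-integral is $\lesssim A(x)^{\ell}I_{1}^{+}(\delta/\epsilon;x)$; for $n\le A(\epsilon z)$ the sum is exactly $T_{\ell+1}(\epsilon;z)$, and the inductive hypothesis gives $\le\eta\,b_{\ell+2}(z)$ once $\epsilon$ is small, which integrates to $\lesssim\eta\,b_{\ell+1}(x)$ via \eqref{eq:I1+O}. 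No iteration, no use of \eqref{eq:ron1modk}.

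Your scheme avoids Lemma~\ref{th:basic}, and this creates two problems. First, on the large-deficit range $z>\delta^{1/2}x$ you overlook the one-line observation that $A(\delta x)\le A(\delta^{1/2}z)$, so $\Sigma_{\ell+1}(z)\le T_{\ell+1}(\delta^{1/2};z)$ and the inductive hypothesis at level $\ell+1$ applies \emph{directly}; your iteration via \eqref{eq:ron1modk} and further peeling is unnecessary, and the claim that ``the small-deficit sub-cases generated along the way reduce to a.n.\ quantities governed by $I_{1}^{+}$'' is not justified --- at the second iteration those sub-cases produce $I_{1}^{+}(\,\cdot\,;z)$ with argument close to $1$, not small. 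Second, and more seriously, your small-deficit bound $\Sigma_{\ell+1}(z)\lesssim b_{\ell+2}(z)$ via \eqref{eq:sup}+\eqref{eq:Kar2} on $\{m>A(z)\}$ requires the index of $m^{\ell+1}/a_{m}$ to be $<-1$; at $1/\alpha\in\mathbb{N}$ and $\ell=\kappa_{\alpha}-1$ the index is exactly $-1$ and the tail sum diverges logarithmically, so the bound fails. Lemma~\ref{th:basic} sidesteps this boundary issue entirely because the exponential factor, not Karamata, controls the tail.
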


The proof exploits the general bounds provided by Lemmas~\ref{th:kale3} and~%
\ref{th:nobig}, together with the next Lemma, which is specialized to
renewal processes.

\begin{lemma}
\label{th:basic} If $F$ is a probability on $[0,\infty)$ which satisfies %
\eqref{eq:tail1} with $\alpha \in (0,1)$, there are $C,c \in (0,\infty)$
such that for all $n\in\mathbb{N}_0$ and $z \in [0,\infty)$
\begin{equation}  \label{eq:basic}
\P (S_n \in z+I) \le \frac{C}{a_n} \, e^{-c \frac{n}{A(z)}} \,.
\end{equation}
\end{lemma}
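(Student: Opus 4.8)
The plan is to combine two elementary facts. The first is the uniform local estimate \eqref{eq:sup}, which gives $\P(S_n \in z+I) \lesssim 1/a_n$ for all $n\ge 1$ and $z$. The second is that, since $F$ is supported on $[0,\infty)$, the increments are non‑negative, so on $\{S_n \in z+I\}$ every partial sum is $\le z$, and in particular $X_1,\ldots,X_n \le z$. The exponential factor $e^{-cn/A(z)}$ in \eqref{eq:basic} is only meaningful when $n$ is large relative to $A(z)$: indeed, if $n \le K\,A(z)$ for a fixed $K$ then $e^{-cn/A(z)}\ge e^{-cK}$ and \eqref{eq:sup} alone already gives the claim. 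So the content lies in the regime $n \gg A(z)$ (equivalently $z$ much smaller than $a_n$), where one needs to gain the extra decay.

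To capture that decay I would split the walk into two halves, $S_n = S_{\lfloor n/2\rfloor} + \bigl(S_n - S_{\lfloor n/2\rfloor}\bigr)$, the two summands being independent and the second having the same law as $S_{\lceil n/2\rceil}$. Conditioning on $S_{\lfloor n/2\rfloor}$, bounding the conditional probability for the second half by \eqref{eq:sup}, and then using the inclusion $\{S_n\in z+I\}\subseteq\{S_{\lfloor n/2\rfloor}\le z\}\subseteq\{X_1\le z,\ldots,X_{\lfloor n/2\rfloor}\le z\}$, yields
\[
	\P(S_n \in z+I) \;\le\; \frac{C_1}{a_{\lceil n/2\rceil}}\;\P\bigl(S_{\lfloor n/2\rfloor}\le z\bigr)
	\;\le\; \frac{C_1}{a_{\lceil n/2\rceil}}\,\bigl(1-\overline F(z)\bigr)^{\lfloor n/2\rfloor}\,.
\]
Since $a_n\in RV(1/\alpha)$, the ratio $a_n/a_{\lceil n/2\rceil}$ stays bounded (it tends to $2^{1/\alpha}$), so $1/a_{\lceil n/2\rceil}\lesssim 1/a_n$ and the prefactor is of the right order. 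It then suffices to show $\bigl(1-\overline F(z)\bigr)^{\lfloor n/2\rfloor}\lesssim e^{-cn/A(z)}$; using $1-u\le e^{-u}$ together with $\lfloor n/2\rfloor\ge n/3$ for $n\ge 2$, this reduces to a lower bound $\overline F(z)\gtrsim 1/A(z)$. The cases $n\in\{0,1\}$ are trivial, since then the right‑hand side of \eqref{eq:basic} is bounded below by a positive constant while the left‑hand side is $\le 1$.

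The one point that needs care — and essentially the only place, besides \eqref{eq:sup}, where the hypotheses enter — is that this lower bound $\overline F(z)\ge c_0/A(z)$ must hold for \emph{all} $z\ge 0$ with a single constant $c_0>0$. For large $z$ it is immediate from $\overline F(z)\sim 1/A(z)$ in \eqref{eq:tail1} (one may take $c_0=\tfrac12$ for $z\ge z_0$). For $z\in[0,z_0]$ one uses that $\overline F$ is non‑increasing and everywhere strictly positive — the law $F$ has unbounded support because $\overline F(z)\sim 1/A(z)>0$ — hence $\overline F(z)\ge\overline F(z_0)>0$ there, while $1/A(z)\le 1/A(0)=2$ is bounded; combining these gives a uniform $c_0$. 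One then takes $c = c_0/3$ and absorbs the finitely many exceptional constants into $C$, which completes the proof.
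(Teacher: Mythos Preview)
Your proof is correct and follows essentially the same approach as the paper: split the walk into two halves, use the uniform local bound \eqref{eq:sup} on one half to extract the factor $1/a_n$, and bound the other half via $\{S_{\lfloor n/2\rfloor}\le z\}\subseteq\{\max_i X_i\le z\}$ together with a uniform lower bound $\overline F(z)\gtrsim 1/A(z)$ to obtain the exponential factor. Your treatment of the small-$n$ cases and of the uniformity of the tail lower bound is slightly more explicit than the paper's, but the argument is the same.
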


\begin{proof}
Assume that $n$ is even (the odd case is analogous). By \eqref{eq:sup}, we
get
\begin{equation*}
\begin{split}
\P \left(S_{n} \in z+I \right) & = \int_{y \in [0,z]} \P (S_{\frac{n}{2}}
\in \mathrm{d} y ) \, \, \P (S_{\frac{n}{2}} \in z-y+I) \lesssim \frac{1}{a_{%
\frac{n}{2}}} \, \P (S_{\frac{n}{2}} \le z) \\
& \lesssim \frac{1}{a_n} \, \P \left(\max_{1 \le i \le \frac{n}{2}} X_i \le
z \right) = \frac{\left(1 - \P (X>z) \right)^{\frac{n}{2}}}{a_n} \le \frac{%
e^{-\frac{n}{2} \P (X>z)}}{a_n} \le \frac{e^{-c \frac{n}{A(z)}}}{a_n} \,,
\end{split}%
\end{equation*}
provided $c > 0$ is chosen such that $\P (X > z) \ge 2c/A(z)$ for all $z \ge
0$. This is possible by \eqref{eq:tail1} and because $z \mapsto A(z)$ is
increasing and continuous, with $A(0) > 0$ (see \S \ref{eq:regvar}).
\end{proof}

\begin{remark}
\textrm{Since $A(\cdot)$ is increasing,
it follows by \eqref{eq:basic}
that for any $\bar x > 0$
and $\ell\in\N$ 
\begin{equation}  \label{eq:unibo}
	\sup_{z \in [0, \bar{x}]} 
	\bigg\{ \sum_{n\in\N} n^\ell \, \P (S_n \in z+I) \bigg\}
	 \lesssim C_{\bar{x},\ell}
	 \,, \qquad \text{with} \qquad
	C_{\bar{x},\ell}
	:= \sum_{n\in\N} \frac{n^\ell}{a_n}
	\, e^{-c \frac{n}{A(\bar{x})}} \, < \, \infty \,.
\end{equation}%
}
\end{remark}

\smallskip

Before proving Theorem~\protect\ref{th:main2},
we state some easy consequences of ``$I_1^+(\delta; x)$ is a.n.''.
\begin{itemize}
\item First we show that, for any bounded interval $J \subseteq \mathbb{R}$,
\begin{equation}  \label{eq:nec0}
	I_1^+(\delta; x) \ \text{ is a.n.} \qquad \Longrightarrow \qquad F( x + J) 
	\underset{x \to \infty}{=} o\big(b_1(x)\big)  \,.
\end{equation}
It is convenient to write $J = [-1 - b, -1 - a]$, for $a,b\in\R$ with $a < b$.
Restricting \eqref{eq:I1} to
$z \in -a-J = [1, 1 + (b-a)]$ we get
$I_1^+(\delta; x) \ge F(x + a + J) 
\cdot \inf_{z\in -a-J} b_2(z) \gtrsim_J F(x+ a +J)$,
so if $I_1^+(\delta; x)$ is a.n.\ then
$F(x+ a +J) = o(b_1(x))$, hence \eqref{eq:nec0} follows.

\smallskip

\item Next we improve \eqref{eq:nec0} as follows:
\begin{equation}  \label{eq:nec1}
	I_1^+(\delta; x) \ \text{ is a.n.} \quad \ \ \Longrightarrow \quad \ \ 
\text{for every fixed $\ell \in \N$}: \quad
\P(S_\ell \in x+J)
\underset{x \to \infty}{=} o\big( b_1(x) \big) \,.
\end{equation}
To see this, we write $J = [a,b]$
and we note that on the event $\{S_\ell \in x+J\}$
we must have $M_\ell \ge (x-a)/\ell$, hence
$S_\ell - M_\ell \le x+b - \frac{x-a}{\ell} \le x - (\frac{x}{\ell} - 2b)$. Then
\begin{equation} \label{eq:elluse}
\begin{split}
	\P(S_{\ell} \in x+J) 
	\le \ell \int_{z > \frac{x}{\ell} - 2b} 
	\P( S_{\ell-1} \in x-\dd z) \, \P(X_1 \in z + J)
	\le \ell \, \sup_{z > \frac{x}{\ell}-2b} F(z+J) \,,
\end{split}
\end{equation}
and, by \eqref{eq:nec0}, as $x\to\infty$ the right hand side is
$o(b_1(\frac{x}{\ell} - 2b)) = o(b_1(x))$, for fixed $\ell$.

\smallskip

\item Finally, we observe that
for any fixed $\gamma > 0$
\begin{equation}\label{eq:I1+O}
	I_1^+(\delta; x) \ \text{ is a.n.} \quad \ \ \Longrightarrow \quad \  \
	\text{for every fixed $\gamma \in (0,1)$}: \quad
	I_1^+(1-\gamma; x)
	\underset{x \to \infty}{=} O\big(b_1(x)\big)  \,.
\end{equation}
First we fix $\bar{\delta} > 0$ small enough
so that $I_1^+\big( \bar{\delta}; x \big) = O(b_1(x))$
(recall Definition~\ref{def:an}).
Then we consider the contribution
to $I_1^+(1-\gamma,x)$ from $z \ge \bar{\delta}$, see \eqref{eq:I1}, which is
\begin{equation*}
	\int_{\bar{\delta} x \le z \le (1-\gamma) x} F(x - \mathrm{d} z) \, \frac{A(z)^2}{z}
	\le \frac{A(x)^2}{\bar{\delta} \, x} \, \overline{F}(\gamma x)
	\lesssim_{\gamma, \bar{\delta}} \frac{A(x)}{x} = b_1(x) \,.
\end{equation*}
\end{itemize}

\begin{proof}[Proof of Theorem~\protect\ref{th:main2}]
We fix, once and for all, $\gamma \in (0, \frac{\alpha}{1-\alpha})$, and we
decompose
\begin{equation*}
\begin{split}
T_{\ell}(\delta; x) & = \sum_{1 \le n \le A(\delta x)} n^\ell \, \P (S_n \in
x+I, \, M_n > \gamma x) \\
& \qquad \qquad + \sum_{1 \le n \le A(\delta x)} n^\ell \, \P (S_n \in x+I,
\, M_n \le \gamma x) \,.
\end{split}%
\end{equation*}
Then it follows by Lemma~\ref{th:kale3} and Lemma~\ref{th:nobig} that %
\eqref{eq:ggoal} holds for every $\ell \ge \kappa_{\alpha}$.

It remains to prove that \eqref{eq:ggoal} holds for $\ell < \kappa_{\alpha}$%
. We proceed by backward induction: we fix $\ell \in \{0, 1, \ldots,
\kappa_{\alpha} - 1\}$ and, \emph{assuming} that
\begin{equation}  \label{eq:assu}
\lim_{\delta \to 0} \, \limsup_{x\to\infty} \, \frac{T_{\ell+1}(\delta; x)}{%
b_{\ell + 2}(x)} = 0 \,,
\end{equation}
we deduce \eqref{eq:ggoal}. We need to estimate $T_{\ell}(\delta; x)$ and we
split it in some pieces.

We start by writing
\begin{equation*}
\P \left(S_n \in x+I \right) = \P \left(S_n \in x+I, \, M_n \le \gamma
x\right) + \P \left(S_n \in x+I, \, M_n > \gamma x\right) \,,
\end{equation*}
and note that the contribution of the first term in the right hand side is
negligible for \eqref{eq:ggoal}, by Lemma~\ref{th:nobig}. Next we bound
\begin{equation}  \label{eq:splito}
\begin{split}
\P \left(S_n \in x+I, \, M_n > \gamma x\right) & \le n \, \P \left( S_n \in
x+I, \, X_1 > \gamma x \right) \\
& = n \, \int_{0 \le z < (1-\gamma)x} F( x - \mathrm{d} z) \, \P %
\left(S_{n-1} \in z +I \right) \,.
\end{split}%
\end{equation}
Looking back at \eqref{eq:Tell}, we may restrict the sum to $n \ge 2$,
because the contribution of the term $n=1$ is negligible for \eqref{eq:ggoal}, 
since $F(x+I) = o(b_1(x)) = o(b_{\ell + 1}(x))$ by \eqref{eq:nec0}. As a
consequence, it remains to prove that \eqref{eq:ggoal} holds with $%
T_{\ell}(\delta;x)$ replaced by
\begin{equation*}
\begin{split}
\widetilde T_{\ell}(\delta; x) & := \sum_{2 \le n \le A(\delta x)} n^{\ell +
1}\, \int_{0 \le z < (1-\gamma)x} F( x - \mathrm{d} z ) \, \P \left(S_{n-1}
\in z +I \right) \,.
\end{split}%
\end{equation*}
We can bound $n^{\ell + 1} \lesssim (n-1)^{\ell + 1}$, since $n \ge 2$, and
rename $n-1$ as $n$, to get
\begin{equation}  \label{eq:cheint300}
\widetilde T_{\ell}(\delta; x) \le \int_{1 \le z < (1-\gamma)x} F( x -
\mathrm{d} z ) \Bigg\{\sum_{1 \le n \le A(\delta x)-1} n^{\ell + 1}\, \P %
\left(S_{n} \in z +I \right) \Bigg\} \, + \, o(b_{\ell + 1}(x)) \,,
\end{equation}
where we have restricted the integral to $z \ge 1$, because the
contribution of $z \in [0,1)$
can be estimated as $o(b_1(x)) = o(b_{\ell + 1}(x))$, thanks to %
\eqref{eq:unibo} and \eqref{eq:nec0}.

\smallskip

Let us fix $\epsilon \in (0,1)$ and consider the contribution to the sum in %
\eqref{eq:cheint300} given by $n > A(\epsilon z)$. Applying Lemma~\ref%
{th:basic}, since $a_n \ge \epsilon z \gtrsim_{\epsilon} z$, we get
\begin{equation}  \label{eq:neg1}
\begin{split}
\sum_{A(\epsilon z) < n \le A(\delta x)} n^{\ell + 1}\, & \P \left(S_{n} \in
z +I \right) \lesssim \sum_{A(\epsilon z) < n \le A(\delta x)} \frac{n^{\ell
+ 1}}{a_n} \, e^{-c \frac{n}{A(z)}} \\
& \lesssim_{\epsilon} \frac{A(z)^{\ell + 2}}{z} \, \mathds{1}_{\{z < \frac{%
\delta}{\epsilon} x\}} \, \left\{ \frac{1}{A(z)} \sum_{n \in\mathbb{N}} \left(%
\frac{n}{A(z)}\right)^{\ell + 1} \, e^{-c \frac{n}{A(z)}} \right\} \,.
\end{split}%
\end{equation}
The bracket is a Riemann sum which converges to $\int_0^\infty t^{\ell + 1}
\, e^{-ct} \, \mathrm{d} t < \infty$ as $z \to \infty$, hence it is
uniformly bounded for $z \in [0,\infty)$. The contribution of $n > A(\epsilon z)$
to \eqref{eq:cheint300} is then
\begin{equation*}
\lesssim_{\epsilon} \int_{1 \le z < \frac{\delta}{\epsilon} x} F( x -
\mathrm{d} z ) \, \frac{A(z)^{\ell + 2}}{z} \le A(\tfrac{\delta}{\epsilon}%
x)^\ell \, I_1^+(\tfrac{\delta}{\epsilon}; x) \lesssim_{\epsilon} A(x)^\ell \,
I_1^+(\tfrac{\delta}{\epsilon}; x) \,.
\end{equation*}
This is negligible for \eqref{eq:ggoal}, for any fixed $\epsilon > 0$, by
the assumption that $I_1^+$ is a.n..

\smallskip

Finally, the contribution of $n \le A(\epsilon z)$
to the integral in \eqref{eq:cheint300} is, by \eqref{eq:Tell},
\begin{equation}  \label{eq:qula}
\int_{1 \le z < (1-\gamma)x} F( x - \mathrm{d} z ) \, T_{\ell+ 1}(\epsilon;
z) \,.
\end{equation}
By the inductive assumption \eqref{eq:assu}, for every $\eta > 0$ we can
choose $\epsilon > 0$ and $\bar x_{\epsilon} < \infty$ 
so that $T_{\ell+1}(\epsilon; z) \le \eta
\, b_{\ell + 2}(z)$ for $z \ge \bar x_{\epsilon}$. 
Then the integral in
\eqref{eq:qula} 
restricted to $z \ge \bar x_{\epsilon}$ is
\begin{equation*}
	\le \eta \, \int_{\bar{x}_{\epsilon} \le z < (1-\gamma)x} F( x - \mathrm{d} z ) \, 
	b_{\ell + 2}(z)
	\le \eta \, A(x)^{\ell} \, I_1^+(1-\gamma, x)
	\lesssim \eta \, A(x)^{\ell} \, b_1(x)
	= \eta \, b_{\ell+1}(x) \,,
\end{equation*}
where we have applied \eqref{eq:I1+O}.
If we let $x \to \infty$ and then $\eta \to 0$, this is negligible for \eqref{eq:ggoal}.
Finally,
by \eqref{eq:unibo} and \eqref{eq:nec0}, 
the integral in
\eqref{eq:qula} 
restricted to $z \le \bar x_{\epsilon}$ is, as $x \to\infty$
\begin{equation*}
\le C_{\bar x_{\epsilon},\ell+1} \, F((x-\bar x_{\epsilon}, x-1]) = o(b_{1}(x)) =
o(b_{\ell+1}(x)) \,.
\end{equation*}
This completes the proof.
\end{proof}

\smallskip

\section{Proof of Theorem~\protect\ref{th:mainrw}: 
sufficiency in case $\alpha \in (\frac{1}{3}, \frac{1}{2}]$}

\label{sec:casek=1}

Let $F$ be a probability on $\mathbb{R}$ that satisfies \eqref{eq:tail2}
with $p, q \ge 0$ and $\alpha \in (%
\frac{1}{3},\frac{1}{2}]$, i.e.\ $\kappa_{\alpha}=1$. \emph{We 
assume that $\tilde I_1(\delta; x)$ is a.n.}
(hence also $I_1(\delta; x)$ is a.n., recall  Remark~\ref{rem:refo0}),
and we deduce
\eqref{eq:SRTeq}, which is equivalent to the SRT. 
This proves the sufficiency in Theorem~\ref{th:mainrw}.

\smallskip

Let us set
\begin{equation}
Z_{1}:=M_{n}=\max \{X_{1},\ldots ,X_{n}\}\,.  \label{eq:Z1}
\end{equation}%
We fix $\gamma
\in (0,\frac{\alpha}{1-\alpha})$
and define the events
\begin{equation}
\begin{split}
& E_{1}^{(1)}:=\{Z_{1}\leq \gamma x\}\,,\qquad E_{1}^{(2)}:=\{|Z_{1}-x|\leq
a_{n}\}\,, \\
& E_{1}^{(3)}:=\{Z_{1}>\gamma x,\,|Z_{1}-x|>a_{n}\}
\end{split}
\label{eq:E123}
\end{equation}%
By Lemma~\ref{th:nobig} with $\ell =0$, we
already know that (with no extra assumptions on $F$)
\begin{equation}
\sum_{1\leq n\leq A(\delta x)}\P (S_{n}\in x+I,E_{1}^{(1)})\quad \text{is
always a.n.}\,.  \label{eq:E1an}
\end{equation}

Next we look at $E_{1}^{(2)}$. Note that by \eqref{eq:sup}
\begin{equation}
\begin{split}
& \P (S_{n}\in x+I,\,E_{1}^{(2)})\leq \sum_{i=1}^{n}\P (S_{n}\in
x+I,\,|X_{i}-x|\leq a_{n}) \\
& \qquad =n\int_{|y|\leq a_{n}}F(x+\mathrm{d}y)\,\P (S_{n-1}\in
I-y)\lesssim \frac{n}{a_{n}}\int_{|y|\leq a_{n}}F(x+\mathrm{d}y)\,.
\end{split}
\label{eq:thesa}
\end{equation}%
Using the fact that $n/a_{n}$ is regularly varying and recalling %
\eqref{eq:btildealt}, we obtain
\begin{equation} \label{eq:estE12}
\begin{split}
\sum_{1\leq n\leq A(\delta x)}\P (S_{n}\in x+I,\,E_{1}^{(2)})& \lesssim
\int_{|y|\leq \delta x}F(x+\mathrm{d}y)\sum_{A(|y|)\leq n\leq A(\delta x)}%
\frac{n}{a_{n}} \\
& \lesssim \int_{|y|\leq \delta x}F(x+\mathrm{d}y)\,\tilde{b}_{2}(\delta x,y)
= \tilde I_1(\delta; x) \,.
\end{split}%
\end{equation}%
Recalling %
\eqref{eq:tildeI1}, we have shown that
\begin{equation}
\tilde{I}_{1}(\delta ;x)\text{ is a.n.}
\qquad \Longrightarrow \qquad 
\sum_{1\leq n\leq A(\delta x)}\P (S_{n}\in x+I,E_{1}^{(2)})\quad \text{is
a.n.} \,.
\label{eq:E2an}
\end{equation}
(The reverse implication also holds, as shown in Section~\ref{sec:necce}.)

We finally turn to $E_{1}^{(3)}$. Arguing as in \eqref{eq:thesa} and setting
$\overline{\gamma }:=1-\gamma $, we have by \eqref{eq:ron2}
\begin{equation*}
\begin{split}
\P (S_{n}\in x+I,E_{1}^{(3)})& \lesssim n\int_{|y|>a_{n},\,y>-\overline{%
\gamma }x}F(x+\mathrm{d}y)\,\P (S_{n-1}\in I-y) \\
& \lesssim \frac{n^{2}}{a_{n}}\int_{|y|>a_{n},\,y>-\overline{\gamma }x}F(x+%
\mathrm{d}y)\,\frac{1}{A(y)}\,,
\end{split}%
\end{equation*}%
hence, recalling \eqref{eq:b}, we get
\begin{equation*}
\begin{split}
\sum_{1\leq n\leq A(\delta x)}\P (S_{n}\in x+I,\,E_{1}^{(3)})& \lesssim
\int_{y>-\overline{\gamma }x}F(x+\mathrm{d}y)\,\frac{1}{A(y)}\sum_{1\leq
n\leq A(\delta x\wedge |y|)}\frac{n^{2}}{a_{n}} \\
& \lesssim \int_{y>-\overline{\gamma }x}F(x+\mathrm{d}y)\,\frac{1}{A(y)}%
\,b_{3}(\delta x\wedge |y|)\,,
\end{split}%
\end{equation*}%
\emph{where the last inequality holds for $\alpha >\frac{1}{3}$,} thanks to %
\eqref{eq:Kar1}, because $n^{2}/a_{n}$ is regularly varying with index $%
2-1/\alpha >-1$. For fixed $\delta_0 > 0$, the right hand side can be estimated by
\begin{equation} \label{eq:secondin}
\begin{split}
& \int_{|y|\leq \delta_0 x}F(x+\mathrm{d}y)\,b_{2}(y)\,+\,b_{3}(\delta
x)\int_{y>-\overline{\gamma }x,\,|y|> \delta_0 x}\frac{F(x+\mathrm{d}y)}{A(y)} 
\\
& \qquad \lesssim I_{1}(\delta_0 ;x)\,+\,\frac{b_{3}(\delta x)}%
{A(\delta_0 x)}%
\,F((\gamma x,\infty ))\lesssim I_{1}(\delta_0 ;x)\,+\,\frac{b_{3}(\delta x)}%
{A(\delta_0 x) \, A(x)}  \,.
\end{split}%
\end{equation}%
By the a.n.\ of $I_1$,
given $\epsilon > 0$, we can fix $\delta_0 > 0$ small so that 
$I_{1}(\delta_0 ;x) \le \epsilon \, b_1(x)$ for large $x$.
Then we can fix $\delta > 0$ small (depending on $\delta_0$) so that
the second term in the right hand side of 
\eqref{eq:secondin} is also $\le \epsilon \, b_1(x)$ for large $x$, because
$b_3(\delta x) \sim \delta^{3\alpha-1}
\, b_3(x)$ and $\alpha > \frac{1}{3}$.
Thus
\begin{equation}
\sum_{1\leq n\leq A(\delta x)}\P (S_{n}\in x+I,E_{1}^{(3)})\quad \text{is
a.n.}\quad \text{if }\alpha >\frac{1}{3}\text{ and }I_{1}(\delta ;x)\text{
is a.n.}\,.  \label{eq:E3an}
\end{equation}
Relations \eqref{eq:E1an}, \eqref{eq:E2an}, \eqref{eq:E3an} prove the sufficiency
part in Theorem~%
\ref{th:main2}, when $\kappa_{\alpha} = 1$.

\smallskip

\section{Proof of Theorem~\protect\ref{th:mainrw}: sufficiency in case $\alpha \le \frac{1}{3}$}
\label{sec:casek>1}

Let $F$ be a probability on $\mathbb{R}$ that satisfies \eqref{eq:tail2}
with $p, q \ge 0$ and $\alpha \in (0,\frac{1}{3}]$. In this section, 
\emph{we assume that $\tilde I_{\kappa_{\alpha}}(\delta, \eta; x)$ is a.n.}
and we deduce \eqref{eq:SRTeq}, which is equivalent to the SRT.
By Remark~\ref{rem:refo0}, this proves the sufficiency part
in Theorem~\ref{th:mainrw}, in case $\kappa_{\alpha} \ge 2$.

We stress that our assumption that
$\tilde I_{\kappa_{\alpha}}(\delta, \eta; x)$ is a.n.\
ensures that \emph{$\tilde I_{r}(\delta, \eta; x)$ and $I_{r}(\delta, \eta; x)$ are
a.n.\ for every $r\in\N$}, by  Remark~\ref{rem:refo} (see
Lemmas~\ref{th:corequiv1}-\ref{th:corcascade}-\ref{th:corequiv2}).

Throughout this section we fix $\gamma \in (0,\frac{\alpha}{1-\alpha})$, we
choose $\eta = \overline\gamma := 1 - \gamma$ and we drop it from notations.
In particular, we write $I_{k}(\delta ;x)$ instead of $I_{k}(\delta, \eta ;x)$.

\subsection{Preparation}

We will prove that
$T(\delta;x) := \sum_{1 \le n \le A(\delta x)} \P (S_n \in x+I)$ is a.n.\
by direct path arguments, see Subsection~\ref{sec:suff}. This will lead us
to consider explicit quantities $J_{k}(\delta ;x)$, $\tilde J_{k}(\delta ;x)$
that generalize $I_{k}(\delta ;x)$, $\tilde I_{k}(\delta ;x)$.
For clarity, in this subsection we define such quantities and show that they are a.n..

\smallskip

We recall that $I_{k}(\delta ;x)$ is defined in \eqref{eq:I1rw}, %
\eqref{eq:Ik}. 
Let us rewrite it as follows:
\begin{align}
I_{k}(\delta ;x)& :=\int_{|y_{1}|\leq \delta x}F(x+\mathrm{d}%
y_{1})\,g_{k}(y_{1})\,,  \label{eq:Ikmod} \\
\rule{0pt}{2.8em}\text{where we set}\quad \ g_{k}(y_{1})& :=%
\begin{cases}
b_{2}(y_{1}) & \text{ if }k=1 \\
\rule{0pt}{1.9em}\displaystyle\int_{\Omega _{k}(y_{1})}P_{y_{1}}(\mathrm{d}%
y_{2},\ldots ,\mathrm{d}y_{k})\,b_{k+1}(y_{k}) & \text{ if }k\geq 2%
\end{cases}%
\,,  \label{eq:gak} \\
\rule{0pt}{1.3em} \Omega _{k}(y_{1})& :=\big\{(y_2, \ldots, y_k) \in \R^{k-1} 
: \ |y_{j}|\leq \overline{\gamma }%
|y_{j-1}|\;\text{for}\;2\leq j\leq k\big\} \,,
\label{eq:Omegak}
\end{align}%
and we recall that $P_{y_{1}}(\mathrm{d}y_{2},\ldots ,\mathrm{d}%
y_{k}):=F(-y_{1}+\mathrm{d}y_{2})F(-y_{2}+\mathrm{d}y_{3})\cdots F(-y_{k-1}+%
\mathrm{d}y_{k})$.

\smallskip

We define $J_k(\delta; x)$ by extending the integral in \eqref{eq:gak}
to a larger subset $\Theta _{k}(y_{1}) \supseteq \Omega _{k}(y_{1})$.
We introduce the shortcut
\begin{equation}
	\theta (y)  :=
\begin{cases}
(-\infty, \overline{\gamma} y] & \text{if } y \ge 0 \\
\rule{0pt}{1.1em} [\overline{\gamma} y, +\infty) & \text{if } y < 0 
\end{cases} \,,
\end{equation}
and note the important fact that (since $0 <\overline{\gamma} < 1$)
\begin{equation}
\int_{\theta (y)}F(-y+ \dd z)
= O\bigg(\frac{1}{A(|y|)}\bigg) \qquad
\text{ as }|y| \to \infty \,.
\label{theta}
\end{equation}%
Then, recalling that $\overline{\gamma} = 1-\gamma$,
we set for $k\ge 2$ and $y_1 \in \R$
\begin{equation}  \label{y}
\begin{split}
\Theta_k(y_1) 
& := \big\{ (y_2, \ldots, y_k) \in \R^{k-1} : \
y_j \in \theta(y_{j-1}) \; \text{for} \; 2 \le j \le k \big\} \,.
\end{split}%
\end{equation}
We then define 
\begin{align}  \label{K}
J_k(\delta; x) & := \int_{|y| \le \delta x} F(x + \mathrm{d} y) \, h_k(y) \,.
\end{align}
where $h_k(y_1)$ is nothing but $g_k(y_1)$, see \eqref{eq:gak},
with $\Omega_k(y_1)$ replaced by $\Theta_k(y_1)$:
\begin{align}  \label{L}
h_k(y_1) := 
\begin{cases}
b_{2}(y_{1}) & \text{ if }k=1 \\
\rule{0pt}{1.9em}\displaystyle\int_{\Theta_k(y_1)}P_{y_{1}}(\mathrm{d}%
y_{2},\ldots ,\mathrm{d}y_{k})\,b_{k+1}(y_{k}) & \text{ if }k\geq 2%
\end{cases} \,.
\end{align}

It will be useful to consider a slight generalization of $h_k(y_1)$:
for any non-negative, even function $f:\mathbb{R}\rightarrow [0,\infty)$ we define
\begin{align}  \label{Lf}
h_k(y_1,f) := 
\begin{cases}
b_{1}(y_{1})\,f(y_{1}) & \text{ if }k=1 \\
\rule{0pt}{1.9em}\displaystyle\int_{\Theta_{k}(y_{1})}P_{y_{1}}(\mathrm{d}%
y_{2},\ldots ,\mathrm{d}y_{k})\,b_{k}(y_{k})\,f(y_{k}) & \text{ if }k\geq 2%
\end{cases} \,,
\end{align}
and note that $h_k(y_1,A) = h_k(y_1)$.

\smallskip

The next proposition shows that $J_k(\delta;x)$ is a.n.\ 
and provides a useful auxiliary estimate.
Its proof is quite tedious and is deferred to
Subsection~\ref{sec:technical}.

\begin{proposition}
\label{G} 
Fix $\ell \in\N$ and $\alpha \in (0,\frac{1}{2})$.
If $\ell \ge 2$, assume that $I_{j}(\delta ;x)$ is a.n.\ 
for $j=1,\ldots, \ell-1$.
\begin{enumerate}
\item\label{it:1} Fix any $f \in RV(\beta)$
with $0 < \beta < 1- \ell\alpha$. Then for all $0 < \delta_0 < \kappa <1$
\begin{equation}
\int_{|y|>\delta _{0}x,\, y>-\kappa x} F(x+\mathrm{d} y) \,
h_{\ell}(y,f)\lesssim_{\delta_0, \kappa,\gamma,\ell} \frac{f(x)}{x\vee 1} 
\qquad (\forall x \ge 0)\,.
\label{m}
\end{equation}
\item\label{it:2} Assume that $I_\ell(\delta;x)$ is a.n.\ too and, moreover,
$\alpha < \frac{1}{\ell+1}$. Then
\begin{equation}
J_{\ell}(\delta ;x)\text{ is a.n.}  \label{n}
\end{equation}%
\end{enumerate}
\end{proposition}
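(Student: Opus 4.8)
The plan is to prove both statements simultaneously by a careful analysis of the function $h_\ell(y,f)$, reducing everything to iterated applications of Karamata's theorem \eqref{eq:Kar1} and Potter's bounds \eqref{eq:Potter}, together with the inductive hypothesis that $I_j$ is a.n.\ for $j<\ell$. The crucial structural observation is that $h_\ell(y_1,f)$, defined by the iterated integral over $\Theta_\ell(y_1)$ in \eqref{Lf}, can be controlled by peeling off one coordinate at a time: on the set $\Theta_\ell(y_1)$ we have $|y_k|\le\overline\gamma|y_{k-1}|\le\cdots$, so the innermost integral $\int_{\theta(y_{\ell-1})}F(-y_{\ell-1}+\dd y_\ell)\,b_\ell(y_\ell)\,f(y_\ell)$ is a sum over a regularly varying integrand on a range $|y_\ell|\lesssim|y_{\ell-1}|$. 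Since $b_\ell(z)f(z)=A(|z|)^\ell f(z)/(|z|\vee 1)$ is regularly varying of index $\ell\alpha+\beta-1$, which is $>-1$ exactly because $\beta<1-\ell\alpha$, Karamata's theorem \eqref{eq:Kar1} gives that this inner integral is $\lesssim \frac{1}{A(|y_{\ell-1}|)}\cdot b_\ell(y_{\ell-1})f(y_{\ell-1})\cdot(\text{const})$, using also \eqref{theta} to bound the mass of $F$ on $\theta(y_{\ell-1})$ by $O(1/A(|y_{\ell-1}|))$. This is the mechanism by which one index gets absorbed: $b_{k+1}$ becomes $b_k$ at the cost of a factor $1/A$, which is precisely the ratio appearing in the definition \eqref{Lf}. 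Iterating, $h_\ell(y_1,f)\lesssim b_1(y_1)f(y_1)$ up to constants depending on $\gamma,\ell$, which already hints at part~\eqref{it:1}.

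For part~\eqref{it:1}, first I would establish the pointwise bound $h_\ell(y_1,f)\lesssim_{\gamma,\ell} b_1(y_1)\,f(y_1) = \frac{A(|y_1|)}{|y_1|\vee 1}f(y_1)$ by the peeling argument just described (being careful that the inner integrals over $\theta(y_j)$ genuinely live on $|y_j|\le\overline\gamma|y_{j-1}|$, so the regular-variation estimates apply uniformly). Then \eqref{m} reduces to
\begin{equation*}
	\int_{|y|>\delta_0 x,\ y>-\kappa x} F(x+\dd y)\,\frac{A(|y|)}{|y|\vee 1}\,f(y)
	\lesssim_{\delta_0,\kappa,\gamma,\ell} \frac{f(x)}{x\vee 1}\,.
\end{equation*}
On the range $|y|\le\kappa x$ (with $|y|>\delta_0 x$) we have $|y|\asymp x$ up to constants, so $A(|y|)/(|y|\vee 1)\cdot f(y)\asymp_{\delta_0,\kappa} f(x)/x$ times $A(x)$; but then one still has a leftover factor $A(x)$, so in fact on this range one bounds $F(x+\dd y)$-mass by the total mass $1$ and uses that $\overline F(\gamma x)\lesssim 1/A(x)$ — wait, more carefully: the contribution of $\{\delta_0 x<|y|\le\kappa x\}$ is $\lesssim \frac{A(\kappa x)f(\kappa x)}{\delta_0 x}\cdot\overline F(\delta_0 x)$ if $y>0$ large, or handled via $F((x-y,\,x])$-type bounds; but actually $y$ ranges near $x+y$ with $x$ fixed, so $x+y\in$ a tail region and $F(x+\dd y)$ over this set has mass $\lesssim 1/A(x)$ when $\delta_0 x<|y|$. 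This gives $\lesssim \frac{A(x)f(x)}{x}\cdot\frac{1}{A(x)}=\frac{f(x)}{x}$, as required. The range $|y|>\kappa x$ is excluded by hypothesis ($y>-\kappa x$ and, on the positive side, $|y|\le\kappa x$ is what makes $x+y$ stay away from the "even bigger jump" region — one must double check the problem means $|y|$ is also bounded above implicitly; re-reading, actually \eqref{m} only restricts $y>-\kappa x$ from below and $|y|>\delta_0 x$ from below in modulus, so large positive $y$ is allowed, and there $F(x+\dd y)$ has its full regularly-varying tail — this needs $f$'s growth to be slow enough, which is where $\beta<1-\ell\alpha$ and Karamata again enter to make $\int_{y>\kappa x}F(x+\dd y)\frac{A(y)f(y)}{y}$ converge and equal $O(f(x)/x)$).

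For part~\eqref{it:2}, I would split $J_\ell(\delta;x)=\int_{|y|\le\delta x}F(x+\dd y)\,h_\ell(y)$ according to whether, in the iterated integral defining $h_\ell$, the coordinates stay in the smaller region $\Omega_\ell$ or escape into $\Theta_\ell\setminus\Omega_\ell$. On $\Omega_\ell$ one recovers exactly $I_\ell(\delta;x)$, which is a.n.\ by hypothesis. The escaping part — where some $y_j$ satisfies $\overline\gamma|y_{j-1}|\ge|y_j|$ but one of the intermediate sign/size constraints defining $\Omega$ fails (really $\Theta$ allows $y_j$ of either sign with $|y_j|\le\overline\gamma|y_{j-1}|$, so $\Theta\setminus\Omega$ is empty? — no: $\Omega$ demands $|y_j|\le\overline\gamma|y_{j-1}|$ too; the difference must be in an earlier-defined variant, so I'd carefully track which inclusion $\Theta_k\supseteq\Omega_k$ is being used and estimate the difference via \eqref{theta} plus part~\eqref{it:1} applied with $f=A\in RV(\alpha)$, legitimate since $\alpha<1-\ell\alpha$ is exactly $\alpha<\frac1{\ell+1}$). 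Concretely: bound the "escaping at step $j$" contribution by first integrating $y_{j+1},\dots,y_\ell$ (giving a factor $\lesssim b_1(y_j)\,A(y_j)/\cdots$ by the peeling lemma), then integrating $y_j$ over $\theta(y_{j-1})$ using \eqref{theta}, and finally recognizing the remaining $y_1,\dots,y_{j-1}$ integral as (up to constants) $I_{j-1}(\delta';x)$ or $J_{j-1}(\delta';x)$, which is a.n.\ by the inductive hypothesis or by part~\eqref{it:1}.

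\textbf{Main obstacle.} The hard part will be the bookkeeping in part~\eqref{it:2}: correctly decomposing $\Theta_\ell(y_1)$ into the "good" part $\Omega_\ell(y_1)$ and the various "escape" regions, and checking that each escape region contributes something a.n.\ — this requires applying part~\eqref{it:1} with the right regularly varying test function and verifying the index condition $\beta<1-(\ell-j)\alpha$ holds in each case, which is where the hypothesis $\alpha<\frac{1}{\ell+1}$ is essential (it guarantees $A\in RV(\alpha)$ is an admissible test function even after $\ell$ peeling steps). Getting the exponents to line up and confirming that no boundary case $\beta=1-k\alpha$ is hit is the delicate point; the regular-variation estimates themselves (Karamata, Potter) are routine once the decomposition is set up.
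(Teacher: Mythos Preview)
Your peeling argument rests on a misreading of $\Theta_\ell(y_1)$. You write ``on the set $\Theta_\ell(y_1)$ we have $|y_k|\le\overline\gamma|y_{k-1}|$'', but this describes $\Omega_\ell(y_1)$, not $\Theta_\ell(y_1)$. By definition $\theta(y)$ is a \emph{half-line}: if $y_{j-1}>0$ then $y_j\in(-\infty,\overline\gamma y_{j-1}]$, so $|y_j|$ is unbounded. The whole difficulty of the proposition is that $\Theta_\ell\supsetneq\Omega_\ell$ allows the coordinates to escape to infinity on one side; you yourself notice this (``so $\Theta\setminus\Omega$ is empty?'') but never resolve it.

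More seriously, the pointwise bound $h_\ell(y,f)\lesssim b_1(y)f(y)$ that you aim for is \emph{false} in general. Take $\ell=2$, $y>0$: then $h_2(y,f)=\int_{z\le\overline\gamma y}F(-y+\dd z)\,b_2(z)f(z)$. The integrand $b_2(z)f(z)$ is of order one for $|z|\le 1$, and nothing prevents $F$ from placing mass $\asymp 1/A(y)$ on the interval $[-y-1,-y+1]$ (so on $|z|\le 1$); this already gives $h_2(y,f)\gtrsim 1/A(y)$, which is \emph{larger} than $b_1(y)f(y)=A(y)f(y)/y$ since $2\alpha+\beta<1$. The local behaviour of $F$ near $-y$ is exactly what the a.n.\ of the $I_j$ is designed to control, and your peeling never invokes that hypothesis.

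The paper's route is genuinely different. It first proves two auxiliary results (Propositions~\ref{th:boundcru} and~\ref{F}) giving integrated bounds $\int_{y>-\kappa x}F(x+\dd y)\,g_r(y)\lesssim b_1(x)$ and its $f$-weighted analogue, where $g_r$ is defined over the \emph{small} domain $\Omega_r$; these are where the a.n.\ of $I_j$ enters. Then it proves the pointwise comparison $h_r(y,f)\lesssim_{\gamma,r}\sum_{j=1}^r g_j(y,f)$ by a sign-based case analysis: for $y<0$ there is a direct induction, while for $y>0$ one tracks the first index $H$ at which $y_H<0$ and uses a change of variables (swapping the order of integration in $\int F(x+\dd y)\int F(-y+\dd z)$, see \eqref{z}) to reduce back to an integral against $F(-\dd w)$ on the left tail, where Proposition~\ref{th:boundcru} applies. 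Part~\eqref{it:1} then follows from Proposition~\ref{F}, and part~\eqref{it:2} by specialising $f=A$, which is admissible precisely when $\alpha<\tfrac{1}{\ell+1}$.
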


\smallskip

We finally define $\tilde{J}_{2}(\delta ;x) := \tilde{I}_{2}(\delta ;x)$ and,
for $k \ge 3$,
\begin{equation} \label{eq:tildeJ}
	\tilde{J}_{k}(\delta ;x)=\int_{|y_1|\leq \delta x}F(x+\mathrm{d}y_1)\,
	\tilde{h}_{k}(y_1)\,,
\end{equation}%
where we set (recall that $\tilde{b}(x,z)$ is defined in \eqref{eq:btilde},
or equivalently \eqref{eq:btildealt}):
\begin{equation}
\tilde{h}_{k}(y_1)=\int_{\Theta _{k-1}(y_1)\cap \{|y_{k}|\leq \overline{\gamma }%
|y_{k-1}|\}}P_{y}(\mathrm{d}y_{2},\cdots ,\mathrm{d}y_{k})\,\tilde{b}_{k+1}
(y_{k-1},y_{k}) \,.  \label{h}
\end{equation}%

\smallskip

The next result shows that $\tilde J_k(\delta;x)$ is a.n..
Its proof is also deferred to
Subsection~\ref{sec:technical}.

\begin{proposition}
\label{H} 
Fix $\ell \in\N$ with $\ell \ge 2$ and $0 < \alpha \le \frac{1}{\ell + 1}$.
Assume that $I_j(\delta;x)$ and $\tilde I_j(\delta; x)$ are a.n.\ for $j=1,\ldots, \ell$.
Then also $\tilde J_{\ell}(\delta ;x)$ is a.n..
\end{proposition}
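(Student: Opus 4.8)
The plan is to argue by induction on $\ell$, with the real work concentrated in the boundary case $\alpha=\frac{1}{\ell+1}$. The base case $\ell=2$ is immediate, since $\tilde J_2(\delta;x)=\tilde I_2(\delta;x)$ is a.n.\ by hypothesis. So fix $\ell\ge 3$ and assume the statement for all smaller indices. On the domain of integration in \eqref{h} one has $|y_\ell|\le\overline\gamma\,|y_{\ell-1}|$, so by \eqref{eq:btilde}
\begin{equation*}
	\tilde b_{\ell+1}(y_{\ell-1},y_\ell)\,=\,\int_{|y_\ell|}^{|y_{\ell-1}|}\frac{b_{\ell+1}(t)}{t\vee 1}\,\dd t \,.
\end{equation*}
When $\alpha<\frac{1}{\ell+1}$ the integrand lies in $RV\big((\ell+1)\alpha-2\big)$ with index $<-1$, so Karamata's theorem \eqref{eq:Kar2} gives $\tilde b_{\ell+1}(y_{\ell-1},y_\ell)\le\int_{|y_\ell|}^\infty\frac{b_{\ell+1}(t)}{t\vee 1}\,\dd t\lesssim b_{\ell+1}(y_\ell)$. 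Plugging this bound into \eqref{eq:tildeJ}--\eqref{h} and enlarging the domain from $\Theta_{\ell-1}(y_1)\cap\{|y_\ell|\le\overline\gamma|y_{\ell-1}|\}$ to $\Theta_\ell(y_1)=\Theta_{\ell-1}(y_1)\cap\{y_\ell\in\theta(y_{\ell-1})\}$ (legitimate since $\{|y_\ell|\le\overline\gamma|y_{\ell-1}|\}\subseteq\theta(y_{\ell-1})$) yields $\tilde J_\ell(\delta;x)\lesssim J_\ell(\delta;x)$, which is a.n.\ by Proposition~\ref{G}\eqref{it:2} (using $\alpha<\frac{1}{\ell+1}$ and the a.n.\ of $I_j$ for $j\le\ell$). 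This settles the strict case.

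The boundary case $\alpha=\frac{1}{\ell+1}$ is where the $\tilde{\ }$-quantities become genuinely necessary: now the integrand is in $RV(-1)$, the integral $\tilde b_{\ell+1}(y_{\ell-1},y_\ell)$ grows like a slowly varying function of $|y_{\ell-1}|$, and it cannot be dominated by the local weight $b_{\ell+1}(y_\ell)$. I would use the telescopic split
\begin{equation*}
	\tilde b_{\ell+1}(y_{\ell-1},y_\ell)\,=\,\tilde b_{\ell+1}(y_{\ell-1},\overline\gamma y_{\ell-1})\,+\,\tilde b_{\ell+1}(\overline\gamma y_{\ell-1},y_\ell)\,,
\end{equation*}
valid since $|y_\ell|\le\overline\gamma|y_{\ell-1}|$. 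The first summand is an integral over the bounded-ratio interval $[\overline\gamma|y_{\ell-1}|,|y_{\ell-1}|]$, hence $\lesssim_\gamma b_{\ell+1}(y_{\ell-1})$; being independent of $y_\ell$, integrating it over $\{|y_\ell|\le\overline\gamma|y_{\ell-1}|\}\subseteq\theta(y_{\ell-1})$ produces a factor $O(1/A(|y_{\ell-1}|))$ by \eqref{theta}, so the inner integral is $\lesssim b_\ell(y_{\ell-1})$ and we are left with $\int_{\Theta_{\ell-1}(y_1)}P_{y_1}(\dd y_2,\ldots,\dd y_{\ell-1})\,b_\ell(y_{\ell-1})=h_{\ell-1}(y_1)$ (see \eqref{L}); thus the contribution of the first summand to $\tilde J_\ell(\delta;x)$ is $\lesssim_\gamma J_{\ell-1}(\delta;x)$, a.n.\ by Proposition~\ref{G}\eqref{it:2} (here $\alpha=\frac{1}{\ell+1}<\frac1\ell$ and $I_j$ is a.n.\ for $j\le\ell-1$). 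For the second summand $\tilde b_{\ell+1}(\overline\gamma y_{\ell-1},y_\ell)$ I would decompose the $\Theta_{\ell-1}$-domain according to the first index $2\le j^*\le\ell-1$ at which the \emph{two-sided} bound $|y_{j^*}|\le\overline\gamma|y_{j^*-1}|$ fails. If no such index exists, all two-sided bounds hold and, together with $|y_\ell|\le\overline\gamma|y_{\ell-1}|$, we are on $\Omega_\ell(y_1)$, so that part of $\tilde J_\ell(\delta;x)$ is $\le\tilde I_\ell(\delta;x)$, a.n.\ by hypothesis. On $\{j^*=j\}$, the relations $y_j\in\theta(y_{j-1})$ and $|y_j|>\overline\gamma|y_{j-1}|$ force $y_j$ to have sign opposite to $y_{j-1}$, so $|X_j|=|y_j|+|y_{j-1}|\ge(1+\overline\gamma)|y_{j-1}|$ is a backward big jump; conditioning on $(y_1,\ldots,y_{j-1})$ and applying \eqref{theta} to it, the contribution factorizes into a prefix piece carried by $(y_1,\ldots,y_{j-1})$ under the one-sided constraints $y_i\in\theta(y_{i-1})$, $2\le i\le j-1$ — bounded, after the $\dd y_1$-integration, by some $J_m(\delta;x)$ with $m<\ell$, hence a.n.\ by Proposition~\ref{G} — times a restarted tail from step $j$ onwards, which after the power-counting identity $b_{\ell+1}(t)/(t\vee 1)=b_1(t)\,b_\ell(t)$ and the representation \eqref{eq:btildealt} is of the same type as $\tilde J_{m'}(\delta;x)$ or $J_{m'}(\delta;x)$ for some $m'<\ell$, a.n.\ by the inductive hypothesis together with Proposition~\ref{G} (observe $\alpha=\frac{1}{\ell+1}<\frac{1}{m'+1}$, so the smaller cases are exactly the strict case already dealt with). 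Summing over the finitely many values of $j^*$ completes the proof.

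The hard part is entirely the boundary case $\alpha=\frac{1}{\ell+1}$, i.e.\ $\frac1\alpha=\ell+1\in\N$ (equivalently $\kappa_\alpha=\ell$), where $\tilde b_{\ell+1}$ exceeds every local weight and the one-sided $\Theta$-domain has to be unfolded step by step rather than merely bounded. The delicate issue is the combinatorial and dimensional bookkeeping in the prefix/restart decomposition — keeping track of how the $\ell+1$ "levels" of $\tilde b_{\ell+1}$ are distributed between the prefix (absorbed into some $J_m$ via Proposition~\ref{G}) and the restarted tail (absorbed into $\tilde I_\ell$, or into a lower $\tilde J_{m'}$ handled by the induction) — so that every term is matched to an a.n.\ quantity already at our disposal.
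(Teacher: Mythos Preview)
Your treatment of the strict case $\alpha<\frac{1}{\ell+1}$ and of the base case $\ell=2$ is correct and matches the paper. In the boundary case $\alpha=\frac{1}{\ell+1}$, your telescopic split of $\tilde b_{\ell+1}(y_{\ell-1},y_\ell)$ and the handling of the ``bounded-ratio'' piece are fine and essentially reproduce one ingredient of the paper's argument.

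The gap is in the prefix/restart step for the second summand. Your claim that the restarted tail ``is of the same type as $\tilde J_{m'}(\delta;x)$ or $J_{m'}(\delta;x)$ for some $m'<\ell$'' is not justified, for two reasons. First, the weight carried by the tail is still $\tilde b_{\ell+1}(\,\cdot\,,\,\cdot\,)$, not $\tilde b_{m'+1}$; to match $\tilde J_{m'}$ you must absorb $j-1$ extra powers of $A$, and on the one-sided domain $\Theta$ the later coordinates $|y_i|$ are \emph{not} bounded by $|y_j|$, so you cannot simply pull the factor $A(y_{\ell-1})^{j-1}$ outside. Second, and more seriously, the inductive hypothesis that $\tilde J_{m'}$ is a.n.\ is an asymptotic statement in $x$ and $\delta$; it gives no \emph{pointwise} bound of the form ``tail as a function of $y_{j-1}$ is $\lesssim b_\bullet(y_{j-1})$'', which is what you need in order for the prefix integral to become a recognisable $J_m(\delta;x)$. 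Your sketch never produces such a pointwise estimate.

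The paper closes this gap in a different way. It decomposes not by the first failure $j^*$ of the two-sided bound, but by $\tilde H=\max\{j\le\ell-1:\,y_j<0\}$. The point of this choice is that \emph{after} $\tilde H$ all remaining coordinates are nonnegative, so the tail sits inside an $\Omega$-type (two-sided) domain, and the crucial pointwise bound of Proposition~\ref{th:boundcru} --- namely $\int_{y>-\kappa z}F(z+\dd y)\,g_r(y)\lesssim b_1(z)$ for $z\ge 0$ --- becomes available. Combined with the factorisation $\tilde b_{r+1}(y,z)\lesssim A(y)^\lambda b_{r+1-\lambda}(z)$ (the paper's relation~(bo)) this collapses the tail to a concrete $b_\bullet(y_{\tilde H})$, and the prefix then lands in some $J_m(\delta;x)$ with $m<\ell$. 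A further subset split (according to whether $|y_{\tilde H+1}|\le\overline\gamma|y_{\tilde H}|$ or not) and the change-of-variables identity \eqref{z} handle the case where the first post-$\tilde H$ coordinate is large. None of these tools appear in your sketch, and without a substitute for Proposition~\ref{th:boundcru} your restart argument does not close.
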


\subsection{Proof of Sufficiency for Theorem~\protect\ref{th:mainrw}}
\label{sec:suff}

Throughout the proof we fix $\alpha \in (0,\frac{1}{3}]$
and $k = \kappa_{\alpha} = \lfloor
1/\alpha \rfloor - 1$, see \eqref{eq:kappaalpha}.
We stress that $k \ge 2$ and $\frac{1}{k+2} < \alpha \le \frac{1}{k+1}$.
Our goal is to prove \eqref{eq:SRTeq}.

\smallskip

We generalize \eqref{eq:Z1}, defining two sequences $Z_{1},Z_{2},\ldots Z_{k}$
and $Y_{1},Y_{2},\ldots, Y_{k}$ as follows:
\begin{equation*}
Z_{1}:=\max \{X_{1},\ldots ,X_{n}\}\,,\qquad Y_{1}:=Z_{1}-x\,,
\end{equation*}%
and for $r \in \{2, \ldots, k\}$
\begin{align}
Z_{r}& :=%
\begin{cases}
\max \big\{\{X_{j},1\leq j\leq n\}\setminus \{Z_{j},1\leq j\leq r-1\}\big\}
& \text{if }Y_{r-1}\leq 0, \\
\rule{0pt}{1.4em}\min \big\{\{X_{j},1\leq j\leq n\}\setminus \{Z_{j},1\leq
j\leq r-1\}\big\} & \text{if }Y_{r-1}>0.%
\end{cases} 
\label{eq:Zer}
\\
Y_{r}& :=\sum_{i=1}^{r}Z_{i}-x
\label{eq:Yer}
\end{align}%
Intuitively, \emph{$Z_{r}$ is the largest available step towards $%
x$ from $Z_{1}+\ldots +Z_{r-1}$}.

In fact, we may assume that the following holds:
\begin{equation}\label{eq:standard}
	Z_r > 0 \quad \text{if } Y_{r-1} \le 0 \,, \qquad
	\text{while} \qquad
	Z_r < 0 \quad \text{if } Y_{r-1} > 0 \,,
\end{equation}
because, as we now show, the event that \eqref{eq:standard} fails to be true
is negligible. This event occurs if, for some $%
r\leq k$, either $Y_{r-1}\leq 0$ and $\{X_{j},1\leq j\leq n\}\setminus
\{Z_{j},1\leq j\leq r-1\}$ contains no positive terms
or $Y_{r-1}>0$ and
this set contains no negative terms. 
Call $\cE_{n,r}$ such event and 
recall that $I = (-h,0]$.
We first observe that
$\P(\cE_{n,r}, \, S_{n}\in x+I , \, Y_{r-1}\notin I)=0$ for all $n\geq r$
(if $Y_{r-1} > 0$ then $S_{n} - x \ge Y_{r-1} > 0$
on the event $\cE_{n,r}$, and similarly if $Y_{r-1} \le -h$ 
then $S_{n} - x \le Y_{r-1} \le -h$). Next we observe that
\begin{equation*}
\begin{split}
\P(\cE_{n,r}, \, Y_{r-1}\in I)
	& \le \binom{n}{r-1}\P(S_{r-1}\in x+I, \ X_r, X_{r+1}, \ldots, X_n \le 0 ) \\
	& \le n^{r-1} \, c^{n-(r-1)} \, \P(S_{r-1} \in x+I) \,,
\end{split}	
\end{equation*}
with $c = \P(X_1 \le 0) < 1$.
If we set $K_r := \sum_{n=r}^\infty n^{r-1} \, c^{n-(r-1)} < \infty$, we 
can write, by \eqref{eq:nec1},
\begin{equation*}
	\sum_{n=r}^{\infty }
	\P(\cE_{n,r}, \, S_{n}\in x+I) 
	\le \sum_{n=r}^{\infty} \P(\cE_{n,r}, \, Y_{r-1}\in I)
	\le K_r \, \P(S_{r-1}\in x+I)
	\underset{x\to\infty}{=}
	o\big(b_1(x) \big) \,,
\end{equation*}
so the contribution of $\cE_{n,r}$ to \eqref{eq:SRTeq} is negligible.
Henceforth we will assume that \eqref{eq:standard} holds.

We cover the probability space 
$\Omega \subseteq E^{(1)}_1 \cup E^{(2)}_1 \cup E^{(3)}_1$, where
we recall from \eqref{eq:E123} that
\begin{equation*}
E_{1}^{(1)}:=\{Z_{1}\leq \gamma x\}\,,\qquad E_{1}^{(2)}:=\{|Y_1|\leq
a_{n}\}\,, \qquad
E_{1}^{(3)}=\{Z_{1}>\gamma x, \ |Y_{1}|>a_{n}\} \,.
\end{equation*}%
The argument to show that $\sum_{1 \le n \le A(\delta x)}
\P (S_{n} \in x+I,E_{1}^{(1)}\cup E_{1}^{(2)})$
is a.n.\ presented in Section~\ref{sec:casek=1} is still valid,
see \eqref{eq:E1an} and \eqref{eq:E2an}, so it
remains to focus on $E_{1}^{(3)}$.

\smallskip

We introduce the constants $C_{r}=(\overline{\gamma })^{r-1}$
and then the events $E_{r}^{(1)}, E_{r}^{(2)}, E_{r}^{(3)}$ for $r \ge 2$
by%
\begin{gather}  \label{eq:Er12}
E_{r}^{(1)} =E_{r-1}^{(3)}\cap \{|Z_{r}|\leq \gamma |Y_{r-1}|\} \,, \qquad
E_{r}^{(2)}=E_{r-1}^{(3)}\cap \{|Y_{r}|\leq  C_{r}a_{n}\} \,, \\
E_{r}^{(3)} = E_{r-1}^{(3)}\cap \{|Z_{r}|>\gamma |Y_{r-1}|,\
|Y_{r}|> C_{r}a_{n}\} \,.
\notag
\end{gather}%
Note that we can decompose (recall that $k = \kappa_{\alpha}$ is fixed)
\begin{equation*}
E^{(3)}_1 \subseteq \bigcup_{r=1}^k E_r^{(1)} \ \cup \ \bigcup_{r=1}^k E_r^{(2)} \ \cup
\ E^{(3)}_k \,.
\end{equation*}
We will
show that $\sum_{1 \le n \le A(\delta x)}\P (S_{n} \in x+I)$ is a.n.\
by estimating the contributions of $E_{r}^{(1)}$ and 
$E_{r}^{(2)}$, for $2\leq r\leq k$, and finally the contribution of
$E_{k}^{(3)}$. Let us note that
\begin{equation}\label{eq:westre}
	\alpha \le \tfrac{1}{k+1} < \tfrac{1}{r} \,, \qquad
	\text{for } r=2,\ldots, k \,,
\end{equation}
which allows us to apply equation~\eqref{n} from Proposition~\ref{G} for $\ell = r-1$
(but not for $\ell = r$, unlike Proposition~\ref{H}).

\begin{remark}
We can rewrite $E_{\ell}^{(3)}$ more explicitly as follows:
\begin{gather*}
E_{\ell}^{(3)} = \{Z_1 > \gamma x \,, \ |Z_i| > \gamma |Y_{i-1}| \text{ for } 2
\le i \le \ell \} \cap \{ |Y_i| > C_{i} a_n \,, \text{ for } 1 \le i \le \ell\} \,.
\end{gather*}
Recalling the definition \eqref{y} of $\Theta_k(\cdot)$, 
we claim that $E_{\ell}^{(3)}$ can also be rewritten as
\begin{gather}  \label{eq:Er3}
E_{\ell}^{(3)} = \{ 
Y_1 > -\overline{\gamma} x, \ (Y_2, \ldots, Y_{\ell}) \in \Theta_{\ell} (Y_1)
\} 
\cap \{ |Y_i| > C_{i} a_n \,, \text{ for } 1 \le i \le \ell\} \,.
\end{gather}
To prove the claim, we show that $|Z_i| > \gamma |Y_{i-1}|$ is
equivalent to $Y_i \in \theta(Y_{i-1})$, for $2 \le i \le k$, with $\theta(\cdot)$ 
defined in \eqref{y}. 
We recall that $Z_i = Y_i - Y_{i-1}$, see \eqref{eq:Yer}. If $Y_{i-1} > 0$, then $Z_i
\le 0$ by \eqref{eq:standard}, hence $|Z_i| > \gamma |Y_{i-1}|$ becomes $%
Y_{i-1} - Y_i > \gamma Y_{i-1}$, which is precisely $Y_i \in (-\infty, -\overline{\gamma} Y_{i-1})
= \theta(Y_{i-1})$. Similar arguments apply if $Y_{i-1}
\le 0$, in which case $Z_i \ge 0$.
\end{remark}

\smallskip

\subsubsection{Estimate of $E_{r}^{(1)}$}

We fix $r\in \{2,\ldots ,k\}$.
By exchangeability,
\begin{equation}
\P (S_{n}\in x+I,\,E_{r}^{(1)})\leq n^{r-1}\,\P \big((Z_{1},\ldots
Z_{r-1})=(X_{1},\ldots ,X_{r-1}),\,S_{n}\in x+I,\,E_{r}^{(1)}\big)\,.
\label{eq:thist}
\end{equation}%
Conditionally on $(X_{1},\ldots ,X_{r-1})=(z_{1},\ldots ,z_{r-1})$,
we have $S_{n}=(z_{1}+\ldots +z_{r-1})+S_{n-(r-1)}^{\prime }$, where we set $%
S_{k}^{\prime }:=X_{1}^{\prime }+\ldots +X_{k}^{\prime }$ with $%
X_{i}^{\prime }:=X_{(r-1)+i}$. Motivated by \eqref{eq:Yer}, if we set
\begin{equation*}
	y_{i}:=(z_{1}+\ldots +z_{i})-x \qquad \text{for } i=1,\ldots, r-1 \,,
\end{equation*}%
then we can write $\{S_{n}\in x+I\}=\{S_{n-(r-1)}^{\prime }\in -y_{r-1}+I\}$. Assume
first that $y_{r-1}\leq 0$, so $Z_{r}=M_{n-(r-1)}^{\prime } := \max
\{X_{i}^{\prime },1\leq i\leq n-(r-1)\}$. By \eqref{eq:Er12} and %
\eqref{eq:Zer}, we need to evaluate
\begin{equation}
\P \Big(S_{n-(r-1)}^{\prime }\in -y_{r-1}+I,\ |M_{n-(r-1)}^{\prime }|\leq
\gamma \,|y_{r-1}|\Big)\,.  \label{eq:als}
\end{equation}%
Since this probability is increasing in $\gamma $, applying \eqref{eq:ron1},
we get the bound
\begin{equation}
\lesssim \frac{1}{a_{n}}\,\bigg(\frac{n}{A(|y_{r-1}|)}\bigg)^{d}\,,\qquad
\text{for all}\quad d\leq \frac{1}{\gamma }\,.  \label{eq:tebo}
\end{equation}%
In case $y_{r-1}>0$, relation \eqref{eq:als} holds with $M_{k}^{\prime }$
replaced by $(M_{k}^{\prime })^{\ast }:=\min_{1\leq i\leq k}X_{i}^{\prime }$. 
Applying \eqref{eq:ron1} to the reflected walk $(S^{\prime })^{\ast}=-S^{\prime }$, 
we see that the bound \eqref{eq:tebo} still holds. 
Then, by \eqref{eq:thist}
and $E_{r}^{(1)} = E_{r-1}^{(3)}\cap \{|Z_{r}|\leq \gamma |Y_{r-1}|\}$,
using \eqref{eq:Er3} for $E_{r-1}^{(3)}$,
we have the key bound
\begin{equation}
\P (S_{n}\in x+I,\,E_{r}^{(1)})\lesssim \!\!\!\!\int\limits_{\substack{ %
y_{1}>-\overline{\gamma }x,  \\ \,(y_{2},\ldots ,y_{r-1})\in \Theta_{r-1}(y_{1}),  
\\ \tilde{y}_{r-1}\geq a_{n}}}
\!\!\!\!\!\! F(x+\mathrm{%
d}y_{1})\,P_{y_{1}}(\mathrm{d}y_{2},\cdots ,\mathrm{d}y_{r-1})\,\frac{%
n^{r+d-1}}{a_{n}\,A(|y_{r-1}|)^{d}}\,,
\label{eq:ussbo}
\end{equation}
where we set
\begin{equation}\label{eq:ytild}
	\tilde{y}_{j}:=\min \{C_{i}^{-1} \, |y_{i}|,1\leq i\leq j\}\,,
	\qquad \text{for } j\geq 1 \,.
\end{equation}%
(For $r=2$ the integral in \eqref{eq:ussbo} is only over $y_1$, so
the restriction $(y_{2},\ldots ,y_{r-1})\in \Theta_{r-1}(y_{1})$
and the term $P_{y_{1}}(\mathrm{d}y_{2},\cdots ,\mathrm{d}y_{r-1})$ should
be ignored.)

Henceforth we fix $d\in (\frac{1}{\alpha }-r,\frac{1}{\alpha }- r + 1)$. Since $%
\gamma <\frac{\alpha }{1-\alpha }$ by assumption, and $r\geq 2$, 
we have $\frac{1}{\gamma }>\frac{1}{\alpha }-1 \geq \frac{1}{\alpha }- r + 1
>d$, hence
the constraint $d\leq \frac{1}{\gamma }$ is satisfied. 
The sequence $n^{r+d-1}/a_{n}$ is regularly varying with exponent $(d+r-1)-%
\frac{1}{\alpha }>-1$, hence by \eqref{eq:Kar1}
\begin{equation*}
\sum_{1\leq n\leq A(w)}\frac{n^{r+d-1}}{a_{n}}\lesssim \frac{A(w)^{r+d}}{%
w\wedge 1}=b_{r+d}(w)\,,\qquad \forall w\geq 0\,,
\end{equation*}%
where we recall that $b_{k}(\cdot )$ was defined in \eqref{eq:b}. 
Since the integral in \eqref{eq:ussbo} is restricted to 
$n\leq A(\tilde{y}_{r-1})$, we see that
\begin{equation}
\begin{split}
& \sum_{1\leq n\leq A(\delta x)}\P (S_{n}\in x+I,\,E_{r}^{(1)}) \\
& \qquad \qquad \lesssim \!\int\limits_{\substack{ y_{1}>-\overline{\gamma }%
x  \\ \,(y_{2},\ldots ,y_{r-1})\in \Theta _{r-1}(y_{1})}}F(x+\mathrm{d}%
y_{1})\,P_{y_{1}}(\mathrm{d}y_{2},\cdots ,\mathrm{d}y_{r-1})\,\frac{b_{r+d}(%
\tilde{y}_{r-1}\wedge \delta x)}{A(|y_{r-1}|)^{d}}\,.
\end{split}
\label{eq:int1r}
\end{equation}%
We split the
integral in two terms, corresponding to $|y_{1}|\leq \delta_0 x$ and $|y_{1}|>\delta_0 x$.
Given $\epsilon > 0$,
we first show that for $\delta_0 > 0$ small enough
the first term is $\le \epsilon \, b_1(x)$, for large $x$.
We then show that
for $\delta > 0$ small enough (depending on $\delta_0$)
the second term is also
$\le \epsilon \, b_1(x)$, for 
large $x$.
Altogether, this proves that \eqref{eq:int1r} is a.n. and
completes the estimate of $E_{r}^{(1)}$.

\begin{itemize}
\item \emph{First term.}
Since $\tilde{y}_{r-1}\leq |y_{r-1}| /C_{r-1}$, see \eqref{eq:ytild},
and since $b_{r+d}(\cdot )$ is asymptotically
increasing (because $r+d>\frac{1}{\alpha }$), we have
\begin{equation*}
\frac{b_{r+d}(\tilde{y}_{r-1}\wedge \delta x)}{A(|y_{r-1}|)^{d}}
\lesssim_{r} \frac{%
b_{r+d}(|y_{r-1}|)}{A(|y_{r-1}|)^{d}}=b_{r}(|y_{r-1}|)\,,
\end{equation*}%
hence the contribution of $|y_{1}|\leq \delta_0 x$ to the integral in %
\eqref{eq:int1r} is bounded by
\begin{equation*}
\int\limits_{\substack{ |y_{1}|\leq \delta_0 x  \\ \,(y_{2},\ldots
,y_{r-1})\in \Theta _{r-1}(y_{1})}}F(x+\mathrm{d}y_{1})\,P_{y_{1}}(\mathrm{d}%
y_{2},\cdots ,\mathrm{d}y_{r-1})\,b_{r}(|y_{r-1}|)=:J_{r-1}(\delta_0 ; x)\,,
\end{equation*}%
see \eqref{K} for the definition of $J_k$. By Proposition~\ref{G}
with $\ell = r-1$ (recall \eqref{eq:westre}),
we can fix $\delta_0 > 0$ small enough
so that for large $x$ we have $J_{r-1}(\delta_0 ,x) \le \epsilon \, b_1(x)$.

\item \emph{Second term.} If we define $f_{1}(y):=1/b_{r+d-1}(y)$, 
then by \eqref{Lf} we
can write
\begin{equation*}
\int\limits_{(y_{2},\ldots ,y_{r-1})\in \Theta _{r-1}(y_{1})}P_{y_{1}}(%
\mathrm{d}y_{2},\cdots ,\mathrm{d}y_{r-1})\,\frac{1}{A(|y_{r-1}|)^{d}}%
=h_{r-1}(y_{1},f_{1})\,, \qquad \forall y_1 \in \R \,.
\end{equation*}%
As a consequence, the contribution of $|y_{1}|> \delta_0 x$ to the integral
in \eqref{eq:int1r} is at most
\begin{equation} \label{eq:wesee}
	b_{r+d}(\delta x)\int\limits_{\substack{ |y_{1}|> \delta_0 x,\, y_{1}>-%
	\overline{\gamma }x}}F(x+\mathrm{d}y_{1})\,h_{r-1}(y_{1},f_{1})\,.
\end{equation}%
Note that $f_{1}(\cdot )\in RV(\beta )$ with $\beta =1-(r+d-1)\alpha $. Our
choice of $d$ implies that $\frac{1}{\alpha }<r+d<\frac{1}{\alpha }+1$,
hence $0<\beta <\alpha $. 
Since $\alpha < \frac{1}{r}$, see \eqref{eq:westre},
we also have $\alpha < 1 - (r-1)\alpha$, which yields $0<\beta < 1 - (r-1)\alpha$.
By Proposition~\ref{G} with $f=f_{1}$ and $\ell = r-1$,
the expression in \eqref{eq:wesee} is
$\lesssim_{\delta_0, \gamma, r} b_{r+d}(\delta x) \, \frac{f_1(x)}{x}
\lesssim \frac{A(\delta x)}{x}$. Then
we can fix $\delta > 0$ small (depending on $\delta_0$) so that
it is $\le \epsilon \, b_1(x)$ for large $x$.
\end{itemize}

\subsubsection{Estimate of $E_{r}^{(2)}$}

Always for $r \in \{2,
\ldots, k\}$, in analogy with \eqref{eq:thist}, we have
\begin{equation}  \label{eq:thist2}
\begin{split}
& \P (S_{n}\in x+I, \, E_{r}^{(2)}) \le n^{r} \, \P \big( (Z_1, \ldots
Z_{r}) = (X_1, \ldots, X_{r}), \, S_{n}\in x+I, \, E_{r}^{(2)} \big) \\
& \qquad\qquad \le n^{r} \, \P \big( (Z_1, \ldots Z_{r}) = (X_1, \ldots,
X_{r}), \, E_{r}^{(2)} \big) \, \bigg( \sup_{z \in \mathbb{R}} \P %
(S_{n-r}\in z+I) \bigg) \\
& \qquad\qquad \lesssim \frac{n^{r}}{a_n} \, \P \big( (Z_1, \ldots Z_{r}) =
(X_1, \ldots, X_{r}), \, E_{r}^{(2)} \big) \,,
\end{split}%
\end{equation}
where we have applied \eqref{eq:sup}. 
Since $E_{r}^{(2)}=E_{r-1}^{(3)}\cap \{|Y_{r}|\leq  C_{r}a_{n}\}$, by
\eqref{eq:Er3} and \eqref{eq:ytild} we obtain
\begin{equation}  \label{eq:ussbo2}
\begin{split}
\P (S_{n}\in x+I, \, E_{r}^{(2)}) & \lesssim \!\!\!\!\! \int\limits
_{\substack{ y_1 > -\overline{\gamma} x, \ \, (y_2, \ldots, y_{r-1}) \in
\Theta _{r-1}(y_1),  \\ \tilde{y}_{r-1}\geq a_{n},\ |y_{r}|<C_{r}a_{n}}} \!\!\!\!\!\!\!\!\!\! F(x + \mathrm{d} y_{1}) \, P_{y_{1}}(%
\mathrm{d} y_{2},\cdots , \mathrm{d} y_{r}) \, \frac{n^{r}}{a_{n}} \,.
\end{split}%
\end{equation}%
Recalling \eqref{eq:ytild} and \eqref{eq:btildealt}, we can write
\begin{align}
& \sum_{1\le n \le A(\delta x)} \P (S_{n}\in x+I, \, E_{r}^{(2)}) \lesssim
\!\!\!\!\!\!\!\!\! \int\limits_{\substack{ y_1 > -\overline{\gamma} x,  \\ %
\, (y_2, \ldots, y_{r-1}) \in \Theta _{r-1}(y_1) \\
|y_r| \le \overline{\gamma} |y_{r-1}|}} \!\!\!\!\!\!\!\!\!\! F(x
+ \mathrm{d} y_{1}) \, P_{y_{1}}(\mathrm{d} y_{2},\cdots , \mathrm{d} y_{r})
\, \sum_{n=A(C_{r}^{-1}|y_r|)}^{A(\delta x \wedge \tilde y_{r-1})}\frac{n^{r}}{a_{n}}
\nonumber
\\
& \quad \qquad = \int\limits_{\substack{ y_1 > -%
\overline{\gamma} x,  \\ \, (y_2, \ldots, y_{r-1}) \in \Theta _{r-1}(y_1) \\
|y_r| \le \overline{\gamma} |y_{r-1}|}}
\!\!\!\!\!\!\!\!\!\! F(x + \mathrm{d} y_{1}) \, P_{y_{1}}(\mathrm{d}
y_{2},\cdots , \mathrm{d} y_{r}) \, \tilde{b}_{r+1}(\delta x\wedge \tilde
y_{r-1} , C_{r}^{-1}|y_{r}|)  \,,
\label{eq:deint}
\end{align}
where we made explicit the restriction $|y_r| \le \overline{\gamma} |y_{r-1}|$
in the domain of integration, because for $|y_r| > \overline{\gamma} |y_{r-1}|$
the integrand vanishes (since $\tilde y_{r-1} < C_{r-1}^{-1} |y_{r-1}|$
and $C_r = \overline{\gamma}^{r-1}$).

We split the integral \eqref{eq:deint} 
in two terms, i.e.\ $|y_{1}|\leq \delta_0 x$ and $|y_{1}|>\delta_0 x$.
First we show that, given any $\epsilon > 0$,
the first term is $\le \epsilon \, b_1(x)$ for $\delta_0 > 0$ small
and $x$ large.
Then we show that the second term is $\le \epsilon \, b_1(x)$ for $\delta > 0$ small
(depending on $\delta_0$) and $x$ large.

\begin{itemize}
\item \emph{First term.} Recalling \eqref{eq:ytild}, \eqref{eq:btildealt} and
the definition $C_i = \overline{\gamma}^{i-1}$, we can bound
\begin{equation*}
\begin{split}
	\tilde{b}_{r+1}(\delta x\wedge \tilde{y}_{r-1}, C_{r}^{-1} |y_r|) 
	&\leq \tilde{b}_{r+1}(C_{r-1}^{-1}|y_{r-1}|,C_{r-1}^{-1} \overline{\gamma}^{-1} |y_{r}|)
	\lesssim \tilde{b}_{r+1}(|y_{r-1}|,\overline{\gamma}^{-1}|y_{r}|) \\
	& \le \tilde{b}_{r+1}(|y_{r-1}|,|y_{r}|) \,,
\end{split}
\end{equation*}
where the last inequality holds because $\overline{\gamma}^{-1} > 1$.
For $|y_{1}|\leq \delta _{0}x$, when we plug this into \eqref{eq:deint}
we obtain $\tilde{J}_{r}(\delta_{0}; x)$, see \eqref{eq:tildeJ} and \eqref{h}. 
By Proposition~\ref{H}
with $\ell = r$, we can fix $\delta _{0}>0$ small enough so
that $\tilde{J}_{r}(\delta_{0}; x) \leq \epsilon
\,b_{1}(x)$ for large $x$.

\item \emph{Second term.} Next we deal with $|y_{1}| > \delta_0 x$.
Note that $\alpha (r+1) \le \alpha(k+1) \le 1$, see \eqref{eq:westre}.
We fix any
$\psi \in (0,1)$, so that $\alpha (r+1-\psi )<1$. By \eqref{eq:btildealt} we can bound
\begin{align*}
	\tilde{b}_{r+1}(\delta x\wedge \tilde{y}_{r-1}, C_{r}^{-1}|y_{r}|) 
	& \lesssim A(\delta x)^{\psi } \,
	\sum_{n \ge A(C_r^{-1} |y_r|)} \frac{n^{r-\psi}}{a_n}
	\lesssim_{r} A(\delta x)^{\psi } \, b_{r+1-\psi }(|y_{r}|) \,,
\end{align*}%
where the last inequality holds by \eqref{eq:Kar2} (note that
$n^{r-\psi}/a_n$ is regularly varying with index $r-\psi-\frac{1}{\alpha} < -1$).
If we set $f_{2}(y):=A(y)^{1-\psi}$,
the contribution of $|y_{1}| > \delta_0 x$ is
\begin{equation*}
	\lesssim A(\delta x)^{\psi }\int_{y_{1}>-\overline{\gamma }%
	x,|y_{1}|>\delta_0 x}F(x+\mathrm{d}y_{1}) \, h_{r}(y_{1},f_{2}).
\end{equation*}%
Note that $f_{2}(y):=A(y)^{1-\psi} \in RV(\beta)$, with $\beta = \alpha(1-\psi)$,
hence $0 < \beta < 1 - r \alpha$ by our choice
of $\psi$. We can apply point~\eqref{it:1}
in Proposition~\ref{G}
with $\ell = r$, to get
\begin{equation*}
	\lesssim A(\delta x)^{\psi } \, \frac{f_2(x)}{x \vee 1}
	\underset{x\to\infty}{\sim} \delta^{\alpha \psi} 
	\, \frac{A(x)}{x \vee 1} = \delta^{\alpha \psi} \, b_1(x) \,,
\end{equation*}
which is a.n..
\end{itemize}

\subsubsection{Estimate of $E_{k}^{(3)}$}

Finally, recalling \eqref{eq:Er3},
\eqref{eq:ytild} and applying \eqref{eq:ron2}, we can write
\begin{equation*}
\begin{split}
	\P(S_{n} \in x+I, \ E_{k}^{(3)})
	& \lesssim 
	n^{k} \, \P \big( (Z_1, \ldots
	Z_{k}) = (X_1, \ldots, X_{k}), \, S_{n}\in x+I, \, E_{k}^{(3)} \big) \\
	& \lesssim
	\int\limits_{\substack{ y_{1}>-\overline{\gamma }x,\\ \,(y_{2},\ldots ,y_{k})
	\in \Theta _{k}(y_{1}),  \\ 
	\tilde{y}_{k}\geq a_{n} }}\!F(x+\mathrm{d}y_{1})\,
	P_{y_{1}}(\mathrm{d}y_{2},\cdots ,\mathrm{d}y_{k})\,\frac{n^{k+1}}{a_{n}A(y_{k})}.
\end{split}
\end{equation*}
Note that $n^{k+1}/a_n \in RV(\zeta)$ with $\zeta = k+1-\frac{1}{\alpha} > -1$,
by $k=\kappa_{\alpha}$.
Therefore, by \eqref{eq:Kar1},
\begin{align} \label{eq:tirdte}
&\sum_{1 \le n \le A(\delta x)}
\!\!\!\!
\P(S_{n}\in x+I,E_{k}^{(3)})\lesssim
\!\!\!\!\!\!\!\!\!\!
\int\limits_{\substack{y_{1}>-\overline{\gamma }x,\\ \,(y_{2},\ldots ,y_{k})\in \Theta
_{k}(y_{1})}}
\!\!\!\!\!\!\!\!\!\!
F(x+\mathrm{d}y_{1})\,P_{y_{1}}(\mathrm{d}y_{2},\cdots ,\mathrm{%
d}y_{k})
\frac{b_{k+2}(\delta x\wedge \tilde{y}_{k})}{A(y_{k})} \,.
\end{align}%
We split the
integral in two terms $|y_{1}|\leq \delta_0 x$ and $|y_{1}|>\delta_0 x$.
We recall that $\frac{1}{k+2} < \alpha \le \frac{1}{k+1}$.

\begin{itemize}
\item \emph{First term.} 
We focus on $|y_{1}|\leq \delta_0 x$
and distinguish two cases.
First we consider $|y_k| \le \overline{\gamma}
|y_{k-1}|$. By $(k+1) - \frac{1}{\alpha} > -1$
and $\tilde y_k \lesssim_k \min\{|y_{k-1}|, |y_{k}|\}$, see \eqref{eq:ytild}, we get
\begin{equation*}
\begin{split}
	b_{k+2}(\delta x\wedge \tilde{y}_{k})
	& \lesssim \sum_{n=1}^{A(\delta x\wedge \tilde{y}_{k})}
	\frac{n^{k+1}}{a_n} 
	\lesssim A(y_k) \, \sum_{n=1}^{A(|y_{k-1}|)}
	\frac{n^{k}}{a_n} 
	\lesssim_{\gamma} A(y_k) \, \sum_{n=A(|y_{k}|)}^{A(|y_{k-1}|)}
	\frac{n^{k}}{a_n} \\
	& \lesssim A(y_k) \, \tilde b_{k+1}(y_{k-1}, y_k) \,,
\end{split}
\end{equation*}
where the third inequality holds for $|y_k| \le \overline{\gamma} |y_{k-1}|$,
and for the last inequality we recall \eqref{eq:btildealt}.
When we plug this bound into \eqref{eq:tirdte}, with the integral restricted
to $|y_{1}|\leq \delta_0 x$ and $|y_k| \le \overline{\gamma} |y_{k-1}|$,
we obtain $\tilde{J}_{r}(\delta_{0}; x)$, see \eqref{eq:tildeJ}-\eqref{h}. 
By Proposition~\ref{H} with $\ell = r$, we can fix $\delta _{0}>0$ small enough so
that $\tilde{J}_{r}(\delta_{0}; x) \leq \epsilon \,b_{1}(x)$.

Next we consider $|y_k| > \overline{\gamma} |y_{k-1}|$,
hence we can bound $A(y_k) \gtrsim_{\gamma} A(y_{k-1})$.
Since $b_{k+2}$ is asymptotically increasing (it is regularly varying with index
$(k+2)\alpha-1 > 0$), we can
also bound $b_{k+2}(\delta x\wedge \tilde{y}_{k}) / A(y_k)
\lesssim b_{k+2}(y_{k-1}) / A(y_{k-1}) = b_{k+1}(y_{k-1})$.
When we plug this into \eqref{eq:tirdte}, the integrand does not
depend on $y_k$ anymore, so we can integrate
over $y_k$ to get $\int_{y_k \in \theta(y_{k-1})} F(-y_{k-1} + \dd y_k)
\lesssim_{\gamma} 1 / A(y_{k-1})$, which multiplied by 
$b_{k+1}(y_{k-1})$ gives $b_{k}(y_{k-1})$. Then
the contribution of $|y_{1}|\leq \delta _{0}x$ and $|y_k| > \overline{\gamma} |y_{k-1}|$
to \eqref{eq:tirdte} is bounded by $%
J_{k-1}(\delta _{0};x)$, see \eqref{K}, which is a.n.\ by Proposition~\ref{G}.

\item \emph{Second term.} To deal with $\{|y_{1}| > \delta_0 x\}$,
we fix $\nu \in (0,1)$ sufficiently close to $1$,
so that $(k+1+\nu)\alpha - 1 > 0$ (we recall that $\alpha > \frac{1}{k+2}$),
which ensures that $b_{k+1+\nu}(\cdot)$ is asymptotically increasing. Then we can bound
\begin{align*}
\frac{b_{k+2}(\delta x\wedge \tilde{y}_{k})}{A(y_{k})} &\lesssim \frac{%
b_{k+1+\nu }(\delta x)A(y_{k})^{1-\nu }}{A(y_{k})} 
= \frac{b_{k+1+\nu }(\delta x)}{A(y_{k})^{\nu }}=b_{k+1+\nu }(\delta
x) \, b_{k}(y_{k}) \, f_{3}(y_{k}) \,,
\end{align*}%
where we set $f_{3}(y) := 1/b_{k+\nu }(y)$.
Note that $\alpha(k+\nu) < 1$ (by $\alpha \le \frac{1}{k+1}$),
hence $f_{3} \in RV(\beta)$
with $\beta = 1-\alpha(k+\nu)$ satisfies $0 < \beta < 1 - \alpha k$, i.e.\
the assumption of point~\eqref{it:1}
in Proposition~\ref{G} with $\ell =k$. 
The contribution of $\{|y_{1}| > \delta_0 x\}$ is then
\begin{equation*}
\begin{split}
	& \lesssim b_{k+1+\nu }(\delta x)\int_{y_{1}>-\overline{\gamma }%
	x,|y_{1}|> \delta_0 x}F(x+\mathrm{d}y_{1}) \, h_{k}(y_{1},f_{3}) 
	\lesssim b_{k+1+\nu }(\delta x) \, \frac{f_3(x)}{x \vee 1} \\
	& = \frac{b_{k+1+\nu }(\delta x)}{A(x)^{k+\nu}}
	\underset{x\to\infty}{\sim} \delta^{(k+1+\nu)\alpha-1} \, b_1(x)\,,
\end{split}
\end{equation*}%
which is a.n.\ and completes the proof.
\end{itemize}

\smallskip

\subsection{Technical proofs}
\label{sec:technical}

In this subsection we are going to prove Propositions~\ref{G} and~\ref{H}.
We first need two preliminary results,
stated in the next Propositions~\ref{th:boundcru} and~\ref{F}.

\smallskip

First an elementary
observation. Recall that $g_r(y)$ is defined in \eqref{eq:gak}.
We claim that
\begin{equation}\label{eq:firstobserve}
	\forall r \ge 2 \,, \ \forall y \in \R : \qquad
	g_r(y) \le A(y) \, \int_{|z| \le \overline{\gamma} |y|}
	F(-y + \dd z) \, g_{r-1}(z) \,.
\end{equation}
The case $r=2$ follows immediately from \eqref{eq:gak}-\eqref{eq:Omegak}
and \eqref{eq:b} (recall that $A$ is increasing).
Similarly, for $r \ge 3$, we simply observe that
$|y_r| \le |y|$ for $(y_2,\ldots, y_r) \in \Omega_r(y)$, hence
\begin{equation} \label{Z16a}
\begin{split}
g_{r}(y) &= \int_{|y_{2}|\leq \overline{\gamma }|y|} F(-y+\mathrm{d} y_{2})
\int_{\Omega_{r-1} (y_{2})}P_{y_{2}}(\mathrm{d} y_{3},\cdots , \mathrm{d}
y_{r}) \, b_{r+1}(y_{r})  \\
&\leq A(|y|) \int_{|y_{2}|\leq \overline{\gamma }|y|} F(-y+\mathrm{d} y_{2}) 
\int_{\Omega_{r-1} (y_{2})}P_{y_{2}}(\mathrm{d} y_{3},\cdots , \mathrm{d}
y_{r})\, b_{r}(y_{r}) \\
&= A(|y|) \int_{|y_{2}|\leq \overline{\gamma }|y|} F(-y+\mathrm{d} y_{2}) \, 
g_{r-1}(y_{2}) \,.
\end{split}%
\end{equation}%

We are ready for our first preliminary result.
If $I_{r}(\delta ;x)$ is a.n., then for $\delta > 0$ small we
have $I_{r}(\delta ;x) \lesssim b_1(x)$ for all $x\ge 0$
(recall Definition~\ref{def:an}).
We now show that the same bound holds when the
integral in \eqref{eq:Ikmod} is enlarged to $\{y_1 > -\kappa x\}$, for any
fixed $\kappa < 1$.

\begin{proposition}
\label{th:boundcru} Fix $r \in\N$ and $\alpha \in (0,\frac{1}{2})$.
Assume that $I_{j}(\delta ;x)$ is a.n.\ for $j=1,\ldots, r$. Then
for any $0 < \kappa <1$ 
\begin{equation}  \label{eq:crubo}
\int_{y> -\kappa x}F(x+\mathrm{d} y) \, g_{r}(y)
\,\lesssim_{\kappa,\gamma,r}\, b_{1}(x) \qquad \forall x \ge 0\,.
\end{equation}
\end{proposition}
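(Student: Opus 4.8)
The plan is to prove the estimate \eqref{eq:crubo} by induction on $r$, peeling off one increment at a time with the elementary inequality \eqref{eq:firstobserve}. At each step I fix a small auxiliary parameter $\delta_0\in(0,\kappa)$ and split $\{y>-\kappa x\}$ into the three ranges $\{|y|\le\delta_0 x\}$, $\{-\kappa x<y<-\delta_0 x\}$ and $\{y>\delta_0 x\}$. The mechanism is that the first range is exactly where $I_r$ lives, so the a.n.\ hypothesis applies directly; the second is a region where $|y|$ is of order $x$, so everything is explicit; and the third, where $y$ (hence $X_1=x+y$) is an arbitrarily large positive jump, is where the real work is. (For $r=1$ the statement is nothing but the upgrade of the a.n.\ of $I_1$ to an $O(b_1)$ bound over the half-line $\{y>-\kappa x\}$, and this upgrade is reused for all $r$.)

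For the base case $r=1$ one has $g_1=b_2\in RV(2\alpha-1)$ with negative index, since $\alpha<\tfrac12$. On $\{|y|\le\delta_0 x\}$ the integral is $I_1(\delta_0;x)$, which is $\lesssim b_1(x)$ once $\delta_0$ is chosen small, by the a.n.\ of $I_1$ (for bounded $x$ one uses that $g_1$ is bounded and that $b_1$ is bounded below, as $A(0)>0$). On $\{-\kappa x<y<-\delta_0 x\}$ one has $b_2(y)\lesssim_{\delta_0,\kappa}b_2(x)=A(x)^2/x$ by regular variation, while the $F$-mass of the range is $\le\overline{F}((1-\kappa)x)\lesssim_\kappa 1/A(x)$, so the contribution is $\lesssim A(x)/x=b_1(x)$. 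On $\{y>\delta_0 x\}$, writing $X=x+y>(1+\delta_0)x$ and using $b_2(X-x)\lesssim_{\delta_0}b_2(X)$, the contribution is $\lesssim\E[b_2(X)\,\ind_{\{X>(1+\delta_0)x\}}]$, which an integration by parts together with Karamata's theorem bounds by $\lesssim b_1(x)$ (the boundary term $b_2((1+\delta_0)x)\,\overline{F}((1+\delta_0)x)\approx A(x)/x$ is the dominant one).

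For the inductive step $r\ge2$, assuming the statement for $r-1$ (available since $I_1,\dots,I_{r-1}$ are a.n.), I again split $\{y>-\kappa x\}$ as above. The piece $\{|y|\le\delta_0 x\}$ is $I_r(\delta_0;x)\lesssim b_1(x)$ by the a.n.\ of $I_r$. For the piece $\{-\kappa x<y<-\delta_0 x\}$, i.e.\ $y<0$, I apply \eqref{eq:firstobserve}:
\[ \int_{y>-\kappa x} F(x+\dd y)\, g_r(y)\;\le\;\int_{y>-\kappa x} F(x+\dd y)\, A(|y|)\int_{|z|\le\overline{\gamma}|y|} F(-y+\dd z)\, g_{r-1}(z)\,. \]
On $\{-\kappa x<y<-\delta_0 x\}$ we have $-y=|y|\ge0$, so the inner integral $\int_{|z|\le\overline{\gamma}|y|}F(-y+\dd z)\,g_{r-1}(z)$ is controlled by the induction hypothesis applied to $g_{r-1}$ with $x$ replaced by $|y|$ and $\kappa$ by any fixed constant in $(\overline{\gamma},1)$, giving $\lesssim b_1(|y|)$; hence $g_r(y)\lesssim A(|y|)\,b_1(|y|)=b_2(y)$ on this range, and one concludes exactly as in the base case.

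The remaining piece $\{y>\delta_0 x\}$ is where I expect the main obstacle. Here $X_1=x+y$ is a large positive jump of unbounded size, and after peeling via \eqref{eq:firstobserve} the increment $z=y+X_2$ with $|z|\le\overline{\gamma}|y|$ forces $X_2$ to be a large \emph{negative} jump of size comparable to $y$, so the tail estimate \eqref{theta} (equivalently \eqref{eq:tail2}) applies. One cannot argue pointwise here: the bound $g_r(y)\lesssim b_2(y)$ is in fact \emph{false} without extra local regularity of $F$, since a sufficiently clumpy left tail — which the a.n.\ of $I_1,\dots,I_r$ does not forbid, as these only probe the right tail and small values of $F$ — makes $\int_{|z|\le\overline{\gamma}|y|}F(-y+\dd z)\,b_2(z)$ too large. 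The plan is therefore to keep the double integral, apply Fubini to integrate $y$ out for fixed $z$, and estimate the resulting convolution-type kernel $\int_y A(|y|)\,F(x+\dd y)\,F(-y+\dd z)$ — which describes ``one large positive and one large negative jump summing to a moderate value'' and is governed purely by the regularly varying tails of index $-\alpha$ — then feed the outcome back into the induction hypothesis for $g_{r-1}$ with the integration variables now correctly aligned, and close with a Karamata estimate of the same shape as in the base case to obtain $\lesssim b_1(x)$. It is precisely in this last range that the full strength of the hypotheses is used: one needs the a.n.\ of \emph{all} of $I_1,\dots,I_r$, not merely of $I_r$.
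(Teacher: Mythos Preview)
Your overall strategy—induction on $r$ with a three-range split, handling $\{|y|\le\delta_0 x\}$ via the a.n.\ of $I_r$ and $\{-\kappa x<y<-\delta_0 x\}$ by showing $g_r(y)\lesssim b_2(y)$ pointwise for $y<0$ through \eqref{eq:firstobserve} plus the inductive hypothesis—is exactly the paper's. (The paper's base case is slightly simpler: since $b_2\in RV(2\alpha-1)$ has negative index, $\sup_{|y|>\delta_0 x}b_2(y)\lesssim_{\delta_0}b_2(x)$ handles both outer ranges at once, with $F$-mass $\overline{F}((1-\kappa)x)\lesssim 1/A(x)$; no integration by parts is needed.)

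The gap is in your treatment of $\{y>\delta_0 x\}$. You correctly diagnose that a pointwise bound $g_r(y)\lesssim b_2(y)$ can fail there, but then propose to Fubini so as to integrate $y$ first and estimate the kernel $K(x,\dd z)=\int_y A(y)\,F(x+\dd y)\,F(-y+\dd z)$ ``purely by the regularly varying tails''. This kernel is \emph{not} governed by tails alone: as a measure in $z$ it probes the local structure of $F$ near the large negative point $z-y$, exactly the irregularity you just warned against, so it cannot be estimated in $z$ and then fed into the induction hypothesis (which requires the specific shape $\int F(x'+\dd z)\,g_{r-1}(z)$ for a single $x'$). The paper's resolution is to change variables to $w=y-z$ (so $F(-y+\dd z)=F(-\dd w)$, a pure tail) and integrate over $v=z$ for \emph{fixed} $w$. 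The inner integral then reads $\int_{-\gamma_1 w}^{\gamma_2 w} F((x+w)+\dd v)\,A(v+w)\,g_{r-1}(v)$, which after bounding $A(v+w)\lesssim_\gamma A(w)$ and enlarging the range to $\{v>-\gamma_1(x+w)\}$ is precisely the inductive hypothesis at the shifted point $x'=x+w\ge0$, giving $\lesssim A(w)\,b_1(x+w)\le b_2(x+w)\lesssim b_2(x)$; the remaining outer integral $\int_{w\gtrsim\delta_0 x}F(-\dd w)\lesssim 1/A(x)$ closes the estimate. The key point you are missing is that one must fix the large negative jump $w=-X_2$, not the residue $z$, so that the leftover integral matches the inductive form.
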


\begin{proof}
The case $r=1$ is easy: since $b_{2}\in RV(2\alpha -1)$ and $%
2\alpha - 1 < 0$, for any fixed $\delta_0 > 0$
\begin{equation}
\int_{|y| > \delta_0 x, \, y>-\kappa x}F(x+\mathrm{d} y) \, b_{2}(y) \le %
\bigg(\sup_{|y| > \delta_0 x}b_{2}(y) \bigg) \, \overline F((1-\kappa)x)
\lesssim_{\delta_0, \kappa} \frac{b_{2}(x)}{A(x)} = b_{1}(x) \,.
\end{equation}%
On the other hand, the contribution to the integral of $|y| \le \delta_0 x$
gives $I_1(\delta_0; x)$ which is $\lesssim b_1(x)$ for $\delta_0 > 0$ small
enough, as we already observed, because $I_1(\delta; x)$ is a.n..

\smallskip

Next we fix $r \ge 2$. By induction, we can assume that \eqref{eq:crubo} holds with
$r$ replaced by $1, 2, \ldots, r-1$ and
our goal is to prove it for $r$.

Assume first that $y \le 0$, say $y = -t$ with $t \ge 0$.
By \eqref{eq:firstobserve} and
the inductive hypothesis \eqref{eq:crubo} for $r-1$ (since $t \ge 0$), we get
\begin{equation*}
	g_r(-t) \le A(t) \int_{|z| \le \overline{\gamma} t}
	F(t + \dd z) \, g_{r-1}(z) \lesssim_{\gamma,r} A(t) \, b_1(t)
	= b_2(t) = g_1(t) \,.
\end{equation*}
When we plug this bound into \eqref{eq:crubo} restricted to $y \le 0$, we get
\begin{equation*}
\int_{0 \le t < \kappa x} F(x - \mathrm{d} t) \, g_{r}(-t) \lesssim_{\gamma,r}
\int_{0 \le t < \kappa x} F(x - \mathrm{d} t) \, g_1(t) \lesssim_{\kappa} b_1(x) \,,
\end{equation*}
where the last inequality holds by the inductive hypothesis \eqref{eq:crubo} for $r=1$.

It remains to look at the contribution of $y > 0$ in \eqref{eq:crubo}. 
By \eqref{eq:Ikmod}, the contribution of $\{0 < y \le \delta_1 x\}$ to \eqref{eq:crubo}
is bounded by $I_{r}(\delta_1, x)$ which is a.n.\ by assumption, hence it
is $\lesssim b_1(x)$ provided $\delta_1 > 0$ is small enough. It remains to
focus on $\{y > \delta_1 x\}$.

We need a simple observation; let $I=(a_{1},a_{2})$ where $0\leq
a_{1}<a_{2}\leq \infty $ and, for $\gamma \in (0,1)$, 
put $I^{\prime }=(\gamma a_{1},(2-\gamma
)a_{2}).$ Then for all non-negative functions $f,g: \R \to [0,\infty)$%
\begin{equation}  \label{z}
\begin{split}
&\int_{y\in xI}F(x+\mathrm{d} y)  \, f(y) \int_{|z|\leq \overline{\gamma }y}F(-y+%
\mathrm{d} z)\, g(z) \\
&=\int_{y\in xI}F(x+\mathrm{d} y)  \, f(y)
\int_{\gamma y \le w \le (2-\gamma )y} F(-%
\mathrm{d} w) \, g(y-w) \\
&\le \int_{w\in xI^{\prime}}F(-\mathrm{d} w)\int_{(2-\gamma )^{-1}w \le y
\le \gamma ^{-1}w} F(x+\mathrm{d} y) \, f(y) \, g(y-w) \\
&=\int_{w\in xI^{\prime }} F(-\mathrm{d} w) \int_{-\gamma_{1}w \le v \le
\gamma _{2}w} F(x+w+\mathrm{d} v)\, f(w+v) \, g(v),
\end{split}%
\end{equation}%
where $\gamma _{1}=1-(2-\gamma )^{-1}$ and $\gamma _{2}=\gamma ^{-1}-1.$

Applying \eqref{eq:firstobserve}
together with \eqref{z},
the contribution of $\{y > \delta_1 x\}$ to %
\eqref{eq:crubo} is
\begin{equation*}
\begin{split}
\int_{\delta _{1} x}^{\infty } F(x + \mathrm{d} y) \, g_{r}(y) & \leq
\int_{\delta _{1}x}^{\infty }F(x+\mathrm{d} y) \, A(y)
\int_{-\overline{\gamma }y}^{%
\overline{\gamma }y}F(-y+\mathrm{d} z) \, g_{r-1}(z) \\
&=\int_{\gamma\delta _{1}x}^{\infty } F(-\mathrm{d}
w)\int_{-\gamma _{1}w}^{\gamma _{2}w} F(x+w + \mathrm{d} v)
\, A(v+w) \, g_{r-1}(v) \\
&\lesssim_{\gamma} \int_{\gamma\delta _{1}x}^{\infty }F(-\mathrm{%
d} w) \, A(w)\int_{-\gamma _{1}(x+w)}^{\infty} 
F(x+w + \mathrm{d} v) \, g_{r-1}(v) \,.
\end{split}%
\end{equation*}
Applying \eqref{eq:crubo} for $r-1$, and the fact that $b_2(\cdot)$ is
asymptotically decreasing, we obtain
\begin{equation*}
\begin{split}
\int_{\delta _{1} x}^{\infty } F(x + \mathrm{d} y) \, g_{r}(y) & \lesssim
\int_{\gamma\delta _{1}x}^{\infty} F(-\mathrm{d} w)\, A(w) \,
b_1(x+w) \le \int_{\gamma\delta _{1}x}^{\infty }F(-\mathrm{d} w)\,
b_2(x+w) \\
& \lesssim b_{2}(x)\int_{\gamma\delta _{1}x}^{\infty} F(-\mathrm{%
d} w)\lesssim_{\delta_1,\gamma} b_1(x) \,.\qedhere
\end{split}%
\end{equation*}
\end{proof}

We now introduce a generalization $g_k(y,f)$ of $g_k(y)$
(in the same way as $h_k(y,f)$ generalizes $h_k(y)$, see
\eqref{L}-\eqref{Lf}). For any non-negative, even function $f:%
\mathbb{R}\rightarrow [0,\infty)$ we denote by $g_{k}(y_{1},f)$ what we get
by replacing $b_{k+1}(y_{k})$ by $b_{k}(y_{k})f(y_{k})$ in \eqref{eq:gak},
that is
\begin{equation}
g_{k}(y_{1},f):=%
\begin{cases}
b_{1}(y_{1})\,f(y_{1}) & \text{ if }k=1 \\
\rule{0pt}{1.9em}\displaystyle\int_{\Omega _{k}(y_{1})}P_{y_{1}}(\mathrm{d}%
y_{2},\ldots ,\mathrm{d}y_{k})\,b_{k}(y_{k})\,f(y_{k}) & \text{ if }k\geq 2%
\end{cases}%
\,.  \label{eq:gkf}
\end{equation}%
In particular, $g_{k}(y)$ is $g_{k}(y,A)$. 

We are going to assume that $f(|\cdot|) \in RV(\beta)$ for some $\beta > 0$,
so $f$ is asymptotically increasing
and $f(w) \lesssim f(y)$ for $|w| \le |y|$.
Then, in analogy with \eqref{eq:firstobserve}, we claim that
\begin{equation}\label{eq:firstobserve2}
	\forall r \ge 2 \,, \ \forall y \in \R : \qquad
	g_r(y,f) \le f(y) \, \int_{|z| \le \overline{\gamma} |y|}
	F(-y + \dd z) \, g_{r-1}(z) \,.
\end{equation}
The case $r = 2$ follows immediately by \eqref{eq:gkf}, while
for $r \ge 3$ we can argue as in \eqref{Z16a},
replacing $b_{r+1}(y_r)$ by $b_r(y_r, f)$ and bounding
$f(y_r) \lesssim f(y)$, since $|y_r| \le |y|$ on $\Omega_r(y)$.

We now state our second preliminary result, which is in the
same spirit as Proposition~\ref{th:boundcru}.

\begin{proposition}
\label{F} Fix $r \in\N$ and $\alpha \in (0,\frac{1}{2})$.
If $r \ge 2$, assume that $I_{j}(\delta ;x)$ is a.n.\ 
for $j=1,\ldots, r-1$.
Fix any $f \in RV(\beta)$
with $0 < \beta < 1-\alpha$. Then for all $0 < \delta_0 < \kappa <1$
\begin{equation}
\int_{|y|>\delta _{0}x,\, y>-\kappa x} F(x+\mathrm{d} y) \, g_{r}(y,f)
\lesssim_{\delta_0, \kappa,\gamma,r} \frac{f(x)}{x \vee 1} \qquad \forall x
\ge 0 \,.  \label{z2}
\end{equation}
\end{proposition}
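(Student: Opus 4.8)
The plan is to follow the proof of Proposition~\ref{th:boundcru} almost verbatim, inserting the weight $f$ into the regular-variation bookkeeping. The reason this works with one fewer a.n.\ hypothesis ($I_j$ for $j\le r-1$ rather than $j\le r$) is that the domain of integration already excludes $\{|y|\le\delta_0 x\}$, so the ``small jump'' term $I_r$ never appears; instead the one-step peeling inequality \eqref{eq:firstobserve2} always reduces us to Proposition~\ref{th:boundcru} \emph{with $r$ replaced by $r-1$}, whose hypotheses are exactly the ones available. We may restrict attention to large $x$, the case of bounded $x$ being routine.

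For $r=1$ the estimate is immediate: $g_1(y,f)=b_1(y)f(y)$ is a function of $|y|$ lying in $RV(\alpha-1+\beta)$, whose index is negative since $\beta<1-\alpha$; hence it is asymptotically decreasing, so $\sup_{|y|>\delta_0 x}g_1(y,f)\lesssim_{\delta_0}b_1(x)f(x)$, while $F(\{|y|>\delta_0 x\})\lesssim_{\delta_0}1/A(x)$ by \eqref{eq:tail2}, and the product is $\lesssim_{\delta_0}f(x)\,b_1(x)/A(x)=f(x)/(x\vee1)$.

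For $r\ge2$ I would split the integral according to the sign of $y$. On $\{y=-t<0\}$ the constraints force $t\in(\delta_0 x,\kappa x)$; by \eqref{eq:firstobserve2} and then Proposition~\ref{th:boundcru} for $r-1$ with $\overline\gamma$ in place of $\kappa$, one gets $g_r(-t,f)\le f(t)\int_{|z|\le\overline\gamma t}F(t+\dd z)\,g_{r-1}(z)\lesssim_{\gamma,r}f(t)\,b_1(t)$; since $f\,b_1\in RV(\alpha-1+\beta)$ is asymptotically decreasing and $t>\delta_0 x$, integrating $F(x-\dd t)$ over $t\in(\delta_0 x,\kappa x)$, whose $F$-mass is at most $\overline F((1-\kappa)x)\lesssim_\kappa 1/A(x)$, gives $\lesssim_{\delta_0,\kappa,\gamma,r}f(x)\,b_1(x)/A(x)=f(x)/(x\vee1)$ --- the cutoff $\kappa<1$ enters here just as in Proposition~\ref{th:boundcru}. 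On $\{y>0\}$, that is $\{y>\delta_0 x\}$, I apply \eqref{eq:firstobserve2} and then the change of variables \eqref{z} with the interval $(\delta_0,\infty)$ in the role of $I$ there, bounding the integral by $\int_{w>\gamma\delta_0 x}F(-\dd w)\int_{-\gamma_1 w\le v\le\gamma_2 w}F(x+w+\dd v)\,f(w+v)\,g_{r-1}(v)$; since $w+v\in[(2-\gamma)^{-1}w,\gamma^{-1}w]$ on this range, regular variation of $f$ gives $f(w+v)\lesssim_\gamma f(w)$, and after enlarging the inner integral to $\{v>-\gamma_1(x+w)\}$ and invoking Proposition~\ref{th:boundcru} for $r-1$ with $\gamma_1\in(0,1)$ in place of $\kappa$, we are left with $\lesssim_{\gamma,r}\int_{w>\gamma\delta_0 x}F(-\dd w)\,f(w)\,b_1(x+w)$. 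Finally $b_1(x+w)\lesssim b_1(w)$ (since $b_1\in RV(\alpha-1)$ has negative index and $x+w\ge w$), and $f\,b_1$ is asymptotically decreasing, so $f(w)\,b_1(x+w)\lesssim_{\delta_0,\gamma}f(x)\,b_1(x)$ uniformly for $w>\gamma\delta_0 x$; integrating $F(-\dd w)$ and using $F((-\infty,-\gamma\delta_0 x])\lesssim_{\delta_0,\gamma}1/A(x)$ gives the desired bound $\lesssim f(x)\,b_1(x)/A(x)=f(x)/(x\vee1)$.

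The genuinely delicate step, which I expect to be the main obstacle, is the last regular-variation estimate on $\{y>0\}$: one must keep the $w$-dependence in $b_1(x+w)$ rather than replace it by $b_1(x)$, since the resulting residual integral $\int F(-\dd w)\,f(w)/(w\,A(w))$ need not converge unless $\beta<\alpha$; the right route is to use that the \emph{product} $f\,b_1$ is itself regularly varying of negative index $\alpha-1+\beta$, which is precisely where the hypothesis $\beta<1-\alpha$ is used. Everything else is a transcription of the proof of Proposition~\ref{th:boundcru}, with $(g_r,\,b_1(x))$ systematically replaced by $(g_r(\cdot,f),\,f(x)/(x\vee1))$.
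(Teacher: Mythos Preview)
Your proof is correct and follows the paper's own argument essentially verbatim: the same $r=1$ base case, the same sign-splitting for $r\ge2$, the same use of \eqref{eq:firstobserve2} combined with Proposition~\ref{th:boundcru} at level $r-1$, and the same application of \eqref{z} on $\{y>0\}$. One small slip: in the $r=1$ case the mass bound should come from the constraint $y>-\kappa x$ (giving $\overline F((1-\kappa)x)\lesssim_\kappa 1/A(x)$), not from $|y|>\delta_0 x$ alone, since the latter set has $F(x+\cdot)$-mass close to $1$; but this is cosmetic and does not affect the argument.
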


\begin{proof}
Since $g_{1}(\cdot,f) = b_{1}(\cdot )f(\cdot ) \in RV(\alpha +
\beta - 1)$ is asymptotically decreasing, we have
\begin{align*}
\int_{|y|>\delta _{0}x, \, y>-\kappa x} F(x+\mathrm{d} y) \, g_{1}(y,f) &
\lesssim_{\delta_0} b_{1}(x)f(x) \int_{y>-\kappa x} F(x+\mathrm{d} y) \\
& \lesssim_{\kappa} \frac{b_{1}(x)f(x)}{A(x)} =\frac{f(x)}{x \vee 1} \, ,
\end{align*}%
which proves \eqref{z2} if $r=1$. Henceforth we assume that $r \ge 2$
and proceed by induction.
Note that we can apply Proposition~\ref{th:boundcru} with $r$ replaced by $r-1$
(since here we assume that $I_{j}(\delta ;x)$ is a.n.\ 
for $j=1,\ldots, r-1$).

Assume first that $y \le 0$, say $y = -t$ with $t \ge 0$.
Then by \eqref{eq:firstobserve2} we can bound
\begin{equation}  \label{Z16}
\begin{split}
	g_{r}(-t,f) \le f(t) \int_{|z|\leq \overline{\gamma }t} F(t+\mathrm{d} z) \,
	g_{r-1}(z)\lesssim_{\gamma} f(t) \, b_1(t) \,,
\end{split}%
\end{equation}%
where for the last inequality we apply Proposition~\ref{th:boundcru} for $r-1$ 
(since $t \ge 0$).
Since $f(\cdot) b_1(\cdot)$ is asymptotically decreasing,
the contribution of $y \le 0$ to \eqref{z2} is then estimated by
\begin{equation*}
\int_{\delta _{0}x < t < \kappa x} F(x-\mathrm{d} t)\,
g_{r}(-t,f)\lesssim_{\delta_0} f(x) \, b_1(x) \int_{\delta _{0}x < t
< \kappa x} F(x-\mathrm{d} t) \lesssim_{\kappa} \frac{f(x) \, b_1(x)}{A(x)}
= \frac{f(x)}{x \vee 1} \,.
\end{equation*}

It remains to control the contribution to \eqref{z2} of $y>0$. 
By \eqref{eq:firstobserve2} and \eqref{z}
\begin{equation}\label{eq:enpr1}
\begin{split}
\int_{\delta _{0}x}^{\infty }F(x+\mathrm{d}y)\,g_{r }(y,f)& \leq
\int_{\delta _{0}x}^{\infty }F(x+\mathrm{d}y) \, f(y)\,
\int_{-\overline{\gamma }y}^{%
\overline{\gamma }y}F(-y+\mathrm{d}z)\,g_{r -1}(z) \\
& \lesssim \int_{\gamma\delta _{0}x}^{\infty }F(-\mathrm{d}%
w)\int_{-\gamma _{1}w}^{\gamma _{2}w}F(x+w + \mathrm{d}v)\,f(w+v)\,g_{r -1}(v) \\
& \lesssim _{\gamma }\int_{\gamma\delta _{0}x}^{\infty }F(-%
\mathrm{d}w)\,f(w)\int_{-\gamma _{1}(x+w)}^{\infty }F(x+w +%
\mathrm{d}v)\,g_{r -1}(v)\,.
\end{split}%
\end{equation}%
Applying again Proposition~\ref{th:boundcru} for $r -1$ we get,
since $f(\cdot) b_1(\cdot)$ is asymptotically decreasing,
\begin{equation*}
\begin{split}
\int_{\delta _{0}x}^{\infty }F(x+\mathrm{d}y)\,g_{\ell }(y,f)& \lesssim_{\kappa,\gamma,r}
\int_{\gamma\delta _{0}x}^{\infty }F(-\mathrm{d}%
w)\,f(x+w)\,b_{1}(x+w) \\
& \lesssim f(x)\,b_{1}(x)\int_{\gamma\delta _{0}x}^{\infty }F(-%
\mathrm{d}w)\lesssim _{\gamma ,\delta _{0}}\frac{f(x)}{x\vee 1}\,.\qedhere
\end{split}%
\end{equation*}
\end{proof}

\smallskip

We are finally ready to prove Propositions~\ref{G} and~\ref{H}.

\begin{proof}[Proof of Proposition~\ref{G}]

We write $r$ in place of $\ell$.
We assume that $I_j(\delta; x)$ is a.n.\ for $j=1,\ldots, r-1$
(if $r \ge 2$).
Moreover, for point \eqref{it:2} we also assume that $I_r(\delta; x)$ is a.n..

Recall the definitions of $h_k(y,f)$, $g_k(y,f)$, see  \eqref{Lf}, \eqref{eq:gkf}.
We claim that 
\begin{equation}\label{eq:claim}
	\forall \text{ even } f \in RV(\beta) \text{ with } 0 < \beta < 1- r\alpha: \qquad
	h_r(y,f) \lesssim_{\gamma,r} \sum_{j=1}^r g_j(y,f) \,.
\end{equation}
Then relation \eqref{m} follows immediately by Proposition~\ref{F}.
This proves point~\eqref{it:1}.

For point~\eqref{it:2},
we note that for $\alpha < \frac{1}{r+1}$
we can plug $f=A$ in \eqref{eq:claim},
because $\beta =\alpha $ satisfies $\beta <1-r\alpha $.
This gives
$h_r(y) \lesssim_{\gamma} \sum_{j=1}^r g_j(y)$, which plugged into \eqref{K}
shows that $J_r(\delta; x) \lesssim_{\gamma} \sum_{j=1}^r I_r(\delta; x)$.
Since in point~\eqref{it:2} we assume that $I_j(\delta;x)$ for $j=1,\ldots, r$
(including $j=r$), relation \eqref{n} follows and completes the proof.

\smallskip

It remains to prove \eqref{eq:claim}.
This holds for $r=1$, since $h_1(y,f) = g_1(y,f)$.
Henceforth we fix $r \ge 2$ and we proceed by induction.

Let us first show that
\begin{equation}\label{eq:ccllaa}
\begin{split}
	& \forall r' = 1,\ldots, r-1 \,, \quad \
	\forall \text{ even } f' \in RV(\beta') \text{ with } 0 < \beta' < 1 - r'\alpha \,, \\
	& \forall y < 0: \qquad \int_{z \ge -\overline{\gamma} |y|} F(-y + \dd z) \,
	h_{r'}(z, f') \lesssim_{\gamma}
	\sum_{i=1}^{r'+1} g_i(y, \tfrac{f'}{A}) \,,
\end{split}
\end{equation}
where we stress that $y < 0$.
By the inductive assumption,
we can apply \eqref{eq:claim} with $r$ replaced by $r'$ (since $r' \le r-1$)
and $f$ replaced by $f'$, hence 
\begin{equation*}
	\int_{z \ge -\overline{\gamma} |y|} F(-y + \dd z) \,
	h_{r'}(z, f') \lesssim_{\gamma}
	\sum_{j=1}^{r'}
	\int_{z \ge -\overline{\gamma} |y|} F(-y + \dd z) \,
	g_{j}(z, f') \,.
\end{equation*}
We now split the domain of integration in the two
subsets $[-\overline{\gamma}|y|, \overline{\gamma}|y|]$
and $(\overline{\gamma}|y|, \infty)$. The first subset gives
$\int_{|z| \le \overline{\gamma} |y|} F(-y + \dd z)
\, g_{j}(z, f') = g_{j+1}(y, \tfrac{f'}{A})$, by \eqref{eq:gkf}.
For the second subset we can apply Proposition~\ref{F}
(since $-y \ge 0$), getting
\begin{equation*}
	\int_{z > \overline{\gamma}|y|} F(-y+\dd z)
	\, g_{j}(z, f') \lesssim_{\gamma} \frac{f'(|y|)}{|y| \vee 1}
	= b_1(y) \, \frac{f'(y)}{A(y)} = g_1(y,\tfrac{f'}{A}) \,,
\end{equation*}
where we recall that $A(\cdot)$ and $f'(\cdot)$ are even functions.
This completes the proof of \eqref{eq:ccllaa}.

\smallskip

We are ready to prove \eqref{eq:claim}.
Let us first consider the case $y < 0$. By \eqref{Lf} we can write
\begin{equation*}
	h_r(y,f) = \int_{y_2 \ge -\overline{\gamma}|y|} F(-y+\dd y_2)
	\, h_{r-1}(y_2, A f) \,.
\end{equation*}
We can now apply \eqref{eq:ccllaa} with $r' := r-1$
and $f' := Af$ (because $f' \in RV(\beta')$ with $\beta' = \alpha + \beta$ which
satisfies $0 < \beta' < 1-r'\alpha$). This proves \eqref{eq:claim} when $y < 0$.

\smallskip

Next we consider the case $y \ge 0$.
If we restrict the domain of integration $\Theta_r(y)$ in \eqref{Lf}
to $y_2 \ge 0, y_3 \ge 0, \ldots, y_r \ge 0$, then the domain becomes
$\{0 \le y_j \le \overline{\gamma} y_{j-1} \text{ for } 2 \le j \le r\}$
which is included in $\Omega_r(y)$, see \eqref{eq:Omegak}.
The corresponding contribution to $h_r(y, f)$ is then bounded from above by $g_r(y, f)$,
see \eqref{eq:gkf}. This proves \eqref{eq:claim} 
when $y_2 \ge 0, y_3 \ge 0, \ldots, y_r \ge 0$.

\smallskip

It remains to estimate $h_r(y, f)$ for $y \ge 0$, when some of the coordinates
$y_2, y_3, \ldots, y_r$
in the integral in \eqref{Lf} are negative. Let us define
$H := \min \{j\in \{2, \ldots, r\}:\, y_{j} < 0\}$. 

In the extreme case $H=r$, 
the corresponding contribution to $h_{r}(y,f)$ is, for $r \ge 3$,
\begin{equation} \label{eq:duli}
\begin{split}
&\int_{y_{2}=0}^{\overline{\gamma }y}\cdots \int_{y_{r-1}=0}^{\overline{%
\gamma }y_{r-2}}\int_{y_{r}=-\overline{\gamma }y_{r-1}}^{0}P_{y}(\dd y_{2},%
\cdots , \dd y_{r}) \, b_{r}(y_{r})f(y_{r}) \\
+& \int_{y_{2}=0}^{\overline{\gamma }y}\cdots \int_{y_{r-1}=0}^{\overline{%
\gamma }y_{r-2}}\int_{y_{r}=-\infty }^{-\overline{\gamma }%
y_{r-1}} P_{y}(\dd y_{2},\cdots ,\dd y_{r}) \, b_{r}(y_{r})f(y_{r}) \,.
\end{split}
\end{equation}%
If $r = 2$, one should ignore the first integrals, that is we have
\begin{equation} \label{eq:duli2}
	\int_{y_2 = - \overline{\gamma}y}^0 F(-y+\dd y_2) \, b_2(y_2) \, f(y) \,+\,
	\int_{y_2 = - \infty}^{\overline{\gamma}y} F(-y+\dd y_2) \, b_2(y_2) \, f(y) \,.
\end{equation}
The first integral in \eqref{eq:duli}-\eqref{eq:duli2}
is bounded by $g_r(y,f)$, because the domain of integration
for $(y_2, \ldots, y_r)$ is included in $\Omega_r(y)$ (recall \eqref{eq:gkf}
and \eqref{eq:Omegak}).
For the second integral, we note that $b_{r}(\cdot )f(\cdot ) \in 
RV(r\alpha - 1 + \beta)$ 
is asymptotically
decreasing, since $r\alpha - 1 + \beta < 0$ by assumption, hence we can bound
$b_{r}(y_{r})f(y_{r}) \lesssim b_{r}(y_{r-1})f(y_{r-1})$.
Since $P_{y}(\dd y_{2},\cdots ,\dd y_{r}) = P_{y}(\dd y_{2},\cdots ,\dd y_{r-1})
F(-y_{r-1}+\dd y_{r})$, when we integrate over $y_{r} \in (-\infty,
-\overline{\gamma} y_{r-1}]$ we get a factor $\lesssim_{\gamma} 1/A(y_{r-1})$.
Overall, for $r \ge 3$ we can bound \eqref{eq:duli} by
\begin{equation*}
\begin{split}
	\leq & \, g_{r}(y,f)+\int_{y_{2}=0}^{\overline{\gamma }y}\cdots
	\int_{y_{r-1}=0}^{\overline{\gamma }y_{r-2}}P_{y} (\dd y_{2},\cdots
	,\dd y_{r-1}) \, \frac{1}{A(y_{r-1})} \, b_{r}(y_{r-1})f(y_{r-1}) \\
	\leq & \, g_{r}(y,f)+g_{r-1}(y,f) \,,
\end{split}
\end{equation*}
and the same bound holds also for $r=2$.
This proves \eqref{eq:claim} when $H=r$.

Finally, if $H=j \in \{2, \ldots, r-1\}$, 
the contribution to $h_{r}(y,f)$ is (recall again \eqref{Lf})
\begin{align}
&\int_{y_{2}=0}^{\overline{\gamma }y}\cdots \int_{y_{j-1}=0}^{\overline{%
\gamma }y_{j-2}}\int_{y_{j}=-\infty }^{0}P_{y}(\dd y_{2},\cdots
,\dd y_{j})\int_{y_{j+1}>-\overline{\gamma }%
|y_{j}|} F(-y_{j}+ \dd y_{j+1})\, h_{r-j}(y_{j+1},A^{j}f) \nonumber \\
&\lesssim_{\gamma} \sum_{i=1}^{r-j+1} 
\int_{y_{2}=0}^{\overline{\gamma }y}\cdots \int_{y_{j-1}=0}^{%
\overline{\gamma }y_{j-2}}\int_{y_{j}=-\infty }^{0}
P_{y}(\dd y_{2},\cdots, \dd y_{j}) \, 
g_{i}(y_{j}, A^{j-1}f) \,, \label{eq:deco}
\end{align}%
where we have applied \eqref{eq:ccllaa} with $r' = r-j$
and $f' = A^j f$ (note that $f' \in RV(\beta')$ with
$\beta' = j\alpha + \beta$, which satisfies $0 < \beta' < 1 - r' \alpha$).
We split the integral over $y_j$ in the two subsets
$[-\overline{\gamma} y_{j-1}, 0]$ and $(-\infty,-\overline{\gamma} y_{j-1}]$.
\begin{itemize}
\item On the first subset, we can enlarge the domain of integration
to $(y_2, \ldots, y_j) \in \Omega_j(y)$, see \eqref{eq:Omegak},
hence the corresponding contribution to \eqref{eq:deco} is
\begin{equation*}
\begin{split}
	& \sum_{i=1}^{r-j+1} \int_{(y_2, \ldots, y_j) \in \Omega_j(y)}
	P_{y}(\dd y_{2},\cdots, \dd y_{j}) \, 
	g_{i}(y_{j}, A^{j-1}f) \\
	& = \sum_{i=1}^{r-j+1} \int_{(y_2, \ldots, y_{j+i-1}) \in \Omega_{j+i-1}(y)}
	P_{y}(\dd y_{2},\cdots, \dd y_{j+i-1}) \, 
	b_{j+i-1}(y_{j+i-1}) \, f(y_{j+i-1}) \\
	& = \sum_{i=1}^{r-j+1} g_{j+i-1}(y, f)  \,,
\end{split}
\end{equation*}
by \eqref{eq:gkf}. This proves \eqref{eq:ccllaa} for the first subset.

\item On the second subset, we first consider a fixed $i\geq 2$:
renaming $y_j = -z$, we can write
\begin{align}
	&\int_{y_{j}=-\infty }^{-\overline{\gamma }y_{j-1}}F(-y_{j-1}+ \dd
	y_{j})\,g_{i}(y_{j},A^{j-1}f) \\
	&=\int_{z=\overline{\gamma }y_{j-1}}^{\infty }F(-y_{j-1}- \dd
	z)\int_{\Omega _{i}(z)}\,P_{-z}(\dd y_{j+1,}\cdots
	\dd y_{j+i})b_{i}(y_{j+i},A^{j-1}f) \nonumber \\
	&=\int_{z=\overline{\gamma }y_{j-1}}^{\infty }F(-y_{j-1}-\dd
	z)\int_{|y_{j+1}|\leq \overline{\gamma }z}F(z+\dd y_{j+1}) \nonumber \\
	& \qquad\qquad\qquad\qquad\qquad \int_{\Omega
	_{i-1}(y_{j+1})}\,P_{y_{j+1,}}(\dd y_{j+2,}\cdots
	\dd y_{j+i})b_{i+j-1}(y_{j+i})f(y_{j+i}) \,. \label{eq:wecana}
\end{align}
We next write $b_{i+j-1}(\cdot)f(\cdot) = \{A^{j-1}(\cdot) f(\cdot)\} b_i(\cdot)$ and then bound
$A^{j-1}(y_{j+i}) f(y_{j+i}) \lesssim A^{j-1}(z) f(z)$,
because $A^{j-1}(\cdot) f(\cdot)$ is asymptotically increasing, to get
\begin{equation*}
\begin{split}
	&\lesssim_{\gamma } \int_{z=\overline{\gamma }y_{j-1}}^{\infty }F(-y_{j-1}-	 \dd z)
	A(z)^{j-1}f(z)\int_{|y_{j+1}|\leq \overline{\gamma }%
	z}F(z+\dd y_{j+1}) \, g_{i-1}(y_{j+1}) \\
	&\lesssim_{\gamma } \int_{z=\overline{\gamma }y_{j-1}}^{\infty }F(-y_{j-1}-%
	\dd z) \, A(z)^{j-1} \, f(z) \, b_{1}(z) \lesssim_{\gamma}b_{j-1}(y_{j-1})f(y_{j-1}) \,,
\end{split}
\end{equation*}%
where we used
\eqref{eq:crubo} and the fact that $b_{1}(z)f(z)A^{j-1}(z)$
is regularly varying with index $j\alpha +\beta -1 < 0$ and so is asymptotically decreasing.
This shows that
\begin{equation*}
	\int_{y_{j}=-\infty }^{-\overline{\gamma }y_{j-1}}F(-y_{j-1}+ \dd
	y_{j})\,g_{i}(y_{j},A^{j-1}f) \lesssim_{\gamma}b_{j-1}(y_{j-1})f(y_{j-1}) \,,
\end{equation*}
and the same bound holds also for $i=1$
(since $g_{1}(z,A^{j-1}f) = b_{1}(z) f(z) A^{j-1}(z)$,
we can directly apply \eqref{eq:wecana}).
Thus the contribution of $y_j \le -\overline{\gamma} y_{j-1}$ to \eqref{eq:deco} is
\begin{equation*}
\begin{split}
	& \lesssim_{\gamma} \sum_{i=1}^{r-j+1}\int_{y_{2}=0}^{\overline{\gamma }y}\cdots
	\int_{y_{j-1}=0}^{\overline{\gamma }y_{j-2}}P_{y}(\mathrm{d}y_{2},\cdots ,%
	\mathrm{d}y_{j-1})\,b_{j-1}(y_{j-1})f(y_{j-1}) \\
	&\leq \sum_{i=1}^{r-j+1}\int_{\Omega _{j-1}(y)}P_{y}(\mathrm{d}y_{2},\cdots
	,\mathrm{d}y_{j-1})\,b_{j-1}(y_{j-1})f(y_{j-1})=(r-j+1)g_{j-1}(y).
\end{split}
\end{equation*}%
This completes the proof of \eqref{eq:ccllaa}.
\end{itemize}
\end{proof}

\begin{proof}[Proof of Proposition~\ref{H}]
We write $r$ in place of $\ell$.
We assume that $\tilde I_j(\delta; x)$
and $I_j(\delta;x)$ are a.n.\ for $j=1,\ldots, r$,
with $r \ge 2$ and $\alpha \le \frac{1}{r+1}$,
and we need to show that $\tilde J_r(\delta; x)$ is a.n..

We first give a basic estimate:
from \eqref{eq:btildealt}, \eqref{eq:Kar2} and \eqref{eq:b}, for any $\lambda \in
(0,r)$ we have 
\begin{equation}
\tilde{b}_{r+1}(y,z)\leq A(y)^{\lambda} \, \tilde{b}_{r+1-\lambda }(y,z) \le
A(y)^{\lambda} \sum_{m=A(z)}^{\infty} \frac{m^{(r-\lambda)}}{a_m} \lesssim
A(y)^{\lambda} \, b_{r+1-\lambda} (z).  \label{bo}
\end{equation}

Let us prove that $\tilde J_r(\delta; x)$ is a.n..
For $\alpha < \frac{1}{r+1}$ we can simply apply Proposition~\ref{G} with $\ell = r$,
because $\tilde J_r(\delta; x) \lesssim J_r(\delta; x)$.
Indeed, by \eqref{eq:btildealt},
\begin{equation*}
	\tilde b_{r+1}(y_{r-1},y_r) \lesssim
	\sum_{n \ge A(|y_r|)}\frac{n^{r}}{a_{n}}
	\lesssim \frac{A(|y_r|)^{r+1}}{|y_r| \vee 1} = b_{r+1}(y_r) \,,
\end{equation*}
because $n^{r}/a_{n}$ is regularly varying with index
$r - 1/\alpha < -1$.

Henceforth we fix $\alpha = \frac{1}{r+1}$.
For $r=2$ there is nothing to prove, since $\tilde{J}%
_{2}(\delta ;x) = \tilde{I}_{2}(\delta ;x)$. 

We now fix $r\geq 3$.
If we consider the contribution to the integrals in \eqref{eq:tildeJ}-\eqref{h} of 
$y_{1} \geq 0, y_{2}\geq
0, \ldots ,y_{r-1}\geq 0$, the domain of integration, see \eqref{y}, reduces to 
\begin{equation*}
	\{0 \leq y_{1} \leq \delta x\} \cap
	\{0\leq y_{i}\leq \overline{\gamma }y_{i-1}\text{ for }2\leq i\leq r-1\}
	\cap \{|y_{r}| \leq \overline{\gamma }y_{r-1}\} \,.
\end{equation*}
This contribution is bounded by $\tilde{I%
}_{r}(\delta ;x)$, see \eqref{eq:tildeIk}, which is a.n.\ by assumption.

Next we consider the contribution to \eqref{eq:tildeJ}-\eqref{h} coming from $y_{1},\ldots
,y_{r-1}$ such that $y_{i}<0$ for some $1\leq i\leq r-1$. Let us define $%
\tilde{H}=\max \{j\in \{1,\ldots ,r-1\}:\ y_{j}<0\}$. If $\tilde{H}=r-1$,
the bound (\ref{bo}) with $\lambda =r-1$ and the fact that $%
y_{r-1}<0$ show that%
\begin{align*}
& \int_{|y_{r}|\leq \overline{\gamma }|y_{r-1}|}F(-y_{r-1}+\mathrm{d}y_{r})\,%
\tilde{b}_{r+1}(y_{r-1,}y_{r}) \\
& \lesssim A(y_{r-1})^{r-1}\int_{|y_{r}|\leq \overline{\gamma }%
|y_{r-1}|}F(-y_{r-1}+\mathrm{d}y_{r})\,b_{2}(y_{r})\lesssim
A(y_{r-1})^{r-1}b_{1}(y_{r-1})=b_{r}(y_{r-1}),
\end{align*}%
where the second inequality comes from Proposition~\ref{th:boundcru}. Plugging this
bound into \eqref{h}, we see that the contribution to $\tilde{h}_{r}(y)$
is at most $h_{r-1}(y)$ (recall \eqref{L}), hence the
contribution to $\tilde{J}_{r}(\delta ;x)$ is at most 
$J_{r-1}(\delta ;x)$ (recall \eqref{K}), which is a.n.\ by Proposition~\ref{G}
with $\ell=r-1$.

We finally consider the contribution of $\tilde{H}=r-j$ with $j\geq 2$. This
means that $y_{r-j}<0$, while $y_{r-j+1}\geq 0,\ldots ,y_{r-1}\geq 0$, and
the range of integration in \eqref{h} is a subset of%
\begin{equation*}
\Theta_{r-j}(y_1) \cap \{y_{r-j}<0\}\cap \{y_{r-j+1}\geq 0\}\cap 
\{|y_{r-j+1+\ell}|\leq \overline{\gamma }|y_{r-j+\ell}|,\ell =1,\cdots j-1\}.
\end{equation*}
We split this into the two subsets $\{0 \leq y_{r-j+1}\leq
\overline{\gamma }|y_{r-j}|\}$ and $\{y_{r-j+1} > \overline{\gamma }|y_{r-j}|\}$.

\smallskip

On the first subset $\{0 \leq y_{r-j+1}\leq
\overline{\gamma }|y_{r-j}|\}$, we
bound $\tilde{b}_{r+1}(y_{r-1},y_{r})\lesssim A(y_{r-1})^{r-j}b_{j+1}(y_{r})$,
by \eqref{bo} with $\lambda =r-j$, and then $A(y_{r-1}) \lesssim A(y_{r-j})$.
Recalling the definition \eqref{eq:gak} of $g_{j}(\cdot )$, we see that this 
part of the integral with respect to $y_{r-j+1},\cdots, y_{r}$ is
\begin{equation*}
\begin{split}
& \lesssim A(y_{r-j})^{r-j}\,\int\limits_{0 \le y_{r-j+1}\leq \overline{\gamma
}|y_{r-j}|}\!\!\!\!\!\!\!\!F(|y_{r-j}|+\mathrm{d}y_{r-j+1})%
\,g_{j}(y_{r-j+1}) \\
& \leq A(y_{r-j})^{r-j}\,\int\limits_{z\geq -\overline{\gamma }%
|y_{r-j}|}\!\!\!\!\!\!\!\!F(|y_{r-j}|+\mathrm{d}z)\,g_{j}(z) \\
& \lesssim A(y_{r-j})^{r-j}\,b_{1}(y_{r-j})=b_{r-j+1}(y_{r-j})\,,
\end{split}%
\end{equation*}%
where the last inequality follows by Proposition~\ref{th:boundcru}.
The contribution to \eqref{h} is
\begin{equation} \label{eq:arggu}
\lesssim \int_{\Theta _{r-j}(y_1)\cap \{y_{r-j}<0\}}P_{y}(\mathrm{d}%
y_{2},\ldots ,\mathrm{d}y_{r-j})\,b_{r-j+1}(y_{r-j})\leq h_{r-j}(y_1)\,,
\end{equation}%
hence the contribution to $\tilde{J}_{r}(\delta ;x)$ is 
$\lesssim J_{r-j}(\delta ;x)$, which is a.n.\ by Proposition~\ref{G}.

\smallskip

On the second subset $\{y_{r-j+1} > \overline{\gamma }|y_{r-j}|\}$, we
bound $\tilde{b}_{r+1}(y_{r-1},y_{r})\lesssim
A(y_{r-1})^{r-j+1}b_{j}(y_{r})$, by \eqref{bo} with $\lambda =r-j+1$,
and then $A(y_{r-1}) \lesssim A(y_{r-j+1})$, getting
\begin{align*}
&\lesssim \!\!\! \int\limits_{y_{r-j+1} > \overline{\gamma }|y_{r-j}|}
\!\!\!\!\!\! F(|y_{r-j}| + \dd y_{r-j+1}) \, A(y_{r-j+1})^{r-j+1} \, 
\!\!\!\!\!\!\!\!\! \int\limits_{|y_{r-j+2}| \le \overline{\gamma } y_{r-j+1}}
\!\!\!\!\!\! F(-y_{r-j+1} + \dd y_{r-j+2}) \, g_{j-1}(y_{r-j+2}) \\
& = \int\limits_{y > \overline{\gamma }|y_{r-j}|}
F(|y_{r-j}| + \dd y) \, A(y)^{r-j+1} \, 
 \int\limits_{|z| \le \overline{\gamma } y}
F(-y + \dd z) \, g_{j-1}(z) \,,
\end{align*}
where we have set $y = y_{r-j+1}$ and $z = y_{r-j+2}$ for short.
Applying \eqref{z}, where we recall that $\gamma _{1}=1-(2-\gamma )^{-1}$ and 
$\gamma _{2}=\gamma ^{-1}-1$, we get
\begin{align*}
	&\lesssim \int\limits_{w \ge \gamma\overline{\gamma }|y_{r-j}|} F(-\dd w)
	\int\limits_{-\gamma_1 w \le v \le \gamma_2 w} F(|y_{r-j}|+w+\dd v) 
	\, A(w+v)^{r-j+1} \, g_{j-1}(v) \\
	&\lesssim_{\gamma} \int\limits_{w \ge \gamma\overline{\gamma }|y_{r-j}|} F(-\dd w) \,
	A(w)^{r-j+1} \int\limits_{v \ge -\gamma_1 (|y_{r-j}|+w)} F(|y_{r-j}|+w+\dd v) 
	\, g_{j-1}(v) \\
	&\lesssim_{\gamma} \int\limits_{w \ge \gamma\overline{\gamma }|y_{r-j}|} F(-\dd w) \,
	A(w)^{r-j+1} \, b_1(|y_{r-j}|+w)  \,,
\end{align*}
by Proposition~\ref{th:boundcru} with $r=j-1$. Finally, this is easily bounded by
\begin{align*}
	& \int\limits_{w \ge \gamma\overline{\gamma }|y_{r-j}|} F(-\dd w) \,
	b_{r-j+2}(|y_{r-j}|+w) \lesssim F(-\gamma\overline{\gamma }|y_{r-j}|) \,
	b_{r-j+2}(|y_{r-j}|) \lesssim b_{r-j+1}(|y_{r-j}|) \,,
\end{align*}
because $b_{r-j+2}(\cdot) \in RV(\alpha(r-j+2)-1)$ is asymptotically
decreasing, since $j \ge 2$ and $\alpha = \frac{1}{r+1}$.
Arguing as in \eqref{eq:arggu},
we see that the contribution to $\tilde{J}_{r}(\delta ;x)$ is 
$\lesssim J_{r-j+1}(\delta ;x)$, which is a.n.\ by Proposition~\ref{G}.
This completes the proof.
\end{proof}

\smallskip

\section{Soft results}
\label{sec:soft}

In this section we prove 
Theorem~\ref{th:1/2},
Propositions~\ref{pr:main1}, \ref{th:integr}, \ref{th:suffrw}
and Theorem~\ref{th:Levy},
which are corollaries of our main results.

\subsection{Proof of Theorem~\ref{th:1/2}}

Assume that condition \eqref{eq:cond} holds. By \eqref{eq:b} we can write
\begin{equation*}
	\sup_{1 \le z \le x} b_2(z) = \sup_{1 \le z \le x} \frac{A(z)^2}{z}
	\lesssim \frac{A(x)^2}{x} \,.
\end{equation*}
For $0 \le z \le 1$ we can also write $b_2(z) \le A(1)^2 = b_2(1) 
\lesssim \frac{A(x)^2}{x}$ hence by \eqref{eq:I1}
\begin{equation*}
\begin{split}
	I_1^+(\delta; x) & \lesssim \frac{A(x)^2}{x} \, F([x-\delta x, x]) 
	\underset{x\to\infty}{\sim}
	\frac{A(x)^2}{x} \, \left( \frac{1}{A((1-\delta)x)} - \frac{1}{A(x)}\right) \\
	& \underset{x\to\infty}{\sim}
	\frac{A(x)}{x} \, \left( \frac{1}{(1-\delta)^{\alpha}} - 1\right) 
	\underset{\delta\to 0}{=} \frac{A(x)}{x} \, O(\delta) \,.
\end{split}
\end{equation*}
This shows that $I_1^+(\delta; x)$ is a.n.,
hence the SRT holds by Theorem~\ref{th:main}.

\smallskip

Next we prove the second part of Theorem~\ref{th:1/2}:
we assume that condition \eqref{eq:cond} is not satisfied,
and we build a probability $F$ for which the SRT fails.
Since $A \in RV(\frac{1}{2})$,
we can write $A(x) = \ell(x) \sqrt{x}$ where $\ell$ is slowly varying.
By assumption, see \eqref{eq:cond}, there is a subsequence $x_n \to \infty$
such that $\sup_{1 \le s \le x_n} \ell(s) \gg \ell(x_n)$, hence
we can find $1 \le s_n \le x_n$ for which $\ell(s_n) \gg \ell(x_n)$.
We have necessarily $s_n = o(x_n)$, because
$\ell(s) / \ell(x_n) \to 1$ uniformly for $s \in [\epsilon x_n, x_n]$,
for any fixed $\epsilon > 0$,
by the uniform convergence theorem of slowly varying functions
\cite[Theorem~1.2.1]{cf:BinGolTeu}.
Summarizing:
\begin{equation} \label{eq:suma}
	x_n \to \infty \,, \qquad s_n = o(x_n) \,, \qquad
	\epsilon_n := \frac{\ell(x_n)}{\ell(s_n)} \to 0  \,.
\end{equation}

By Lemma~\ref{lem:uao} below, there is a probability $F$ on $(0,\infty)$,
which satisfies \eqref{eq:tail1}, such that
\begin{equation} \label{eq:uao0}
	F(\{x_n\}) \ge \, \frac{\epsilon_{n}}{A(x_{n})} \qquad 
	\text{for infinitely many } n \in \N \,.
\end{equation}
Since $A(x) = \ell(x) \sqrt{x}$, recalling \eqref{eq:I1},
for infinitely many $n\in\N$ we can write
\begin{equation*}
\begin{split}
	I_1^+(\delta; x_n + s_n)
	& \ge \frac{A(s_{n})^2}{s_{n}} \, F(\{ x_{n}\})
	\ge \ell(s_{n})^2 \, \frac{\epsilon_n}{A(x_{n})}
	= \frac{\ell(s_{n})}{\sqrt{x_{n}}}
	= \frac{1}{\epsilon_{n}} \, \frac{A(x_n)}{x_{n}}
	\gg \frac{A(x_n + s_n)}{x_{n} + s_n} \,,
\end{split}
\end{equation*}
where the last inequality holds because $\epsilon_n \to 0$ and
$x_n + s_n \sim x_n$, see \eqref{eq:suma}.
This shows that $I_1^+(\delta; x)$ is not a.n., hence the SRT
fails, by Theorem~\ref{th:main}. \qed

\subsection{Proof of Proposition~\ref{pr:main1}}
\label{sec:prmain1}
We claim that \eqref{eq:suff0} is equivalent to the following relation:
\begin{equation} \label{eq:suff0eq}
	F((x-y, x])
	\,\underset{x\to\infty}{=} \, O
	\bigg( \frac{1}{A(x)}\Big(\frac{y}{x}\Big)^{\gamma} \bigg) 
	\qquad \text{for any $y = y_x \ge 1$ with } y = o(x) \,.
\end{equation}
It is clear that \eqref{eq:suff0} implies \eqref{eq:suff0eq}.
On the other hand, if \eqref{eq:suff0} fails, there are sequences
$x_n \to \infty$, $C_n \to \infty$ and $y_n \in [1, \frac{1}{2} x_n]$ such that
\begin{equation}\label{eq:contradic}
	F((x_n-y_n, x_n]) > C_n \frac{1}{A(x_n)} \Big(\frac{y_n}{x_n} \Big)^{\gamma} \,.
\end{equation}
By extracting subsequences, we may assume that $\frac{y_n}{x_n} \to \rho \in [0, \frac{1}{2}]$.
If $\rho = 0$, then $y_n = o(x_n)$ and \eqref{eq:contradic} contradicts \eqref{eq:suff0eq}.
If $\rho > 0$, then \eqref{eq:contradic} contradicts \eqref{eq:tail1}, because it yields
\begin{equation*}
	\overline{F}(\tfrac{1}{2}x_n) \ge F((x_n-y_n, x_n]) > C_n \frac{1}{A(x_n)}
	\big( \rho^{\gamma} + o(1) \big) \gg \frac{1}{A(\frac{1}{2}x_n)} \,.
\end{equation*}

We first prove that relation \eqref{eq:suff0eq} for every $\gamma < 1-2\alpha$ is 
a necessary condition
for the SRT. We can assume that $\alpha < \frac{1}{2}$, because
for $\alpha = \frac{1}{2}$ we have $\gamma < 0$ and \eqref{eq:suff0eq} follows 
by \eqref{eq:tail1}.
If we restrict the integral \eqref{eq:I1} to $z \in [0,y)$, where $y = y_x = o(x)$,
for large $x$
we can bound
 $I_1^+(\delta; x) \gtrsim b_2(y) \, F((x-y,x])$
because $b_2(z) \in RV(2\alpha - 1)$ is asymptotically
decreasing.
 If the SRT holds, $I_1^+(\delta;x)$ is a.n.\ by Theorem~\ref{th:main},
hence $b_2(y) \, F((x-y,x]) = o(b_1(x))$, i.e.\
\begin{equation*}
	F((x-y,x]) \underset{x\to\infty}{=} o\bigg(\frac{1}{A(x)} \, \frac{b_2(x)}{b_2(y)}\bigg) \,,
	\qquad \text{for any $y = y_x \ge 1$ with } y_x = o(x) \,.
\end{equation*}
Since $b_2 \in RV(2\alpha - 1)$, it follows by Potter's bounds \eqref{eq:Potter}
that, for any given $\gamma < 1-2\alpha$, we have 
$\frac{b_2(x)}{b_2(y)} \lesssim_{\gamma} (\frac{y}{x})^{\gamma}$,
hence \eqref{eq:suff0eq} holds as claimed
(even with $o(\cdot)$ instead of $O(\cdot)$).

\smallskip

We now turn to the sufficiency part.
Let $F$ be a probability on $[0,\infty)$ 
which satisfies \eqref{eq:tail1}, with $\alpha \in (0,\frac{1}{2}]$,
such that
relation \eqref{eq:suff0} holds for some $\gamma > 1-2\alpha$
and $x_0, C < \infty$.
We prove that the SRT holds by showing that $\tilde I_1^+(\delta; x)$
defined in \eqref{eq:I1hat} is a.n., by Proposition~\ref{th:integr}.
Applying \eqref{eq:suff0} and recalling \eqref{eq:I1hat}, for
fixed $\delta \in (0, \frac{1}{2})$ and large $x$ we get
\begin{equation*}
	\tilde I_1^+(\delta; x)
	\le \frac{C}{A(x) \, x^{\gamma}} \int_1^{\delta x} 
	\frac{A(z)^2}{z^{2-\gamma}} \, \dd z
	\underset{\,x\to\infty\,}{\sim}
	\frac{C'}{A(x) \, x^{\gamma}} \, \frac{A(\delta x)^2}{(\delta x)^{1-\gamma}}
	\underset{\,x\to\infty\,}{\sim} C' \, \delta^{2\alpha - 1 + \gamma}
	\, \frac{A(x)}{x} \,,
\end{equation*}
where $C' := C / (2\alpha - 1 - \gamma)$ and
the first asymptotic equivalence holds by \eqref{eq:Kar1},
because $z \mapsto A(z)^2 / z^{2-\gamma}$ is regularly varying
with index $2\alpha - (2 - \gamma) > -1$ (since $\gamma > 1-2\alpha$).
This shows that $\tilde I_1^+(\delta; x)$ is a.n.\ and completes the proof
of Proposition~\ref{pr:main1}.\qed

\subsection{Proof of Proposition~\ref{th:integr}}

By \eqref{eq:I1hat} we can write
\begin{equation} \label{eq:I+alt}
\begin{split}
	\tilde I_1^+(\delta; x) & = \int_1^{\delta x} 
	\bigg( \int_{\R} \ind_{\{y \in [0, z)\}} \, F(x-\dd y) \bigg)
	\, \frac{b_2(z)}{z} \, \dd z \\
	& = \int_{y \in [0, \delta x)} F(x-\dd y)
	\bigg( \int_{1 \vee y}^{\delta x} \frac{b_2(z)}{z} \, \dd z \bigg) \,.
\end{split}
\end{equation}
We recall that $b_k$ is defined in \eqref{eq:b}.
Assume that $\alpha < \frac{1}{2}$.
Then the function $z \mapsto b_2(z)/z$ is regularly varying
with index $2\alpha-2 < -1$, hence by \eqref{eq:Kar2}, for $y \ge 0$ we can write
\begin{equation*}
	\int_{1 \vee y}^{\delta x} \frac{b_2(z)}{z} \, \dd z
	\le \int_{1 \vee y}^{\infty} \frac{b_2(z)}{z} \, \dd z
	\lesssim b_2(1 \vee y) \lesssim b_2(y) \,,
\end{equation*}
because for $0 \le y < 1$ we have $b_2(y)
\ge A(0)^2 > 0$, see \S\ref{eq:regvar}.
Recalling \eqref{eq:I1},
we have shown that $\tilde I_1^+(\delta; x) \lesssim I_1^+(\delta; x)$
when $\alpha < \frac{1}{2}$. Then, if $I_1^+(\delta; x)$ is a.n.,
also $\tilde I_1^+(\delta; x)$ is a.n..

We now work for $\alpha \le \frac{1}{2}$.
Let us restrict the outer integral in \eqref{eq:I+alt} to $y \in [0, \frac{\delta}{2}x)$, 
and the inner integral to $z \in [1 \vee y, 2 \vee 2y)$. For $y \ge 1$ we have
\begin{equation*}
	\int_{1 \vee y}^{2 \vee 2 y} \frac{b_2(z)}{z} \, \dd z =
	\int_{y}^{2 y} \frac{b_2(z)}{z} \, \dd z  \ge \frac{b_2(y)}{2y} (2y-y) 
	= \frac{1}{2} b_2(y) \,,
\end{equation*}
while for $0 \le y < 1$ we can write 
$\int_{1 \vee y}^{2 \vee 2 y} \frac{b_2(z)}{z} \, \dd z = \int_{1}^{2} \frac{b_2(z)}{z} \, \dd z
= C \gtrsim b_2(y)$. Overall, it follows from \eqref{eq:I+alt} that
$\tilde I_1^+(\delta; x) \gtrsim I_1^+(\frac{\delta}{2}; x)$. This completes the proof.\qed

\subsection{Proof of Proposition~\ref{th:suffrw}}

Assume that both $\tilde I_1(\delta;x)$ and $\tilde I_1^{*}(\delta; x)$ are a.n.,
see \eqref{eq:tildeI10} and \eqref{eq:I1rw*}. We first show that, for any $\eta \in (0,1)$,
\begin{equation}\label{eq:counterh2}
	\forall z\in\R, \ \forall \ell \in \N: \qquad
	\int_{|y| \le \eta |z|}
	F(-z+\dd y) \, \tilde b_{\ell+1}(z,y) \lesssim_{\eta} \, b_{\ell}(z) \,.
\end{equation}
Since $\tilde b_{\ell + 1}(z,y) \le A(z)^{\ell - 1} \tilde b_{2}(z,y)$
and $b_{\ell}(z) = A(z)^{\ell - 1} b_1(z)$, see \eqref{eq:btilde} and \eqref{eq:b},
it is enough to prove \eqref{eq:counterh2} for $\ell = 1$. 
Let us fix $0 < \delta_0 < \eta$.
For $|y| > \delta_0 |z|$ we can bound $\tilde b_{2}(z,y)
\lesssim \tilde b_{2}(z, \delta_0 z) \lesssim_{\delta_0} b_2(z)$
and $\int_{\delta_0 |z| < |y| \le \eta |z|} F(-z+\dd y) \lesssim_{\delta_0, \eta}
1/A(z)$. It remains to prove \eqref{eq:counterh2} for $\ell = 1$ and with
$\eta$ replaced by an arbitrary $\delta_0 > 0$. The left hand side
of \eqref{eq:counterh2} equals $\tilde I_1(\eta; z)$
for $z \ge 0$ and $\tilde I_1^{*}(\eta; -z)$ for $z \le 0$
(recall \eqref{eq:tildeI1}), which are a.n.\ by assumption, hence we can fix
$\eta = \delta_0 > 0$ small enough so that the inequality \eqref{eq:counterh2} holds
for $|z| > x_0$, for a suitable $x_0 \in (0,\infty)$.
Finally, for $|z| \le x_0$ both sides of \eqref{eq:counterh2} are uniformly
bounded away from $0$ and $\infty$, hence the inequality \eqref{eq:counterh2} still holds.

Observe that, for $|z| \le \eta |w|$, we can bound
$b_{\ell}(z) \lesssim_{\eta} \tilde b_{\ell}(\frac{1}{\eta} z, z)
\le \tilde b_{\ell}(w, z)$, so \eqref{eq:counterh2} yields
\begin{equation}\label{eq:counterh3}
	\forall z, w\in\R \text{ with } |z| \le \eta |w|, \ \forall \ell \in \N: \qquad
	\int_{|y| \le \eta |z|}
	F(-z+\dd y) \, \tilde b_{\ell+1}(z,y) \lesssim_{\eta} \, \tilde b_{\ell}(w,z) \,.
\end{equation}
If we plug this inequality into \eqref{eq:tildeIk}, we see that
$\tilde I_2(\delta, \eta; x) \lesssim_{\eta} \tilde I_{1}(\delta; x)$
and, similarly,
$\tilde I_k(\delta, \eta; x) \lesssim_{\eta} \tilde I_{k-1}(\delta, \eta; x)$
for any $k \ge 3$. Since $\tilde I_1(\delta; x)$ is a.n.\ by assumption,
it follows that $\tilde I_k(\delta, \eta; x)$ is a.n.\ for any $k \ge 2$,
hence the SRT holds by Theorem~\ref{th:mainrw}.

\smallskip

Finally, if relation \eqref{eq:suff0} holds both for $F$ and for $F^*$,
the same arguments as in the proof of Proposition~\ref{pr:main1},
see \S\ref{sec:prmain1}, show that both $\tilde I_1(\delta; x)$
and $\tilde I_1^*(\delta; x)$ are a.n..\qed

\subsection{Proof of Theorem~\ref{th:Levy}}
Since Stone's local limit theorem applies equally to L\'{e}vy processes, 
see \cite[Proposition~2]{BD97},
an argument similar to the random walk case (see Subsection~\ref{sec:refor})
shows that the SRT \eqref{x1} holds if and only if $\widehat T(\delta; x)$ is a.n., where
\begin{equation}\label{x4}
\widehat T(\delta; x) := \int_{0}^{\delta A(x)}
\P(X_{t}\in (x-h,x]) \,\dd t \,.  
\end{equation}%

Let $J_s := X_s - X_{s-}$ be the jump of $X$ at time $s > 0$.
If we write
\begin{equation*}
	X_{t}=X_{t}^{(1)}+X_{t}^{(2)} \,, \qquad
	\text{where} \qquad 
	X_{t}^{(1)} := \sum_{s\leq t}J_{s}
	\, \ind_{\{|J_{s}| \ge 1\}\text{ }} \,,
\end{equation*}
then $X^{(1)}$ and $X^{(2)}$ are independent L\'{e}vy processes.
\begin{itemize}
\item The process $X^{(1)}$ is compound Poisson: we can write $X^{(1)}_t
= S_{N_{\lambda t}}$, where $N = (N_t)_{t\ge 0}$ is a standard Poisson process, 
$S = (S_n)_{n\in\N_0}$ is a random walk with step distribution
$\P(S_1 \in \dd x) = F(\dd x)$ given in \eqref{eq:F0}, and $\lambda = \Pi(\R \setminus
(-1,1)) \in (0,\infty)$.

\item The process $X_{t}^{(2)}$ can be written as
$X_{t}^{(2)}=\sigma B_{t}+\mu t+M_{t},$ where $M$ is the martingale
formed from the compensated sum of jumps with modulus less than $1$. 
\end{itemize}
To complete the proof, we show that the SRT
holds for the random walk $S$ if and only if it holds for $X^{(1)}$ 
(step 1)
if and only if it holds for $X$ (step 2).

\smallskip

\noindent
\emph{Step 1.}
Since $X^{(1)}_t = S_{N_{\lambda t}}$, we have
\begin{equation*}
	\P(X^{(1)}_{t}\in (x-h,x]) = \sum_{n\in\N_0} e^{-\lambda t} \frac{(\lambda t)^n}{n!}
	\P(S_n \in (x-h,x]) \,.
\end{equation*}
Note that $\int_0^z e^{-\lambda t} \frac{(\lambda t)^n}{n!} \, \dd t
= \frac{1}{\lambda} \P(Z_{n,\lambda} \le z)$, where $Z_{n,\lambda}$ denotes a
random variable with a $\mathrm{Gamma}(n,\lambda)$ distribution.
Then the quantity $\widehat T(\delta; x) = \widehat T_{X^{(1)}}(\delta; x)$ for $X^{(1)}$ equals
\begin{equation} \label{eq:hatT1}
	\widehat 
	T_{X^{(1)}}(\delta; x) = \frac{1}{\lambda} \sum_{n\in\N_0} \P(Z_{n,\lambda} \le \delta A(x)) \,
	\P(S_n \in (x-h,x]) \,.
\end{equation}
For $n \le \lambda \delta A(x)$ we have $\P(Z_{n,\lambda} \le \delta A(x))
\ge \P(Z_{n,\lambda} \le  \frac{n}{\lambda}) \to \frac{1}{2}$ as $n\to\infty$,
by the central limit theorem (recall that
$Z_{n,\lambda} \sim \frac{1}{\lambda}(Y_1 + \ldots + Y_n)$, where $Y_i$ are i.i.d.\
$\mathrm{Exp}(1)$ random variables).
Denoting by $T_S(\delta; x)$ the quantity in 
\eqref{eq:SRTeq} for the random walk $S$, and restricting the sum in \eqref{eq:hatT1}
to $n \le \lambda \delta A(x)$, we get
\begin{equation*}
	\widehat T_{X^{(1)}}(\delta; x) \gtrsim T_S(\lambda \delta; x) \,.
\end{equation*}

To prove a reverse inequality,
we observe that for all $z \le \frac{1}{2} \frac{n}{\lambda}$
we can write, for $\epsilon > 0$,
\begin{equation*}
	\P(Z_{n,\lambda} \le z) \le e^{\epsilon \lambda z} \E[e^{-\epsilon \lambda Z_{n,\lambda}}]
	= \frac{e^{\epsilon \lambda z}}{(1+\epsilon)^n}
	\le \bigg( \frac{e^{\frac{1}{2}\epsilon}}{1+\epsilon} \bigg)^n
	\le e^{-c n} \,,
\end{equation*}
where the last inequality holds with $c = c_{\epsilon} > 0$, provided we fix $\epsilon > 0$
small. Then, splitting the sum in \eqref{eq:hatT1} according to $n \le 2 \lambda \delta A(x)$
and $n > 2 \lambda \delta A(x)$, we get
\begin{equation*}
	\widehat T_{X^{(1)}}(\delta; x) \le 
	\frac{1}{\lambda} \sum_{n\le 2 \lambda \delta A(x)} 	\P(S_n \in (x-h,x])
	+ \sum_{n > 2 \lambda \delta A(x)} e^{-cn}
	\lesssim T_S(2 \lambda \delta; x) + e^{-2 c \lambda \delta A(x)} \,.
\end{equation*}
These inequalities show that $\widehat T_{X^{(1)}}(\delta; x)$ is a.n.\ if and only if
$T_S(\delta;x)$ is a.n., that is, the SRT holds for $X^{(1)}$ if and only if it
holds for $S$.

\smallskip

\noindent
\emph{Step 2.}
Assume that $X = X^{(1)} + X^{(2)}$ and
the SRT holds for $X^{(1)}$, that is $\widehat T_{X^{(1)}}(\delta; x)$
is a.n.. Then, given $\varepsilon >0$, there are $\delta_{0},x_{0}$
such that, for all $0<\delta <\delta _{0}$,
\begin{equation}\label{eq:innint}
\forall y > x_0: \qquad 
\widehat T_{X^{(1)}}(\delta; y) =
\int_{0}^{\delta A(y)} \P(X_{t}^{(1)}\in (y-h,y]) \, \dd t \leq \varepsilon \, \frac{A(y)}{y}.
\end{equation}%
Let us now write
\begin{align*}
\int_{0}^{\delta A(x)}
\P(X_{t} &\in (x-h,x],\, X_{t}^{(2)}\leq x/2) \, \dd t \\
&=\int_{-\infty }^{x/2}
\P(X_{t}^{(2)}\in \dd z) \int_{0}^{\delta A(x)} \P(X_{t}^{(1)}\in
(x-z-h,x-z]) \, \dd t \,.
\end{align*}%
For $z \le x/2$ we can write $A(x) \le c A(x/2)$, for any $c > 2^{\alpha}$ and for large $x$.
Then the inner integral is bounded by 
$\widehat T_{X^{(1)}}(c\delta; x-z) \le \varepsilon \frac{A(x-z)}{x-z} \lesssim
\varepsilon \frac{A(x)}{x}$, by \eqref{eq:innint}. This shows that
\begin{equation} \label{eq:toghe}
	\int_{0}^{\delta A(x)}
	\P(X_{t} \in (x-h,x],\, X_{t}^{(2)}\leq x/2) \, \dd t  \,\lesssim \,
	\varepsilon \, \frac{A(x)}{x} \,.
\end{equation}

Note that $X^{(2)}$ has finite exponential moments, because its L\'evy measure 
$\Pi(\,\cdot\, \cap (-1,1))$ is compactly supported, hence
$\E[e^{|X_t^{(2)}|}] \le \E[e^{X_t^{(2)}}] + \E[e^{-X_t^{(2)}}] \le e^{ct}$
for a suitable $c \in (0,\infty)$.
This yields the exponential bound $\P(|X_{t}^{(2)}|>a)\leq e^{-a} \, e^{ct}$, for all
$a \ge 0$, hence
\begin{align*}
\int_{0}^{\delta A(x)}
\P(X_{t} &\in (x-h,x], \, X_{t}^{(2)}>x/2) \, \dd t \\
&\leq \int_{0}^{\delta A(x)} \P(X_{t}^{(2)}>x/2) \, \dd t
\lesssim e^{-x/2} \, e^{c\delta A(x)} \underset{\,x\to\infty\,}{=}
o\left( \frac{A(x)}{x}\right) \,.
\end{align*}%
Together with \eqref{eq:toghe}, this shows that $\widehat T_{X}(\delta;x)$ is a.n.,
that is the SRT holds for $X$.

If the SRT holds for $X$, to show that it holds for $X^{(1)}$
we can repeat the previous arguments switching $X$ and $X^{(1)}$ (no
special feature of $X^{(1)}$ was used in this step).
\qed

\smallskip

\section{Counterexamples}
\label{sec:examples}

In this section we prove Propositions~\ref{th:counter}
and~\ref{th:counterex}. We first develop some useful tools.

\subsection{Preliminary tools}

Let us describe a practical way to build 
counter-examples.

\begin{remark}\label{rem:counter}
Let us fix $A \in RV(\alpha)$.
Let $F_1$ be a probability on $(0,\infty)$ which satisfies
\begin{equation}\label{eq:slig}
	\overline{F_1}(x) 
	\underset{x\to\infty}{\sim} \frac{2}{A(x)} \,, \qquad \ \
	F_1((x-h, x]) \underset{x\to\infty}{=} O\bigg(\frac{1}{x A(x)}\bigg) \,,
	\quad \forall h > 0 \,.
\end{equation}
(For instance, fix $n_0 \in \N$ 
such that $c_1 := \sum_{n > n_0} \frac{2 \, \alpha}{n \, A(n)} < 1$
and define
$F_1(\{n_0\}) := 1-c_1$, $F_1(\{n\}) := \frac{2 \, \alpha}{n \, A(n)}$
for $n\in\N$ with $n > n_0$.)
Let $F_2$ be a probability on $(0,\infty)$ such that
\begin{equation}\label{eq:sat0}
	\overline{F_2}(x) 
	\underset{x\to\infty}{=} o\bigg(\frac{1}{A(x)}\bigg)  \,.
\end{equation}
If we define $F := \frac{1}{2}(F_1 + F_2)$, we obtain a new probability on $(0,\infty)$
which satisfies
\begin{equation}\label{eq:sat1}
	\overline{F}(x) 	\underset{x\to\infty}{\sim} \frac{1}{A(x)} \,, \qquad \ \
	F(x+I) \ge \frac{1}{2} F_2(x+I) \,.
\end{equation}
\end{remark}

\smallskip

Next we state a useful result.
To provide motivation, note that if $F$ satisfies \eqref{eq:tail1}, then necessarily
$F(x+I) = o(\frac{1}{A(x)})$ as $x\to\infty$ (because $\overline{F}(x-h)
\sim \overline{F}(x) \sim \frac{1}{A(x)}$).
Interestingly, this bound can
be approached as close as one wishes, in the following sense.

\begin{lemma}\label{lem:uao}
Fix two arbitrary
positive sequences $x_n \to \infty$ and $\epsilon_n \to 0$.
For any $A \in RV(\alpha)$, with $\alpha \in (0,1)$, there is a probability $F$ on $(0,\infty)$
satisfying \eqref{eq:tail1} such that
\begin{equation} \label{eq:uao}
	F(\{x_n\}) \ge \, \frac{\epsilon_{n}}{A(x_{n})} \qquad 
	\text{for infinitely many } n \in \N \,.
\end{equation}
\end{lemma}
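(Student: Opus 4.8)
The plan is to build $F$ as a convex combination of a ``base'' distribution carrying the correct regularly varying tail and a sparse ``spike'' distribution placing the mass $\epsilon_n/A(x_n)$ at the points $x_n$. First I would pass to a subsequence of $(x_n)$ growing so fast, say $x_{n+1} \ge 2 x_n$ and $x_{n+1} \ge$ (some function of $x_n$, $\epsilon_n$, $A$), that the atoms $\{x_n\}$ are well separated and the cumulative mass $\sum_n \epsilon_n / A(x_n)$ can be made finite and as small as we like; we also shrink $\epsilon_n$ if necessary (which only weakens \eqref{eq:uao}, so is harmless) to ensure $\epsilon_n \le 1$ and $\epsilon_n / A(x_n)$ is decreasing. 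Along this sparse subsequence define the spike part $F_2 := \sum_n c_n \, \delta_{x_n}$ with $c_n := \epsilon_n / A(x_n)$, after normalising: since $\sum_n c_n < \infty$ we rescale to a probability, or — more cleanly — we use the construction of Remark~\ref{rem:counter}, taking $F_2$ to be the normalised version of $\sum_n c_n \delta_{x_n}$ (the normalisation constant is close to $1$). The key point is that $\overline{F_2}(x) = \sum_{n: x_n > x} c_n$; because the $x_n$ are super-exponentially sparse and $c_n$ decreasing, this tail sum is dominated by its first term, $\overline{F_2}(x) \asymp c_{n(x)}$ where $x_{n(x)}$ is the first atom exceeding $x$, and since $c_n \le \epsilon_n / A(x_n) \to 0$ while $A$ is regularly varying, one checks $\overline{F_2}(x) = o(1/A(x))$ as $x \to \infty$ — this is exactly hypothesis \eqref{eq:sat0}.

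Next I would take $F_1$ to be the explicit atomic law on $\N$ from the parenthetical remark inside Remark~\ref{rem:counter}, namely $F_1(\{n\}) = 2\alpha / (n A(n))$ for $n > n_0$ with a compensating atom at $n_0$; by Karamata's theorem \eqref{eq:Kar2} its tail satisfies $\overline{F_1}(x) \sim 2/A(x)$, and $F_1((x-h,x]) = O(1/(xA(x)))$ trivially since each atom has that size — this is \eqref{eq:slig}. Then set $F := \tfrac12(F_1 + F_2)$. By \eqref{eq:sat1} of Remark~\ref{rem:counter} we immediately get $\overline{F}(x) \sim 1/A(x)$, i.e.\ \eqref{eq:tail1} holds, and $F(\{x_n\}) \ge \tfrac12 F_2(\{x_n\}) \ge \tfrac12 \cdot (\text{const}) \cdot c_n$, which after absorbing constants into the (already shrunk) sequence $\epsilon_n$ gives $F(\{x_n\}) \ge \epsilon_n / A(x_n)$ for all $n$ in the sparse subsequence — hence for infinitely many $n \in \N$, which is exactly \eqref{eq:uao}.

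The only delicate point is the verification that $\overline{F_2}(x) = o(1/A(x))$, i.e.\ that the spikes do not disturb the tail asymptotics: one must choose the sparsification of $(x_n)$ quantitatively so that for $x \in [x_{m}, x_{m+1})$ the bound $A(x) \, \overline{F_2}(x) \le A(x) \sum_{n \ge m+1} c_n \lesssim A(x_{m+1}) \, c_{m+1} = \epsilon_{m+1} \to 0$ — the inequality $A(x) \le A(x_{m+1})$ being just monotonicity, and the tail-sum domination following from, say, $c_{n+1} \le \tfrac12 c_n$ which we can arrange by thinning. I expect this bookkeeping (choosing the subsequence and checking the $o(\cdot)$ uniformly) to be the main, though entirely routine, obstacle; everything else is a direct appeal to Remark~\ref{rem:counter} and Karamata's theorem. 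Note that if the original $x_n$ do not grow fast enough we simply pass to a subsequence, which is legitimate since \eqref{eq:uao} only asks for infinitely many $n$.
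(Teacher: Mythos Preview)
Your approach is correct and essentially identical to the paper's: reduce via Remark~\ref{rem:counter} to constructing a spike measure $F_2$ on a subsequence chosen so that $c_n := \epsilon_n/A(x_n)$ decays geometrically, then verify $\overline{F_2}(x) = o(1/A(x))$ by bounding the tail sum by its first term and using $A(x) \le A(x_{n_{\bar k}})$. One cosmetic point: rather than ``shrinking $\epsilon_n$'' (which would weaken the \emph{conclusion} you must prove, not strengthen it), the paper simply thins further so that the normalising constant of $F_2$ is at least $2$, which cleanly absorbs the factor $\tfrac12$ coming from $F = \tfrac12(F_1 + F_2)$.
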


\begin{proof}
Let us fix $A \in RV(\alpha)$.
By Remark~\ref{rem:counter}, it is enough
to build a probability $F_2$ on $(0,\infty)$,
supported on the sequence $\{x_n\}_{n\in\N}$, which satisfies
\eqref{eq:sat0} and
\begin{equation}\label{eq:tosat}
	F_2(\{x_n\}) \ge 2 \frac{\epsilon_n}{A(x_n)} \quad
	\text{for infinitely many } n \in \N \,.
\end{equation}
Then, if we define $F := \frac{1}{2}(F_1 + F_2)$, the proof is completed
(recall \eqref{eq:sat1}).

By assumption $x_n \to \infty$ and $\epsilon_n \to 0$, hence
we can fix a subsequence $(n_k)_{k\in\N}$ such that
\begin{equation} \label{eq:geome}
	\frac{\epsilon_{n_{k+1}}}{A(x_{n_{k+1}})} \le \frac{1}{2}
	\, \frac{\epsilon_{n_{k}}}{A(x_{n_{k}})} \,, \qquad \forall k \in \N \,.
\end{equation}
This ensures that $\sum_{k\in\N} \frac{\epsilon_{n_k}}{A(x_{n_k})} < \infty$ (the series
converges geometrically) and we fix $k_0 \in \N$ so that
$\sum_{k \ge k_0} \frac{\epsilon_{n_k}}{A(x_{n_k})} \le \frac{1}{2}$. We now define
$F_2$, supported on the set $\{x_{n_k}: \ k \ge k_0\}$, by
\begin{equation*}
	F_2(\{x_{n_k}\}) := c_2 \, \frac{\epsilon_{n_k}}{A(x_{n_k})}
	\quad \text{for } k \ge k_0 \,, \qquad
	\text{where} \quad
	c_2 := \Bigg( \sum_{k\ge k_0} \frac{\epsilon_{n_k}}{A(x_{n_k})} \Bigg)^{-1} \ge 2 \,.
\end{equation*}
In this way, \eqref{eq:tosat} is satisfied. It remains
to check that \eqref{eq:sat0} holds.
Given $x \in (0,\infty)$, if we set $\bar k := \min\{k \ge k_0: \ x_{n_k} > x\}$,
we can write
\begin{equation*}
	F_2((x,\infty)) =
	\sum_{k \ge \bar k} c_2 \, \frac{\epsilon_{n_k}}{A(x_{n_k})}
	\le c_2 \, \frac{\epsilon_{n_{\bar k}}}{A(x_{n_{\bar k}})}
	\sum_{k \ge \bar k} \frac{1}{2^{k-\bar k(x)}}
	\le  2 \, c_2 \,
	\frac{\epsilon_{n_{\bar k}}}{A(x)} \,,
\end{equation*}
where we used \eqref{eq:geome}, and
the last inequality holds because $x_{n_{\bar k}} > x$,
by definition of $\bar k$. Since $\epsilon_n \to 0$ by assumption, 
and $\bar k \to \infty$ as $x \to \infty$,
the proof is completed.
\end{proof}

\subsection{Proof of Proposition~\ref{th:counter}}
Let us fix $A \in RV(\alpha)$ with $\alpha \in (0,\frac{1}{2})$.
By Remark~\ref{rem:counter}, it is enough to build
a probability $F_2$ on $(0,\infty)$ which satisfies \eqref{eq:sat0} and moreover
\begin{equation} \label{eq:tosat2}
	F_2(x+I) = O\bigg(\frac{\zeta(x)}{x A(x)} \bigg) \,, \qquad
	I_1^+(\delta; x; F_2) \text{ is not a.n.} \,,
\end{equation}
where $I_1^+(\delta; x; F_2)$ denotes the quantity $I_1^+(\delta; x)$ in \eqref{eq:I1}
with $F$ replaced by $F_2$.
Once this is done,
we can set $F := \frac{1}{2}(F_1 + F_2)$ and the proof is completed
(recall \eqref{eq:sat1}).

\smallskip

By assumption $\zeta(\cdot)$ is non-decreasing with $\lim_{x\to\infty} \zeta(x) = \infty$.
Let us define $x_n := 2^n$, and fix $n_0 \in \N$ large enough so that
$\zeta(x_{n_0-1}) \ge 1$. Let us define
\begin{equation} \label{eq:zetan}
	z_n := \frac{1}{2} \frac{x_n}{\zeta(x_{n-1})^{1+\theta}} \,, \qquad
	\text{where $\theta > 0$ will be fixed later}\,.
\end{equation}
Note that $z_n \le \frac{1}{2} x_n$ for $n \ge n_0$ (because $\zeta(x_{n-1}) \ge 1$),
hence the intervals $(x_n - z_n, x_n]$ are disjoint.
We may also assume that $z_n \ge 1$, possibly
enlarging $n_0$ (if we decrease $\zeta(\cdot)$ we get a stronger statement,
so we can replace $\zeta(x)$ by $\min\{\zeta(x), \log x\}$, so that $z_n \to \infty$).

We define a probability $F_2$
supported on the set
$\bigcup_{n \ge n_0} (x_n - z_n, x_n]$,
with a constant density on each interval, as follows:
\begin{equation} \label{eq:F2}
	F_2(x_n - \dd s) := c \, \frac{\zeta(x_{n-1})}{x_n \, A(x_n)} \, 
	\ind_{[0, z_n]}(s) \, \dd s \,, 
	\qquad \forall n \ge n_0 \,,
\end{equation}
for a suitable $c \in (0,\infty)$.
We are going to show that $F_2$ is a finite measure,
so we can fix the constant $c$ to make it a probability.
Note that
\begin{equation*}
	F_2((x_n-z_n, x_n]) = c\, \frac{\zeta(x_{n-1})}{x_n \, A(x_n)} \, z_n
	= \frac{c}{2 \, A(x_n) \, \zeta(x_{n-1})^{\theta}} \,.
\end{equation*}
Since $A(x_n) = A(2 x_{n-1}) \sim 2^{\alpha} A(x_{n-1})$ as $n\to\infty$,
we may assume that $A(x_n) \ge 2^{\alpha/2} A(x_{n-1})$ for all $n \ge n_0+1$
(possibly enlarging $n_0$). Since $\zeta(x_{n-1})^{\theta} \ge
\zeta(x_{n-2})^{\theta}$, we obtain
\begin{equation*}
	F_2((x_n-z_n, x_n]) \le 2^{-\alpha/2} F_2((x_{n-1}-z_{n-1}, x_{n-1}]) \,,
	\qquad \forall n \ge n_0 + 1 \,.
\end{equation*}
It follows that, for every $n\ge n_0$,
\begin{equation*}
	\sum_{m \ge n} F_2((x_m-z_m, x_m]) \le
	F_2((x_n-z_n, x_n]) \sum_{m \ge n} (2^{-\alpha/2})^{m-n}
	= C \, F_2((x_n-z_n, x_n]) \,,
\end{equation*}
where $C := (1-2^{-\alpha/2})^{-1} < \infty$.
This shows that $F_2$ is indeed a finite measure.

For all large $x \in (0,\infty)$, we have
$x_{n-1} < x \le x_n$ for a unique $n \ge n_0$, hence
\begin{equation*}
	\overline{F_2}(x) \le \sum_{m\ge n} F_2((x_m-z_m, x_m])
	\le C \, F_2((x_n-z_n, x_n]) =
	\frac{c \, C}{2 \, A(x_n) \, \zeta(x_{n-1})^{\theta}}
	\underset{x\to\infty}{=} o\bigg(\frac{1}{A(x)}\bigg) \,,
\end{equation*}
so that \eqref{eq:sat0} holds. Similarly,
for $x_{n-1} < x \le x_n$ we can write, by \eqref{eq:F2},
\begin{equation*}
	F_2(x+I) = F_2((x-h,x]) \le c \, h \, \frac{\zeta(x_{n-1})}{x_n \, A(x_n)}
	\le c \, h \, \frac{\zeta(x)}{x \, A(x)} \,,
\end{equation*}
because both $\zeta(\cdot)$ and $A(\cdot)$ are non-decreasing, hence 
the first relation in \eqref{eq:tosat2} holds.
Finally, for fixed $\delta \in (0, \frac{1}{2})$, since $z_n \le \delta x_n$
for $n$ large enough, we have by \eqref{eq:Kar1}
\begin{equation*}
	I_1^+(\delta; x_n; F_2) = c \, \frac{\zeta(x_{n-1})}{x_n \, A(x_n)} 
	\int_{0 \le z \le z_n} \frac{A(z)^2}{z \vee 1} \, \dd z
	\underset{\,n\to\infty\,}{\sim} c \, \frac{\zeta(x_{n-1})}{x_n \, A(x_n)} \, A(z_n)^2 \,.
\end{equation*}
Recalling \eqref{eq:zetan}, we can apply Potter's bounds \eqref{eq:Potter}, since
$z_n \ge 1$, to get, for any $\epsilon > 0$,
\begin{equation*}
	I_1^+(\delta; x_n; F_2) \gtrsim_{\epsilon}
	\frac{\zeta(x_{n-1})}{x_n \, A(x_n)} \, 
	\frac{A(x_n)^2}{\zeta(x_{n-1})^{2(1+\theta)(\alpha+\epsilon)}}
	= \zeta(x_{n-1})^{1-2(1+\theta)(\alpha+\epsilon)} \, \frac{A(x_n)}{x_n}
	\gg \frac{A(x_n)}{x_n} \,,
\end{equation*}
where the last inequality holds provided we choose $\theta > 0$ and $\epsilon > 0$
small enough, depending only on $\alpha$, so that
$1-2(1+\theta)(\alpha+\epsilon) > 0$ (we recall that $\alpha < \frac{1}{2}$).
This shows that $I_1^+(\delta; x_n; F_2)$ is not a.n.\ and completes the proof.\qed

\subsection{Proof of Proposition~\ref{th:counterex}}

We fix $\alpha \in (0,\frac{1}{3})$
and choose for simplicity $A(x) := x^{\alpha}$.
We are going to build a probability $F$ on $\R$ which satisfies
\eqref{eq:tail2} with $p=q=1$, such that $\tilde I_1(\delta; x)$
is a.n.\ but $\tilde I_2(\delta,\eta; x)$ is not a.n., for any $\eta \in (0,1)$.
It suffices to show that $I_1(\delta; x)$
is a.n.\ but $I_2(\delta,\eta; x)$ is not a.n., thanks to
\eqref{eq:IItildele2} and \eqref{eq:IItildele}.

In analogy with Remark~\ref{rem:counter}, we fix a probability
$F_1$, \emph{this time on the whole real line $\R$}, which satisfies \eqref{eq:tail2} with $p=q=3$
and such that $F_1((x-h,x]) = O(\frac{1}{|x|A(x)})$ as $x \to \pm \infty$.
Then we define two probabilities $F_2, F_3$ on $(0,\infty)$
which both satisfy \eqref{eq:sat0}, and we set
\begin{equation} \label{eq:F123}
	F := \frac{1}{3}(F_1 + F_2 + F_3^*) \,,
\end{equation}
where $F_3^*(A) := F_3(-A)$ is the reflection of $F_3$
(so that it is 
a probability on $(-\infty, 0)$).
Clearly, \eqref{eq:tail2} holds for $F$ with $p=q=1$. 
It remains to build $F_2$ and $F_3$.

\smallskip

We are going to define
$F_2$ so that
\begin{equation}\label{eq:tossa}
	I_1(\delta; x; F_2) \text{ is a.n.} \,,
\end{equation}
(where $I_1(\delta; x; F_2)$ denotes the quantity in \eqref{eq:I1rw}
with $F$ replaced by $F_2$).
This implies that $I_1(\delta; x) = I_1(\delta; x; F)$ is a.n.,
because $I_1(\delta; x; F_1)$ is clearly a.n., while $F_3^*$ is supported on $(-\infty, 0)$
and gives no contribution.

We fix a parameter $p \in (1,\frac{1}{3\alpha})$.
We set $E_{n,k} := [2^n + 2^k, 2^n + 2^k +\frac{2^k}{2k^p})$ for $n \in \N$ with $n\ge 2$
and for $1 \le k \le n-1$. Note that
$E_{n,k} \subseteq [2^n + 2^k, 2^n + 2^{k+1})$ are disjoint intervals, and moreover
$\bigcup_{k=1}^{n-1} E_{n,k} \subseteq [2^n, 2^{n+1})$.
We define $F_2$ with a density, which is
constant in each interval $E_{n,k}$ (for $n \ge 2$ and $1 \le k \le n-1$)
and zero otherwise, given by
\begin{equation} \label{eq:F2last}
\begin{split}
	& F_2(2^n + 2^k + \dd w) := \frac{c}{\ell(n) \, (2^n)^{1-\alpha}} \, 
	\frac{1}{(2^k)^{2\alpha}} \, \ind_{[0, \frac{2^k}{2k^p})}(w) \, \dd w \,,
\end{split}
\end{equation}
where $c\in (0,\infty)$ is a suitable normalizing constant and we set for short
\begin{equation} \label{eq:ellen}
	\ell(n) := \log (1 + n ) \,.
\end{equation}
Note that
\begin{equation} \label{eq:F2Enk}
	F_2(E_{n,k}) = \frac{c}{\ell(n) \, (2^n)^{1-\alpha}} \, 
	\frac{1}{(2^k)^{2\alpha}} \, \frac{2^k}{2k^p}
	= \frac{c}{\ell(n) \, (2^n)^{1-\alpha}} \, \frac{(2^k)^{1-2\alpha}}{2 k^{p}} \,,
\end{equation}
hence
\begin{equation*}
\begin{split}
	F_2([2^n, 2^{n+1})) & = \sum_{k=1}^{n-1} F_2(E_{n,k}) 
	\le \frac{c}{\ell(n) \, (2^n)^{1-\alpha}}
	\sum_{k=1}^{n-1} (2^k)^{1-2\alpha}
	\lesssim \frac{c\, (2^n)^{1-2\alpha}}{\ell(n) \, (2^n)^{1-\alpha}} \\
	& = \frac{c }{\ell(n) \, (2^n)^{\alpha}}
	\underset{n\to\infty}{=} o\bigg(\frac{1}{(2^n)^{\alpha}}\bigg) \,.
\end{split}
\end{equation*}
Note that $F_2([2^n, 2^{n+1}))$ decreases exponentially fast in $n$, hence
for $x \in [2^n, 2^{n+1})$ we have 
$\overline{F_2}(x) \le \overline{F_2}(2^n)  \lesssim F_2([2^n, 2^{n+1})) = o(1/A(x))$,
which shows that \eqref{eq:sat0} is fulfilled. 
It remains to check \eqref{eq:tossa}. We do this by showing that,
for any $\delta < \frac{1}{4}$,
\begin{equation} \label{eq:goalI}
	I_1(\delta; x; F_2) 
	\underset{x\to\infty}{=} o\big( b_1(x) \big)
	= o\bigg(\frac{1}{x^{1-\alpha}} \bigg) \,.
\end{equation}
This is elementary but slightly technical, and it is shown below.

\smallskip

Finally, we define $F_3$,
We introduce the disjoint intervals $G_k := [2^k, 2^k + \frac{2^k}{k^p})$
for $k \ge 2$. We
let $F_3$ have a density, constant on every $G_k$ (for $k \ge 2$)
and zero otherwise, given by
\begin{equation} \label{eq:F3last}
	F_3(2^k + \dd z) := \frac{c'}{\ell(k)} \, \frac{k^p}{(2^k)^{1+\alpha}}
	\, \ind_{[0, \frac{2^k}{k^p})}(z) \, \dd z \,,
\end{equation}
where $c'\in (0,\infty)$ is a normalizing constant. Then
\begin{equation*}
	F_3([2^k, 2^{k+1})) =
	F_3(G_k) = \frac{c'}{\ell(k)} \, \frac{k^p}{(2^k)^{1+\alpha}}
	\, \frac{2^k}{k^p} = \frac{c'}{\ell(k)} \, \frac{1}{(2^k)^{\alpha}}
	= o\bigg(\frac{1}{(2^k)^{\alpha}}\bigg) \,.
\end{equation*}
Then
for $x \in [2^k, 2^{k+1})$ we have $	\overline{F_3}(x) \le \overline{F_3}(2^k) \lesssim  
\overline{F_3}([2^k, 2^{k+1})) = o(1/A(x))$ as $x \to \infty$,
hence \eqref{eq:sat0} holds. 
Given $\eta \in (0,1)$, fix $k_0 = k_0(\eta)$ large enough so that $\frac{1}{2k^p} < \eta$
for $k \ge k_0$. Then, recalling \eqref{eq:Ik}
and \eqref{eq:F123}, we can write
\begin{equation*}
\begin{split}
	I_2(\delta, & \eta; 2^n; F) 
	\ge \int_{0 \le y \le \delta 2^n}
	F(2^n +\dd y) \int_{|z| \le \eta y} F(-y+\dd z) \, b_3(z) \\
	& \ge \frac{1}{9} \sum_{k=k_0}^{\lfloor \log_2 (\delta 2^n) \rfloor} 
	\int_{0 \le w \le \frac{2^k}{2 k^p}}
	F_2(2^n + 2^k + \dd w) 
	\int_{0 \le z \le \frac{2^k}{2 k^p}}
	\, F_3\big( 2^k + w + \dd z \big) \, 
	(1 \vee z)^{3\alpha-1} \,.
\end{split}
\end{equation*}
Note that $(1 \vee z)^{3\alpha-1} \ge (\frac{2^k}{2 k^p})^{3\alpha-1}$ 
(we recall that $\alpha < \frac{1}{3}$)
and by \eqref{eq:F3last}
\begin{equation*}
	\forall 0 \le w \le \tfrac{2^k}{2 k^p}: \qquad
	F_3\big( 2^k + w + [0, \tfrac{2^k}{2 k^p}) \big) =
	\frac{c'}{\ell(k)} \, \frac{k^p}{(2^k)^{1+\alpha}}
	\, \frac{2^k}{2k^p} \gtrsim \frac{1}{\ell(k)} \, \frac{1}{(2^k)^{\alpha}} \,.
\end{equation*}
Since $\log_2 (\delta 2^n) = n + \log_2 \delta$, recalling
\eqref{eq:F2Enk}, we can write for large $n$
\begin{equation*}
\begin{split}
	I_2(\delta, \eta; 2^n; F) 
	& \gtrsim \sum_{k=k_0}^{n/2} F_2(E_{n,k}) \, \frac{1}{\ell(k)} \, \frac{1}{(2^k)^{\alpha}}
	\, \bigg( \frac{2^k}{2 k^p} \bigg)^{3\alpha-1} 
	\gtrsim \frac{1}{\ell(n) \, (2^n)^{1-\alpha}} \sum_{k=k_0}^{n/2}
	\frac{1}{\ell(k) \, k^{3\alpha p}} \,.
\end{split}
\end{equation*}
Since we have fixed $p < \frac{1}{3\alpha}$,
applying \eqref{eq:Kar1} and recalling \eqref{eq:ellen} we finally obtain
\begin{equation*}
	I_2(\delta, \eta; 2^n; F)  \gtrsim \frac{n^{1-3\alpha p}}{\ell(n)^2} \, 
	\frac{1}{(2^n)^{1-\alpha}} \gg \frac{1}{(2^n)^{1-\alpha}} = b_1(2^n) \,.
\end{equation*}
This shows that $I_2(\delta, \eta; x; F)$ is not a.n..\qed

\medskip
\noindent
\emph{Proof of \eqref{eq:goalI}.}
We recall that $F_2$ is supported
on the intervals $E_{n,k} := [2^n + 2^k, 2^n + 2^k +\frac{2^k}{2k^p})$ with $n\ge 2$
and $1 \le k \le n-1$. Let us set $E_n := \bigcup_{k=1}^{n-1} E_{n,k} \subseteq [2^n, 2^{n+1})$.

For large $x \ge 0$, we define $n\ge 2$ such that
$2^n \le x < 2^{n+1}$.
For $\delta \in (0,\frac{1}{4})$ 
and large $x$, the interval $(x-\delta x, x + \delta x)$ can intersect at most
$E_n$ and $E_{n+1}$ (because the rightmost point in $E_{n-1}$
is $2^{n-1} + 2^{n-2} + \frac{2^{n-2}}{2(n-2)^p} \sim \frac{3}{4} \, 2^{n}$ as $n\to\infty$).
Consequently we can write
\begin{equation} \label{eq:contrii}
	I_1(\delta; x; F_2) = \int_{|y| \le \delta x} F_2(x + \dd y) \, b_2(y)
	\le \int_{z \in E_n \cup E_{n+1}} F_2(\dd z) \, b_2(z-x) \,.
\end{equation}
For $z \in E_{n+1}$ we have
$z \in E_{n+1, k}$ for some $1 \le k \le n$, in which case
$z \ge 2^{n+1} + 2^k$. Since $x < 2^{n+1}$, we have
$|z-x| = z-x > 2^k$ which yields $b_2(z-x) \le b_2(2^k) = (2^k)^{2\alpha - 1}$.
Recalling \eqref{eq:F2Enk}, we see that
the contribution of $E_{n+1}$ to \eqref{eq:contrii} is at most
\begin{equation*}
	\sum_{k=1}^{n} \frac{c}{\ell(n) \, (2^{n+1})^{1-\alpha}} 
	\, \frac{(2^k)^{1-2\alpha}}{2 k^{p}} \, (2^k)^{2\alpha - 1}
	\lesssim \frac{1}{\ell(n) \, (2^{n+1})^{1-\alpha}} \, \sum_{k=1}^\infty
	\frac{1}{k^p} =
	o\bigg(\frac{1}{(2^{n+1})^{1-\alpha}}\bigg) \,,
\end{equation*}
since we chose $p > 1$.
This is $o\big(\frac{1}{x^{1-\alpha}}\big)$,
so it is negligible for \eqref{eq:goalI}.

Then we look at the contribution of $E_n$ to \eqref{eq:contrii}.
Assume first that $2^n + 2 \le x < 2^{n+1}$.
Then we can write $2^n + 2^{\tilde k} \le x < 2^n + 2^{\tilde k+1}$
for a unique $\tilde k \in \{1,\ldots, n-1\}$.
For $z \in E_{n}$ we have
$z \in E_{n, k}$ for some $1 \le k \le n-1$. We distinguish three cases.
\begin{itemize}
\item If $k \le \tilde k - 1$ (in particular, $\tilde k \ge 2$), then 
\begin{equation*}
	|z-x| = x-z \gtrsim (2^n + 2^{\tilde k}) -
	(2^n + 2^{\tilde k-1} + \tfrac{2^{\tilde k-1}}{2(\tilde k-1)^p}) \gtrsim 2^{\tilde k} \,,
\end{equation*}
hence $b_2(z - x) \lesssim b_2(2^{\tilde k}) = (2^{\tilde k})^{2\alpha-1}$.
By \eqref{eq:F2Enk},
the contribution to \eqref{eq:contrii} is at most
\begin{equation*}
	\sum_{k=1}^{\tilde k - 1} F_2(E_{n,k}) \, (2^{\tilde k})^{2\alpha-1}
	\le \frac{c}{\ell(n) \, (2^n)^{1-\alpha}} \, (2^{\tilde k})^{2\alpha-1}
	\sum_{k=1}^{\tilde k - 1} (2^k)^{1-2\alpha}
	\lesssim \frac{c}{\ell(n) \, (2^n)^{1-\alpha}} \,,
\end{equation*}
which is $o\big(\frac{1}{x^{1-\alpha}}\big)$, so it is negligible for \eqref{eq:goalI}.

\item If $k \ge \tilde k + 2$, then 
$|z-x| = z - x \gtrsim (2^n + 2^k) - (2^n + 2^{\tilde k + 1}) \ge 2^k - 2^{k-1} \gtrsim 2^k$,
hence $b_2(z - x) \lesssim b_2(2^{k}) = (2^{k})^{2\alpha-1}$ and we get
\begin{equation*}
	\sum_{k=\tilde k + 1}^{n - 1} F_2(E_{n,k}) \, (2^{k})^{2\alpha-1}
	\le \frac{c}{\ell(n) \, (2^n)^{1-\alpha}} \,
	\sum_{k= \tilde k + 1}^\infty \frac{1}{2k^p}
	\lesssim \frac{c}{\ell(n) \, (2^n)^{1-\alpha}} \,,
\end{equation*}
because $p > 1$, hence this contribution is also negligible for \eqref{eq:goalI}.

\item If $k \in \{\tilde k, \tilde k + 1\}$, then $|z-x| \le
2^{\tilde k + 2} - 2^{\tilde k} = 3 \cdot 2^{\tilde k}$.
By \eqref{eq:F2last}, since the density of $F_2$ is larger in
$E_{n, \tilde k}$ than in $E_{n, \tilde k + 1}$,
we see the contribution to \eqref{eq:contrii} is at most
\begin{equation*}
	\frac{c}{\ell(n) \, (2^n)^{1-\alpha}}
	\, \frac{1}{(2^{\tilde k})^{2\alpha}}
	\int_{|w| \le 3 \cdot 2^{\tilde k}} (|w| \vee 1)^{2\alpha-1} \, \dd w
	\lesssim \frac{1}{\ell(n) \, (2^n)^{1-\alpha}} \,,
\end{equation*}
which is negligible for \eqref{eq:goalI}.
\end{itemize}

Finally, the regime $2^n \le x < 2^{n} + 2$ is treated similarly.
For $z \in E_{n,k}$, we distinguish the cases $k \ge 2 $ and $k = 1$.
If we set $\tilde k := 0$, the estimates in the two cases 
$k \ge \tilde k + 2$ and $k \in \{\tilde k, \tilde k + 1\}$ treated above
apply with no change.\qed

\smallskip

\appendix

\section{Some technical results}

\label{sec:app}

Let us fix a probability $F$ on $\mathbb{R}$ which satisfies %
\eqref{eq:tail1} with $\alpha \in (0, \frac{1}{2}]$ and with $p, q > 0$.
The next Lemmas show some relations
between the quantities $I_k$ and $\tilde I_k$
defined in \eqref{eq:I1rw}, \eqref{eq:Ik} and in
\eqref{eq:tildeI1}, \eqref{eq:tildeIk}, respectively.
We recall that $\kappa_{\alpha} \in \N$ is defined in \eqref{eq:kappaalpha}.

\begin{lemma}\label{th:corequiv1}
Fix $\eta \in (0,1)$.
If $\tilde I_{k}(\delta,\eta; x)$ is a.n.\ for $k=\kappa_{\alpha}$,
then it is a.n.\ for all $k\in\N$.
\end{lemma}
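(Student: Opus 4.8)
The plan is to reduce the a.n.\ of $\tilde I_k(\delta,\eta;x)$, for every $k\in\N$, to the a.n.\ of the companion quantities $I_j(\delta,\eta;x)$, and then to invoke the $I$-cascade. The reduction rests on two pointwise estimates of the $\tilde{b}$-weights: a \emph{downward} estimate $\tilde I_k(\delta,\eta;x)\lesssim I_k(\delta,\eta;x)$ valid for $k\le\kappa_\alpha-1$, and an \emph{upward} estimate $\tilde I_{k+1}(\delta,\eta;x)\lesssim_\eta I_k(\delta,\eta;x)+R_k(\delta;x)$ valid for $k\ge\kappa_\alpha$, where the remainder $R_k$ is a.n.\ as soon as $\tilde I_1,\dots,\tilde I_k$ are. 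A short induction then finishes: the downward estimate handles $k=1,\dots,\kappa_\alpha-1$, the hypothesis handles $k=\kappa_\alpha$, and the upward estimate propagates to all $k>\kappa_\alpha$ — provided every $I_j(\delta,\eta;x)$ is a.n.. The latter holds because a.n.\ of $\tilde I_{\kappa_\alpha}$ implies a.n.\ of $I_{\kappa_\alpha}$, by the comparisons \eqref{eq:IItildele1}--\eqref{eq:IItildele}, and a.n.\ of $I_{\kappa_\alpha}$ propagates to all $I_j$ by the $I$-cascade of Lemma~\ref{th:corcascade}.

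For the upward estimate I would isolate the innermost integration of $\tilde I_{k+1}$, namely $\int_{|y_{k+1}|\le\eta|y_k|}F(-y_k+\dd y_{k+1})\,\tilde{b}_{k+2}(y_k,y_{k+1})$. Using $\tilde{b}_{k+2}(y_k,y_{k+1})\approx\sum_{A(|y_{k+1}|)\le n\le A(|y_k|)}n^{k+1}/a_n$ from \eqref{eq:btildealt} together with Fubini, this equals, up to constants, $\sum_{1\le n\le A(|y_k|)}\frac{n^{k+1}}{a_n}\,F([-y_k-s_n,\,-y_k+s_n])$ with $s_n:=(\eta|y_k|)\wedge a_n$. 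For $|y_k|\ge1$ the interval $[-y_k-s_n,\,-y_k+s_n]$ keeps its distance $\ge(1-\eta)|y_k|$ from the origin, so its $F$-mass is $\lesssim_\eta 1/A(|y_k|)$ by \eqref{eq:tail2}; summing by Karamata \eqref{eq:Kar1} — legitimate precisely because $k\ge\kappa_\alpha$ makes $n^{k+1}/a_n\in RV(k+1-\tfrac1\alpha)$ of index $>-1$ — gives
\[
	\int_{|y_{k+1}|\le\eta|y_k|}F(-y_k+\dd y_{k+1})\,\tilde{b}_{k+2}(y_k,y_{k+1})
	\ \lesssim_\eta\ \frac{1}{A(|y_k|)}\sum_{1\le n\le A(|y_k|)}\frac{n^{k+1}}{a_n}
	\ \lesssim_\eta\ b_{k+1}(y_k)\qquad(|y_k|\ge1).
\]
Feeding this back into the definition of $\tilde I_{k+1}$ yields $\tilde I_{k+1}(\delta,\eta;x)\lesssim_\eta I_k(\delta,\eta;x)+R_k(\delta;x)$, with $R_k$ the contribution of $|y_k|<1$. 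There $\tilde{b}_{k+2}(y_k,y_{k+1})\lesssim1$, and $1\lesssim\tilde{b}_{k+1}(y_{k-1},y_k)$ as soon as $|y_{k-1}|\ge1$ (the $n=1$ term of \eqref{eq:btildealt}), so that part of $R_k$ is $\lesssim\tilde I_k$; iterating this observation peels off $\tilde I_{k-1},\tilde I_{k-2},\dots$, and the final leftover, in which all coordinates but $y_1$ are $O(1)$ while $Z_1$ is pinned near $x$, is $\lesssim F(x+I)\cdot O(1)=o(b_1(x))$ by the a.n.\ of $\tilde I_1$. Hence $R_k$ is a.n..

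The downward estimate is immediate: for $k\le\kappa_\alpha-1$ the power $n^{k}/a_n$ lies in $RV(k-\tfrac1\alpha)$ of index $<-1$, so by Karamata \eqref{eq:Kar2}
\[
	\tilde{b}_{k+1}(y_{k-1},y_k)=\sum_{A(|y_k|)\le n\le A(|y_{k-1}|)}\frac{n^{k}}{a_n}
	\ \le\ \sum_{n\ge A(|y_k|)}\frac{n^{k}}{a_n}\ \lesssim\ b_{k+1}(y_k)\,,
\]
and similarly $\tilde{b}_2(\delta x,y)\lesssim b_2(y)$ at the level $k=1$; this gives $\tilde I_k(\delta,\eta;x)\lesssim I_k(\delta,\eta;x)$ at once.

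The main obstacle is the borderline index $k=\kappa_\alpha$, which arises exactly when $\alpha(\kappa_\alpha+1)=1$, i.e.\ $\tfrac1\alpha\in\N$: there $n^{\kappa_\alpha}/a_n$ sits precisely at index $-1$, so one cannot trade $\tilde{b}_{\kappa_\alpha+1}$ for $b_{\kappa_\alpha+1}$ without losing a slowly varying factor, and the $\tilde{b}$-truncations have to be carried along the whole recursion — which is precisely why the $\tilde I$ formulation, rather than the $I$ one, is the right one when $\tfrac1\alpha\in\N$. One must also verify that the argument never circularly invokes the sufficiency part of Theorem~\ref{th:mainrw} for $\alpha\le\tfrac13$; this is automatic, since it relies only on regular-variation estimates, on the comparisons \eqref{eq:IItildele1}--\eqref{eq:IItildele}, and on Lemma~\ref{th:corcascade}, none of which depends on that sufficiency proof.
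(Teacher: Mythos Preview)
Your overall strategy is sound and close in spirit to the paper's, but there are two concrete gaps.

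\textbf{The remainder $R_k$.} Your peeling argument breaks at the last step. When $|y_2|,\dots,|y_k|<1$ but $|y_1|\ge 1$, you want to absorb the contribution into $\tilde I_1$ via the bound $1\lesssim \tilde b_2(\delta x,y_1)$. This is false: for $|y_1|$ close to $\delta x$ the sum $\sum_{A(|y_1|)\le n\le A(\delta x)} n/a_n$ has only $O(1)$ terms, each of size $\sim A(\delta x)/(\delta x)\to 0$. Your description of the ``final leftover'' is also inconsistent (you say all coordinates \emph{but} $y_1$ are $O(1)$, yet conclude $Z_1=x+y_1$ is pinned near $x$, i.e.\ $|y_1|$ small). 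The fix is one line and avoids peeling entirely: for $|y_k|<1$ one has $b_{k+1}(y_k)=A(|y_k|)^{k+1}/(|y_k|\vee 1)\ge A(0)^{k+1}>0$, so $1\lesssim b_{k+1}(y_k)$ and therefore $R_k\lesssim I_k$ directly. Thus your upward estimate becomes simply $\tilde I_{k+1}\lesssim_\eta I_k$.

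\textbf{Invoking Lemma~\ref{th:corcascade}.} That lemma carries the hypothesis $\tfrac{1}{\alpha}\notin\N$, which you do not address. When $\tfrac{1}{\alpha}\in\N$ one has $\kappa_\alpha=\tfrac{1}{\alpha}-1$, and the relation \eqref{eq:Ikk-1} (which needs $k<\tfrac{1}{\alpha}-1$) does not let you pass from $I_{\kappa_\alpha}$ down to $I_{\kappa_\alpha-1}$. The repair is again easy: \eqref{eq:IItildele} already gives $I_{\kappa_\alpha-1}\lesssim_\eta\tilde I_{\kappa_\alpha}$, so $I_{\kappa_\alpha-1}$ is a.n.\ too, and then the raw inequalities \eqref{eq:Ikk-1}--\eqref{eq:Ik-1k} (which hold pointwise, regardless of whether $\tfrac{1}{\alpha}\in\N$) propagate to all $I_j$.

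\textbf{Comparison with the paper.} The paper does not split into your downward/upward estimates plus a separate $I$-cascade. Instead it proves direct recursions on the $\tilde I$'s themselves: $\tilde I_{k-1}\lesssim_\eta \tilde I_k$ for $k\le\tfrac{1}{\alpha}-1$ and $\tilde I_k(\tfrac{\delta}{2})\lesssim_\eta\tilde I_{k-1}(\delta)$ for $k>\tfrac{1}{\alpha}-1$, by chaining the comparison inequalities \eqref{eq:IItildele1}--\eqref{eq:IItildeeq} with \eqref{eq:Ikk-1}--\eqref{eq:Ik-1k}. Your corrected upward estimate $\tilde I_{k+1}\lesssim_\eta I_k$ is in fact the same bound as the paper's \eqref{eq:IItildeeq} in the regime $k+1>\tfrac{1}{\alpha}-1$, just derived differently (your Fubini computation versus the paper's direct bound $\tilde b_{k+1}(y_{k-1},y_k)\lesssim b_{k+1}(y_{k-1})$ followed by a tail estimate on the $y_k$-integral). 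Once the two gaps above are patched, the arguments are essentially equivalent.
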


\begin{lemma}\label{th:corcascade}
Assume $\frac{1}{\alpha} \not\in \N$
and fix $\eta \in (0,1)$.
If $I_{k}(\delta ,\eta ;x)$ is a.n.\ for $k=\kappa_{\alpha}$,
then it is a.n. for all $k \in \N$.
\end{lemma}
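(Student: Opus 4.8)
The plan is to avoid manipulating the $I_k$'s directly and instead to reduce the statement to its counterpart for the $\tilde I_k$'s, namely Lemma~\ref{th:corequiv1}. The link between the two families is the one–sided comparison established earlier in the Appendix, see \eqref{eq:IItildele1} and \eqref{eq:IItildele}, which yields the implication ``$\tilde I_k(\delta,\eta;x)$ is a.n.'' $\Longrightarrow$ ``$I_k(\delta,\eta;x)$ is a.n.'' for every $k\in\N$. Granting this, it is enough to prove the single implication
\[
I_{\kappa_\alpha}(\delta,\eta;x)\ \text{is a.n.}\qquad\Longrightarrow\qquad \tilde I_{\kappa_\alpha}(\delta,\eta;x)\ \text{is a.n.}\,,
\]
since then Lemma~\ref{th:corequiv1} upgrades the a.n.\ of $\tilde I_{\kappa_\alpha}$ to that of $\tilde I_k$ for all $k$, and \eqref{eq:IItildele1} and \eqref{eq:IItildele} transfer it back to all the $I_k$'s.

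For this single implication I would in fact prove the \emph{pointwise} bound $\tilde I_{\kappa_\alpha}(\delta,\eta;x)\lesssim I_{\kappa_\alpha}(\delta,\eta;x)$, with a constant depending only on $\alpha$. Since \eqref{eq:tildeIk} and \eqref{eq:Ik} are integrals over the same domain, with integrands differing only in the last factor $\tilde b_{\kappa_\alpha+1}(y_{k-1},y_k)$ versus $b_{\kappa_\alpha+1}(y_k)$, the whole matter reduces to the kernel estimate
\[
\tilde b_{\kappa_\alpha+1}(z,w)\ \lesssim\ b_{\kappa_\alpha+1}(w)\qquad\text{whenever}\ \ |w|\le|z|
\]
(for $\kappa_\alpha=1$ one argues the same way starting from the Fubini form \eqref{eq:tildeI10} of $\tilde I_1$, and the corresponding estimate for $\int_{|w|}^{\delta x}(b_2(t)/(t\vee1))\,\dd t$). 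By \eqref{eq:btildealt} one has $\tilde b_{\kappa_\alpha+1}(z,w)\approx\sum_{A(|w|)\le n\le A(|z|)} n^{\kappa_\alpha}/a_n\le\sum_{n\ge A(|w|)} n^{\kappa_\alpha}/a_n$, and Karamata's theorem \eqref{eq:Kar2} identifies the last sum with $A(|w|)^{\kappa_\alpha+1}/(|w|\vee1)=b_{\kappa_\alpha+1}(w)$ up to constants when $|w|$ is large, while for bounded $|w|$ the sum is at most the convergent constant $\sum_{n\ge1} n^{\kappa_\alpha}/a_n$ and $b_{\kappa_\alpha+1}(w)$ stays bounded below (recall $A(0)=\tfrac12$), so the estimate holds uniformly in $w$. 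This is exactly where the hypothesis $\tfrac1\alpha\notin\N$ is used: since $\kappa_\alpha=\lfloor\tfrac1\alpha\rfloor-1$, the sequence $n^{\kappa_\alpha}/a_n$ lies in $RV(\kappa_\alpha-\tfrac1\alpha)$ with $\kappa_\alpha-\tfrac1\alpha=\lfloor\tfrac1\alpha\rfloor-1-\tfrac1\alpha<-1$ \emph{strictly}, which is what makes \eqref{eq:Kar2} applicable with no logarithmic correction. When $\tfrac1\alpha\in\N$ the index equals $-1$, a genuine logarithm appears in $\tilde b_{\kappa_\alpha+1}$, the pointwise comparison breaks down, and one really has to keep $\tilde I_{\kappa_\alpha}$ rather than $I_{\kappa_\alpha}$ — which is precisely the source of the dichotomy in Theorem~\ref{th:mainrw}.

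The step I expect to be most delicate is not in this reduction but in the ingredient taken for granted at the outset, namely the comparison $I_k\lesssim\tilde I_k$ (in the a.n.\ sense) encoded in \eqref{eq:IItildele1} and \eqref{eq:IItildele}: there one has to weigh the kernels $b_{k+1}$ and $\tilde b_{k+1}$ against the full two–sided tail behaviour \eqref{eq:tail2}, and in particular this is where $p,q>0$ enters, through the fact that the relevant one–step probabilities are of order $1/A(\cdot)$ up to $\eta$–dependent constants. Granting that comparison, the argument above is just a short computation with regularly varying functions, and — importantly — the entire ``cascade'' over $k$, both upward and downward from $\kappa_\alpha$, is carried out once and for all on the $\tilde I_k$ side via Lemma~\ref{th:corequiv1}, so no separate inductive passage through the $I_k$'s is required.
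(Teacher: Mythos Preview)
Your reduction is logically valid if one grants Lemma~\ref{th:corequiv1} and the comparisons \eqref{eq:IItildele1}--\eqref{eq:IItildele} as black boxes, and your kernel estimate $\tilde b_{\kappa_\alpha+1}(z,w)\lesssim b_{\kappa_\alpha+1}(w)$ via Karamata is correct and uses the hypothesis $\tfrac1\alpha\notin\N$ in exactly the right place.

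However, the route is circular within the paper's logical structure. The proof of Lemma~\ref{th:corequiv1} in the Appendix explicitly relies on the pointwise comparisons \eqref{eq:Ikk-1}--\eqref{eq:Ik-1k} between consecutive $I_k$'s (via the relation \eqref{eq:maxI}). Those two relations \emph{are} the paper's entire proof of Lemma~\ref{th:corcascade}: one simply observes that $b_{k+1}\in RV((k+1)\alpha-1)$ is asymptotically decreasing for $k<\tfrac1\alpha-1$ and increasing for $k>\tfrac1\alpha-1$, bounds $b_{k+1}(y_k)$ by $b_{k+1}(\eta y_{k-1})$ on $\{|y_k|\le\eta|y_{k-1}|\}$, and integrates out $y_k$ against $F$. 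So by passing through Lemma~\ref{th:corequiv1} you have not avoided the direct manipulation of the $I_k$'s---you have merely deferred it to a lemma whose proof performs exactly that manipulation. The paper's direct argument is in fact shorter than your detour: it is two one-line estimates, whereas your route requires the kernel bound, the two-sided comparison $I_k\approx_\eta\tilde I_k$ (which, as you note, genuinely uses $p,q>0$), and then the full cascade for $\tilde I_k$.

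If you want a proof that is genuinely independent of \eqref{eq:Ikk-1}--\eqref{eq:Ik-1k}, you would need to supply an alternative proof of Lemma~\ref{th:corequiv1} that compares $\tilde I_{k-1}$ and $\tilde I_k$ directly at the level of the kernels $\tilde b_k$, without routing through the $I_k$'s. That is possible in principle but is not what the paper does, and you do not sketch it.
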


\begin{lemma}\label{th:corequiv2}
With no restriction on $\alpha$,
if $\tilde I_{\kappa_{\alpha}}(\delta,\eta; x)$
is a.n., then also $I_{\kappa_{\alpha}}(\delta,\eta; x)$ is a.n..
The reverse implication holds if $\frac{1}{\alpha} \not\in \N$
(but not necessarily if $\frac{1}{\alpha} \in \N$).
\end{lemma}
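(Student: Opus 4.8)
The plan is to compare the integrands defining $I_{\kappa_\alpha}$ and $\tilde I_{\kappa_\alpha}$, exploiting the relation \eqref{eq:btildealt}, namely $\tilde b_k(z,x) \approx \sum_{A(|x|) \le n \le A(|z|)} n^{k-1}/a_n$. For the forward implication (``$\tilde I$ a.n.$\Rightarrow I$ a.n.''), which holds with no restriction on $\alpha$, I would bound $b_{k+1}(y) = A(|y|)^{k+1}/(|y|\vee 1)$ from above by a constant times $\tilde b_{k+1}(\delta x, y)$ when $|y| \le \delta x$. Indeed, by \eqref{eq:Kar1} the tail of the sum over $n \le A(|y|)$ is comparable to $A(|y|)^{k+1}/(|y|\vee 1) = b_{k+1}(y)$, so $b_{k+1}(y) \lesssim \tilde b_2(\delta x, y) \cdot A(|y|)^{k-1} \le \tilde b_{k+1}(\delta x, y)$ once $|y|$ is large; for $|y|$ bounded one absorbs the discrepancy into the $o(b_1(x))$ error using \eqref{eq:Fo}, exactly as done repeatedly in Section~\ref{sec:necce}. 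This shows $I_{\kappa_\alpha}(\delta,\eta;x) \lesssim \tilde I_{\kappa_\alpha}(\delta,\eta;x) + o(b_1(x))$, giving the implication. (In fact this is essentially the content of the inequalities \eqref{eq:IItildele1}--\eqref{eq:IItildele} that the paper already invokes; I would state them precisely in the Appendix and cite them.)

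For the reverse implication when $\frac{1}{\alpha}\notin\N$ (``$I$ a.n.$\Rightarrow \tilde I$ a.n.''), the key point is that $\kappa_\alpha = \lfloor 1/\alpha\rfloor - 1$ satisfies the \emph{strict} inequality $\kappa_\alpha + 1 < 1/\alpha$, equivalently $(\kappa_\alpha+1)\alpha < 1$, whence $n^{\kappa_\alpha}/a_n$ is regularly varying with index $\kappa_\alpha - 1/\alpha < -1$. Therefore, by Karamata \eqref{eq:Kar2}, the \emph{full} tail sum converges: $\tilde b_{\kappa_\alpha+1}(y_{k-1}, y_k) \approx \sum_{n \ge A(|y_k|)} n^{\kappa_\alpha}/a_n \lesssim A(|y_k|)^{\kappa_\alpha+1}/(|y_k|\vee 1) = b_{\kappa_\alpha+1}(y_k)$, where crucially the bound no longer depends on $y_{k-1}$. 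Plugging this into the definition \eqref{eq:tildeIk} of $\tilde I_{\kappa_\alpha}$, the term $\tilde b_{\kappa_\alpha+1}(y_{k-1},y_k)$ is replaced by $b_{\kappa_\alpha+1}(y_k)$, turning $\tilde I_{\kappa_\alpha}$ into (a constant times) $I_{\kappa_\alpha}$; since $I_{\kappa_\alpha}$ is a.n.\ by hypothesis, so is $\tilde I_{\kappa_\alpha}$. The case $k=1$ (i.e.\ $\alpha\in(\frac13,\frac12)$, $\kappa_\alpha=1$) works the same way, directly from \eqref{eq:tildeI10} versus \eqref{eq:I1hat}. Finally, for the assertion that the reverse implication may fail when $\frac{1}{\alpha}\in\N$, I would not prove it here but point to Proposition~\ref{th:counterex} (for $\alpha \le \frac13$) and to the discussion around \eqref{eq:cond}/Theorem~\ref{th:1/2} (for $\alpha=\frac12$), where the borderline integrability $\sum n^{\kappa_\alpha}/a_n = \sum 1/a_n^{?}$ with index exactly $-1$ produces genuinely larger quantities.

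The main obstacle is the boundary bookkeeping: ensuring that replacing sums by integrals via Karamata, and replacing $|y_k|\vee a_{n_1}$-type truncations by $|y_k|$, only costs an $o(b_1(x))$ error uniformly in $x$. This is handled exactly as in Sections~\ref{sec:necce} and~\ref{sec:suffe}: the contribution of bounded $|y_k|$ is controlled by $\P(S_{\kappa_\alpha} \in x + J) = o(b_1(x))$ via \eqref{eq:Fo}/\eqref{eq:nec1}, and Potter's bounds \eqref{eq:Potter} give the uniform comparison of regularly varying quantities on $[1,\infty)$. The genuinely delicate ingredient — the strict inequality $(\kappa_\alpha+1)\alpha<1$ when $1/\alpha\notin\N$, which is precisely what makes the tail sum converge and decouples $y_{k-1}$ from the bound — is elementary once one stares at \eqref{eq:kappaalpha}.
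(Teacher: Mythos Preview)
Your approach is the same as the paper's: compare $b_{k+1}$ and $\tilde b_{k+1}$ pointwise via \eqref{eq:btildealt} and Karamata. The reverse implication is exactly right and matches the paper --- since $1/\alpha\notin\N$ gives the strict inequality $\kappa_\alpha < 1/\alpha - 1$, the sequence $n^{\kappa_\alpha}/a_n$ has index $<-1$, so by \eqref{eq:Kar2} the tail sum from $A(|y_k|)$ is $\lesssim b_{\kappa_\alpha+1}(y_k)$, decoupling from $y_{k-1}$ and giving $\tilde I_{\kappa_\alpha}\lesssim I_{\kappa_\alpha}$.

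There is a slip in the forward direction when $\kappa_\alpha\ge 2$. You bound $b_{k+1}(y_k)\lesssim\tilde b_{k+1}(\delta x,y_k)$, but for $k\ge 2$ the integrand of $\tilde I_k$ is $\tilde b_{k+1}(y_{k-1},y_k)$, see \eqref{eq:tildeIk}; since $|y_{k-1}|\le\delta x$ one has $\tilde b_{k+1}(y_{k-1},y_k)\le\tilde b_{k+1}(\delta x,y_k)$, so your inequality goes the wrong way for comparing with $\tilde I_k$. The fix (this is the paper's \eqref{eq:ssttaa}) is to use the constraint $|y_k|\le\eta|y_{k-1}|$: restrict the sum in \eqref{eq:btildealt} to $A(|y_k|)\le n\le A(|y_k|/\eta)\le A(|y_{k-1}|)$, which yields $\tilde b_{k+1}(y_{k-1},y_k)\gtrsim_\eta b_{k+1}(y_k)$ directly. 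With this, the comparison is a clean pointwise inequality and no $o(b_1(x))$ error term or appeal to \eqref{eq:Fo} is needed --- which is just as well, since \eqref{eq:Fo} was derived assuming \eqref{eq:SRTeq}, not assumed here.
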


\begin{proof}[Proof of Lemma~\protect\ref{th:corcascade}]
Fix $k\in \mathbb{N}$ with $k\ge 2$ and $\eta \in (0,1)$.
We are going to prove the following relations:
\begin{align}
	& \text{if } k < \tfrac{1}{\alpha}-1: \quad \ 
	I_{k-1}(\delta ,\eta ;x)\lesssim _{\eta }I_{k}(\delta ,\eta ;x)\,, 
	\label{eq:Ikk-1}\\
	& \text{if } k > \tfrac{1}{\alpha}-1: \quad \ 
	I_{k}(\delta ,\eta ;x)\lesssim _{\eta }I_{k-1}(\delta ,\eta ;x)\,.
	\label{eq:Ik-1k}
\end{align}%
Since we assume that
$\frac{1}{\alpha} \not\in \N$, we have $\frac{1}{\alpha}
- 2 < \kappa_{\alpha} < \frac{1}{\alpha} - 1$.
If $I_{\kappa_{\alpha}}$ is a.n., it follows that also
$I_{\kappa_{\alpha}-1}$, $I_{\kappa_{\alpha}-2}$, $\ldots$ are a.n., by \eqref{eq:Ikk-1},
while $I_{\kappa_{\alpha}+1}, I_{\kappa_{\alpha}+2}, \ldots$ are a.n., by \eqref{eq:Ik-1k}.

\smallskip

It remains to prove \eqref{eq:Ikk-1}-\eqref{eq:Ik-1k}.
For $k < \frac{1}{\alpha} - 1$, the function $b_{k+1}(y)$, see \eqref{eq:b}, is regularly varying
with index $(k+1)\alpha -1<0$, hence it is asymptotically decreasing:
bounding $b_{k+1}(y_{k})\gtrsim b_{k+1}(\eta y_{k-1})$ for $|y_{k}|\leq \eta
|y_{k-1}|$ gives
\begin{equation*}
\int_{|y_{k}|\leq \eta |y_{k-1}|}F(-y_{k-1}+\mathrm{d}y_{k})\,b_{k+1}(y_{k})%
\gtrsim F(-(1-\eta )|y_{k-1}|)\,b_{k+1}(\eta y_{k-1})\gtrsim _{\eta
}b_{k}(y_{k-1})\,,
\end{equation*}%
which plugged into \eqref{eq:Ik} shows that $I_{k}(\delta ,\eta ;x)\gtrsim
_{\eta }I_{k-1}(\delta ,\eta ;x)$, proving \eqref{eq:Ikk-1}. 

To prove \eqref{eq:Ik-1k}, note that
for $\alpha >%
\frac{1}{k+1}$ the function $b_{k+1}(y)$ is asymptotically increasing:
by a similar argument, we get $I_{k}(\delta ,\eta ;x) \lesssim_{\eta }
I_{k-1}(\delta ,\eta ;x)$, that is \eqref{eq:Ik-1k}.
\end{proof}

\begin{proof}[Proof of Lemma~\ref{th:corequiv2}]
We are going to prove the following inequalities between
$I_1$ and $\tilde I_1$:
\begin{gather}
	\label{eq:IItildele1}
	I_1(\tfrac{\delta}{2}; x) \lesssim \tilde I_1(\delta; x) \,,
	\\
	\label{eq:IItildele2}
	\text{if } \alpha < \tfrac{1}{2}: \qquad
	\tilde I_1(\delta; x) \lesssim I_1(\delta; x)  \,.
\end{gather}
For $k\in\N$ with $k\ge 2$,
we have the following relations between $\tilde I_k$
and $I_k, I_{k-1}$:
\begin{gather} 
\label{eq:IItildele}
\max\big\{ I_{k-1}(\delta,\eta; x), \,
I_{k}(\delta, \eta; x) \big\} 
\lesssim_{\eta} \tilde I_k(\delta, \eta; x) \,, \\
\label{eq:IItildeeq}
\text{if } k \ne \tfrac{1}{\alpha}-1: \qquad
\tilde I_{k}(\delta, \eta; x) 
\lesssim_{\eta} 
\max\big\{ I_{k-1}(\delta,\eta; x), \, I_{k}(\delta, \eta; x) \big\} \,.
\end{gather}
Given these relations,
if $\tilde I_{\kappa_{\alpha}}$ is a.n.,
then also $I_{\kappa_{\alpha}}$ is a.n.: it suffices to apply \eqref{eq:IItildele1}
and \eqref{eq:IItildele} with $k=\kappa_{\alpha}$.
When $\frac{1}{\alpha} \not\in \N$,
the reverse implication also holds,
because we can apply \eqref{eq:IItildele2} if $\kappa_{\alpha} = 1$
(note that $\alpha < \frac{1}{2}$, since $\frac{1}{\alpha} \not\in \N$)
or \eqref{eq:IItildeeq} if $\kappa_{\alpha} > 1$.

\smallskip

It remains to prove \eqref{eq:IItildele1}-\eqref{eq:IItildeeq}.
By \eqref{eq:btildealt}, for $|y_{k}|\leq \eta |y_{k-1}|$
with $\eta \in (0,1)$ we can write
\begin{equation} \label{eq:ssttaa}
\tilde{b}_{k+1}(y_{k-1},y_{k})\geq \sum_{m=A(|y_{k}|)}^{A(|y_{k}|/\eta )}%
\frac{m^{k}}{a_{m}}\gtrsim \frac{A(|y_{k}|)^{k}}{(|y_{k}|/\eta)\vee 1}\,\big(%
A(|y_{k}|/\eta )-A(|y_{k}|)\big)\gtrsim_{\eta} b_{k+1}(y_{k})\,,
\end{equation}%
The same arguments show that, for $|y| \le \frac{\delta}{2} x$, we 
have $\tilde b_2(\delta x,y) \ge \tilde b_2(2y,y) \gtrsim b_2(y)$.
Plugging these bounds into \eqref{eq:tildeI1}
and \eqref{eq:tildeIk} proves \eqref{eq:IItildele1} and also
$\tilde I_k(\delta,\eta; x) \gtrsim_{\eta} 
I_k(\delta,\eta; x)$, which is half of \eqref{eq:IItildele}. 
For the other half, note that for $|y_{k}|\leq \eta |y_{k-1}|$,
always by \eqref{eq:btildealt},
\begin{equation*}
\tilde{b}_{k+1}(y_{k-1},y_{k})
\gtrsim \sum_{m=A(\eta|y_{k-1}|)}^{A(|y_{k-1}| )}%
\frac{m^{k}}{a_{m}}\gtrsim \frac{A(\eta|y_{k-1}|)^{k}}{|y_{k-1}|\vee 1}\,\big(%
A(|y_{k-1}|)-A(\eta|y_{k-1}|)\big)\gtrsim_{\eta} b_{k+1}(y_{k-1})\,,
\end{equation*}%
hence
\begin{equation} \label{eq:inan}
	\int\limits_{|y_{k}|\leq \eta |y_{k-1}|}
	\!\!\!\!\!\!\!\! F(-y_{k-1}+\mathrm{d}y_{k})\,
	\tilde b_{k+1}(y_{k-1},y_{k})%
	\gtrsim_{\eta} b_{k+1}(y_{k-1})
	F(-(1-\eta )|y_{k-1}|) \gtrsim_{\eta}b_{k}(y_{k-1})\,.
\end{equation}
From \eqref{eq:tildeI1} we get $\tilde I_k(\delta,\eta; x) 
\gtrsim_{\eta} I_{k-1}(\delta,\eta; x)$,
which completes the proof of \eqref{eq:IItildele}.

Next we prove \eqref{eq:IItildele2} and \eqref{eq:IItildeeq}.
We distinguish two cases.
\begin{itemize}
\item If $k < \frac{1}{\alpha} - 1$, the sequence $m^{k}/a_{m}$ is regularly varying with
index $k-\frac{1}{\alpha }<-1$. By \eqref{eq:Kar2}, we can write
\begin{equation*}
\tilde{b}_{k+1}(y_{k-1},y_{k})\leq \sum_{m=A(|y_{k}|)}^{\infty }\frac{m^{k}}{%
a_{m}}\lesssim \frac{A(|y_{k}|)^{k+1}}{|y_{k}|\vee 1}=b_{k+1}(y_{k})\,,
\end{equation*}%
which yields $\tilde I_k(\delta,\eta; x) \lesssim_{\eta} I_{k}(\delta,\eta; x)$.
For $k=1$, we have proved \eqref{eq:IItildele2}, since
$k < \frac{1}{\alpha} - 1$ means precisely $\alpha < \frac{1}{2}$, while
for $k \ge 2$ we have proved half of \eqref{eq:IItildeeq}.

\item If $k > \frac{1}{\alpha} - 1$, with $k \ge 2$,
the sequence $m^{k}/a_{m}$ is regularly varying with
index $k-\frac{1}{\alpha } > -1$ and by \eqref{eq:Kar1} we get
\begin{equation*}
\tilde{b}_{k+1}(y_{k-1},y_{k})\leq \sum_{m=1}^{A(|y_{k-1}|)}\frac{m^{k}}{%
a_{m}}\lesssim \frac{A(|y_{k-1}|)^{k+1}}{|y_{k-1}|\vee 1}=b_{k+1}(y_{k-1})\,,
\end{equation*}%
and in analogy with \eqref{eq:inan} we get 
$\tilde I_k(\delta,\eta; x) \lesssim_{\eta} I_{k-1}(\delta,\eta; x)$.
Relation \eqref{eq:IItildeeq} is proved.
\end{itemize}
The proof is completed.
\end{proof}

\begin{proof}[Proof of Lemma~\ref{th:corequiv1}]
Fix $k\in \mathbb{N}$ with $k\ge 2$ and $\eta \in (0,1)$.
In analogy with \eqref{eq:Ikk-1}-\eqref{eq:Ik-1k}, we are going to prove that
\begin{align}
	& \text{if } k \le \tfrac{1}{\alpha}-1: \quad \ 
	\tilde I_{k-1}(\delta ,\eta ;x) \lesssim_{\eta }
	\tilde I_{k}(\delta ,\eta ;x)\,, 
	\label{eq:Ikk-1tilde}\\
	& \text{if } k > \tfrac{1}{\alpha}-1: \quad \ 
	\tilde I_{k}(\tfrac{\delta}{2} ,\eta ;x)
	\lesssim_{\eta } \tilde  I_{k-1}(\delta ,\eta ;x)\,,
	\label{eq:Ik-1ktilde}
\end{align}%
where $k=\frac{1}{\alpha}-1$ is included in \eqref{eq:Ikk-1tilde}
(unlike \eqref{eq:Ikk-1}).
Since we assume that $\tilde I_{\kappa_{\alpha}}$ is a.n.,
and since $\kappa_{\alpha} \le \frac{1}{\alpha}-1$, we can apply 
\eqref{eq:Ikk-1tilde} iteratively to see that 
$\tilde I_{\kappa_{\alpha}-1}$, $\tilde I_{\kappa_{\alpha}-2}$, \ldots\ are a.n..
Similarly, since $\kappa_{\alpha} + 1 > \frac{1}{\alpha}-1$, relation
\eqref{eq:Ik-1ktilde} shows that
$\tilde I_{\kappa_{\alpha}+1}$, $\tilde I_{\kappa_{\alpha}+2}$, \ldots\ are a.n..

\smallskip

It remains to prove \eqref{eq:Ikk-1tilde}-\eqref{eq:Ik-1ktilde}.
By \eqref{eq:Ikk-1} and \eqref{eq:Ik-1k} we have
\begin{equation}\label{eq:maxI}
	\max\big\{ I_{k-1}(\delta,\eta; x), \, I_{k}(\delta, \eta; x) \big\} \\
	\approx_{\eta} \begin{cases}
	I_{k}(\delta, \eta; x) & \text{if } k < \tfrac{1}{\alpha}-1 \\
	\rule{0pt}{1.2em}I_{k-1}(\delta, \eta; x) & \text{if } k > \tfrac{1}{\alpha}-1 
\end{cases} \,.
\end{equation}
Let us fix $k \le \frac{1}{\alpha} - 1$ and assume first that $k \ge 3$.
By \eqref{eq:IItildeeq} and \eqref{eq:maxI}
(with $k-1$ in place of $k$; note that $k-1 < \frac{1}{\alpha}-1$), and then
by \eqref{eq:IItildele},
we have
\begin{equation*}
	\tilde I_{k-1} \lesssim_{\eta} \max\{I_{k-2}, I_{k-1}\}
	\approx_{\eta} I_{k-1} \le \max\{I_{k-1}, I_{k}\} \lesssim_{\eta} \tilde I_k \,.
\end{equation*}
If $k=2$, 
the assumption $k \le \frac{1}{\alpha} - 1$ means $\alpha \le \frac{1}{3}$,
hence we can apply \eqref{eq:IItildele2} followed by \eqref{eq:IItildele} to get
$\tilde I_1 \lesssim I_1 \le \max\{I_1, I_2\} \lesssim_{\eta} \tilde I_2$.
This completes the proof of \eqref{eq:Ikk-1tilde}.

Fix now $k > \frac{1}{\alpha}-1$ (note that $k \ge 2$, since $\alpha \le \frac{1}{2}$).
By \eqref{eq:IItildeeq} and \eqref{eq:maxI}, we can write
\begin{equation*}
	\tilde I_k(\tfrac{\delta}{2}) \lesssim_{\eta} 
	\max\{I_{k-1}(\tfrac{\delta}{2}), I_k(\tfrac{\delta}{2})\} \lesssim_{\eta}
	I_{k-1}(\tfrac{\delta}{2}) \,.
\end{equation*}
If $k \ge 3$, we apply \eqref{eq:IItildele} with $k-1$ in place of $k$, to get
\begin{equation*}
	I_{k-1}(\tfrac{\delta}{2}) \le \max\{I_{k-2}(\tfrac{\delta}{2}),
	I_{k-1}(\tfrac{\delta}{2})\} \lesssim_{\eta} \tilde I_{k-1}(\tfrac{\delta}{2})
	\le \tilde I_{k-1}(\delta) \,.
\end{equation*}
This yields $\tilde I_k(\tfrac{\delta}{2}) \lesssim_{\eta} \tilde I_{k-1}(\delta)$,
which is precisely \eqref{eq:Ik-1ktilde}.
If $k = 2$, we apply \eqref{eq:IItildele1} to see that
$I_{k-1}(\tfrac{\delta}{2}) = I_{1}(\tfrac{\delta}{2}) \le \tilde I_{1}(\delta)$. 
This completes the proof of \eqref{eq:Ik-1ktilde}.
\end{proof}


\end{document}